\newcommand {\Z} {\mathbb{Z}}
\newcommand {\C} {\mathbb{C}}
\newcommand {\tr} {\mathrm{Tr}}
\newcommand {\St} {\mathrm{St}}
\def\N{\mathbb N}
\def\a{\mathfrak {a}}
\def\b{\mathfrak {b}}
\def\gl{\mathfrak {gl}}
\def\h{\mathfrak {h}}
\def\g{\mathfrak {g}}
\def\l{\mathfrak {l}}
\def\m{\mathfrak {m}}
\def\n{\mathfrak {n}}
\def\p{\mathfrak {p}}
\def\sl{\mathfrak{sl}}
\def\so{\mathfrak{so}}
\def\sp{\mathfrak{sp}}
\def\t{\mathfrak{t}}
\def\z{\mathfrak {z}}
\def\k{\mathfrak{k}}
\def\q{\mathfrak{q}}
\def\r{\mathfrak{r}}
\def\s{\mathfrak{s}}
\def\F{\mathfrak{F}}
\def\G{\mathfrak{G}}
\def\H{\mathfrak{H}}
\def\Ch{\mathcal{C}}
\def\Dh{\mathcal{D}}
\def\Eh{\mathcal{E}}
\newcommand {\ad}{\mathrm{ad \:}}
\newcommand {\Span}{\mathrm{Span}}
\newcommand{\Sym}{\mathrm{Sym}}
\newtheorem{thm}{Theorem}[section]
\newtheorem{lemma}[thm]{Lemma}
\newtheorem{prop}[thm]{Proposition}
\newtheorem{defn}[thm]{Definition}
\newtheorem{cor}[thm]{Corollary}
\begin{document}

\title{Parabolic and Levi subalgebras of finitary Lie algebras}
\author{Elizabeth Dan-Cohen\thanks{Research partially supported by DFG Grant PE 980/2-1 and NSF Grant DMS 0354321} \, and Ivan Penkov\thanks{Research partially supported by DFG Grant PE 980/2-1  and FAPESP Grant 2007/54820-8}}
\date{April 2, 2009}

\maketitle

\begin{abstract}
Let $\g$ be a locally reductive complex Lie algebra which admits a faithful countable-dimensional finitary representation $V$.  Such a Lie algebra is a split extension of an abelian Lie algebra by a direct sum of copies of $\sl_\infty$, $\so_\infty$, $\sp_\infty$, and finite-dimensional simple Lie algebras.  A parabolic subalgebra of $\g$ is any subalgebra which contains a maximal locally solvable (that is, Borel) subalgebra.  Building upon work by Dimitrov and the authors of the present paper, \cite{DP2}, \cite{D}, we give a general description of parabolic subalgebras of $\g$ in terms of joint stabilizers of taut couples of generalized flags.  The main differences with the Borel subalgebra case are that the description of general parabolic subalgebras has to use both the natural and conatural modules, and that the parabolic subalgebras are singled out by further ``trace conditions" in the suitable joint stabilizer.

The technique of taut couples can also be used to prove the existence of a Levi component of an arbitrary subalgebra $\k$ of $\gl_\infty$.  If $\k$ is splittable, we show that the linear nilradical admits a locally reductive complement in $\k$.  We conclude the paper with descriptions of Cartan, Borel, and parabolic subalgebras of arbitrary splittable subalgebras of $\gl_\infty$.

\vspace{5pt}
\noindent 2000 MSC: 17B05 and 17B65
\end{abstract}

\section{Introduction}
In the present paper we study the structure of subalgebras of finitary Lie algebras, and most essentially of the three complex simple Lie algebras $\sl_\infty$, $\so_\infty$, $\sp_\infty$. We are motivated by the fundamental structural relationship between the representation theory of a locally finite Lie algebra $\g$ and the subalgebras of $\g $. Our work is a direct continuation of the papers \cite{N-P}, \cite{DP2}, \cite{D}, \cite{DPS}, as well as of the article \cite{DP4}. In these earlier papers Cartan and Borel subalgebras of $\gl_\infty$, $\sl_\infty$, $\so_\infty$, and $\sp_\infty$ were studied, but general parabolic subalgebras of $\gl_\infty$, $\sl_\infty$, $\so_\infty$, and $\sp_\infty$ were not addressed.  We fill in this gap in the present work, and we also address Levi subalgebras as well as general splittable subalgebras of $\gl_\infty$, $\sl_\infty$, $\so_\infty$, and $\sp_\infty$.

Here is a brief description of the results of the paper. Let $\g$ be one of the finitary locally finite complex Lie algebras $\gl_\infty$, $\sl_\infty$, $\so_\infty$, and $\sp_\infty$.  By $V$ we denote the natural (defining) representation of $\g$.  By $V_*$ we denote the conatural representation, i.e.\ the unique simple $\g$-submodule of the algebraic dual $V^*$ of $V$.  For $\g = \gl_\infty$ or $\sl_\infty$, the representations $V$ and $V_*$ are not isomorphic.  For $\g = \so_\infty$ or $\sp_\infty$, one has $V \cong V_*$.  Recall that a generalized flag in $V$ (or $V_*$) is a chain of subspaces characterized by two properties: see Section~\ref{tautcouples} for the definition.  In \cite{DP2} and \cite{D} generalized flags were used to describe the Borel subalgebras of $\g$.

The first key idea of the present paper is that, given any subalgebra $\k$ of $\gl_\infty$, one can attach to $\k$ a couple $\F$, $\G$ with specific properties, where $\F$ is a generalized flag in $V$ and $\G$ is a generalized flag in $V_*$, such that $\k$ is contained in the joint stabilizer of $\F$ and $\G$.  We call $\F$, $\G$ a taut couple (see Section \ref{tautcouples}).  This construction enables us to prove the existence of a Levi component of any finitary Lie algebra, i.e.\ of any subalgebra of $\gl_\infty$. We define a Levi component of a finitary Lie algebra $\g$ as a complementary subalgebra in $[\g, \g]$ of the intersection of the locally solvable radical $\r$ with $[\g, \g]$. This is a direct extension of the definition of Levi component of a finite-dimensional Lie algebra (note that our definition differs from an earlier one, compare \cite{Ba}). The existence of a Levi component is by no means obvious and is proved in Section \ref{secLevi} of this paper. We then establish some main properties of Levi components and strengthen the results for splittable subalgebras (see Section~\ref{preliminaries} for the definition of splittable).  We prove that any splittable subalgebra $\k$ has a well-defined locally reductive part $\k_{red}$ which is a complement to the linear nilradical of $\k$ (the latter is defined as the largest ideal of $\k$ consisting of nilpotent elements of $\gl_\infty$). Moreover $\k_{red}$ equals the semi-direct sum of a toral subalgebra and a Levi component of $\k$. 

Our next major result is the description of all parabolic subalgebras of $\gl_\infty$, $\sl_\infty$, $\so_\infty$, and $\sp_\infty$. Consider the case of $\gl_\infty$, and the case $\sl_\infty$ is similar, and for the cases of $\so_\infty$ and $\sp_\infty$, see Section \ref{secParsosp}. As we know from \cite{DP2}, maximal locally solvable subalgebras of $\gl_\infty$ are stabilizers of maximal closed generalized flags in the natural representation $V$ (for the definition of a closed generalized flag see Section~\ref{tautcouples} or \cite{DP2}). We show that in the above result closed generalized flags can be replaced by the more general class of semiclosed generalized flags, which we define in Section \ref{tautcouples}. Then we use the construction described earlier in this introduction and attach to any parabolic subalgebra $\p\subset\gl_\infty$ a taut couple of generalized flags $\F$, $\G$. A comparison of $\p$ with the joint stabilizer $\St_\F\cap \St_\G$ shows that $\p$ almost coincides with $\St_\F \cap \St_\G$. More precisely, the parabolic subalgebra $\p$ is singled out by ``trace conditions" on the subalgebra  $\St_\F \cap \St_\G$. This means that $\p$ and $\St_\F \cap \St_\G$ have the same linear nilradical and the same Levi components. 

There are at least two new effects produced by this result.  First, in order to describe the parabolic subalgebras of $\gl_\infty$, both representations $V$ and $V_*$ are needed instead of just one of them.  Another new effect is that not all parabolic subalgebras are self-normalizing (in fact, the self-normalizing parabolic subalgebras are precisely all joint stabilizers $\St_\F \cap \St_\G$). The most obvious example of the latter phenomenon is that $\sl_\infty$ is a parabolic subalgebra of $\gl_\infty$, as it contains a very special Borel subalgebra of $\gl_\infty$ constructed in \cite{DP2}.

We ultimately describe the parabolic subalgebras of an arbitrary splittable subalgebra $\k \subset \gl_\infty$ and show that the inclusion $\k_{red} \hookrightarrow \k$ induces a bijection of parabolic (in particular, Borel) subalgebras of $\k$ and $\k_{red}$. 

We conclude the paper with an appendix extending the existing theory of Cartan subalgebras to the case of an arbitrary splittable subalgebra of a locally reductive Lie algebra. 

\subsection*{Acknowledgements}

We thank Joseph A.\ Wolf for his supportive interest in our work.  He made many valuable suggestions, and in particular drew our attention to the work \cite{Mostow} of G.\ D.\ Mostow.

\section{Preliminaries on subalgebras of $\gl_\infty$} \label{preliminaries}

The ground field is the field of complex numbers $\C$. All vector spaces (including Lie algebras) are assumed to be at most countable dimensional.  If $\g$ is a Lie algebra, $\z(\g)$ denotes the center of $\g$.  Fix countable-dimensional vector spaces $V$ and $V_*$ and a nondegenerate pairing $\langle \cdot , \cdot \rangle \colon V \times V_*  \rightarrow \C$.  We define $\gl(V,V_*)$ (or simply $\gl_\infty$) to be the Lie algebra associated to the associative algebra $V \otimes V_*$, and we define $\sl(V,V_*)$ (or $\sl_\infty$) to be the commutator subalgebra of $\gl(V,V_*)$.  Given a symmetric nondegenerate pairing $V \times V \rightarrow \C$, we denote by $\so(V)$ (or $\so_\infty$) the Lie subalgebra $\bigwedge^2 V \subset \gl(V,V)$.  Given an antisymmetric nondegenerate pairing $V \times V \rightarrow \C$, we denote by $\sp(V)$ (or $\sp_\infty$) the Lie subalgebra $\Sym^2 (V) \subset \gl(V,V)$.

If $F$ is a subspace in $V$ or $V_*$, then $F^\perp$ stands for the orthogonal complement of $F$ (respectively in $V_*$ or $V$) with respect to the pairing $\langle \cdot , \cdot \rangle$. 

A subspace $F \subset W$, where $W$ is a vector space endowed with a symmetric or antisymmetric form, is called \emph{isotropic} if $\langle F , F \rangle = 0$.  The condition $\langle F , F \rangle = 0$ is equivalent to $F \subset F^\perp$.  A subspace $F \subset W$, where $W$ is a vector space endowed with a symmetric or antisymmetric form, is called \emph{coisotropic} if $F^\perp \subset F$.

A Lie algebra $\g$ is said to be \emph{locally finite} if every finite subset of $\g$ is contained in a finite-dimensional subalgebra.  (Clearly $\gl_\infty$, $\sl_\infty$, $\so_\infty$, and $\sp_\infty$ are locally finite.) If $\g$ is at most countable dimensional, being locally finite is equivalent to admitting an exhaustion $\g=\bigcup_{n\in\N}\g_n$ by nested finite-dimensional Lie subalgebras $\g_n$ of $\g$. If $W$ is a module over a locally finite Lie algebra $\g$, the representation is said to be \emph{finitary} if $W$ admits a basis such that all endomorphisms coming from $\g$ are given by finite matrices in this basis.  A locally finite Lie algebra $\g$ is said to be \emph{finitary} if there exists a faithful finitary representation of $\g$.  Any finitary Lie algebra is isomorphic to a subalgebra of $\gl_\infty$.

A locally finite Lie algebra is said to be \emph{locally semisimple} if it admits an exhaustion by finite-dimensional semisimple subalgebras.  

We say that a Lie algebra $\g$ is a \emph{union of reductive subalgebras} if it can be represented as a union of nested finite-dimensional reductive Lie algebras $\g_n \subset \g_{n+1}$.  A Lie algebra $\g$ is called \emph{locally reductive} if it can be expressed as a union of nested finite-dimensional reductive Lie algebras $\g_n \subset \g_{n+1}$ such that $\g_n$ is reductive in $\g_{n+1}$ (i.e.\ the induced $\g_n$-module structure on $\g_{n+1}$ is semisimple). The Lie algebras $\gl_\infty$, $\sl_\infty$, $\so_\infty$, and $\sp_\infty$ are obviously locally reductive. If $\g$ is a locally reductive Lie algebra, every element $X\in\g$ has a well-defined Jordan decomposition, and both the semisimple part $X_{ss}$ and the nilpotent part $X_{nil}$ of $X$ belong to $\g$.  (If $X \in \g_n$, then $X_{ss}$ and $X_{nil}$ are respectively the semisimple and nilpotent parts of $\ad X \colon \g_n \rightarrow \g_n$ and do not depend on $n$.)  More generally, a subalgebra $\k$ of a locally reductive Lie algebra $\g$ is said to be \emph{splittable}\footnote{We prefer ``splittable" to the term ``decomposable" used for the French ``scindable" in the English translation of N.\ Bourbaki's treatise \cite{Bourbaki}.} if for any $X \in \k$ both $X_{ss}$ and $X_{nil}$ are themselves in $\k$. 

Let $\g = \bigcup_n \g_n$ be a locally finite Lie algebra.  One says that $\g$ is \emph{locally solvable} (respectively \emph{locally nilpotent}) if every finite subset of $\g$ is contained in a solvable (resp.\ nilpotent) subalgebra.  The sum of all locally solvable ideals in $\g$ is a locally solvable ideal, so $\g$ has a unique maximal locally solvable ideal, which we call the \emph{locally solvable radical} $\r$.  The intersection $\r \cap [\g,\g]$ is a locally nilpotent ideal in $\g$, since
$$\r \cap [\g,\g] = \bigcup_n (\r \cap \g_n) \cap [\g_n , \g_n]$$
and $(\r \cap \g_n) \cap [\g_n , \g_n]$ is a nilpotent ideal of $\g_n$ for each $n$.

Let $\g $ be a finitary Lie algebra and suppose an injective homomorphism $\g \hookrightarrow \gl(V,V_*)$ is given. The \emph{linear nilradical} of $\g$ is defined as the set of elements of the locally solvable radical of $\g$ which are nilpotent as elements of $\gl(V,V_*)$.  We denote the linear nilradical of $\g$ by $\n_\g$, where the injective homomorphism $\g \hookrightarrow \gl(V,V_*)$ is understood.

\begin{prop} \label{nilcommutator}
Let $\g \hookrightarrow \gl(V,V_*)$.  Then $\n_\g$ is a locally nilpotent ideal in $\g$ such that $\n_\g \cap [\g,\g] = \r \cap [\g,\g]$.
\end{prop}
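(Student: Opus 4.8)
The plan is to fix an exhaustion $\g=\bigcup_n\g_n$ by finite-dimensional subalgebras, isolate the one finite-dimensional statement that is really needed, and then glue the pieces. That statement is as follows. Let $\l$ be a finite-dimensional Lie algebra acting faithfully on a finite-dimensional space $W$, and let $\a$ be a solvable ideal of $\l$. Fix a composition series of $W$ as an $\l$-module with simple quotients $S_1,\dots,S_k$. By Lie's theorem $\a$ has a common eigenvector in each $S_j$; by the invariance lemma the associated weight space is an $\l$-submodule, hence all of $S_j$, so $\a$ acts on $S_j$ by a scalar character $\lambda_j\colon\a\to\C$, and the same lemma gives $\lambda_j|_{[\l,\a]}=0$. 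An element $x\in\a$ is nilpotent on $W$ exactly when it is nilpotent on every $S_j$, i.e.\ when $\lambda_j(x)=0$ for all $j$, so $N(\a):=\{x\in\a:x|_W\text{ is nilpotent}\}$ equals $\bigcap_j\ker\lambda_j$. In particular $N(\a)$ is a subspace containing $[\l,\a]$, whence $[\l,N(\a)]\subseteq[\l,\a]\subseteq N(\a)$, so $N(\a)$ is an ideal of $\l$, and it is nilpotent by Engel's theorem. Taking $\a$ to be the radical of $\l$ and using $\rad\l\cap[\l,\l]\subseteq[\l,\rad\l]$ (immediate from a Levi decomposition) we also conclude that every element of $\rad\l\cap[\l,\l]$ is nilpotent on $W$.

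Next I would apply this with $\l=\g_n$ and $\a=\r\cap\g_n$: the latter is an ideal of $\g_n$ because $\r$ is an ideal of $\g$, and it is solvable because it is a finite-dimensional subalgebra of the locally solvable algebra $\r$, so in particular $\r\cap\g_n$ lies in the radical of $\g_n$. Fixing a finite-dimensional $\g_n$-stable subspace $W_n\subseteq V$ containing the image of every element of $\g_n$ (such $W_n$ exists since $\g_n$ is finite-dimensional and consists of finite-rank operators), an element of $\g_n$ is nilpotent in $\gl(V,V_*)$ precisely when it is nilpotent on $W_n$, so $\n_\g\cap\g_n=N(\r\cap\g_n)$ is a nilpotent ideal of $\g_n$. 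With the exhaustion chosen increasing, the subspaces $\n_\g\cap\g_n$ increase and exhaust $\n_\g$; hence $\n_\g$ is a subalgebra, it is locally nilpotent since every finite subset of it lies in some nilpotent $N(\r\cap\g_n)$, and it is an ideal of $\g$, since for $g\in\g$ and $x\in\n_\g$ one may choose $n$ with $g\in\g_n$ and $x\in\n_\g\cap\g_n$, giving $[g,x]\in[\g_n,\n_\g\cap\g_n]\subseteq\n_\g\cap\g_n$.

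It remains to prove $\n_\g\cap[\g,\g]=\r\cap[\g,\g]$. The inclusion $\subseteq$ is immediate from $\n_\g\subseteq\r$. For $\supseteq$, take $x\in\r\cap[\g,\g]$; since $[\g,\g]=\bigcup_n[\g_n,\g_n]$, there is an $n$ with $x\in\g_n$ and $x\in[\g_n,\g_n]$, so $x\in(\r\cap\g_n)\cap[\g_n,\g_n]\subseteq\rad(\g_n)\cap[\g_n,\g_n]$, which by the finite-dimensional statement above consists of operators nilpotent on $W_n$. Thus $x$ is nilpotent in $\gl(V,V_*)$, and since $x\in\r$ it lies in $\n_\g$, hence in $\n_\g\cap[\g,\g]$.

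I expect the real obstacle to be the step carried out in the first two paragraphs: showing that selecting the linearly nilpotent elements of $\r$ yields a subspace that is stable under bracketing with $\g$. Taking the nilpotent part of an element is not an additive operation, so there is no direct argument; one must descend to finite-dimensional solvable ideals and use the invariance lemma, which at once shows that the characters by which such an ideal acts on the composition factors of a faithful module detect linear nilpotency and that these characters are annihilated by commutators with the ambient algebra. The remainder is bookkeeping with exhaustions together with the classical finite-dimensional theorems of Lie, Engel and Levi.
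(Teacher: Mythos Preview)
Your proof is correct. The paper's argument reaches the same conclusions but in a different order, and is shorter as a result. Rather than proving directly that $\n_\g$ is an ideal by showing each $\n_\g\cap\g_n=N(\r\cap\g_n)$ is an ideal of $\g_n$, the paper first establishes the inclusion $\r\cap[\g,\g]\subset\n_\g$ (essentially your third paragraph, quoting the standard fact that $\rad(\g_0)\cap[\g_0,\g_0]$ acts nilpotently on any module), and then derives the ideal property in one line:
\[
[\g,\n_\g]\subset[\g,\r]\subset\r\cap[\g,\g]\subset\n_\g.
\]
Thus the step you identified as the real obstacle---closure of $\n_\g$ under bracketing with $\g$---is in the paper's arrangement an immediate corollary of the equality $\n_\g\cap[\g,\g]=\r\cap[\g,\g]$, and your first-paragraph lemma (that $N(\a)$ is an ideal of $\l$ for an arbitrary solvable ideal $\a$) is stronger than what is needed. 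What your route buys is a self-contained derivation of the relevant finite-dimensional facts from Lie's theorem and the invariance lemma, whereas the paper simply invokes ``finite-dimensional Lie theory'' as a black box.
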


\begin{proof}
Fix $X \in \r \cap [\g,\g]$.  There exists a finite-dimensional subalgebra $\g_0 \subset \g$ such that $X \in [\g_0, \g_0]$.  Let $\r_0$ denote the solvable radical of $\g_0$, and note that $X \in \r \cap \g_0 \subset \r_0$. By finite-dimensional Lie theory, any element of $\r_0 \cap [\g_0 , \g_0]$ equals its own nilpotent part defined via $V$.  Hence $X$ is an element of $\n_\g$.  This shows that $\r \cap [\g,\g] \subset \n_\g$ and hence $\r \cap [\g,\g] = \n_\g \cap [\g,\g]$.

We now compute $[\g, \n_\g] \subset [\g , \r] \subset \r \cap [\g,\g] \subset \n_\g$.  Hence $\n_\g$ is an ideal.  Engel's theorem implies that $\n_\g$ is locally nilpotent. 
\end{proof}

\begin{lemma} \label{nilradicalcontainment}
If $\k \subset \g \hookrightarrow \gl(V,V_*)$, then  $\n_\g \cap \k \subset \n_\k$.
\end{lemma}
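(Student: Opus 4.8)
The plan is to exhibit $\n_\g \cap \k$ as a locally solvable ideal of $\k$ all of whose elements are nilpotent in $\gl(V,V_*)$; since $\n_\k$ is by definition the set of $\gl(V,V_*)$-nilpotent elements of the locally solvable radical $\r$ of $\k$, and this radical contains every locally solvable ideal of $\k$, that will immediately give $\n_\g \cap \k \subseteq \n_\k$. (Note first that $\k$, being a subalgebra of $\gl_\infty$, is itself locally finite --- the subalgebra of $\gl_\infty$ generated by a finite subset $S \subseteq \k$ is finite-dimensional and, being a subalgebra containing $S$, lies in $\k$ --- so its locally solvable radical is well defined.)

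First I would invoke Proposition~\ref{nilcommutator} to know that $\n_\g$ is a locally nilpotent ideal of $\g$. The ideal property in $\k$ is then formal: $[\k, \n_\g \cap \k] \subseteq \k$ since $\k$ is a subalgebra, and $[\k, \n_\g \cap \k] \subseteq [\g, \n_\g] \subseteq \n_\g$ since $\n_\g$ is an ideal of $\g$, so $[\k, \n_\g \cap \k] \subseteq \n_\g \cap \k$.

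Next, for local solvability I would observe that the subalgebra of $\n_\g \cap \k$ generated by any finite subset $S$ equals the subalgebra of $\n_\g$ generated by $S$ (this subalgebra already lies inside $\n_\g \cap \k$ because $\n_\g \cap \k$ is a subalgebra containing $S$), and the latter is nilpotent because $\n_\g$ is locally nilpotent. Hence $\n_\g \cap \k$ is locally nilpotent, in particular locally solvable, and therefore $\n_\g \cap \k \subseteq \r$. Finally, every element of $\n_\g \cap \k \subseteq \n_\g$ is nilpotent in $\gl(V,V_*)$ by definition of the linear nilradical, so $\n_\g \cap \k$ consists of $\gl(V,V_*)$-nilpotent elements of $\r$, i.e.\ $\n_\g \cap \k \subseteq \n_\k$.

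There is no serious obstacle here; the proof is a short chain of elementary verifications. The only point requiring a moment's care is the passage from ``$\n_\g$ locally nilpotent'' to ``$\n_\g \cap \k$ locally solvable,'' which one should phrase in terms of the subalgebra generated by a finite set rather than merely an ambient solvable subalgebra, since the latter need not lie in $\k$; this is handled exactly as above.
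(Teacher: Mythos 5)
Your proof is correct and follows essentially the same route as the paper's: exhibit an intermediate locally solvable ideal of $\k$ to land inside $\r_\k$, then use that elements of $\n_\g$ are $\gl(V,V_*)$-nilpotent. The only minor difference is that you show $\n_\g \cap \k$ itself is a locally nilpotent ideal (via Proposition~\ref{nilcommutator}), whereas the paper passes through the slightly larger ideal $\r_\g \cap \k$; both are one-line reductions to the definition of $\n_\k$.
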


\begin{proof}
Note that $\n_\g \cap  \k \subset \r_\g \cap \k$, and furthermore since $\r_\g \cap \k$ is a locally solvable ideal in $\k$, one has $ \r_\g \cap \k \subset \r_\k$.  Every element of $\n_\g \cap  \k$ equals its own nilpotent Jordan component defined by the inclusion $\k \subset \gl(V,V_*)$, so as a result $\n_\g \cap \k \subset \n_\k$.
\end{proof}

\begin{lemma} \label{locreductivenilradical}
If $\g \hookrightarrow \gl(V,V_*)$ and $\g$ is a union of reductive subalgebras, then $\n_\g \subset \z(\g)$.
\end{lemma}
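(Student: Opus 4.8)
The plan is to reduce the statement to a finite-dimensional fact about reductive Lie algebras exhausting $\g$. Write $\g = \bigcup_n \g_n$ with $\g_n$ finite-dimensional reductive and $\g_n \subset \g_{n+1}$. Fix $X \in \n_\g$; we want to show $[\g, X] = 0$, so it suffices to show $[\g_n, X] = 0$ for all sufficiently large $n$. Choose $n$ large enough that $X \in \g_n$. The key observation is that $X \in \r \cap \g_n$, where $\r$ is the locally solvable radical of $\g$; since $\r \cap \g_n$ is a solvable ideal of $\g_n$, it lies in the solvable radical $\r_n$ of $\g_n$. But $\g_n$ is reductive, so $\r_n = \z(\g_n)$. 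Hence $X \in \z(\g_n)$, i.e.\ $[\g_n, X] = 0$.

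The one point that needs a little care is passing from ``$X \in \z(\g_n)$ for all large $n$'' to ``$X \in \z(\g)$.'' This is immediate: if $Y \in \g$ is arbitrary, pick $m$ large enough that both $X \in \g_m$ and $Y \in \g_m$; then $[Y,X] = 0$ by the above. Since $Y$ was arbitrary, $X \in \z(\g)$.

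I would also note explicitly why $X$ being in the linear nilradical is not actually needed for this particular lemma beyond membership in $\r$: the argument only uses $X \in \r$ (so that $X$ lands in the solvable radical of each $\g_n$) and the fact that each $\g_n$ is reductive (so that its solvable radical is central). In other words, the proof really shows $\r \cap \g_n \subset \z(\g_n)$, and hence $\r \subset \z(\g)$, which in particular gives $\n_\g \subset \z(\g)$. The main (and only mild) obstacle is simply being careful that the exhaustion can be chosen with each term reductive — but this is precisely the hypothesis that $\g$ is a union of reductive subalgebras, so there is nothing to prove there.
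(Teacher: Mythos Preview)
Your proof is correct and follows essentially the same approach as the paper's: pick a finite-dimensional reductive subalgebra $\g_0$ containing the relevant elements, observe that the intersection of the radical with $\g_0$ is an ideal of $\g_0$ lying in its (central) solvable radical, and conclude. The paper phrases the key step using $\n_\g \cap \g_0$ as a nilpotent ideal rather than $\r \cap \g_n$ as a solvable ideal, but this is a cosmetic difference; your remark that the argument in fact yields the stronger inclusion $\r \subset \z(\g)$ is a correct and worthwhile observation.
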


\begin{proof}
Fix $X \in \n_\g$ and $Y \in \g$.  There exists a reductive subalgebra $\g_0 \subset \g$ such that $X$, $Y \in \g_0$.  Since $\n_\g \cap \g_0$ is a nilpotent ideal in the reductive Lie algebra $\g_0$, one has $\n_\g \cap \g_0 \subset \z(\g_0)$.  Hence $[X , Y]=0$.
\end{proof}

The following two theorems are crucial toward the results of the present paper.

\begin{thm} \cite[Theorem 1.3]{BS} \label{irreducible}
Let $\m$ be a subalgebra of $\gl(V,V_*)$ which acts irreducibly on $V$.  Then there exists a subspace $W \subset V_*$ with $W^\perp = 0$ such that $\m$ equals $\gl(V,W)$, $\sl(V,W)$,
$\so(V)$, or $\sp(V)$, in the last two cases under an identification of $V$ and $W$ making the induced form on $V$ respectively symmetric or antisymmetric.
\end{thm}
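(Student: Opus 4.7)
The plan is to produce the subspace $W$ directly from $\m$ and then reduce to a finite-dimensional classification. First, identifying $\gl(V,V_*) = V \otimes V_*$, I let $W \subset V_*$ be the smallest subspace such that $\m \subset V \otimes W$; equivalently, $W$ is the span of the images of the transposes $X^t \colon V_* \to V_*$ (each finite-dimensional) as $X$ ranges over $\m$. To verify $W^\perp = 0$, note that if $v \in W^\perp$ then $\langle Xv, \psi \rangle = \langle v, X^t\psi\rangle = 0$ for all $X \in \m$ and $\psi \in V_*$, so $Xv = 0$ for every $X \in \m$; the joint kernel of $\m$ on $V$ is an $\m$-submodule, hence zero by irreducibility, unless $\m = 0$ and $V$ is one-dimensional (where the statement is trivial).

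Next I would establish that $\m$ is locally finite: for any finite set $X_1, \ldots, X_k \in \m$, the subspaces $V_0 := \sum_i X_i(V)$ and $W_0 := \sum_i X_i^t(V_*)$ are finite-dimensional, and a direct computation using $X_i X_j(V) \subset V_0$ and $(X_i X_j)^t(V_*) \subset W_0$ shows that the Lie subalgebra generated by $X_1, \ldots, X_k$ inside $\gl(V,V_*)$ already lies in $V_0 \otimes W_0$. Fix then an exhaustion $\m = \bigcup_n \m_n$ by finite-dimensional subalgebras, and set $V_n := \m_n \cdot V$, $W_n := \m_n \cdot W$, which are finite-dimensional and $\m_n$-invariant. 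After enlarging I arrange $V_n \subset V_{n+1}$, $W_n \subset W_{n+1}$, and nondegeneracy of the induced pairing $V_n \times W_n \to \C$.

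The core step is a finite-dimensional classification argument. Consider the associative subalgebra $A \subset V \otimes W$ generated by $\m$; irreducibility of $\m$ on $V$ together with the Jacobson density theorem implies that $A$ acts densely on $V$, and combined with the finite-rank property this forces $A = V \otimes W$, i.e., $\m$ generates $\gl(V,W)$ as an associative algebra. On each finite-dimensional piece, the Lie subalgebras of $\gl(V_n, W_n)$ whose associative closure is all of $\gl(V_n,W_n)$ are classified by a Burnside-type argument together with standard finite-dimensional Lie theory as $\gl(V_n, W_n)$, $\sl(V_n, W_n)$, or the orthogonal or symplectic subalgebra preserving a nondegenerate symmetric or antisymmetric form identifying $V_n$ with $W_n$. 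Combined with the nesting, the type is independent of $n$ for large $n$, and the conclusion follows.

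The principal obstacle I anticipate is precisely this last step: ensuring that the finite-dimensional restrictions lie in a compatible sequence of the same type across all $n$, and that any preserved symmetric or antisymmetric form on $(V_n, W_n)$ extends coherently to a single nondegenerate form on $V$ identifying $V$ with $W$ in the orthogonal and symplectic cases. I would tackle this by fixing a base level $n_0$ where the type first stabilizes and tracking the lifts of the form to each subsequent level through the nested inclusions, arguing that the restriction of the form at level $n+1$ to level $n$ must agree with the previously chosen form up to a scalar fixed by normalization.
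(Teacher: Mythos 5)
The paper does not prove this result; it is quoted as \cite[Theorem 1.3]{BS} (Baranov--Strade), so there is no internal proof for comparison. What can be assessed is whether your sketch is a viable route to the theorem, and the core step is not.

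Your opening moves are fine: defining $W$ as the minimal subspace of $V_*$ with $\m \subset V \otimes W$, verifying $W^\perp = 0$ from irreducibility, and observing that any subalgebra of $\gl(V,V_*)$ is automatically locally finite because finite-rank operators close up under brackets inside $V_0 \otimes W_0$. The problem is the sentence asserting that ``the Lie subalgebras of $\gl(V_n,W_n)$ whose associative closure is all of $\gl(V_n,W_n)$ are classified \ldots as $\gl(V_n,W_n)$, $\sl(V_n,W_n)$, or the orthogonal or symplectic subalgebra.'' This is false, and not in a marginal way: \emph{any} reductive Lie algebra with a faithful irreducible $n$-dimensional representation sits irreducibly inside $\gl_n$ and generates the full matrix algebra by Burnside's theorem. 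For instance, $\sl_2$ on $\Sym^{n-1}\C^2$ for arbitrary $n$, or $\mathfrak{g}_2 \subset \gl_7$, or $E_8 \subset \gl_{248}$ via the adjoint representation. None of these are of type $\gl$, $\sl$, $\so$, or $\sp$ at their finite level. So the finite-dimensional classification you invoke as the ``core step'' does not exist, and the level-by-level bootstrapping you describe afterward cannot even get started.

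This gap is not a matter of missing details; it points at the genuine difficulty of the Baranov--Strade theorem, which is why the paper treats it as an imported black box. The whole content of the result is that the infinite-dimensionality collapses the list of possibilities: an \emph{infinite} ascending chain $\m_1 \subset \m_2 \subset \cdots$ of simple Lie algebras, each acting irreducibly on the natural modules $V_n$ of a compatible nested family $\gl(V_n,W_n)$, is forced to eventually stabilize into one of the three classical families. Ruling out exotic limits (towers of $\sl_2$'s in growing symmetric powers, exceptional algebras, etc.) requires controlling branching rules along the chain and is essentially equivalent to the classification of simple finitary Lie algebras. Any correct proof has to engage with that machinery rather than deduce the claim from a per-level finite-dimensional statement, so the strategy as written would need to be replaced, not just completed.
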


\begin{thm} \cite{DP4} \label{locsemisimple} 
Let $\k$ be a locally semisimple subalgebra of $\gl_\infty$. 
Then $\k$ is isomorphic to a direct sum of finite-dimensional simple subalgebras and copies of $\sl_\infty$, $\so_\infty$, and $\sp_\infty$.
\end{thm}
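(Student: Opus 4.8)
The plan is to prove two things in succession: that $\k$ is the direct sum of its minimal ideals, and that each minimal ideal is either finite-dimensional simple or one of $\sl_\infty$, $\so_\infty$, $\sp_\infty$. Write $\k=\bigcup_n\k_n$ with each $\k_n$ finite-dimensional and semisimple, and let $\k_n=\bigoplus_j\s_{n,j}$ be its decomposition into simple ideals. Since a locally solvable ideal of a finite-dimensional semisimple Lie algebra is zero, the locally solvable radical of $\k$ vanishes, so $\k$ has no nonzero abelian ideal; this is what will force the sums below to be direct.

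For the first part the essential use of the finitary hypothesis is that every element of a simple ideal $\s_{n,j}$ acts on $V$ with finite rank. From this one shows that $\s_{n,j}$ is contained in only finitely many simple ideals of $\k_m$ for $m\ge n$, and in fact in a number of them bounded independently of $m$: a ``diagonal'' embedding of $\s_{n,j}$ into a sum of $r$ pairwise commuting simple ideals of $\k_m$, all acting non-trivially on $V$, forces a fixed nonzero element of $\s_{n,j}$ to act on $V$ with rank growing with $r$. Hence the cluster of simple ideals generated by $\s_{n,j}$ stabilizes from some level on, and, keeping track of the inclusions among simple ideals, one exhibits $\k$ as a direct sum $\bigoplus_\alpha\s_\alpha$ in which each $\s_\alpha$ is an increasing union of finite-dimensional simple Lie algebras and is a minimal ideal of $\k$; in particular each $\s_\alpha$ is simple.

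Now fix a minimal ideal $\s_\alpha$; if it is finite-dimensional there is nothing more to do, so assume it is infinite-dimensional. It acts on $V$ faithfully, hence non-trivially, and, $V$ being a finitary $\s_\alpha$-module, it contains a simple $\s_\alpha$-submodule $L$. Since $L$ is $\s_\alpha$-stable, so is $L^\perp\subset V_*$; the pairing of $V$ with $V_*$ descends to a nondegenerate pairing of $L$ with $V_*/L^\perp$, and $\s_\alpha$ acts finitarily and irreducibly on $L$. By Theorem~\ref{irreducible}, $\s_\alpha$ equals $\gl(L,W)$, $\sl(L,W)$, $\so(L)$, or $\sp(L)$ for a suitable $W\subseteq V_*/L^\perp$; it cannot be $\gl(L,W)$ since it is simple, and $L$ must be infinite-dimensional since $\s_\alpha$ is, so $\s_\alpha\cong\sl_\infty$, $\so_\infty$, or $\sp_\infty$.

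I expect the boundedness-and-stabilization step to be the main obstacle: one has to convert the finite-rank hypothesis into rigid control over how the simple ideals of $\k_n$ sit inside $\k_{n+1}$, which is precisely what excludes the diagonal and ``duplication'' phenomena that produce non-finitary locally simple Lie algebras (for instance the direct limit of the $\sl_{2^n}$ under $X\mapsto\mathrm{diag}(X,X)$) as well as locally semisimple Lie algebras with no minimal ideals at all. The supporting statements — that a finitary module over a finite-dimensional semisimple Lie algebra is semisimple with finite-dimensional non-trivial part, and that a nonzero finitary module has a simple submodule — are elementary, but need to be in place before the counting argument can be run.
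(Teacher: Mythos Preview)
The paper does not prove this theorem; it is quoted from \cite{DP4} and used as a black box, so there is no in-paper proof to compare your proposal against.

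That said, your two-step strategy --- first decompose $\k$ as a direct sum of simple ideals using the finitary hypothesis to control how the simple summands of $\k_n$ sit inside $\k_{n+1}$, then classify each infinite-dimensional simple ideal via Theorem~\ref{irreducible} --- is precisely the route taken in \cite{DP4}, and you have correctly located the real content in the boundedness-and-stabilization step.  Two small remarks.  First, the rank-of-a-single-element heuristic is morally right but awkward to make precise; the cleaner invariant is the dimension of the non-trivial $\s_{n,j}$-isotypic part of $V$, which is finite and bounds the total ``index'' of the embedding $\s_{n,j}\hookrightarrow\k_m$, from which the bound on the number of simple ideals hit follows by a short branching argument.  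Second, in applying Theorem~\ref{irreducible} you need $\s_\alpha$ realized inside some $\gl(L,L_*)$; the restriction map $\s_\alpha\to\End(L)$ is injective with finite-rank image, but showing this image lies in $L\otimes(V_*/L^\perp)$ (rather than merely in the finite-rank operators $L\otimes L^*$) takes a line of argument --- or alternatively take $L_*$ to be the smallest subspace of $L^*$ containing the image and deduce nondegeneracy of $L\times L_*$ from irreducibility.
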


For any locally semisimple subalgebra $\k$ of $\gl_\infty$, we introduce notation related to the decomposition of $\k$ given in Theorem~\ref{locsemisimple}.  Let $\k_0$ denote the direct sum of the finite-dimensional simple direct summands of $\k$, and let $\k_i$ denote the infinite-dimensional simple direct summands of $\k$, so that 
\begin{equation} \label{locssdecomp}
\k =  \bigoplus_{i \in I \sqcup \{0 \}} \k_i. 
\end{equation} 

The following two propositions are corollaries of Theorem~\ref{irreducible}.

\begin{prop}\label{glirreducibleeverywhere}
Suppose a subalgebra $\k \subset \bigoplus_{\gamma \in C} \gl \big(V_\gamma , (V_\gamma)_* \big)$ acts irreducibly on $V_\gamma$ and $(V_\gamma)_*$ for all $\gamma \in C$.  Then $[\k,\k]$ also acts irreducibly on $V_\gamma$ and $(V_\gamma)_*$ for all $\gamma \in C$.  
\end{prop}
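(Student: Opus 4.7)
For each $\gamma \in C$ let $\k_\gamma$ denote the image of $\k$ under the projection $\bigoplus_{\delta \in C}\gl\bigl(V_\delta,(V_\delta)_*\bigr) \to \gl\bigl(V_\gamma,(V_\gamma)_*\bigr)$; this is a Lie subalgebra of $\gl(V_\gamma,(V_\gamma)_*)$, and because the actions of $\k$ on $V_\gamma$ and $(V_\gamma)_*$ factor through $\k_\gamma$, the subalgebra $\k_\gamma$ also acts irreducibly on both $V_\gamma$ and $(V_\gamma)_*$. The plan is to pin down $\k_\gamma$ from this double irreducibility, deduce that $[\k_\gamma,\k_\gamma]$ still acts irreducibly on $V_\gamma$ and $(V_\gamma)_*$, and then use the fact that the projection is a Lie algebra homomorphism, so that $[\k,\k]$ surjects onto $[\k_\gamma,\k_\gamma]$.

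Applying Theorem~\ref{irreducible} to $\k_\gamma$ and its irreducible action on $V_\gamma$, we get that $\k_\gamma$ equals one of $\gl(V_\gamma,W_\gamma)$, $\sl(V_\gamma,W_\gamma)$, $\so(V_\gamma)$, or $\sp(V_\gamma)$ for some $W_\gamma \subset (V_\gamma)_*$ with $W_\gamma^\perp = 0$. The key observation at this step is that in the first two cases the subspace $W_\gamma$ is itself a $\k_\gamma$-invariant subspace of $(V_\gamma)_*$: for any $v \otimes f \in V_\gamma \otimes W_\gamma$ and any $f' \in (V_\gamma)_*$, the action $(v\otimes f)(f') = -\langle v,f'\rangle\, f$ lies in $W_\gamma$. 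Since $\k_\gamma$ acts irreducibly on $(V_\gamma)_*$ and $W_\gamma \ne 0$, this forces $W_\gamma = (V_\gamma)_*$. Thus $\k_\gamma$ is one of the four algebras $\gl(V_\gamma,(V_\gamma)_*)$, $\sl(V_\gamma,(V_\gamma)_*)$, $\so(V_\gamma)$, $\sp(V_\gamma)$.

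Now $[\k_\gamma,\k_\gamma]$ is computed directly: in the first case it is $\sl(V_\gamma,(V_\gamma)_*)$, while in the remaining three cases $\k_\gamma$ is perfect and $[\k_\gamma,\k_\gamma]=\k_\gamma$. In every case $[\k_\gamma,\k_\gamma]$ is one of the simple finitary Lie algebras $\sl_\infty$, $\so_\infty$, $\sp_\infty$ realized as the full algebra of this type on $V_\gamma$, and hence acts irreducibly on $V_\gamma$ and on $(V_\gamma)_*$ (for $\sl_\infty$ this is standard; for $\so_\infty$ and $\sp_\infty$ one uses the identification $V_\gamma \cong (V_\gamma)_*$ coming from the invariant form).

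Finally, since the projection $\k \to \k_\gamma$ is a Lie algebra homomorphism, $[\k,\k]$ maps onto $[\k_\gamma,\k_\gamma]$. Therefore the $\k_\gamma$-submodules of $V_\gamma$ and $(V_\gamma)_*$ that are preserved by $[\k,\k]$ are precisely those preserved by $[\k_\gamma,\k_\gamma]$, and the previous paragraph shows that there are no nontrivial proper such submodules. The main obstacle in this argument is the second paragraph, where both irreducibility hypotheses have to be combined with Theorem~\ref{irreducible} to rule out $W_\gamma \subsetneq (V_\gamma)_*$; once $\k_\gamma$ has been identified as one of the four standard types, the rest is automatic.
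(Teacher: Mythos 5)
Your approach is the same as the paper's: project to each factor $\gl\bigl(V_\gamma,(V_\gamma)_*\bigr)$, apply Theorem~\ref{irreducible} to pin down $\pi_\gamma(\k)$, and observe that $[\k,\k]$ surjects onto $[\pi_\gamma(\k),\pi_\gamma(\k)]$. Your second paragraph is a nice elaboration: the paper's statement of Theorem~\ref{irreducible} only guarantees $W_\gamma \subset (V_\gamma)_*$ with $W_\gamma^\perp = 0$, and you correctly note that irreducibility of $\k_\gamma$ on $(V_\gamma)_*$ upgrades this to $W_\gamma = (V_\gamma)_*$ (since $W_\gamma$ is a $\k_\gamma$-invariant subspace of $(V_\gamma)_*$). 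The paper simply asserts the full conclusion without spelling this out.

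However, there is a genuine gap: you apply Theorem~\ref{irreducible} uniformly for all $\gamma$, but that theorem only holds when $V_\gamma$ is infinite-dimensional. For finite-dimensional $V_\gamma$ its conclusion is false (for instance, $G_2$ acting faithfully and irreducibly on $\mathbb{C}^7$ is not one of $\gl$, $\sl$, $\so$, $\sp$), and the paper's own proof of this proposition treats the finite-dimensional case separately. The fix is short: if $\dim V_\gamma < \infty$, then $\k_\gamma$ acts faithfully and irreducibly on the finite-dimensional space $V_\gamma$, hence is reductive; its center acts by scalars (Schur), so every $[\k_\gamma,\k_\gamma]$-submodule of $V_\gamma$ or $(V_\gamma)_*$ is automatically $\k_\gamma$-stable, forcing $[\k_\gamma,\k_\gamma]$ to act irreducibly as well. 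You should insert this case distinction before invoking Theorem~\ref{irreducible}.
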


\begin{proof}
Fix $\alpha \in C$, and let $\pi_\alpha \colon  \bigoplus_{\gamma \in C} \gl \big( V_\gamma , (V_\gamma)_* \big) \rightarrow  \gl \big(V_\alpha , (V_\alpha)_* \big)$ denote the projection.  We have assumed that $\k$ acts irreducibly on $V_\alpha$ and $(V_\alpha)_*$, so $\pi_\alpha (\k)$ is a subalgebra of $\gl \big( V_\alpha, (V_\alpha)_* \big)$ which acts irreducibly on both $V_\alpha$ and $(V_\alpha)_*$.  By Theorem~\ref{irreducible}, if $V_\alpha$ is infinite dimensional, then $\pi_\alpha (\k)$ is $\gl \big( V_\alpha,(V_\alpha)_* \big)$, $\sl \big( V_\alpha,(V_\alpha)_* \big)$, $\so(V_\alpha)$, or $\sp(V_\alpha)$, where in the last two cases one has a suitable identification of $V_\alpha$ and $(V_\alpha)_*$.  If $V_\alpha$ is finite dimensional, then after finite-dimensional Lie theory, $\pi_\alpha (\k) \subset \gl \big( V_\alpha , (V_\alpha)_* \big)$ is reductive.  In either case, $[\k,\k]$ acts irreducibly on $V_\alpha$ and $(V_\alpha)_*$. 
\end{proof}

\begin{prop} \label{irreducibleeverywhere}
Suppose a subalgebra $\k \subset \bigoplus_{\gamma \in C} \sl \big(V_\gamma , (V_\gamma)_* \big)$ acts irreducibly on $V_\gamma$ and $(V_\gamma)_*$ for all $\gamma \in C$.  Then

\begin{enumerate}
\item \label{uno} $\k$ is locally semisimple, and \eqref{locssdecomp} holds.

\item \label{dos} Let $C_0$ denote the set of $\gamma \in C$ for which $V_\gamma$ is finite dimensional.  Then $C \setminus C_0$ is the disjoint union of finite subsets $C_i$ for $i \in I$ such that
$$\k_i \subset \bigoplus_{\gamma \in C_i} \sl \big( V_\gamma , (V_\gamma)_* \big)$$
for $i \in I \sqcup \{0\}$.  

\item \label{tres} Let $i \in I$.  Then $\k_i$ is diagonally mapped into $\bigoplus_{\gamma \in C_i} \sl \big( V_\gamma , (V_\gamma)_* \big)$.  For $\gamma \in C_i$, the projection of $\k_i$ to $\sl \big( V_\gamma , (V_\gamma)_* \big)$ gives an isomorphism of $\k_i$ with $\sl \big(V_\gamma, (V_\gamma)_* \big)$, $\so(V_\gamma)$, or $\sp(V_\gamma)$. (In the final two cases one has an identification of $V_\gamma$ and $(V_\gamma)_*$ making the induced form on $V_\gamma$ symmetric or antisymmetric, as appropriate.)

\item \label{quatro} Each simple direct summand of $\k_0$ is contained in $\bigoplus_{\gamma \in C_0'} \sl \big( V_\gamma , (V_\gamma)_* \big)$ for some finite subset $C_0' \subset C_0$.
\end{enumerate}
\end{prop}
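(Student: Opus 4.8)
The plan is to determine each projection $\pi_\gamma(\k)$ by means of Theorem~\ref{irreducible}, then to decompose $\k$ with a Goursat-type argument which at once yields (1), and finally to deduce (2)--(4) by tracking simple ideals through the projections $\pi_\gamma$.

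Fix $\gamma\in C$ and write $\pi_\gamma$ for the projection $\bigoplus_{\delta\in C}\sl\big(V_\delta,(V_\delta)_*\big)\to\sl\big(V_\gamma,(V_\gamma)_*\big)$. Since $\k$ acts on $V_\gamma$ and on $(V_\gamma)_*$ through $\pi_\gamma$, the subalgebra $\pi_\gamma(\k)\subseteq\sl\big(V_\gamma,(V_\gamma)_*\big)$ acts irreducibly on both. When $\dim V_\gamma=\infty$, Theorem~\ref{irreducible} yields $W\subseteq(V_\gamma)_*$ with $W^\perp=0$ and $\pi_\gamma(\k)\in\{\gl(V_\gamma,W),\sl(V_\gamma,W),\so(V_\gamma),\sp(V_\gamma)\}$; the first case is impossible because $\gl(V_\gamma,W)=V_\gamma\otimes W$ contains elements of nonzero trace while $\pi_\gamma(\k)\subseteq\sl$, and in the second case each element of $\sl(V_\gamma,W)$ sends $(V_\gamma)_*$ into $W$, so irreducibility on $(V_\gamma)_*$ forces $W=(V_\gamma)_*$; thus $\pi_\gamma(\k)$ equals $\sl\big(V_\gamma,(V_\gamma)_*\big)$, $\so(V_\gamma)$, or $\sp(V_\gamma)$. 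When $\dim V_\gamma<\infty$, the algebra $\pi_\gamma(\k)\subseteq\sl(V_\gamma)$ acts faithfully and irreducibly on the finite-dimensional space $V_\gamma$, so by finite-dimensional Lie theory it is reductive with trivial center, hence semisimple. Splitting each such finite-dimensional $\pi_\gamma(\k)$ into its simple ideals, and using $\bigcap_\gamma\ker\pi_\gamma=0$ to embed $\k\hookrightarrow\bigoplus_\gamma\pi_\gamma(\k)$, we realize $\k$ as a subalgebra of a direct sum $\bigoplus_{\delta\in D}\h_\delta$ of simple Lie algebras --- each $\h_\delta$ finite-dimensional simple or one of $\sl_\infty,\so_\infty,\sp_\infty$ --- in which every projection $\pi_\delta$ is surjective.

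The next step is a Goursat-type analysis of this embedding. Call $\delta,\delta'\in D$ \emph{linked} if the image of $\k$ in $\h_\delta\oplus\h_{\delta'}$ is proper; since these factors are simple, Goursat's lemma forces the image then to be the graph of an isomorphism $\h_\delta\cong\h_{\delta'}$, and composing such graphs shows that linkage, extended by reflexivity, is an equivalence relation on $D$. One shows by induction on $m$ that for pairwise non-linked $\delta_1,\dots,\delta_m$ the projection $\pi_{\{\delta_1,\dots,\delta_m\}}(\k)$ is all of $\h_{\delta_1}\oplus\dots\oplus\h_{\delta_m}$: in the inductive step, if it were proper, $\h_{\delta_m}$ would be a simple quotient of $\bigoplus_{j<m}\h_{\delta_j}$, hence isomorphic to one of the $\h_{\delta_j}$ in a way linking $\delta_m$ with $\delta_j$. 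Since $\k$ lies in a direct sum, each linkage class $E$ is finite (a nonzero element of the diagonal $\pi_E(\k)$ has all its $E$-components nonzero), and, combining finiteness of supports with the previous statement, one obtains $\k=\bigoplus_E\pi_E(\k)$, the sum running over linkage classes, with each $\pi_E(\k)$ simple and isomorphic via the diagonal to $\h_\delta$ for any $\delta\in E$. Hence $\k$ is a direct sum of finite-dimensional simple Lie algebras and copies of $\sl_\infty,\so_\infty,\sp_\infty$; this is (1), and \eqref{locssdecomp} follows either directly from this decomposition or from Theorem~\ref{locsemisimple}.

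For (2)--(4) I would chase ideals through the $\pi_\gamma$. Each $\k_i$ ($i\in I$) and each simple summand of $\k_0$ is a simple ideal of $\k$, so its $\pi_\gamma$-image is an ideal of the simple algebra $\pi_\gamma(\k)$ --- hence $0$ or all of it --- and the restriction of $\pi_\gamma$ to it is $0$ or injective. Because finite- and infinite-dimensional simple Lie algebras are never isomorphic and the latter are not quotients of the former, the simple summands of $\k_0$ are supported only on indices $\gamma$ with $\dim V_\gamma<\infty$, so $\k_0\subseteq\bigoplus_{\gamma\in C_0}\sl\big(V_\gamma,(V_\gamma)_*\big)$; and, putting $C_i:=\{\gamma\in C:\pi_\gamma(\k_i)\neq 0\}$ for $i\in I$, one gets $C_i\subseteq C\setminus C_0$ and $\k_i\subseteq\bigoplus_{\gamma\in C_i}\sl\big(V_\gamma,(V_\gamma)_*\big)$. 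The $\pi_\gamma$-images of distinct summands commute and are ideals of the simple $\pi_\gamma(\k)$, so at most one is nonzero; for $\gamma\in C\setminus C_0$, where $\pi_\gamma(\k)$ is infinite-dimensional simple and equals the sum of all these images, exactly one summand maps onto it and it must be some $\k_i$, whence the $C_i$ are pairwise disjoint and cover $C\setminus C_0$. On $C_i$ the restriction $\pi_\gamma|_{\k_i}$ is injective, so finiteness of supports forces $C_i$ to be finite, and it is onto $\pi_\gamma(\k)\in\{\sl(V_\gamma,(V_\gamma)_*),\so(V_\gamma),\sp(V_\gamma)\}$ by the second paragraph, which is (3); the same support argument applied to a simple summand of $\k_0$ gives (4), completing (2). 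The step I expect to be the main obstacle is the passage, in the third paragraph, from ``$\pi_S(\k)$ is the full product for every finite $S\subseteq D$'' to ``$\k$ itself equals $\bigoplus_E\pi_E(\k)$'': controlling the stray components of lifted elements in the presence of infinitely many, possibly infinite-dimensional, simple factors is exactly where the direct-sum condition must be used decisively, whereas the remaining steps reduce to Theorem~\ref{irreducible} and routine manipulations with ideals.
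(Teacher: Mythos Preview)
Your proposal is correct and follows essentially the same route as the paper: classify each projection $\pi_\gamma(\k)$ via Theorem~\ref{irreducible}, split into simple factors, run a pairwise Goursat analysis to obtain an equivalence relation whose classes give the diagonal simple summands, and then read off (2)--(4) by tracking supports. Your write-up is in places more explicit than the paper's (you name the inductive step for non-linked indices and flag the passage from finite projections to the global decomposition, which the paper also treats summarily), but the underlying argument is the same.
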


\begin{proof}
The projections of the proof of Proposition~\ref{irreducibleeverywhere} restrict to projections, for which we reuse the same notation, $\pi_\alpha \colon  \bigoplus_{\gamma \in C} \sl \big( V_\gamma , (V_\gamma)_* \big) \rightarrow  \sl \big(V_\alpha , (V_\alpha)_* \big)$ for $\alpha \in C$.  As in the proof of Proposition~\ref{glirreducibleeverywhere}, we see
that $\pi_\alpha (\k)$ when infinite dimensional is $\sl \big( V_\alpha,(V_\alpha)_* \big)$, $\so(V_\alpha)$, or $\sp(V_\alpha)$, where in the last two cases $\pi_\alpha (\k)$ identifies $V_\alpha$ and $(V_\alpha)_*$, making the induced form on $V_\alpha$ symmetric in the former case and antisymmetric in the latter case.
Similarly, if $V_\alpha$ is finite dimensional, then $\pi_\alpha (\k) \subset \sl \big( V_\alpha , (V_\alpha)_* \big)$ is semisimple.  

Let the finite-dimensional direct summands of the direct sum $\bigoplus_{\gamma \in C} \pi_\gamma (\k)$ be further subdivided to obtain a decomposition $\bigoplus_{j \in J} \s_j$ into simple subalgebras $\s_j$.  Thus we have $\k \subset \bigoplus_{\gamma \in C} \pi_\gamma (\k) = \bigoplus_{j \in J} \s_j$.  For any two elements $j \neq k \in J$, let $\pi_{jk} \colon \bigoplus_{l \in J} \s_l \rightarrow \s_j \oplus \s_k$ denote the projection.
For each $j \neq k \in J$, the intersection $\pi_{jk} (\k) \cap \s_j$ equals $\s_j$ if it is not trivial, since
$\pi_{jk} (\k) \cap \s_j$ is an ideal in the simple Lie algebra $\s_j$.  Note that the condition $\pi_{jk} (\k) \cap \s_j = \s_j$ is equivalent to the condition $\pi_{jk} (\k) = \s_j \oplus \s_k$.  

Now suppose $\pi_{jk} (\k) \cap \s_j = 0$.  For any $X \in \s_j$, there exists a unique $Y \in \s_k$ with $(X,Y) \in \pi_{jk} (\k)$.  This enables us to define a map $\eta_{jk} \colon  \s_j \rightarrow  \s_k$ sending $X$ to the unique element $Y \in \s_k$ with $(X,Y) \in \pi_{jk} (\k)$.  Then $\eta_{jk}$ is a Lie algebra isomorphism.

We define an equivalence relation on $J$ by setting $j \simeq k$ if $\pi_{jk} (\k) \cap \s_j = 0$.  Then (\ref{locssdecomp}) holds, where $I$ is the set of equivalence classes of $J$ for which $\s_j$ is infinite dimensional, and $\k_0$ is isomorphic to the direct sum of $\s_i$ as $i$ runs over a set of representatives of the remaining equivalence classes of $J$. 
This proves that $\k$ is locally semisimple, i.e.\ (\ref{uno}) is proved.  
 
For each element $j$ of an equivalence class $i \in I$, we have that $\s_j$ is infinite-dimensional and 
hence $\s_j = \pi_\gamma (\k)$ for some $\gamma \in C$.  
For each $i \in I$, let $C_i$ be the set of elements of $C$ corresponding in this way to the elements of the equivalence class $i$.  
Note that the sets $C_i$ are disjoint.
For each $i \in I$, $\k_i$ is the diagonal subalgebra of $\bigoplus_{j \in i} \s_j \subset \bigoplus_{\gamma \in C_i} \sl \big( V_\gamma , (V_\gamma)_* \big)$ given by the isomorphisms $\eta_{jk}$.  For $\k_i$ isomorphic to $\so_\infty$ or $\sp_\infty$ and $\gamma \in C_i$, we already observed that the projection of $\k_i$ to $\sl \big(V_\gamma , (V_\gamma)_*\big)$ yields an identification of $V_\gamma$ and $(V_\gamma)_*$. 
Thus (\ref{tres}) is proved.

Each nonzero element of $\k_i$ for $i \in I$ has nonzero components in $\sl \big( V_\gamma , (V_\gamma)_* \big)$ for all $\gamma \in C_i$, 
hence $C_i$ must be a finite set.  Because $\k$ acts irreducibly on $V_\gamma$ for all $\gamma \in C$, we conclude that $C$ is the disjoint union of the sets $C_i$ for $i \in I$ and the set $C_0$ of $\gamma \in C$ for which $V_\gamma$ is finite dimensional.  Thus (\ref{dos}) is proved, and (\ref{quatro}) comes as a result of finite-dimensional Lie theory.
\end{proof}

We conclude this section by computing the normalizers of certain diagonal subalgebras of $\gl_\infty$.

\begin{lemma}\label{infinitenormalizer}
Let $n \in \Z_{>0}$, and define $W := V \oplus \cdots \oplus V$ and $W_* := V_* \oplus \cdots \oplus V_*$ to be direct sums of $n$ copies of $V$ and $V_*$, respectively, with the natural nondegenerate pairing.  Let $\varphi$ denote the $n$-fold diagonal map
$$\varphi \colon \gl(V,V_*) \rightarrow \gl(W,W_*).$$
Then the normalizer in $\gl(W,W_*)$ of $\varphi (\sl(V,V_*))$ is $\varphi (\gl(V,V_*))$, while $\varphi (\so(V))$ and $\varphi (\sp(V))$ (defined under suitable identifications of $V$ and $V_*$) are self-normalizing in $\gl(W,W_*)$.
\end{lemma}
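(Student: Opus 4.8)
The plan is to realize $\gl(W,W_*)$ as the matrix algebra $\mathrm{Mat}_n\big(\gl(V,V_*)\big)$ and to reduce everything to facts about the centralizer and normalizer of $\sl(V,V_*)$, $\so(V)$, $\sp(V)$ inside $\gl(V,V_*)$ itself. Under the natural pairing one identifies $\gl(W,W_*)=W\otimes W_*=\bigoplus_{i,j=1}^{n}V_{(i)}\otimes (V_*)_{(j)}$, and this is, as an associative algebra (hence as a Lie algebra under the commutator), the algebra of $n\times n$ matrices $A=(A_{ij})$ with entries $A_{ij}\in\gl(V,V_*)$, multiplied blockwise using composition in $V\otimes V_*$. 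In this picture $\varphi(X)$ is the scalar matrix $X\cdot\Id_n$, so $[A,\varphi(X)]_{ij}=[A_{ij},X]$ for all $X$ and all $i,j$. Thus, writing $\s$ for any one of $\sl(V,V_*)$, $\so(V)$, $\sp(V)$, the condition that $A$ normalize $\varphi(\s)$ unwinds into: $[A_{ij},\s]=0$ for $i\neq j$; $[A_{ii}-A_{jj},\s]=0$ for all $i,j$; and $[A_{ii},\s]\subseteq\s$.

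The first step I would carry out is to show the centralizer of $\s$ in $\gl(V,V_*)$ is zero. This is a Schur-type/finite-rank argument: an element of $\gl(V,V_*)$ commuting with $\s$ is a finite-rank endomorphism of $V$ commuting with the irreducible action of $\s$ on $V$, hence (since it must preserve every line of $V$ --- otherwise one exhibits $X\in\s$ with $Xv=0$ but $X\phi(v)\neq 0$, contradicting $X\phi=\phi X$) is a scalar, and a finite-rank scalar operator on the infinite-dimensional space $V$ is $0$. Feeding this into the first two unwound conditions forces $A_{ij}=0$ for $i\neq j$ and $A_{11}=\cdots=A_{nn}=:B$, and the third condition then says exactly $B\in N_{\gl(V,V_*)}(\s)$. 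So $N_{\gl(W,W_*)}\big(\varphi(\s)\big)=\varphi\big(N_{\gl(V,V_*)}(\s)\big)$, and the lemma is reduced to computing $N_{\gl(V,V_*)}(\s)$. For $\s=\sl(V,V_*)$ this normalizer is all of $\gl(V,V_*)$, because $\sl(V,V_*)$ is by definition the commutator subalgebra of $\gl(V,V_*)$ and hence an ideal; this yields the first assertion.

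For $\s=\so(V)$ or $\sp(V)$ --- where one first uses the defining form to identify $V$ with $V_*$, so that $\s\subseteq\gl(V,V)\cong\gl(V,V_*)$ and the whole setup transports --- the remaining task is to see $\s$ is self-normalizing in $\gl(V,V)$. The key structural input is that $\gl(V,V)=V\otimes V=\bigwedge^2 V\oplus\Sym^2 V$ is the eigenspace decomposition $\gl(V,V)=\s\oplus\p$ of the order-two Lie algebra automorphism of $\gl(V,V)$ induced by the form, with $\s$ the $+1$-eigenspace (one of the two summands) and $\p$ the other; in particular $[\s,\p]\subseteq\p$. Given $Z=Z_\s+Z_\p$ normalizing $\s$, since $\s$ is a subalgebra we get $[Z_\p,\s]\subseteq\s$, while the grading gives $[Z_\p,\s]\subseteq\p$; hence $[Z_\p,\s]=0$, so $Z_\p$ centralizes $\s$ and therefore $Z_\p=0$ by the vanishing of the centralizer. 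Thus $Z\in\s$, so $\varphi(\s)$ is self-normalizing in $\gl(W,W_*)$. (In finite dimensions $\s$ is not self-normalizing in $\gl$ --- its normalizer is $\s$ plus the scalars --- and the point of the argument is precisely that scalar operators are absent from $\gl(V,V)$ when $\dim V=\infty$.) I expect the only genuinely delicate point to be the Schur/finite-rank vanishing of the centralizer of $\s$ in $\gl(V,V_*)$; the rest is bookkeeping with the matrix realization and the $\Z/2$-grading.
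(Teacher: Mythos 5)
Your proof is correct and follows essentially the same block-matrix computation as the paper's: both decompose an element of $\gl(W,W_*)$ into $n\times n$ blocks from $\gl(V,V_*)$, observe $[A,\varphi(X)]_{ij}=[A_{ij},X]$, and reduce to a statement about $\gl(V,V_*)$ itself. You fill in two steps the paper leaves implicit — the Schur-type vanishing of the centralizer of $\s$ in $\gl(V,V_*)$ (the paper just writes ``Hence''), and the $\Z/2$-grading argument for the self-normalization of $\so(V)$ and $\sp(V)$ (the paper dismisses these cases with ``proved similarly'') — and you correctly note that the absence of scalar operators in $\gl(V,V)$, i.e.\ infinite-dimensionality, is what makes $\so$ and $\sp$ self-normalizing here, unlike in the finite-dimensional $\gl_n$.
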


\begin{proof}
Suppose $X \in \gl(W,W_*)$ is in the normalizer of $\varphi (\sl(V,V_*))$.  Denote the block decomposition of $X$ by
$$X = \left( \begin{array}{cccc}
X_{11} & X_{12} & \cdots & X_{1n} \\
X_{21} & X_{22} & &\\
\vdots & & \ddots & \vdots \\
X_{n1} & & \cdots & X_{nn}
\end{array} \right) .$$
For any $A \in \sl(V,V_*)$, we have $[X,\varphi(A)] \in \varphi(\sl(V,V_*))$.  We compute that the $(i,j)$-th entry of
$$[X, \varphi(A)] = 
\left[
\left( \begin{array}{cccc}
X_{11} & X_{12} & \cdots & X_{1n} \\
X_{21} & X_{22} & &\\
\vdots & & \ddots & \vdots \\
X_{n1} & & \cdots & X_{nn}
\end{array} \right)
,
\left( \begin{array}{cccc}
A & 0 & \cdots & 0 \\
0 & A & &\\
\vdots & & \ddots & \vdots \\
0 & & \cdots & A
\end{array} \right)
\right]$$
is $[X_{ij} , A]$.  Thus for every $A \in \sl(V,V_*)$ we have $[X_{11} , A] = [X_{22}, A] = \cdots = [X_{nn} , A]$, and $[X_{ij},A] = 0$ for $i \neq j$.  Hence $X_{11} = X_{22} = \cdots = X_{nn} \in \gl(V,V_*)$, and $X_{ij} = 0$ for $i \neq j$.  This shows that the normalizer in $\gl(W,W_*)$ of $\varphi (\sl(V,V_*))$ is $\varphi (\gl(V,V_*))$.  The other cases may be proved similarly.
\end{proof}

\section{Taut couples of semiclosed generalized flags} \label{tautcouples}

We recall the notion of a generalized flag, \cite{DP1}. A \emph{chain} $\Ch$ in $V$ is any (possibly uncountable) set of nested subspaces of $V$.  That is, inclusion gives the subspaces of a chain a total ordering.  Suppose subspaces $C'$ and $C''$ in a chain $\Ch$ with $C' \subsetneq C''$ have the property that no subspaces in $\Ch$ come strictly between $C'$ and $C''$ in the inclusion ordering; then we say that $C'$ is the \emph{immediate predecessor} of $C''$, that $C''$ is the \emph{immediate successor} of $C'$, and that $C' \subset C''$ are an \emph{immediate predecessor-successor pair}.   If $\Ch$ is a chain in $V$, we denote by $\St_{\Ch,\g}$ the stabilizer of $\Ch$ in a Lie algebra $\g$ of which $V$ is a module.  If $\g$ is $\gl(V,V_*)$ or $\sl(V,V_*)$, we write simply $\St_\Ch$.

A \emph{generalized flag} is a chain $\F$ with the following two properties:
\begin{itemize}
\item[(i)] for each subspace $F \in \F$ there exists an immediate predecessor-successor pair $F' \subset F''$ with $F \in \{F' , F'' \}$;
\item[(ii)] \label{vectorpair} for each nonzero $v \in V$ there exists an immediate predecessor-successor pair $F' \subset F''$ with $v \in F''$ and $v \notin F'$.
\end{itemize}
For short, we will call any immediate predecessor-successor pair in a generalized flag $\F$ simply a \emph{pair in} $\F$. In what follows we will routinely parametrize a generalized flag $\F$ by the set $A$ of pairs in $\F$.  For any $\alpha \in A$, we denote by $F'_\alpha \subset F''_\alpha$ the pair corresponding to $\alpha$.  That is, $F'_\alpha$ is the immediate predecessor of $F''_\alpha$, and $\alpha$ is the pair $F'_\alpha \subset F''_\alpha$.  By definition, a generalized flag is exhausted by its pairs, i.e.\ $\F = \{ F'_\alpha , F''_\alpha \}_{\alpha \in A}$.  We recall that the stabilizer in $\gl(V,V_*)$ of any generalized flag $\F =  \{ F'_\alpha , F''_\alpha \}_{\alpha \in A}$ in $V$ is given by the formula $\St_\F = \sum_{\alpha \in A} F''_\alpha \otimes (F'_\alpha)^\perp$ \cite{DP2}.  

Here is a general construction from \cite{DP2} that produces a generalized flag from a chain $\Ch$ in $V$, when both $0$ and $V$ are elements of $\Ch$.  For every nonzero vector $v \in V$, let $F'(v)$ be the union of the subspaces in $\Ch$ which do not contain $v$, and let $F''(v)$ be the intersection of the subspaces in $\Ch$ which do contain $v$.  Define $\F$ to be the set $\{ F'(v) , \, F''(v) ~\colon 0 \neq v \in V \}$.  Then $\F$ is a generalized flag with the same stabilizer as $\Ch$.  

If $\Ch$ is a chain in $V$, then $\Ch^\perp := \{ C^\perp ~\colon C \in \Ch \}$ is a chain in $V_*$.  For a generalized flag $\F =  \{ F'_\alpha , F''_\alpha \}_{\alpha \in A}$ in $V$, the chain $\F^\perp = \{ (F'_\alpha)^\perp , (F''_\alpha)^\perp ~\colon \alpha \in A \}$ is not necessarily a generalized flag (for instance, it is possible to have $\F^\perp = \{0\}$).  

A subspace $F \subset V$ is \emph{closed} (in the Mackey topology) if $F=F^{\perp \perp}$.  We denote by $\overline{F}$ the closure of a subspace $F$, that is $\overline{F} := F^{\perp \perp}$.

A generalized flag $\F = \{ F'_\alpha , F''_\alpha \}_{\alpha \in A}$ in $V$ is called \emph{semiclosed} if $\overline{F'_\alpha} \in \{ F'_\alpha , F''_\alpha \}$ for every $\alpha \in A$.  In words, a generalized flag is semiclosed if the predecessor of each pair is either closed or has its successor as its closure.  A \emph{closed generalized flag} $\F = \{ F'_\alpha , F''_\alpha \}_{\alpha \in A}$ in $V$ is defined as a semiclosed generalized flag with the additional property that $F''_\alpha$ is closed for all $\alpha \in A$ \cite{DP2}.

\begin{lemma} \label{stablesubspaces}
Let $\F$ be a semiclosed generalized flag in $V$.  Any nontrivial proper closed subspace of $V$ which is stable under $\St_\F$ is both a union and an intersection of elements of $\F$.
\end{lemma}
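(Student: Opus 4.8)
The plan is to work directly with the description $\St_\F = \sum_{\alpha \in A} F''_\alpha \otimes (F'_\alpha)^\perp$ and exploit the ``rank one'' generators $v \otimes \phi$ with $v \in F''_\alpha$, $\phi \in (F'_\alpha)^\perp$. Let $U$ be a nontrivial proper closed subspace of $V$ stable under $\St_\F$. First I would show that $U$ is a union of elements of $\F$. Pick a pair $\alpha = (F'_\alpha \subset F''_\alpha)$ in $\F$. If $F''_\alpha \cap U \neq 0$, I claim $F''_\alpha \subseteq U$: choose $0 \neq v \in F''_\alpha \cap U$ and note that for any $w \in F''_\alpha$ and any $\phi \in (F'_\alpha)^\perp$ we have $w \otimes \phi \in \St_\F$, so $(w \otimes \phi)(v) = \langle v, \phi\rangle\, w \in U$. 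Thus if there is some $\phi \in (F'_\alpha)^\perp$ with $\langle v, \phi \rangle \neq 0$, then $w \in U$ for all $w \in F''_\alpha$, giving $F''_\alpha \subseteq U$. The existence of such a $\phi$ amounts to $v \notin (F'_\alpha)^{\perp\perp} = \overline{F'_\alpha}$; this is where semiclosedness enters, since $\overline{F'_\alpha} \in \{F'_\alpha, F''_\alpha\}$. If $\overline{F'_\alpha} = F'_\alpha$, then any $0 \neq v \in F''_\alpha \cap U$ can be chosen outside $F'_\alpha$ (using that the pair is immediate and $v$ can be taken in $F''_\alpha \setminus F'_\alpha$, adjusting $v$ by an element of $F'_\alpha \cap U$ if necessary — and if $F'_\alpha \cap U = F''_\alpha \cap U$ we instead get the cleaner statement below). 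If $\overline{F'_\alpha} = F''_\alpha$, then $F''_\alpha \cap U \neq 0$ together with $U$ closed will force, after a short argument, either $F''_\alpha \subseteq U$ or $F'_\alpha \supseteq U$-type containment; I expect to handle this by passing to closures, since $F'_\alpha$ is dense in $F''_\alpha$ and $U = \overline{U}$.

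Granting the local statement, set $U' := \bigcup \{ F \in \F : F \subseteq U \}$ and $U'' := \bigcap\{ F \in \F : F \supseteq U\}$ (both well-defined since $0 \in \F$ or can be adjoined, and $V \in \F$); clearly $U' \subseteq U \subseteq U''$, and by property (ii) of a generalized flag every $0 \neq v \in U$ lies in $F''_\alpha \setminus F'_\alpha$ for some pair, forcing $F''_\alpha \cap U \neq 0$, hence $F''_\alpha \subseteq U$ by the local step, so $v \in U'$; thus $U = U'$, a union of elements of $\F$. For the intersection statement I would run the dual argument: if $F'_\alpha \not\supseteq U$, pick $v \in U \setminus F'_\alpha$; then $v \in F''_\beta \setminus F'_\beta$ for the pair $\beta$ it determines, and $F''_\beta \subseteq U$, so in particular no element of $\F$ strictly between the relevant subspaces can separate $U$ from $U''$; combined with $U = \overline{U}$ this should give $U'' \subseteq U$, hence $U = U''$. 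Here the closedness hypothesis on $U$ is essential: without it $U''$ could be strictly larger (e.g. $U$ dense but not closed in some $F''_\alpha$), which is exactly the semiclosed-but-not-closed phenomenon the lemma is designed to accommodate.

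The main obstacle is the case $\overline{F'_\alpha} = F''_\alpha$ (the predecessor is dense in the successor): there $\St_\F$ cannot distinguish $F'_\alpha$ from $F''_\alpha$ by rank-one operators alone on a single vector, and I must use the closedness of $U$ to conclude $U$ contains or avoids the whole successor rather than sitting strictly in between. I expect the right tool is: $U = U^{\perp\perp}$, and $\St_\F$ stabilizes $U^\perp \subseteq V_*$ as well (by transpose action), so I can play the analogous union/intersection game for $U^\perp$ against the chain $\F^\perp$ in $V_*$ and dualize back. Once that case is pinned down, the union and intersection assertions both follow by the bookkeeping in the previous paragraph, completing the proof.
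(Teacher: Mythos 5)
Your approach starts in the right direction, but the proposal has a genuine gap in exactly the case you flag as the ``main obstacle'' (i.e.\ when $\overline{F'_\alpha} = F''_\alpha$), and the route you sketch for closing it does not work. The paper's actual argument is short and purely direct: for $0 \neq v \in U$ with its determined pair $F'(v) \subset F''(v)$, one simply computes $\St_\F \cdot v$. From $\St_\F = \sum_\alpha F''_\alpha \otimes (F'_\alpha)^\perp$, every pair $\gamma$ with $F''_\gamma \subset F'(v)$ contributes all of $F''_\gamma$ (since $v \notin F'(v) \supseteq \overline{F'_\gamma}$), while pairs above contribute nothing; so $\St_\F \cdot v \supseteq F'(v)$. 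Thus $F'(v) \subset U$ by stability, and then $F''(v) = \overline{F'(v)} \subset \overline{U} = U$ by closedness of $U$. That is the whole content of the hard case, and it is where closedness of $U$ is used. Your proposal instead speculates about transferring to $U^\perp$ and ``the chain $\F^\perp$ in $V_*$'' — but $\F^\perp$ is in general \emph{not} a generalized flag (the paper even remarks on this), so you cannot ``play the analogous game'' on the other side without further work. As written, the case $\overline{F'_\alpha} = F''_\alpha$ remains unproved, and with it the whole lemma.

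Two secondary remarks. First, your ``local claim'' as initially stated — that $F''_\alpha \cap U \neq 0$ forces $F''_\alpha \subseteq U$ — is false (take $U = F'_\alpha$ when that subspace is stable); what you actually use and need is the weaker statement that a vector $v \in U$ lying in $F''_\alpha \setminus \overline{F'_\alpha}$ forces $F''_\alpha \subseteq U$, which is correct, so be careful to phrase it that way. Second, once one knows $U$ is a (proper) union of elements of $\F$, the assertion that $U$ is also an intersection of elements of $\F$ follows immediately from property (ii) of a generalized flag alone, without any further use of closedness and without a dual argument: for any $w \notin U$ the pair determined by $w$ has $F'(w) \supseteq U$ (otherwise $F''(w) \subseteq U$ by the chain structure, contradicting $w \notin U$). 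Your dual intersection argument is therefore both harder than necessary and not fully carried out.
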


\begin{proof}
Property (ii) of the definition of a generalized flag has the following consequence.  A proper subspace $F \subset V$ which is the union of a set of subspaces of $\F$ is also the intersection of the set of subspaces which contain $F$.  Therefore it suffices to show that a $\St_\F$-stable nonzero closed subspace $F \subset V$ is a union of elements of $\F$.

Fix $0 \neq v \in F$, and let $F'(v) \subset F''(v)$ be the unique pair in $\F$ such that $v \in F''(v)$ and $v \notin F'(v)$.  Since $\St_\F = \sum_{\alpha \in A} F''_\alpha \otimes (F'_\alpha)^\perp$, we have  
$$\St_\F \cdot v = 
\begin{cases}
F''(v) & \textrm{if } \overline{F'(v)} = F'(v) \\
F'(v) & \textrm{if }\overline{F'(v)} = F''(v).
\end{cases}$$
If $\overline{F'(v)} = F'(v)$, the $\St_\F$-stability of $\F$ yields $F''(v) \subset F$.  If $\overline{F'(v)} = F''(v)$, we have $F'(v) \subset F$, and as $F$ is closed, again $F''(v) \subset F$.  Thus $\bigcup_{0 \neq v \in F} F''(v) \subset F$.  Since we have assumed $F \neq 0$, clearly $F \subset \bigcup_{0 \neq v \in F} F''(v)$, which implies $F = \bigcup_{0 \neq v \in F} F''(v)$.
\end{proof}

\begin{lemma} \label{maximalsemiclosed}
Let $\F = \{ F'_\alpha , F''_\alpha \}_{\alpha \in A}$ be a semiclosed generalized flag in $V$.  Then $\F$ is maximal semiclosed if and only if $\dim F''_\alpha / F'_\alpha = 1$ for all $\alpha \in A$ such that $\overline{F'_\alpha} = F'_\alpha$.
\end{lemma}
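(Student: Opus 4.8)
The plan is to prove both implications by contraposition, working directly with the characterization of semiclosed generalized flags and the refinement operation that inserts a subspace into a chain. First I would recall that if $\F = \{F'_\alpha, F''_\alpha\}_{\alpha \in A}$ is semiclosed and $D$ is a subspace with $F'_\alpha \subsetneq D \subsetneq F''_\alpha$ for some pair $\alpha$, then $\F \cup \{D\}$ is again a chain, and in fact a generalized flag, since inserting one subspace between an immediate predecessor-successor pair preserves properties (i) and (ii); the point to check is when $\F \cup \{D\}$ is \emph{semiclosed}.

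For the ``only if'' direction, suppose there is a pair $\alpha$ with $\overline{F'_\alpha} = F'_\alpha$ (so $F'_\alpha$ is closed) but $\dim F''_\alpha / F'_\alpha \geq 2$. I would then produce a closed subspace $D$ with $F'_\alpha \subsetneq D \subsetneq F''_\alpha$: take any vector $v \in F''_\alpha \setminus F'_\alpha$ and set $D := \overline{F'_\alpha + \C v} = (F'_\alpha + \C v)^{\perp\perp}$. Since $F'_\alpha$ is closed and the codimension of $F'_\alpha$ in $F''_\alpha$ is at least $2$, one checks (using that closure can only add ``few'' dimensions in the relevant sense, or more carefully, that $D \subsetneq F''_\alpha$ can be arranged by an appropriate choice of $v$ together with the fact that $F''_\alpha/F'_\alpha$ contains a closed proper nonzero subspace when its dimension is $\geq 2$) that $D$ is a proper closed subspace strictly containing $F'_\alpha$. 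Then $\F \cup \{D\}$ is a semiclosed generalized flag strictly refining $\F$: the new pairs are $F'_\alpha \subset D$ and $D \subset F''_\alpha$, and $\overline{F'_\alpha} = F'_\alpha$, $\overline{D} = D$ both hold, so the semiclosedness condition is satisfied at both new pairs, contradicting maximality.

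For the ``if'' direction, assume $\dim F''_\alpha / F'_\alpha = 1$ for every pair $\alpha$ with $\overline{F'_\alpha} = F'_\alpha$, and suppose for contradiction that $\F$ is not maximal semiclosed, so there is a semiclosed generalized flag $\F'$ properly refining $\F$. Then some pair $F'_\alpha \subsetneq F''_\alpha$ of $\F$ has a subspace $D \in \F'$ strictly between them. If $\overline{F'_\alpha} = F'_\alpha$, then $\dim F''_\alpha/F'_\alpha = 1$ leaves no room for such a $D$, a contradiction. So $\overline{F'_\alpha} = F''_\alpha$. Now within $\F'$, consider the pair $\beta$ of $\F'$ with predecessor $F'_\alpha$ (the immediate successor of $F'_\alpha$ in $\F'$ is some $D$ with $F'_\alpha \subsetneq D \subseteq F''_\alpha$). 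Semiclosedness of $\F'$ forces $\overline{F'_\alpha} \in \{F'_\alpha, D\}$; since $\overline{F'_\alpha} = F''_\alpha \supsetneq D$ would only be possible if $D = F''_\alpha$, which is excluded as $D$ lies strictly below $F''_\alpha$, and $\overline{F'_\alpha} = F'_\alpha$ is excluded since $\overline{F'_\alpha} = F''_\alpha \neq F'_\alpha$, we reach a contradiction. Hence $\F$ is maximal semiclosed.

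The main obstacle I anticipate is the ``only if'' direction, specifically the claim that when $F'_\alpha$ is closed and $\dim F''_\alpha/F'_\alpha \geq 2$ there exists a \emph{closed} subspace strictly between $F'_\alpha$ and $F''_\alpha$; the subtlety is that $\overline{F'_\alpha + \C v}$ could conceivably jump all the way up to $F''_\alpha$ for every choice of $v$. I would handle this by analyzing two cases: if $F''_\alpha$ is itself closed, then $F''_\alpha/F'_\alpha$ is a closed subspace of $V/F'_\alpha$ of dimension $\geq 2$ and it is standard in the Mackey-topology setting that such a space has a closed subspace of dimension $1$, which pulls back to the desired $D$; if $F''_\alpha$ is not closed, then $\overline{F'_\alpha}$ being a proper subspace distinct from $F''_\alpha$ (it equals $F'_\alpha$ by hypothesis) means $F'_\alpha \subsetneq \overline{F''_\alpha}$, and one can instead insert $\overline{F''_\alpha}$ itself, or argue that $F''_\alpha$ not closed forces the closure to behave well enough. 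The cleanest route is probably to invoke the description $\F$ is not maximal precisely when some pair admits a refinement respecting the closure structure, reducing everything to the one-dimensional fact about closed subspaces in $V$ equipped with the pairing to $V_*$.
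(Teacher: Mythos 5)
Your overall strategy matches the paper's: add a small subspace above a closed predecessor to get a refinement, and argue that a non-closed predecessor admits no strict refinement of the pair. However, both directions as you have written them contain gaps.

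In the ``only if'' direction you needlessly worry that $\overline{F'_\alpha + \C v}$ might jump up to all of $F''_\alpha$. It cannot: the key fact (which the paper invokes in one parenthetical) is that \emph{any subspace of $V$ containing a closed subspace of finite codimension is itself closed}. This is elementary --- the induced pairing $V/F'_\alpha \times (F'_\alpha)^\perp \to \C$ is nondegenerate, finite-dimensional subspaces are closed in any nondegenerate pairing, and one pulls back. So if $F'_\alpha$ is closed and $\dim F''_\alpha/F'_\alpha \geq 2$, then for any $v \in F''_\alpha \setminus F'_\alpha$ the subspace $F := F'_\alpha + \C v$ is \emph{already} closed and satisfies $F'_\alpha \subsetneq F \subsetneq F''_\alpha$. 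Thus $\F \cup \{F\}$ is a semiclosed refinement, and the two sub-cases you set up (whether $F''_\alpha$ is closed or not, quotient arguments, etc.) are unnecessary.

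In the ``if'' direction you implicitly assume that $F'_\alpha$ has an immediate successor in the refining generalized flag $\F'$. Elements of a generalized flag are only guaranteed to be part of \emph{some} immediate predecessor-successor pair, possibly only as a successor, so this needs justification and in fact your argument cannot be run as written. The clean fix is to avoid this assumption entirely: given $D \in \F'$ strictly between $F'_\alpha$ and $F''_\alpha$, pick $v \in D \setminus F'_\alpha$ and let $G' \subset G''$ be the pair in $\F'$ with $v \in G'' \setminus G'$. Comparability in the chain gives $F'_\alpha \subset G' \subsetneq G'' \subset F''_\alpha$. Semiclosedness of $\F'$ forces $\overline{G'} \in \{G', G''\}$, but $\overline{G'} \supset \overline{F'_\alpha} = F''_\alpha \supset G''$, so $\overline{G'} = G'' = F''_\alpha$. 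Since $v \in D \setminus G'$ and $D$, $G'$ are comparable, $G' \subsetneq D \subsetneq F''_\alpha = G''$, contradicting that $G' \subset G''$ is an immediate pair. Note that this argument runs on the pair containing $v$ rather than on any hypothetical pair with predecessor $F'_\alpha$, which is why it closes the gap.
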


\begin{proof}
Let $\tilde{\F}$ be a semiclosed generalized flag refining $\F$.  For any $\alpha \in A$ such that $\overline{F'_\alpha} = F''_\alpha$, there are no subspaces of $\tilde{\F}$ lying properly between $F'_\alpha$ and $F''_\alpha$.  Therefore if any subspace of $\tilde{\F}$ lies properly between $F'_\alpha$ and $F''_\alpha$, we have $\overline{F'_\alpha} = F'_\alpha$.  If in addition $\dim F''_\alpha / F'_\alpha = 1$ for all $\alpha$ with $\overline{F'_\alpha} = F'_\alpha$, then $\F$ is maximal semiclosed.

Conversely, assume $\F$ is maximal semiclosed.  Fix $\alpha \in A$ such that $\overline{F'_\alpha} = F'_\alpha$.  Then any subspace $F$ with $F'_\alpha \subset F \subset F''_\alpha$ and $\dim F / F'_\alpha = 1$ is closed (since it contains a closed subspace of finite codimension), hence $\tilde{\F} := \F \cup \{F\}$ is a semiclosed generalized flag refining $\F$.  As $\F$ admits no proper refinement, $\F = \tilde{\F}$, i.e.\ $\dim F''_\alpha / F'_\alpha = 1$.
\end{proof}

We say that two semiclosed generalized flags $\F$ in $V$ and $\G$ in $V_*$ form a \emph{taut couple} if the chain $\F^\perp$ is stable under $\St_\G$ and the chain $\G^\perp$ is stable under $\St_\F$.  Given a nondegenerate form $V \times V \rightarrow \C$, we call a semiclosed generalized flag $\F$ \emph{self-taut} if $\F^\perp$ is stable under the stabilizer of $\F$ in $\gl(V,V)$.

\begin{prop} \label{tautchar}
Suppose $\F$ and $\G$ are semiclosed generalized flags in $V$ and $V_*$, respectively.  The following are equivalent:
\begin{enumerate}
\item \label{tautcondition} $\F$, $\G$ form a taut couple;
\item \label{intersectioncondition} for any $F \in \F$ the subspace $F^\perp$ is both a union and an intersection of elements of $\G$, as long as $F^\perp$ is a nontrivial proper subspace of $V_*$; and vice versa (that is, for any $G \in \G$ the subspace $G^\perp$ is both a union and an intersection of elements of $\F$, as long as $G^\perp$ is a nontrivial proper subspace of $V$).
\end{enumerate}
\end{prop}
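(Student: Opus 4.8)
The plan is to prove the two implications separately, using Lemma~\ref{stablesubspaces} as the main tool to convert ``stability'' statements into ``union and intersection'' statements and back.

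First I would prove \eqref{tautcondition} $\Rightarrow$ \eqref{intersectioncondition}. Assume $\F$, $\G$ form a taut couple, and fix $F \in \F$ with $F^\perp$ a nontrivial proper subspace of $V_*$. Since $\F^\perp$ is by definition stable under $\St_\G$, the subspace $F^\perp$ is $\St_\G$-stable. I also need $F^\perp$ to be closed: this is immediate, as $F^\perp = (F^{\perp\perp})^\perp = \overline{F}^\perp$, and orthogonal complements are always closed (one always has $F^\perp = F^{\perp\perp\perp}$). Now apply Lemma~\ref{stablesubspaces} to the semiclosed generalized flag $\G$ in $V_*$: any nontrivial proper closed $\St_\G$-stable subspace of $V_*$ is both a union and an intersection of elements of $\G$. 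Hence $F^\perp$ is such. The reverse statement (with the roles of $\F$ and $\G$, and $V$ and $V_*$, interchanged) follows by the symmetric argument, using that $\G^\perp$ is $\St_\F$-stable.

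Next I would prove \eqref{intersectioncondition} $\Rightarrow$ \eqref{tautcondition}. Assume the intersection condition. I must show $\F^\perp$ is stable under $\St_\G$. Recall $\F^\perp = \{(F'_\alpha)^\perp, (F''_\alpha)^\perp : \alpha \in A\}$, so it suffices to show each $F^\perp$ for $F \in \F$ is $\St_\G$-stable. If $F^\perp$ is trivial (i.e.\ $0$ or all of $V_*$) this is automatic. Otherwise, by hypothesis $F^\perp$ is a union of elements of $\G$. Since $\St_\G = \sum_{\beta} G''_\beta \otimes (G'_\beta)^\perp$ and each $G''_\beta \otimes (G'_\beta)^\perp$ maps $G''_\beta$ into itself and kills $G'_\beta$, any $\St_\G$-operator maps a union $\bigcup G''_\beta$ of successor spaces into itself; more directly, a union of elements of $\G$ is $\St_\G$-stable because $\St_\G$ stabilizes each element of $\G$. (Here I use that $F^\perp$, being a union of subspaces that all lie in the chain $\G$, is itself a subspace that is $\St_\G$-stable.) Symmetrically $\G^\perp$ is $\St_\F$-stable, so $\F$, $\G$ form a taut couple.

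The only subtle point — and the step I expect to require the most care — is the closedness hypothesis in the application of Lemma~\ref{stablesubspaces}: the lemma requires a \emph{closed} subspace, and one must be sure that the relevant $F^\perp$ (resp.\ $G^\perp$) is closed, which holds automatically since orthogonal complements satisfy $E^\perp = E^{\perp\perp\perp}$. One should also double-check the bookkeeping that ``$\F^\perp$ stable under $\St_\G$'' is equivalent to ``each $F^\perp$, $F \in \F$, is $\St_\G$-stable,'' and handle the degenerate cases $F^\perp \in \{0, V_*\}$ separately, since Lemma~\ref{stablesubspaces} excludes the trivial and improper subspaces; these cases are trivially stable and are precisely the ones the phrase ``as long as $F^\perp$ is a nontrivial proper subspace'' carves out of condition \eqref{intersectioncondition}.
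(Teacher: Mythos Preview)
Your proposal is correct and follows essentially the same approach as the paper: both directions hinge on Lemma~\ref{stablesubspaces} applied to the closed subspace $F^\perp$ for $(1)\Rightarrow(2)$, and on the trivial observation that a union (the paper uses intersection, equivalently available from condition~\eqref{intersectioncondition}) of $\St_\G$-stable subspaces is $\St_\G$-stable for $(2)\Rightarrow(1)$. The only cosmetic difference is that the paper invokes the intersection part of~\eqref{intersectioncondition} rather than the union part in the reverse implication.
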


\begin{proof}
Suppose first that (\ref{intersectioncondition}) holds.  Let $F \in \F$.  If $F^\perp$ is $0$ or $V_*$, then evidently $F^\perp$ is stable under $\St_\G$.  Otherwise, $F^\perp$ is the intersection of elements stable under $\St_\G$, and thus $F^\perp$ is stable under $\St_\G$.  Similarly, the subspace $G^\perp$ is stable under $\St_\F$ for all $G \in \G$.  Hence $\F$, $\G$ form a taut couple.

Conversely, suppose $\F$, $\G$ form a taut couple.  Fix $F \in \F$.  
By the definition of a taut couple, $F^\perp$ is stable under $\St_\G$.  Observe that $F^\perp$ is closed.  If $F^\perp$ is a nontrivial proper subspace of $V_*$, then Lemma~\ref{stablesubspaces} implies that $F^\perp$ is both a union and an intersection of elements of $\G$.  The vice versa part of the statement follows immediately, by the symmetry of $\F$ and $\G$. 
\end{proof}

Let $\F = \{F'_\alpha , F''_\alpha \}_{\alpha \in A}$ and $\G= \{G'_\beta, G''_\beta \}_{\beta \in B}$ be semiclosed generalized flags in $V$ and $V_*$, respectively, and assume $\F$, $\G$ form a taut couple.  Set $C_A:=\{\alpha\in A ~\colon F_\alpha' \mathrm{~is~closed}\}$ and $C_B:=\{\beta\in B ~\colon G_\beta' \mathrm{~is~closed}\}$, and fix $\alpha \in C_A$.  Then $(F''_\alpha)^\perp \subset (F'_\alpha)^\perp$ are distinct closed subspaces of $V_*$.  Since $\F$ has no subspace properly between $F'_\alpha$ and $F''_\alpha$, Proposition~\ref{tautchar} yields that $\G$ has no closed subspace properly between $(F''_\alpha)^\perp$ and $(F'_\alpha)^\perp$.  As a result, $(F''_\alpha)^\perp$ is in $\G$ and has an immediate successor in $\G$.  That is, there exists $\beta \in C_B$ with $G'_\beta = (F''_\alpha)^\perp$.  Thus we may define a map $f_{AB} \colon C_A \rightarrow C_B$ by setting $f_{AB} (\alpha) := \beta$.  Furthermore, Proposition~\ref{tautchar} again implies $(G''_\beta)^\perp = F'_\alpha$.  Therefore $f_{AB}$ and the analogously defined map $f_{BA}$ are inverses.   This argument proves the following proposition.

\begin{prop} \label{definec}
The map $f_{AB}$ is a bijection
$$C_A = \{ \alpha \in A ~\colon F'_\alpha \textrm{ is closed} \} \rightarrow C_B = \{ \beta \in B ~\colon G'_\beta \textrm{ is closed} \}.$$
\end{prop}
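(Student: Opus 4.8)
The paragraph preceding the statement has in effect already built the map: for each $\alpha \in C_A$ it singles out a $\beta \in C_B$ with $G'_\beta = (F''_\alpha)^\perp$ and $(G''_\beta)^\perp = F'_\alpha$, and sets $f_{AB}(\alpha) := \beta$. My plan is to make this construction precise, to run it symmetrically so as to obtain a map $f_{BA}\colon C_B \to C_A$, and then to check directly that $f_{AB}$ and $f_{BA}$ are mutually inverse.

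I would begin by recording two elementary facts about a pair $F'_\alpha \subsetneq F''_\alpha$ with $\alpha \in C_A$. Both $(F''_\alpha)^\perp$ and $(F'_\alpha)^\perp$ are closed, being orthogonal complements; and they are distinct, since if they coincided, applying $(\cdot)^\perp$ and using $F'_\alpha = \overline{F'_\alpha}$ would yield $\overline{F''_\alpha} = F'_\alpha$, contradicting $F'_\alpha \subsetneq F''_\alpha$. By Proposition~\ref{tautchar}, each of $(F''_\alpha)^\perp$ and $(F'_\alpha)^\perp$ is both a union and an intersection of elements of $\G$ (the degenerate situations in which one of them equals $0$ or $V_*$ I would dispose of separately by inspecting the definition of a taut couple). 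The key point is then that $\G$ has no member strictly between $(F''_\alpha)^\perp$ and $(F'_\alpha)^\perp$: a putative such $G \in \G$ would, by the ``vice versa'' half of Proposition~\ref{tautchar}, have $G^\perp$ a union of elements of $\F$ squeezed strictly between $F'_\alpha$ and $\overline{F''_\alpha}$, hence containing $F''_\alpha$; but then the closed subspace $G^\perp$ would satisfy $F''_\alpha \subseteq G^\perp \subsetneq \overline{F''_\alpha}$, which is impossible. Feeding this back into ``$(F''_\alpha)^\perp$ is an intersection of members of $\G$'' forces $(F''_\alpha)^\perp \in \G$, and symmetrically $(F'_\alpha)^\perp \in \G$; having nothing in between, they form an immediate predecessor--successor pair, so there is $\beta \in B$ with $G'_\beta = (F''_\alpha)^\perp$ and $G''_\beta = (F'_\alpha)^\perp$. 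Since $G'_\beta$ is an orthogonal complement it is closed, so $\beta \in C_B$, and $(G''_\beta)^\perp = F'_\alpha$ because $F'_\alpha$ is closed. I set $f_{AB}(\alpha) := \beta$.

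Repeating the argument with $(\F,V)$ and $(\G,V_*)$ interchanged produces $f_{BA}\colon C_B \to C_A$ satisfying $F'_{f_{BA}(\beta)} = (G''_\beta)^\perp$ and $(F''_{f_{BA}(\beta)})^\perp = G'_\beta$. To verify $f_{BA}\circ f_{AB} = \mathrm{id}_{C_A}$, fix $\alpha \in C_A$ and put $\beta = f_{AB}(\alpha)$; then $F'_{f_{BA}(\beta)} = (G''_\beta)^\perp = ((F'_\alpha)^\perp)^\perp = F'_\alpha$, and since two distinct pairs in a generalized flag have distinct immediate predecessors, $f_{BA}(\beta) = \alpha$. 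The relation $f_{AB}\circ f_{BA} = \mathrm{id}_{C_B}$ follows by the same reasoning, so $f_{AB}$ is a bijection.

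The one step I expect to require genuine care is the passage, in the second paragraph, from ``$(F''_\alpha)^\perp$ is a union and an intersection of elements of $\G$'' to ``$(F''_\alpha)^\perp$ (together with $(F'_\alpha)^\perp$) actually belongs to $\G$''. A union or intersection of members of a generalized flag need not itself be a member, so this step genuinely uses the taut-couple hypothesis, through the ``nothing strictly in between'' observation, rather than the generalized-flag axioms alone. Everything else amounts to formal bookkeeping with orthogonal complements and with the combinatorics of immediate predecessor--successor pairs.
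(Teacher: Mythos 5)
Your construction of the map is close to what the paper does, but the ``key point'' you single out --- that $\G$ has \emph{no} member strictly between $(F''_\alpha)^\perp$ and $(F'_\alpha)^\perp$ --- is false in general, and the argument you give for it has a real gap. You pass from $(F''_\alpha)^\perp \subsetneq G \subsetneq (F'_\alpha)^\perp$ to ``$G^\perp$ squeezed \emph{strictly} between $F'_\alpha$ and $\overline{F''_\alpha}$,'' but taking orthogonal complements does not preserve strict inclusions when $G$ is not closed: $G^\perp = \overline{G}{}^\perp$, so if $G$ is a non-closed member of $\G$ with $\overline{G} = (F'_\alpha)^\perp$, then $G^\perp = F'_\alpha$ exactly, and your contradiction evaporates. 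Such a $G$ really can occur. For instance, take $V$ with basis $\{e_0,e_1,e_2,\dots, e_{-1},e_{-2},\dots\}$, set $F'_\alpha = \Span(e_{-1},e_{-2},\dots)$ and $F''_\alpha = F'_\alpha + \Span(e_1+e_2, e_1+e_3,\dots)$, and let $\F$ be $0 \subset F'_\alpha \subset F''_\alpha \subset \overline{F''_\alpha} \subset V$; then $(F''_\alpha)^\perp = \C e_0^*$ and $(F'_\alpha)^\perp = \Span(e_0^*,e_1^*,\dots)$. Putting $G_0 := \C e_0^* + \Span(e_1^*+e_2^*, e_1^*+e_3^*, \dots)$ and $\G := \{0 \subset \C e_0^* \subset G_0 \subset (F'_\alpha)^\perp \subset V_*\}$ gives a taut couple in which $G_0$ sits strictly between $(F''_\alpha)^\perp$ and $(F'_\alpha)^\perp$ with $G_0^\perp = F'_\alpha$. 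Consequently your intermediate assertion $G''_\beta = (F'_\alpha)^\perp$ is false (here $G''_\beta = G_0 \subsetneq (F'_\alpha)^\perp$), and the case analysis you flagged as ``degenerate'' is precisely where the argument is wrong, not merely tedious.

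What is actually true --- and what the paper proves --- is the weaker claim that $\G$ has no \emph{closed} subspace strictly between $(F''_\alpha)^\perp$ and $(F'_\alpha)^\perp$. From this one deduces (using that $(F''_\alpha)^\perp$ is both a union and an intersection of elements of $\G$) that $(F''_\alpha)^\perp \in \G$ and is the immediate predecessor of some pair $\beta$; but the immediate successor $G''_\beta$ need not equal $(F'_\alpha)^\perp$. The identity $(G''_\beta)^\perp = F'_\alpha$, which is what you in fact use to verify $f_{BA} \circ f_{AB} = \mathrm{id}$, then requires a \emph{second} application of Proposition~\ref{tautchar}: since $(F''_\alpha)^\perp \subsetneq G''_\beta \subseteq (F'_\alpha)^\perp$ and $G''_\beta{}^\perp$ is closed, a union, and an intersection of elements of $\F$ sitting between $F'_\alpha$ and $\overline{F''_\alpha}$, it must equal $F'_\alpha$. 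Your final bookkeeping is therefore saved by a formula that happens to be correct, but the derivation of that formula via the claim $G''_\beta = (F'_\alpha)^\perp$ is not. The fix is to drop the ``nothing strictly in between'' claim, argue only that $(F''_\alpha)^\perp \in \G$ with an immediate successor, and establish $(G''_\beta)^\perp = F'_\alpha$ by the separate $\perp$-argument just indicated.
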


Using the identification of Proposition~\ref{definec}, we denote both $C_A$ and $C_B$ by $C$.  For each $\gamma \in C$, one has $G'_\gamma = (F''_\gamma)^\perp$ and $F'_\gamma = (G''_\gamma)^\perp$.

The following proposition characterizes maximal taut couples.

\begin{prop} \label{noobstruction}
The taut couple $\F$, $\G$ is a maximal taut couple if and only if $\F$ (or $\G$) is a maximal semiclosed generalized flag.
\end{prop}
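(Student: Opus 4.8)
The plan is to prove both implications by using the bijection $f_{AB}$ of Proposition~\ref{definec} together with the characterization of maximal semiclosed generalized flags in Lemma~\ref{maximalsemiclosed}. The key observation is that, via Proposition~\ref{tautchar}, a refinement of $\F$ is forced to produce a corresponding refinement of $\G$, and vice versa, so the notions of maximality are transported back and forth between the two flags.

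First I would prove the contrapositive of the ``only if'' direction: suppose $\F$ is not maximal semiclosed (the case of $\G$ being symmetric). By Lemma~\ref{maximalsemiclosed}, there is a pair $\alpha \in A$ with $\overline{F'_\alpha} = F'_\alpha$ (so $\alpha \in C$) and $\dim F''_\alpha / F'_\alpha > 1$. Pick a subspace $F$ with $F'_\alpha \subsetneq F \subsetneq F''_\alpha$ and $\dim F / F'_\alpha = 1$; as in the proof of Lemma~\ref{maximalsemiclosed}, $F$ is closed, so $\tilde\F := \F \cup \{F\}$ is a semiclosed generalized flag properly refining $\F$. To extend this to a refinement of the taut couple, I set $G := F^\perp$ and $\tilde\G := \G \cup \{G\}$. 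Here I must check: (a) $F^\perp$ is closed (immediate, being an orthogonal complement) and is \emph{not} already in $\G$; for the latter, note $F^\perp$ lies strictly between $(F''_\alpha)^\perp = G'_\gamma$ and $(F'_\alpha)^\perp = G''_\gamma$ (using $\gamma = f_{AB}(\alpha)$ and Proposition~\ref{definec}), while $G'_\gamma, G''_\gamma$ are an immediate pair in $\G$; (b) $\tilde\G$ is again a semiclosed generalized flag, which holds because inserting one closed subspace into a semiclosed generalized flag keeps both defining properties and leaves all closures in place; and (c) $\tilde\F$, $\tilde\G$ form a taut couple, which I verify via Proposition~\ref{tautchar}(\ref{intersectioncondition}): $F^\perp$ is trivially a union and intersection of elements of $\tilde\G$ since $F^\perp \in \tilde\G$, and $G^\perp = F^{\perp\perp} = F \in \tilde\F$ likewise; the conditions for the old elements $F \in \F$, $G \in \G$ still hold because adding subspaces to $\tilde\G$ (resp.\ $\tilde\F$) can only help a subspace be expressible as a union and intersection of elements. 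Hence $\F$, $\G$ is not a maximal taut couple.

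Conversely, for the ``if'' direction, suppose $\F$ is maximal semiclosed and let $\tilde\F$, $\tilde\G$ be a taut couple refining $\F$, $\G$. Then $\tilde\F$ is a semiclosed generalized flag refining $\F$, so by maximality $\tilde\F = \F$. It remains to show $\tilde\G = \G$. Suppose $G \in \tilde\G \setminus \G$. By Proposition~\ref{tautchar} applied to the taut couple $\tilde\F = \F$, $\tilde\G$, the subspace $G^\perp$ is a union and an intersection of elements of $\F$ (when it is nontrivial proper; the cases $G^\perp \in \{0, V\}$ must be dispatched separately, using that a generalized flag is exhausted by its pairs so $0$ and $V$ and their immediate neighbours are already determined). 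Set $F := G^\perp = \overline{G}^\perp$; since $G$ is closed (orthogonal complements are closed, and elements of the semiclosed flag $\tilde\G$ that are predecessors need not be, but here I use that $G^{\perp\perp} \supseteq G$ and run the argument with $\overline{G}$ as needed). The point is that $F \in \F$ forces, again by Proposition~\ref{tautchar}, $F^\perp = \overline G$ to be a union and intersection of elements of $\G$; combined with $G$ lying in $\tilde\G$ strictly between two consecutive elements $G'_\gamma \subsetneq G''_\gamma$ of $\G$ (necessarily with $\overline{G'_\gamma} = G'_\gamma$, i.e.\ $\gamma \in C$, since otherwise no subspace fits between them), this makes $F'_\gamma := (G''_\gamma)^\perp \subsetneq F := G^\perp \subsetneq (G'_\gamma)^\perp =: F''_\gamma$ with $F'_\gamma$ closed, contradicting Lemma~\ref{maximalsemiclosed} and the maximality of $\F$. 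Hence $\tilde\G = \G$ and the couple is maximal.

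The main obstacle I anticipate is the careful bookkeeping in the ``if'' direction: one has to rule out that a refinement of $\G$ can occur \emph{without} a visible refinement of $\F$, which means tracking exactly where in $\G$ a new subspace could be inserted and showing that the taut condition (Proposition~\ref{tautchar}) always pulls such an insertion back to an insertion between $F'_\gamma$ and $F''_\gamma$ for some $\gamma \in C$ with $\dim F''_\gamma/F'_\gamma > 1$. The boundary cases $G^\perp \in \{0, V\}$ and the interplay between a predecessor subspace of $\tilde\G$ possibly being non-closed while its perp is automatically closed are the fiddly points; everything else is a routine application of Lemma~\ref{maximalsemiclosed}, Proposition~\ref{tautchar}, and Proposition~\ref{definec}.
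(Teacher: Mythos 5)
Your proposal has a genuine gap in the ``only if'' (contrapositive) direction, and it is located precisely at the step you treated as immediate.  Having chosen a closed $F$ with $F'_\alpha \subsetneq F \subsetneq F''_\alpha$ and $\dim F/F'_\alpha = 1$, you assert that $F^\perp$ lies strictly between $G'_\gamma = (F''_\alpha)^\perp$ and $(F'_\alpha)^\perp$, and you identify the latter with $G''_\gamma$ by appeal to Proposition~\ref{definec}.  But Proposition~\ref{definec} only gives $F'_\gamma = (G''_\gamma)^\perp$, hence $(F'_\gamma)^\perp = \overline{G''_\gamma}$, and $G''_\gamma$ need not be closed.  When $\dim F''_\gamma/F'_\gamma$ is finite one can indeed deduce $G''_\gamma = (F'_\gamma)^\perp$ (the paper does exactly this), so your argument is fine in that case.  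But when $\dim F''_\gamma/F'_\gamma = \infty$ the subspace $G''_\gamma$ may be a proper non-closed subspace of $\overline{G''_\gamma}$, and then your $F^\perp$ — which has codimension exactly $1$ in $\overline{G''_\gamma}$ — is forced to be \emph{incomparable} to $G''_\gamma$: it cannot contain $G''_\gamma$ (the codimension count would force $G''_\gamma = F^\perp$, but $F^\perp$ is closed and $G''_\gamma$ is not), nor be contained in it (again by the codimension count).  So $\G \cup \{F^\perp\}$ fails to be a chain, and the construction collapses.  A concrete instance: $\F = \{0, V\}$ and $\G = \{0, W, V_*\}$ with $W$ a dense non-closed subspace of $V_*$; here every $1$-dimensional closed $F \subset V$ has $F^\perp$ incomparable to $W$.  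The paper's proof handles this infinite-dimensional case by invoking Mackey's theorem to obtain dual bases of $F''_\gamma/F'_\gamma$ and $G''_\gamma/G'_\gamma$ and then re-ordering those bases over $\Z$ so that the half-sum $H := F'_\gamma \oplus \bigoplus_{i<0}\C x_i$ satisfies both $F'_\gamma \subsetneq H \subsetneq F''_\gamma$ and $G'_\gamma \subsetneq H^\perp \subsetneq G''_\gamma$.  That construction is the technical heart of the proposition and cannot be replaced by the one-dimensional choice.

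Separately, your ``if'' direction is logically superfluous.  The paper observes at the outset that it suffices to prove the single implication: if $\F$ (or $\G$) is not maximal semiclosed, then the taut couple admits a refinement that is proper on \emph{both} sides.  This simultaneously shows that $\F$ is maximal semiclosed iff $\G$ is, and yields both directions of the proposition by contraposition.  Your independent argument for the converse, besides being unneeded, also runs into the same non-closed predecessor/successor bookkeeping issues you flag as ``fiddly points'' without resolving them.  I recommend reorganizing around the paper's single reduction and then concentrating on getting the Mackey/ordering construction right.
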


\begin{proof}
It suffices to show that, if $\F$ (or $\G$) is not maximal semiclosed, then there exists a taut couple $\tilde{\F}$, $\tilde{\G}$ such that $\tilde{\F}$ is a proper refinement of $\F$ and $\tilde{\G}$ is a proper refinement of $\G$.  In particular, this will imply that $\G$ is maximal semiclosed if and only if $\F$ is maximal semiclosed, and the statement will be proved.

If $\F$ is not maximal semiclosed, there exists $\gamma \in C$ for which $\dim {F''_\gamma} / {F'_\gamma} > 1$, by Lemma~\ref{maximalsemiclosed}.  We need only show that there exists a closed subspace $H$ with $F'_\gamma \subsetneq H \subsetneq F''_\gamma$ and $G'_\gamma \subsetneq H^\perp \subsetneq G''_\gamma$.  
Then the generalized flags $\tilde{\F} := \F \cup \{H\}$ and $\tilde{\G} := \G \cup \{H^\perp \}$ form a taut couple as desired.

Assume first that $\dim {F''_\gamma} / {F'_\gamma} < \infty$.  In this case $F''_\gamma$ and $G''_\gamma$ are both closed.  There exists a closed subspace $H$ with $F'_\gamma \subsetneq H \subsetneq F''_\gamma$, and one sees immediately that $G'_\gamma = (F''_\gamma)^\perp \subsetneq H^\perp \subsetneq (F'_\gamma)^\perp = G''_\gamma$.

Now suppose that $\dim {F''_\gamma} / {F'_\gamma} = \infty$.  The pairing $\langle \cdot , \cdot \rangle \colon V \times V_* \rightarrow \C$ yields a nondegenerate pairing ${F''_\gamma} / {F'_\gamma} \times {G''_\gamma} / {G'_\gamma} \rightarrow \C$.  Hence, by a well-known result of Mackey \cite{Mackey}, there exist dual bases in these two infinite-dimensional vector spaces.  Let $\{x_i \in F''_\gamma\}$ and $\{x_i^* \in G''_\gamma\}$ denote the preimages of such dual bases; that is, $F''_\gamma = F'_\gamma \oplus \bigoplus_i \C x_i$, and $G''_\gamma = G'_\gamma \oplus \bigoplus_i \C x_i^*$, and $\langle x_i , x_j^* \rangle = \delta_{ij}$.  One may consider the $x_i$ to be ordered by $i \in \Z$ such that two properties hold.  First, for all $i \in \Z$ and for all $v \in \overline{F''_\gamma} \setminus F''_\gamma$, there exists $N > i$ such that $\langle v , x_N^* \rangle \neq 0$.  Second, for all $i \in \Z$ and for all $w \in \overline{G''_\gamma} \setminus G''_\gamma$, there exists $M < i$ such that $\langle x_M , w \rangle \neq 0$.  This is possible because $F'_\gamma = (G''_\gamma)^\perp$ and $G'_\gamma = (F''_\gamma)^\perp$.  Let $H:= F'_\gamma \oplus \bigoplus_{i < 0} \C x_i$. Then $H^\perp = G'_\gamma \oplus \bigoplus_{i \geq 0} \C x_i^*$.  Hence $H$ is a closed subspace as required.
\end{proof}

\begin{prop} \label{splitexact}
If $\p := \St_\F \cap \St_\G \subset \gl(V,V_*)$, then the following statements hold.
\begin{enumerate}
\item \label{firstitem} $\n_\p = \sum_{\alpha \in A} F''_\alpha \otimes (F''_\alpha)^\perp$.

\item \label{seconditem} There exist vector spaces $V_\alpha$ and $(V_\beta)_*$ for $\alpha \in A$ and $\beta \in B$ such that 
\begin{itemize}
\item $F''_\alpha = F'_\alpha \oplus V_\alpha$ and $G''_\beta = G'_\beta \oplus (V_\beta)_*$;
\item $\langle V_\gamma, (V_\eta)_* \rangle = 0$ for distinct $\gamma \neq \eta \in C$;
\item $V = \bigoplus_{\alpha \in A} V_\alpha$ and $V_* = \bigoplus_{\beta \in B} (V_\beta)_*$.
\end{itemize}
Moreover,
$$\p = \n_\p \subsetplus \bigoplus_{\gamma \in C} \gl \big( V_\gamma , (V_\gamma)_* \big).$$

\item \label{thirditem} The induced isomorphism $${\p} / {\n_\p} \cong \bigoplus_{\gamma \in C} \gl \big({F''_\gamma} / {F'_\gamma} , {G''_\gamma} / {G'_\gamma} \big)$$ is independent of the choice of vector spaces  $V_\alpha$ and $(V_\beta)_*$.

\end{enumerate}
\end{prop}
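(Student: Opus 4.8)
The plan is to work first with the stabilizer formula $\St_\F = \sum_{\alpha \in A} F''_\alpha \otimes (F'_\alpha)^\perp$ and its analogue for $\St_\G$, then to decompose $V$ and $V_*$ compatibly with both flags, and finally to read off all three statements from this decomposition.

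First I would compute $\n_\p$ for item (\ref{firstitem}). Since $\p = \St_\F \cap \St_\G$, an element $X \in \p$ lies in $\sum_\alpha F''_\alpha \otimes (F'_\alpha)^\perp$ and in $\sum_\beta G''_\beta \otimes (G'_\beta)^\perp$; dualizing the latter via the pairing says $X$ maps each $F''_\gamma$ into... more precisely, $X \in \St_\G$ is equivalent to $X^{t}$ stabilizing $\G$, which by Proposition~\ref{tautchar} and the bijection $f_{AB}$ of Proposition~\ref{definec} translates into the condition that $X$ maps $F''_\gamma$ into $F''_\gamma$ and $F'_\gamma$ into $F'_\gamma$ for $\gamma \in C$ (the closed part), and respects the semiclosed pairs elsewhere. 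The candidate $\n := \sum_{\alpha \in A} F''_\alpha \otimes (F''_\alpha)^\perp$ is visibly an ideal in $\p$ (it is $\St_\F$-stable on the left and its right factors $(F''_\alpha)^\perp$ are $\St_\G$-stable, using Proposition~\ref{tautchar}), it consists of nilpotent operators since each sends $F''_\alpha$ into the strictly smaller $F'_\alpha$ when $\alpha$ is not in the closed part and more generally is strictly upper-triangular with respect to the flag, and it is locally solvable, hence $\n \subset \n_\p$. For the reverse inclusion, one shows $\p / \n$ has no nonzero nilpotent ideal; this will drop out of item (\ref{seconditem}) once we know $\p/\n \cong \bigoplus_{\gamma \in C} \gl(F''_\gamma/F'_\gamma, G''_\gamma/G'_\gamma)$, since a direct sum of copies of $\gl_\infty$ and finite-dimensional $\gl$'s has trivial linear nilradical (Lemma~\ref{locreductivenilradical} applied summand-wise). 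So items (\ref{firstitem}) and (\ref{seconditem}) are proved together.

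For item (\ref{seconditem}) the construction of the $V_\alpha$ and $(V_\beta)_*$ is the technical heart. For each pair $\alpha \in A$ I choose a complement $V_\alpha$ to $F'_\alpha$ in $F''_\alpha$, and similarly $(V_\beta)_*$; property (ii) of a generalized flag guarantees $V = \bigoplus_\alpha V_\alpha$ and $V_* = \bigoplus_\beta (V_\beta)_*$. The subtle point is arranging $\langle V_\gamma, (V_\eta)_* \rangle = 0$ for $\gamma \neq \eta$ in $C$: using $G'_\gamma = (F''_\gamma)^\perp$ and $F'_\gamma = (G''_\gamma)^\perp$ from Proposition~\ref{definec}, one sees that after ordering $C$ compatibly with the flag one can choose the complements inductively (or transfinitely) so as to kill the off-diagonal pairings, exactly as in the classical Gram--Schmidt-type argument; for the non-closed pairs there is nothing to arrange since the relevant orthogonals are already handled. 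With these choices, $\gl(V_\gamma, (V_\gamma)_*)$ embeds in $\gl(V,V_*)$ and lands in $\p$ (it stabilizes both flags because it acts as zero outside the block $F''_\gamma$, modulo the ambiguity absorbed into $\n_\p$), and a block-matrix computation using the stabilizer formulas shows $\p = \n_\p \subsetplus \bigoplus_{\gamma \in C} \gl(V_\gamma, (V_\gamma)_*)$: the diagonal blocks indexed by $C$ are unconstrained, the diagonal blocks indexed by non-closed pairs contribute to $\n_\p$ (since there $\overline{F'_\alpha} = F''_\alpha$ forces the block to land in $F'_\alpha \otimes (F''_\alpha)^\perp$... here one must be slightly careful and check directly that the $\St_\G$ condition kills the semisimple part), and all strictly-triangular blocks lie in $\n_\p$.

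Finally item (\ref{thirditem}): the projection $\p \to \p/\n_\p$ restricted to $\bigoplus_{\gamma\in C}\gl(V_\gamma,(V_\gamma)_*)$ is an isomorphism onto $\p/\n_\p$, and composing with the natural identifications $V_\gamma \cong F''_\gamma/F'_\gamma$, $(V_\gamma)_* \cong G''_\gamma/G'_\gamma$ gives the stated isomorphism. To see it is independent of the choices, note that any two systems of complements differ by an element of $\n_\p$: if $V_\alpha$ and $\tilde V_\alpha$ are two complements to $F'_\alpha$ in $F''_\alpha$, the linear map agreeing with the identity and sending one system to the other is unipotent and lies in $\p$, in fact in $\n_\p$, so conjugation by it is trivial on $\p/\n_\p$; chasing this through shows the two induced isomorphisms agree. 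The main obstacle is genuinely the transfinite bookkeeping in item (\ref{seconditem})---simultaneously choosing complements adapted to a (possibly uncountable) chain on both sides of a nondegenerate pairing so that the cross-pairings vanish---together with the care needed to verify that the non-closed diagonal blocks really are forced into $\n_\p$ by the $\St_\G$ condition; once that is in place, items (\ref{firstitem}) and (\ref{thirditem}) are formal consequences.
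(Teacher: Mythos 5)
Your overall plan is the right one and you correctly identify both the key facts to be established and the order in which to establish them. However, the step you yourself flag as ``the technical heart'' --- producing the complements $V_\alpha$ and $(V_\beta)_*$ with vanishing cross-pairings --- is not actually carried out, and this is a genuine gap. Your proposed ``transfinite Gram--Schmidt'' along the flag ordering is not an argument: the index set $A$ (and in particular $C$) can carry an arbitrary countable linear order (for instance a dense one, like $\Q$), so there is no first element to start an induction from, and well-ordering $A$ arbitrarily destroys the compatibility with the nested subspaces $F'_\alpha \subset F''_\alpha$ that the complements must respect. The paper sidesteps this entirely by a non-obvious maneuver: refine $\F$, $\G$ to a \emph{maximal} taut couple $\tilde\F$, $\tilde\G$ (using Proposition~\ref{noobstruction}), note that $\tilde\F$ and $\tilde\G$ are then maximal \emph{closed} generalized flags (a forward reference to Theorem~\ref{tfae}, explicitly acknowledged to be logically harmless), and then invoke the theorem from \cite{DP2} that maximal closed generalized flags in $V$ and $V_*$ admit compatible dual bases with $\langle v_\gamma, v^\eta\rangle=\delta_{\gamma\eta}$. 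The desired $V_\alpha$ and $(V_\beta)_*$ are then read off as spans of appropriate subsets of these bases. This external ingredient, not Gram--Schmidt, is what makes the construction go through.

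Two lesser points. First, you assert the decomposition $\p=\sum_{\gamma\in C}F''_\gamma\otimes G''_\gamma+\sum_{\alpha\notin C}F''_\alpha\otimes (F''_\alpha)^\perp$ via ``a block-matrix computation,'' but the nontrivial inclusion requires a genuine argument: given $X\in\St_\F\cap\St_\G$ written as $\sum v_i\otimes w_i$ adapted to $\F$, one must show that each $w_i$ lying in $(F'_\gamma)^\perp$ actually lies in $G''_\gamma$. The paper does this by contradiction, taking $\gamma$ maximal among those for which a bad $w_i$ exists and pairing against $G''_\gamma$ to derive $v_i\in F'_\gamma$; you would need something of this sort. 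Second, for item~(\ref{thirditem}), your proposal via conjugation by a unipotent element leaves open whether such a conjugation actually preserves $\p$ and $\n_\p$ and whether the automorphism in question is inner in the relevant sense; the paper's argument is cleaner and avoids this by directly showing $\n_\p\cdot F''_\gamma\subset F'_\gamma$ and likewise on the $V_*$ side, so that the map $\p\to\bigoplus_{\gamma\in C}\gl\big(F''_\gamma/F'_\gamma,G''_\gamma/G'_\gamma\big)$ given by the induced action on quotients canonically factors through $\p/\n_\p$ without any choice of splitting.
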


\begin{proof} 
We show first that $\p = \sum_{\gamma \in C} F''_\gamma \otimes G''_\gamma + \sum_{\alpha \in A \setminus C} F''_\alpha \otimes (F''_\alpha)^\perp$.  

Note that $ \sum_{\gamma \in C} F''_\gamma \otimes G''_\gamma + \sum_{\alpha \in A \setminus C} F''_\alpha \otimes (F''_\alpha)^\perp$ stabilizes both $\F$ and $\G$.  
Clearly both terms $\sum_{\gamma \in C} F''_\gamma \otimes G''_\gamma$ and $\sum_{\alpha \in A \setminus C} F''_\alpha \otimes (F''_\alpha)^\perp$ stabilize $\F$, while the term $\sum_{\gamma \in C} F''_\gamma \otimes G''_\gamma$ stabilizes $\G$.  To show that $F''_\alpha \otimes (F''_\alpha)^\perp$ stabilizes $G''_\beta$ for all $\alpha \in A$ and $\beta \in B$, we observe that if $(F''_\alpha \otimes (F''_\alpha)^\perp) \cdot G''_\beta \neq 0$ then $\langle F''_\alpha , G''_\beta \rangle \neq 0$.  Hence $(F''_\alpha \otimes (F''_\alpha)^\perp) \cdot G''_\beta \subset (F''_\alpha)^\perp \subset G''_\beta$.

Consider now $X \in \St_\F \cap \St_\G$.  Since $X \in \St_\F$, we may express $X = \sum_{i=1}^n v_i \otimes w_i$, with $v_i \in F''_{\alpha_i} \setminus F'_{\alpha_i}$ and $w_i \in (F'_{\alpha_i})^\perp$.  Recall that $(F''_\gamma)^\perp = G'_\gamma \subset G''_\gamma \subset (F'_\gamma)^\perp$ for each $\gamma \in C$.  We assume that for each $\gamma \in C$, the vectors $w_i$ such that $w_i \in (F'_\gamma)^\perp \setminus G''_\gamma$ are linearly independent modulo $G''_\gamma$.  

Assume, for the sake of a contradiction, that the set of $\gamma \in C$ for which there exists $i \in \{1, \ldots , n\}$ such that $w_i \in (F'_\gamma)^\perp \setminus G''_\gamma$ is nonempty.  Let $\gamma$ denote the maximal element of that set.  Let $I$ denote the set of $i \in \{1 , \ldots , n \}$ such that $w_i \in (F'_\gamma)^\perp \setminus G''_\gamma$.  Fix $y \in G''_\gamma$, and compute
\begin{eqnarray*}
X \cdot y & = & (\sum_{i=1}^n v_i \otimes w_i) \cdot y 
 = - \sum_{i=1}^n \langle v_i , y \rangle w_i \\
& = & - \sum_{i \in I} \langle v_i , y \rangle w_i - \sum_{i \notin I} \langle v_i , y \rangle w_i .
\end{eqnarray*}
 If $w_i \notin (F'_\gamma)^\perp$, then $\langle v_i , y \rangle = 0$.  Hence the term $\sum_{i \notin I} \langle v_i , y \rangle w_i$
 is in $G''_\gamma$, since for each $i \notin I$ either $w_i \in G''_\gamma$ or $w_i \notin (F'_\gamma)^\perp$. 
 As $X \cdot y \in G''_\gamma$, it follows that the term $\sum_{i \in I} \langle v_i , y \rangle w_i$ is also in $G''_\gamma$.  By hypothesis the vectors $w_i$ for $i \in I$ are linearly independent modulo $G''_\gamma$, and thus $\langle v_i , y \rangle = 0$ for $i \in I$.  Since $y$ is arbitrary, we have shown that $v_i \in (G''_\gamma)^\perp = F'_\gamma$.  This contradicts the assumption that $v_i \in F''_\gamma \setminus F'_\gamma$. Therefore there are no $\gamma \in C$ and $i \in \{1, \ldots , n\}$ such that $w_i \in (F'_\gamma)^\perp \setminus G''_\gamma$, and we have shown $\p = \sum_{\gamma \in C} F''_\gamma \otimes G''_\gamma + \sum_{\alpha \in A \setminus C} F''_\alpha \otimes (F''_\alpha)^\perp$.

The linear nilradical of $\St_\F$ is $\sum_{\alpha \in A} F''_\alpha \otimes (F''_\alpha)^\perp$ \cite{DP2}, and it is contained in $\p$ because $(F''_\gamma)^\perp = G'_\gamma \subset G''_\gamma$ for all $\gamma \in C$.  Lemma \ref{nilradicalcontainment} enables us to conclude that  $\sum_{\alpha \in A} F''_\alpha \otimes (F''_\alpha)^\perp$ is a locally nilpotent ideal contained in the linear nilradical of $\p$.  
 
We now show the existence of subspaces $V_\alpha$ and $(V_\beta)_*$ as in statement (\ref{seconditem}).  Let $\tilde{\F} = \{ \tilde{F}'_\alpha , \tilde{F}''_\alpha\}_{\alpha \in \tilde{A}}$, $\tilde{\G} = \{ \tilde{G}'_\beta , \tilde{G}''_\beta\}_{\beta \in \tilde{B}}$ be a maximal refinement of the taut couple $\F$, $\G$.  By Proposition~\ref{noobstruction}, both $\tilde{\F}$ and $\tilde{\G}$ are maximal semiclosed generalized flags.  Moreover, $\tilde{\F}$ and $\tilde{\G}$ are both maximal closed generalized flags.  This fact is seen in Theorem~\ref{tfae}, and we use it now for convenience, as there is no logical obstruction.
  Let $\tilde{C}$ denote the set analogous to $C$ as defined in Proposition~\ref{definec}.  It is shown in \cite{DP2} that there exist bases of $V$ and $V_*$ compatible with the maximal closed generalized flags $\tilde{\F}$ and $\tilde{\G}$ in the following sense.  For each $\alpha \in \tilde{A}$, the set of basis vectors in $\tilde{F}''_\alpha \setminus \tilde{F'}_\alpha$ is a basis for the quotient ${\tilde{F}''_\alpha} / {\tilde{F'}_\alpha}$, and similarly for each $\beta \in \tilde{B}$.  Note that $\dim {\tilde{F}''_\gamma} / {\tilde{F'}_\gamma} = \dim  {\tilde{G}''_\gamma} / {\tilde{G'}_\gamma} = 1$ for all $\gamma \in \tilde{C}$.  Let $v_\gamma \in V$ and $v^\gamma \in V_*$ denote the basis vectors corresponding to the pair $\gamma \in \tilde{C}$.  One may assume, according to \cite{DP2}, that these vectors are dual in the sense that $\langle v_\gamma , v^\eta \rangle = \delta_{\gamma \eta}$ for all $\gamma$, $\eta \in \tilde{C}$.  For each $\alpha \in A$, take $V_\alpha$ to be the span of all the basis elements of $V$ in $F''_\alpha \setminus F'_\alpha$.  For each $\beta \in B$, take $(V_\beta)_*$ to be the span of all the basis elements of $V_*$ in $G''_\beta \setminus G'_\beta$.  Then these vector subspaces have the desired properties.

Let $V_\alpha$ and $(V_\beta)_*$ be as in statement (\ref{seconditem}).  To check that for each $\gamma \in C$ the restriction of the pairing to $V_\gamma \times (V_\gamma)_*$ is nondegenerate, suppose $0 \neq v \in V_\gamma$.  Then since $V \notin F'_\gamma$, there exists $w \in G''_\gamma$ such that $\langle v , w \rangle \neq 0$.  There exist elements $w_1 \in G'_\gamma$ and $w_2 \in (V_\gamma)_*$ such that $w = w_1 + w_2$, and hence $0 \neq \langle v , w \rangle = \langle v , w_1 + w_2  \rangle = \langle v , w_2 \rangle$.  Similarly, any nontrivial element of $(V_\gamma)_*$ pairs nontrivially with some element of $V_\gamma$.  Furthermore, we have assumed that $\langle V_\gamma , (V_\eta)_* \rangle = 0$ for $\gamma \neq \eta \in C$.  We have 
\begin{eqnarray*}
\sum_{\gamma \in C} F''_\gamma \otimes G''_\gamma &= &  \sum_{\gamma \in C} F''_\gamma \otimes \big((F''_\gamma)^\perp \oplus (V_\gamma)_* \big)  \\
& = & \sum_{\gamma \in C}  F''_\gamma \otimes (F''_\gamma)^\perp \oplus F'_\gamma \otimes (V_\gamma)_* \oplus V_\gamma \otimes (V_\gamma)_*.
\end{eqnarray*}
Observe that $\sum_{\gamma \in C} F'_\gamma \otimes (V_\gamma)_* \subset \sum_{\alpha \in A} F''_\alpha \otimes (F''_\alpha)^\perp$, since for any $v \in F'_\gamma$, one has $v \in F''_\beta$ for some $\beta < \gamma$, so $(V_\gamma)_* \subset (F'_\gamma)^\perp \subset (F''_\beta)^\perp$.  Thus we have established a vector space decomposition $\p = (\bigoplus_{\gamma \in C} V_\gamma \otimes (V_\gamma)_*) \oplus (\sum_{\alpha \in A} F''_\alpha \otimes (F''_\alpha)^\perp)$.  The quotient ${\p} / {(\sum_{\alpha \in A} F''_\alpha \otimes (F''_\alpha)^\perp)}$ is isomorphic to $\bigoplus_{\gamma \in C} V_\gamma \otimes (V_\gamma)_* = \bigoplus_{\gamma \in C} \gl \big( V_\gamma, (V_\gamma)_* \big)$.  Since this quotient is locally reductive, every locally nilpotent ideal is central.
Hence any element of $\n_\p$ maps to an element in the span of central elements in copies of finite-dimensional general linear Lie algebras, but the nilpotence of elements of $\n_\p$ implies that their image must be trivial.  
 It follows that the linear nilradical is $\n_\p = \sum_{\alpha \in A} F''_\alpha \otimes (F''_\alpha)^\perp$, which is (\ref{firstitem}).  We have also shown (\ref{seconditem}).

From the definition of $V_\gamma$ and $(V_\gamma)_*$, we see that  ${\p} / {\n_\p} \cong \bigoplus_{\gamma \in C} \gl \big({F''_\gamma} / {F'_\gamma} , {G''_\gamma} / {G'_\gamma} \big)$.  To prove (\ref{thirditem}), we must show that this isomorphism does not depend of the choice of vector space complements.  We will show that the actions of $\n_\p$ on ${F''_\gamma} / {F'_\gamma}$ and ${G''_\gamma} / {G'_\gamma}$ for any $\gamma \in C$ are trivial.  We have $\n_\p = \sum_{\alpha \in A} F''_\alpha \otimes (F''_\alpha)^\perp$.  Fix $\gamma \in C$.  If $\alpha \geq \gamma$, then $(F''_\alpha \otimes (F''_\alpha)^\perp) \cdot F''_\gamma = \langle F''_\gamma, (F''_\alpha)^\perp \rangle F''_\alpha = 0$.  If $\alpha < \gamma$, then $(F''_\alpha \otimes (F''_\alpha)^\perp) \cdot F''_\gamma = \langle F''_\gamma, (F''_\alpha)^\perp \rangle F''_\alpha \subset F''_\alpha \subset F'_\gamma$.  Thus in either case $\n_\p \cdot F''_\gamma \subset F'_\gamma$, i.e.\ $\n_\p \cdot {F''_\gamma} / {F'_\gamma} = 0$.  Similarly, $\n_\p$ acts trivially on ${G''_\gamma} / {G'_\gamma}$.  It follows that the isomorphism ${\p} / {\n_\p} \cong \bigoplus_{\gamma \in C} \gl \big({F''_\gamma} / {F'_\gamma} , {G''_\gamma} / {G'_\gamma} \big)$ does not depend on the choice of vector space complements.
\end{proof}

The following is a key construction.  It places an arbitrary subalgebra of $\gl_\infty$ tightly within the stabilizer of a taut couple.

\begin{thm} \label{tautcouple}
Let $\k \subset \gl(V,V_*)$ be any subalgebra. There exists a $\k$-stable taut couple $\F$, $\G$ such that ${F''_\gamma} / {F'_\gamma}$ and ${G''_\gamma} / {G'_\gamma}$ are irreducible $\k$-modules for all $\gamma \in C$.  Furthermore, one has $\n_\k = \n_{\St_\F \cap \St_\G} \cap \k$.
\end{thm}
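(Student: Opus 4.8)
The plan is to attach to $\k$ a couple built from a maximal chain of $\k$-stable closed subspaces of $V$, then to patch it so that the subquotients at the closed pairs become irreducible, and finally to read the equality $\n_\k = \n_{\St_\F \cap \St_\G} \cap \k$ off Proposition~\ref{splitexact} together with Theorem~\ref{irreducible}.

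\emph{Construction of the couple.} First I would use Zorn's lemma to pick a maximal chain $\Ch$ of $\k$-stable closed subspaces of $V$ with $0, V \in \Ch$. Then $\Ch^\perp$ is a chain of $\k$-stable closed subspaces of $V_*$ containing $0$ and $V_*$, and it is maximal among such (a $\k$-stable closed $D \subseteq V_*$ comparable with $\Ch^\perp$ yields a $\k$-stable closed $D^\perp$ comparable with $\Ch$, so $D^\perp \in \Ch$ and $D = D^{\perp\perp} \in \Ch^\perp$). Let $\F_0$, $\G_0$ be the generalized flags generated by $\Ch$, $\Ch^\perp$ as recalled in Section~\ref{tautcouples}; they have the same stabilizers as $\Ch$, $\Ch^\perp$. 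Since the members of $\F_0$ are unions and intersections of elements of $\Ch$, they are $\k$-stable, and likewise for $\G_0$. For semiclosedness: if $F'_\alpha \subsetneq F''_\alpha$ is a pair of $\F_0$, then $F''_\alpha$ is an intersection of elements of $\Ch$, hence closed, and $\overline{F'_\alpha}$ is a $\k$-stable closed subspace lying between $F'_\alpha$ and $F''_\alpha$ and comparable with every element of $\Ch$, so $\overline{F'_\alpha} \in \Ch$ and therefore $\overline{F'_\alpha} \in \{F'_\alpha, F''_\alpha\}$; the same argument handles $\G_0$. Finally, every element of $\Ch^\perp$ is fixed by $\St_{\Ch^\perp}$, hence so is its orthogonal complement, and $\{C^{\perp\perp} : C \in \Ch\} = \Ch$; thus $\St_{\G_0} = \St_{\Ch^\perp} \subseteq \St_\Ch = \St_{\F_0}$ and, symmetrically, $\St_{\F_0} = \St_{\G_0}$. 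This common stabilizer fixes each $F \in \F_0$ and hence each $F^\perp$, and likewise on the other side, so $\F_0$, $\G_0$ form a taut couple.

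\emph{Making the closed subquotients irreducible.} For $\gamma$ with $F'_\gamma$ closed, both $F'_\gamma$ and $F''_\gamma$ lie in $\Ch$ and are consecutive there, and the pairing puts $F''_\gamma/F'_\gamma$ in nondegenerate duality with $(F'_\gamma)^\perp/(F''_\gamma)^\perp$. If $F''_\gamma/F'_\gamma$ is reducible I would choose an irreducible $\k$-submodule of it and insert its preimage $\tilde{M}_\gamma$ into $\F_0$. The closure of $\tilde{M}_\gamma$ is a $\k$-stable closed subspace strictly above $F'_\gamma$ and inside $F''_\gamma$, hence equals $F''_\gamma$ by maximality of $\Ch$; so $(\tilde{M}_\gamma)^\perp = (F''_\gamma)^\perp$, the orthogonal chain $\F_0^\perp$ is unchanged, the two new pairs $(F'_\gamma, \tilde{M}_\gamma)$ and $(\tilde{M}_\gamma, F''_\gamma)$ are semiclosed, and since stabilizers only shrink under such an insertion the couple stays taut. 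Doing all these insertions on the $\F$-side, and the symmetric ones on the $\G$-side, yields a $\k$-stable taut couple $\F$, $\G$ of semiclosed generalized flags in which each closed pair has an irreducible subquotient on each side; the matching of closed pairs from Proposition~\ref{definec} sends $(F'_\gamma, \tilde{M}_\gamma)$ to the refined $\G$-pair with predecessor $(F''_\gamma)^\perp$, which is exactly where the corresponding $\G$-side insertion was made. \emph{The step I expect to be the main obstacle} is the very first move here: showing that a reducible subquotient $F''_\gamma/F'_\gamma$ of $V$, with $F'_\gamma$, $F''_\gamma$ consecutive $\k$-stable closed subspaces, actually has an irreducible $\k$-submodule. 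Any proper nonzero $\k$-submodule of $F''_\gamma/F'_\gamma$ is dense, since otherwise its closure would be a $\k$-stable closed subspace strictly between $F'_\gamma$ and $F''_\gamma$; to extract an irreducible one I would run Theorem~\ref{irreducible}: if the image of $\k$ in $\gl(F''_\gamma/F'_\gamma, (F'_\gamma)^\perp/(F''_\gamma)^\perp)$ already acts irreducibly on the first factor we are done, and if not, one passes to a proper $\k$-submodule and repeats, arguing that this descent terminates.

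\emph{The equality for the linear nilradical.} Applying Lemma~\ref{nilradicalcontainment} to $\k \subseteq \St_\F \cap \St_\G \hookrightarrow \gl(V,V_*)$ gives $\n_{\St_\F \cap \St_\G} \cap \k \subseteq \n_\k$. For the reverse inclusion, by Proposition~\ref{splitexact} we have $\n_{\St_\F \cap \St_\G} = \sum_{\alpha \in A} F''_\alpha \otimes (F''_\alpha)^\perp$, the algebra $\St_\F \cap \St_\G$ is the semidirect sum of this ideal with $\bigoplus_{\gamma \in C} \gl(F''_\gamma/F'_\gamma, G''_\gamma/G'_\gamma)$, and $\n_{\St_\F \cap \St_\G}$ acts trivially on every $F''_\gamma/F'_\gamma$. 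Take $X \in \n_\k$; then $X$ lies in the locally solvable radical of $\k$ and is nilpotent on $V$, hence on each $F''_\gamma/F'_\gamma$. By the construction $\k$ acts irreducibly on $F''_\gamma/F'_\gamma$ and on $G''_\gamma/G'_\gamma$, so Theorem~\ref{irreducible} identifies the image $\m_\gamma$ of $\k$ in $\gl(F''_\gamma/F'_\gamma, G''_\gamma/G'_\gamma)$ with $\gl$, $\sl$, $\so$, or $\sp$ of these spaces, or with a reductive subalgebra acting irreducibly when the spaces are finite dimensional; in each case the image of the locally solvable radical of $\k$ is contained in the center of $\m_\gamma$, so by Schur's lemma it acts on $F''_\gamma/F'_\gamma$ by a scalar, and being nilpotent it acts by $0$. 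Hence $X$ maps to $0$ in $\bigoplus_{\gamma \in C} \gl(F''_\gamma/F'_\gamma, G''_\gamma/G'_\gamma) \cong (\St_\F \cap \St_\G)/\n_{\St_\F \cap \St_\G}$, i.e.\ $X \in \n_{\St_\F \cap \St_\G}$; together with $X \in \k$ this gives $X \in \n_{\St_\F \cap \St_\G} \cap \k$, as required.
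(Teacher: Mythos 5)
Your reduction of the final equality $\n_\k = \n_{\St_\F\cap\St_\G}\cap\k$ to Lemma~\ref{nilradicalcontainment}, Proposition~\ref{splitexact}, and Theorem~\ref{irreducible} is essentially the paper's argument, and your construction of the initial taut couple $\F_0$, $\G_0$ from a single maximal chain $\Ch$ of closed $\k$-stable subspaces is sound as far as it goes. The gap is the one you flag: you need each reducible quotient $F''_\gamma/F'_\gamma$, with $F'_\gamma\subsetneq F''_\gamma$ consecutive in $\Ch$ and $F'_\gamma$ closed, to contain an irreducible $\k$-submodule, and you do not establish this. Your observation that every proper nonzero $\k$-submodule of $F''_\gamma/F'_\gamma$ is dense (its closure would violate maximality of $\Ch$) does not force the existence of a minimal one; Theorem~\ref{irreducible} classifies subalgebras acting irreducibly rather than producing irreducible submodules; and your descent has no termination clause. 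Nothing in your argument rules out an infinite strictly decreasing chain of dense $\k$-submodules with zero intersection.

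The paper sidesteps the problem by working with two chains rather than one. After fixing the maximal chain $\Ch$ of closed $\k$-stable subspaces, it fixes a maximal chain $\Dh$ of \emph{all} $\k$-stable subspaces of $V$ containing $\Ch$, defines $P(F):=\bigcap_{\overline{D}=F,\,D\in\Dh}D$, and builds $\F$ from $\Eh=\{P(\overline{D}),\,\overline{D} ~\colon D\in\Dh\}$. The crucial point is that for a pair $F'_\gamma\subsetneq F''_\gamma$ in the resulting $\F$ with $F'_\gamma$ closed, no element of $\Dh$ lies strictly between them, and then the maximality of $\Dh$ (not of $\Ch$) forces there to be no $\k$-stable subspace there at all, so $F''_\gamma/F'_\gamma$ is irreducible automatically. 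In the language of your scheme: what one should insert into the closed pair $(F'_\gamma, F''_\gamma)$ is not an irreducible submodule of $F''_\gamma/F'_\gamma$ (which need not exist) but rather $P(F''_\gamma)$, computed from an auxiliary maximal chain $\Dh$ of all $\k$-stable subspaces. With that modification your strategy becomes the paper's proof; without the auxiliary chain the existence of the irreducible submodule remains unproven, and the proposal is incomplete.
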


\begin{proof}
Let $\Ch$ be a maximal chain of \emph{closed} $\k$-stable subspaces of $V$, and let $\Dh$ be a maximal chain of $\k$-stable subspaces of $V$ containing $\Ch$. For any $F \subset V$ we define $$P(F) := \bigcap_{\overline{D} = F, \, D \in \Dh} D.$$
The subspace $P(F)$ is an intersection of subspaces in the chain, and hence $P(F)$ is stable under $\k$ and $\Dh \cup \{ P(F) \}$ is a chain.  By the maximality of $\Dh$, $P(F) \in \Dh$ for any $F \subset V$. 
 
We claim that $\overline{P(\overline{D})} = \overline{D}$ for any $D \in \Dh$.  To check this, assume for the sake of a contradiction that $\overline{P(\overline{D})} \neq \overline{D}$.  Then $P(\overline{D})$ is closed, since the maximality of $\Ch$ implies that the closed subspace $\overline{P(\overline{D})}$, which forms a chain together with $\Ch$ and is stable under $\k$, is in the chain $\Ch$.  
Since $P(\overline{D})$ is defined as the intersection of subspaces whose closure is $\overline{D}$, there must exist a non-closed subspace properly between $P(\overline{D})$ and $\overline{D}$, and hence $\dim \overline{D} / P(\overline{D}) = \infty$.
Then $\k$ stabilizes the generalized flag
$$ 0 \subset P(\overline{D}) \subset \H \subset \overline{D} \subset V,$$
where $\H$ is produced from the chain  $\{ H \in \Dh ~\colon \overline{H} = \overline{D} \}$ according to the general procedure that produces a generalized flag with the same stabilizer as a given chain, as described at the beginning of Section~\ref{tautcouples}.  Let $E$ denote the set of pairs in $\H$, so that $\H = \{H'_\epsilon , H''_\epsilon \}_{\epsilon \in E}$.  By the definition of $P(\overline{D})$, we know that $\overline{H'_\epsilon} = \overline{D}$, and hence $(H'_\epsilon)^\perp = D^\perp$ for all $\epsilon \in E$.
Therefore
\begin{eqnarray*}
\k & \subset & P(\overline{D}) \otimes V_* + \sum_\epsilon H''_\epsilon \otimes (H'_\epsilon)^\perp +  V \otimes D^\perp \\
& = & P(\overline{D}) \otimes V_* + \sum_\epsilon H''_\epsilon \otimes D^\perp +  V \otimes D^\perp \\
& = & P(\overline{D}) \otimes V_* +  V \otimes D^\perp.
\end{eqnarray*}
As a result, $\k$ stabilizes every subspace between $P(\overline{D})$ and $\overline{D}$.  As there is an abundance of closed subspaces between $P(\overline{D})$ and $\overline{D}$ (for instance, the subspace $P(\overline{D}) \oplus \C x$ for any $x \in \overline{D} \setminus P(\overline{D})$), this contradicts the hypothesis that $\Ch$ is maximal with respect to closed $\k$-stable subspaces.  Thus we have shown $\overline{P(\overline{D})} = \overline{D}$ for all $D \in \Dh$.

Consider the chain $\Eh := \{P(\overline{D}), \, \overline D  ~\colon  D \in \Dh\}$.  Note that $0$ and $V$ are elements of $\Eh$, since both are closed and $\k$-stable.  The general construction described at the beginning of Section~\ref{tautcouples} produces a generalized flag $\F$ with the same stabilizer as $\Eh$.

We have now constructed a generalized flag $\F$ with $\k \subset \St_\F$.  The next step is to show that $\F$ is a semiclosed generalized flag.  Suppose $F' \subset F''$ is a pair in $\F$ and $F'$ is not closed.  By the maximality of $\Dh$, $\F$ is a subchain of $\Dh$.  So $F' \in \Dh$, and 
$\overline{F'}$ is a closed $\k$-stable subspace such that $\Ch \cup \{ \overline{F'} \}$ is a chain.  By the maximality of $\Ch$, one has $\overline{F'} \in \Ch \subset \Dh$.  Since $P(F') \subset F'$ and there are no subspaces in the chain $\{ P(\overline{D}), \overline D ~\colon D \in \Dh \}$ properly between $P(\overline{F'})$ and $\overline{F'}$, we see that it must be the case that $F' = P(F')$.  For any $D \in \Dh$ with $F' \subset D \subset \overline{F'}$, one has $\overline{D} = \overline{F'}$, and hence $P(\overline{D}) = P(\overline{F'})$.  This shows $F'' = \overline{F'}$.

Note also that by construction the quotient $F'' / F'$ is an irreducible $\k$-module.  (When $\alpha \in A \setminus C$, ${F''_\alpha} / {F'_\alpha} $ is a trivial module.)

To obtain a semiclosed generalized flag $\G$ in $V_*$ with the desired properties, repeat the above construction, starting with the chain of closed $\k$-stable subspaces $\Ch^\perp$.  That is, take a maximal chain $\Dh$ of $\k$-stable subspaces of $V_*$, containing the maximal chain of closed $\k$-stable subspaces $\Ch^\perp$.  

We now demonstrate that $\F$, $\G$ form a taut couple.  Note that the closures of the subspaces appearing in $\F$ were elements of the chain $\Ch$, by the maximality of $\Ch$.  That is, $\F^{\perp \perp} \subset \Ch$, which implies $\F^{\perp} \subset \Ch^\perp$.  Since we used the chain $\Ch^\perp$ to construct the generalized flag $\G$, the chain $\F^\perp$ is stable under $\St_\G$.  Furthermore, the closure of any subspace of $\G$ forms a chain together with $\Ch^\perp$, and hence $\G^\perp \cup \Ch^{\perp \perp} = \G^\perp \cup \Ch$ is a chain.  The maximality of $\Ch$ implies $\G^\perp \subset \Ch$, and thus since the chain $\Ch$ was used in the construction of $\F$, we see that $\G^\perp$ is stable under $\St_\F$.

It remains to show that $\n_\k = \n_{\St_\F \cap \St_\G} \cap \k$.
Set $\p := \St_\F \cap \St_\G$.   Lemma~\ref{nilradicalcontainment} yields $\n_\p \cap \k \subset \n_\k$.  Consider the homomorphism $\k \rightarrow {\p} / {\n_\p}$ with kernel $\n_ \p \cap \k$.  Recall from Proposition~\ref{splitexact} that ${\p} / {\n_\p} = \bigoplus_{\gamma \in C} \gl\big({F''_\gamma} / {F'_\gamma} , {G''_\gamma} / {G'_\gamma}\big)$.  The image of $\k$ in ${\p} / {\n_\p}$ acts irreducibly on ${F''_\gamma} / {F'_\gamma}$ and on ${G''_\gamma} / {G'_\gamma}$ for all $\gamma \in C$.  

Consider the composition $\k \rightarrow {\p} / {\n_\p} \rightarrow \gl\big({F''_\gamma} / {F'_\gamma} , {G''_\gamma} / {G'_\gamma}\big)$ for a fixed $\gamma \in C$.  The image of $\k$ under this homomorphism  is a subalgebra of $ \gl\big({F''_\gamma} / {F'_\gamma} , {G''_\gamma} / {G'_\gamma}\big)$ which acts irreducibly on both ${F''_\gamma} / {F'_\gamma}$ and ${G''_\gamma} / {G'_\gamma}$.  
Suppose first that $\dim {F''_\gamma} / {F'_\gamma} = \infty$.  Then Theorem~\ref{irreducible} applies, from which we conclude that the image of $\k$ in $\gl\big({F''_\gamma} / {F'_\gamma} , {G''_\gamma} / {G'_\gamma}\big)$ is one of $\gl\big({F''_\gamma} / {F'_\gamma} , {G''_\gamma} / {G'_\gamma}\big)$, $\sl \big({F''_\gamma} / {F'_\gamma} , {G''_\gamma} / {G'_\gamma} \big)$, $\so({F''_\gamma} / {F'_\gamma})$, and $\sp({F''_\gamma} / {F'_\gamma})$.  None of these subalgebras has any nontrivial locally nilpotent ideals; hence the image of $\n_\k$ in $\gl\big({F''_\gamma} / {F'_\gamma} , {G''_\gamma} / {G'_\gamma}\big)$ is trivial.  Now suppose $\dim {F''_\gamma} / {F'_\gamma} < \infty$.  Then the image of $\k$ in $\gl\big({F''_\gamma} / {F'_\gamma} , {G''_\gamma} / {G'_\gamma}\big)$  acts irreducibly on ${F''_\gamma} / {F'_\gamma}$ and ${G''_\gamma} / {G'_\gamma}$, and hence any locally nilpotent ideal in the image of $\k$ is central in $\gl\big({F''_\gamma} / {F'_\gamma} , {G''_\gamma} / {G'_\gamma}\big)$.  

It follows that any element of $\n_\k$ maps to an element in the span of central elements in copies of finite-dimensional general linear Lie algebras, but the Jordan nilpotence of elements of $\n_\k$ implies that their image must be trivial.
Therefore the image of $\n_\k$ in ${\p} / {\n_\p}$ is trivial, i.e $\n_\k \subset \n_\p \cap \k$.  Thus we have shown $\n_\k = \n_\p \cap \k$.
\end{proof}

\begin{prop} \label{uniquetautpair}
The map from taut couples of generalized flags in $V$ and $V_*$ to subalgebras of $\gl(V,V_*)$ given by $$(\F,\G) \mapsto \St_\F \cap \St_\G$$ is injective. 
\end{prop}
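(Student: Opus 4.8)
The plan is to recover the taut couple $(\F,\G)$ from the subalgebra $\p:=\St_\F\cap\St_\G$, so that two taut couples with equal joint stabilizer must coincide. The essential input is the explicit description of $\p$ obtained in the course of the proof of Proposition~\ref{splitexact}, namely
$$\p = \sum_{\gamma\in C}F''_\gamma\otimes G''_\gamma + \sum_{\alpha\in A\setminus C}F''_\alpha\otimes(F''_\alpha)^\perp,$$
together with the fact that semiclosedness of $\F$ forces $\overline{F'_\alpha}=F''_\alpha$ (and hence $\overline{F''_\alpha}=F''_\alpha$, by idempotence of the closure) whenever $\alpha\in A\setminus C$, and the relations $(G''_\gamma)^\perp=F'_\gamma$ for $\gamma\in C$ coming from Proposition~\ref{definec}.

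The first step is to compute, for each nonzero $v\in V$, the subspace $\p\cdot v\subseteq V$. Let $\beta$ be the unique pair of $\F$ with $v\in F''_\beta\setminus F'_\beta$. A direct calculation with the displayed formula for $\p$ — using that $F''_\gamma\otimes G''_\gamma$ sends $v$ into $\langle v,G''_\gamma\rangle F''_\gamma$, which is all of $F''_\gamma$ exactly when $v\notin(G''_\gamma)^\perp=F'_\gamma$, and that $F''_\alpha\otimes(F''_\alpha)^\perp$ sends $v$ into $\langle v,(F''_\alpha)^\perp\rangle F''_\alpha$, which is all of $F''_\alpha$ exactly when $v\notin\overline{F''_\alpha}=F''_\alpha$ for $\alpha\in A\setminus C$ — shows that the summands of index strictly below $\beta$ jointly produce $\bigcup\{F''_\alpha:\alpha<\beta\}=F'_\beta$, those of index strictly above $\beta$ produce nothing, and the summand of index $\beta$ produces $F''_\beta$ precisely when $\beta\in C$ and nothing otherwise. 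Hence $\p\cdot v=F''_\beta$ if $\beta\in C$ and $\p\cdot v=F'_\beta$ if $\beta\in A\setminus C$; in particular $v\in\p\cdot v$ if and only if $\beta\in C$.

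The second step uses only the assignment $v\mapsto\p\cdot v$, which is intrinsic to $\p$. Put $\mathcal S_1:=\{\p\cdot v:0\neq v\in V,\ v\in\p\cdot v\}$ and $\mathcal S_2:=\{\p\cdot v:0\neq v\in V,\ v\notin\p\cdot v\}$. By the first step, $\mathcal S_1=\{F''_\gamma:\gamma\in C\}$ and $\mathcal S_2=\{F'_\alpha:\alpha\in A\setminus C\}$. Since $\overline{F'_\alpha}=F''_\alpha$ for $\alpha\in A\setminus C$, the set of all successors appearing in $\F$ is $\{F''_\alpha:\alpha\in A\}=\mathcal S_1\cup\{\overline S:S\in\mathcal S_2\}$, which is determined by $\p$. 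Finally, since no element of $\F$ lies strictly between a pair, one has $\alpha<\gamma$ if and only if $F''_\alpha\subsetneq F''_\gamma$, so the identity $F'_\gamma=\bigcup\{F''_\alpha:\alpha<\gamma\}$ (valid for every $\gamma\in A$) recovers the remaining predecessors as $F'_\gamma=\bigcup\{T\in\mathcal S_1\cup\{\overline S:S\in\mathcal S_2\}:T\subsetneq F''_\gamma\}$, with $F''_\gamma$ ranging over $\mathcal S_1$. Combined with $\mathcal S_2$ this yields all predecessors, so $\F=\{F'_\alpha,F''_\alpha\}_{\alpha\in A}$ is determined by $\p$. Exchanging the roles of $V$ and $V_*$ (equivalently, of $\F$ and $\G$) throughout the above argument shows that $\G$ is likewise determined by $\p$, which establishes the injectivity asserted in Proposition~\ref{uniquetautpair}.

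The step I expect to demand the most care is the computation of $\p\cdot v$ in the first step: one has to verify that the contributions of $F''_\gamma\otimes G''_\gamma$ for $\gamma>\beta$ and of $F''_\alpha\otimes(F''_\alpha)^\perp$ for $\alpha\geq\beta$ all vanish, that the summands of index below $\beta$ jointly exhaust exactly $F'_\beta$ — which itself rests on the identity $F'_\beta=\bigcup_{\alpha<\beta}F''_\alpha$, a consequence of the pair structure of a generalized flag — and, crucially, that the extra summand $F''_\beta\otimes G''_\beta$, present exactly when $\beta\in C$, is what promotes $F'_\beta$ to $F''_\beta$ in that case. Everything else is routine bookkeeping with chains, closures, and orthogonal complements.
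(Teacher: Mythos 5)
Your proof is correct and takes essentially the same approach as the paper's: both compute $\p\cdot v$ from the explicit description of $\St_\F\cap\St_\G$ in Proposition~\ref{splitexact}, find that it equals $F''_\beta$ when $F'_\beta$ is closed and $F'_\beta$ (with $\overline{F'_\beta}=F''_\beta$) otherwise, and thereby recover the set of successors $\{F''_\alpha\}_{\alpha\in A}$ of $\F$, with the symmetric argument in $V_*$ recovering $\G$. Your write-up is simply more explicit than the paper's, both in carrying out the computation of $\p\cdot v$ term by term and in spelling out how the predecessors are reconstructed as unions of smaller successors.
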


\begin{proof}
It suffices to show that one may reconstruct the subchain $\{F''_\alpha ~\colon \alpha \in A \}$ from $\St_\F \cap \St_\G$.  For any $0 \neq v \in F$, one may compute, using the formula for the stabilizer of a taut couple given in Proposition~\ref{splitexact},
$$(\St_\F \cap \St_\G) \cdot v = 
\begin{cases}
F'' & \textrm{if } \overline{F'} = F' \\
F' & \textrm{if }\overline{F'} = F'',
\end{cases}$$
where $F' \subset F''$ is the pair given by the definition of a generalized flag such that $v \in F''$ and $v \notin F'$. 
The set of immediate successors in $\F$ is therefore obtained by taking for every nonzero $v \in V$ the subspace 
$(\St_\F \cap \St_\G) \cdot v$ if  $v \in (\St_\F \cap \St_\G) \cdot v$, or the subspace $$\overline{(\St_\F \cap \St_\G) \cdot v}$$ if $v \notin (\St_\F \cap \St_\G) \cdot v$.
\end{proof}

\section{Levi components and locally reductive parts} \label{secLevi}

\begin{defn} \label{levicomponent}
Let $\g$ be a locally finite Lie algebra, and let $\r$ denote its locally solvable radical.  We say that a subalgebra $\l$ is a \emph{Levi component} of $\g$ if $$[\g,\g] = (\r \cap [\g,\g]) \subsetplus \l.$$
\end{defn}

We first prove the existence of Levi components of finitary Lie algebras.
\begin{thm} \label{existence}
Any finitary Lie algebra has a Levi component.
\end{thm}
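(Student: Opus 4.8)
The plan is to reduce the statement to the key construction of Theorem~\ref{tautcouple} and the structural description of stabilizers of taut couples in Proposition~\ref{splitexact}. Let $\g$ be a finitary Lie algebra; fix an embedding $\g \hookrightarrow \gl(V,V_*)$ and set $\k := [\g,\g]$. Applying Theorem~\ref{tautcouple} to $\k$, we obtain a $\k$-stable taut couple $\F$, $\G$ with ${F''_\gamma}/{F'_\gamma}$ and ${G''_\gamma}/{G'_\gamma}$ irreducible $\k$-modules for all $\gamma \in C$, and with $\n_\k = \n_{\p} \cap \k$ where $\p := \St_\F \cap \St_\G$. By Proposition~\ref{splitexact}(\ref{thirditem}) there is a canonical homomorphism $\theta \colon \k \rightarrow {\p}/{\n_\p} \cong \bigoplus_{\gamma \in C} \gl\big({F''_\gamma}/{F'_\gamma}, {G''_\gamma}/{G'_\gamma}\big)$ with kernel $\n_\p \cap \k = \n_\k$, and the image $\theta(\k)$ acts irreducibly on ${F''_\gamma}/{F'_\gamma}$ and on ${G''_\gamma}/{G'_\gamma}$ for each $\gamma \in C$.

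Next I would analyze $\theta(\k)$ using the propositions derived from Theorem~\ref{irreducible}. The image $\theta(\k)$ sits inside $\bigoplus_{\gamma\in C}\gl\big({F''_\gamma}/{F'_\gamma}, {G''_\gamma}/{G'_\gamma}\big)$ and acts irreducibly on each natural and conatural quotient; since $\k = [\k,\k]$ (as $\k$ is a commutator subalgebra, so $[\k,\k]=[[\g,\g],[\g,\g]]$ — here one should instead work with $[\k,\k]$ or note $\theta(\k)=[\theta(\k),\theta(\k)]$ up to passing to the commutator, which is harmless for producing a Levi component), Proposition~\ref{glirreducibleeverywhere} shows $[\theta(\k),\theta(\k)]$ still acts irreducibly on every $V_\gamma := {F''_\gamma}/{F'_\gamma}$ and $(V_\gamma)_*$. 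Then Proposition~\ref{irreducibleeverywhere}, applied to $[\theta(\k),\theta(\k)] \subset \bigoplus_\gamma \sl(V_\gamma,(V_\gamma)_*)$ after projecting away the central $\gl_1$-directions, yields that this algebra is \emph{locally semisimple} and splits as in \eqref{locssdecomp}. In particular $\theta([\k,\k])$ is locally semisimple, hence admits no nonzero locally solvable ideal, so the locally solvable radical of $\theta(\k)$ meets $[\theta(\k),\theta(\k)]$ trivially; more precisely $\theta(\k)$ is an extension of a locally semisimple algebra by its center, so $\r_{\theta(\k)} = \z(\theta(\k))$ and $[\theta(\k),\theta(\k)]$ is a Levi component of $\theta(\k)$.

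The final step is to lift this Levi component through $\theta$. Since $\theta(\k) = \bigoplus_\gamma (\text{center}) \oplus \big(\bigoplus_{i} \s_i\big)$ with $\s_i$ finite-dimensional simple or one of $\sl_\infty,\so_\infty,\sp_\infty$, and each $\s_i$ is either finite-dimensional (where a section exists by finite-dimensional Levi--Malcev) or infinite-dimensional simple, I would construct a homomorphic section of $\theta$ over $[\theta(\k),\theta(\k)]$ by exhausting $\k$ by finite-dimensional subalgebras $\k = \bigcup_n \k_n$ and building compatible Levi components $\l_n$ of $\k_n$ that map isomorphically onto exhausting pieces of $[\theta(\k),\theta(\k)]$; the union $\l := \bigcup_n \l_n$ is then a subalgebra of $\k=[\g,\g]$ mapping isomorphically onto $[\theta(\k),\theta(\k)]$, and $\ker\theta = \n_\k = \r\cap[\g,\g]$ (using Proposition~\ref{nilcommutator}, $\n_\k \cap \k = \n_\k$, together with $\n_{\g}\cap[\g,\g] = \r\cap[\g,\g]$ and a comparison of $\n_\k$ with $\r_\g\cap[\g,\g]$). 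This gives $[\g,\g] = (\r\cap[\g,\g]) \subsetplus \l$, as required. The main obstacle is the lifting step: one must show the section can be chosen \emph{consistently} across the exhaustion — i.e.\ that the infinite-dimensional simple summands $\sl_\infty,\so_\infty,\sp_\infty$ inside $\theta(\k)$ genuinely lift to subalgebras of $\k$ rather than merely subquotients. This requires choosing the exhaustion $\k_n$ so that the finite-dimensional Levi components $\l_n$ can be nested, which in turn relies on the fact (from finite-dimensional theory) that a semisimple subalgebra of $\k_{n+1}$ mapping into $[\theta(\k),\theta(\k)]$ can always be conjugated inside an existing $\l_n$ by an element of the nilradical — an argument best phrased via Mostow's theorem on fully reducible subalgebras, acknowledged in the paper's introduction.
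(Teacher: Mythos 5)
Your overall strategy (taut couple $\to$ pass to the quotient by the linear nilradical $\to$ recognize a locally semisimple image $\to$ lift by nested finite-dimensional Levi components) is the same as the paper's, but you apply Theorem~\ref{tautcouple} to $\k := [\g,\g]$ rather than to $\g$ itself, and this change introduces two genuine gaps that you gesture at but do not close. First, you need $\ker\theta = \n_{[\g,\g]}$ to coincide with $\r_\g \cap [\g,\g]$. Lemma~\ref{nilradicalcontainment} gives $\n_\g \cap [\g,\g] \subset \n_{[\g,\g]}$, and Proposition~\ref{nilcommutator} gives $\n_\g \cap [\g,\g] = \r_\g \cap [\g,\g]$, but the reverse containment $\n_{[\g,\g]} \subset \r_\g$ would require $\r_{[\g,\g]}$ (or at least $\n_{[\g,\g]}$) to be an ideal of $\g$, and in the locally finite setting this is not automatic -- the ``radical of an ideal is the intersection with the radical'' argument from finite dimensions does not transfer directly because the locally solvable radical is defined as a union of locally solvable ideals of $[\g,\g]$, which need not be $\g$-stable. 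Second, your $\l$ is constructed to map isomorphically onto $[\theta(\k),\theta(\k)]$, so $\n_\k \subsetplus \l = \theta^{-1}\bigl([\theta(\k),\theta(\k)]\bigr) = \n_\k + [\k,\k]$, and this need not be all of $\k = [\g,\g]$: the quotient $\theta(\k)/[\theta(\k),\theta(\k)]$ can be a nontrivial abelian piece, since $\theta(\k)$ need not lie in $\bigoplus_\gamma \sl\bigl(F''_\gamma/F'_\gamma, G''_\gamma/G'_\gamma\bigr)$. Even assuming the splitting $\theta(\k) = \z(\theta(\k)) \oplus [\theta(\k),\theta(\k)]$, the central part need not be absorbed into $\r_\g \cap [\g,\g]$, so your final equality $[\g,\g] = (\r\cap[\g,\g]) \subsetplus \l$ does not follow. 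What you would actually obtain by this route is at best a Levi component of the Lie algebra $[\g,\g]$ in the sense of Definition~\ref{levicomponent}, i.e.\ a complement to $\r_{[\g,\g]} \cap [[\g,\g],[\g,\g]]$ inside $[[\g,\g],[\g,\g]]$, which is a priori a different object.

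The paper avoids both problems by applying Theorem~\ref{tautcouple} to $\g$, so that $\pi \colon \g \to \p/\n_\p$ is defined on all of $\g$. Then $\m := \pi([\g,\g]) = [\pi(\g),\pi(\g)]$ automatically lies in $\bigoplus_\gamma \sl\bigl(F''_\gamma/F'_\gamma, G''_\gamma/G'_\gamma\bigr)$ (being a commutator), so Proposition~\ref{irreducibleeverywhere} applies directly and $\m$ is locally semisimple. After constructing $\l$ lifting $\m$, one observes that $\n_\g \subsetplus \l$ is an ideal of $\g$ with abelian quotient (since $\pi(\g)/\m$ is abelian), so $[\g,\g] \subset \n_\g \subsetplus \l$; combined with $\l \subset [\g,\g]$ this gives $[\g,\g] = (\n_\g \cap [\g,\g]) \subsetplus \l$, and one concludes via Proposition~\ref{nilcommutator} directly. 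This way only $\n_\g$ and $\r_\g$ enter, and the comparison you were missing never arises. Incidentally, the lifting step does not need Mostow's theorem: the paper chooses nested finite-dimensional $\k_i \subset \pi^{-1}(\m_i)$ with $\pi(\k_i) = \m_i$ and then extends $\l_{i-1}$ to a maximal semisimple subalgebra $\l_i$ of $\k_i$ (which is a Levi component of $\k_i$ by finite-dimensional Levi--Mal'cev); since $\ker(\pi|_{\k_i})$ equals $\rad(\k_i)$, this forces $\pi(\l_i) = \m_i$ and $\pi|_{\l_i}$ injective, so the union works without any conjugation argument.
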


\begin{proof}
Let $\g$ be a finitary Lie algebra.  
Then $\g$ has a finitary representation $V$, and one may consider $\g$ as a subalgebra of $\gl(V,V_*)$.  Let $\F$, $\G$ be a taut couple as given in Theorem~\ref{tautcouple}.  By Theorem~\ref{tautcouple}, $\g \subset \p$ and $\n_\g = \n_\p \cap \g$, where $\p := \St_\F \cap \St_\G$.  Let $\pi \colon \g \rightarrow {\p} / {\n_\p}$ be the inclusion of $\g$ into $\p$ followed by the quotient map.  
Recall from Proposition~\ref{splitexact} that $$\p /  \n_\p = \bigoplus_{\gamma \in C}\gl\big({F''_\gamma} / {F'_\gamma} , {G''_\gamma} / {G'_\gamma}\big).$$  

Consider $\m := \pi([\g,\g]) \subset \bigoplus_{\gamma \in C} \sl\big({F''_\gamma} / {F'_\gamma} , {G''_\gamma} / {G'_\gamma}\big)$.  By Proposition~\ref{glirreducibleeverywhere}, $\m$ acts irreducibly on ${F''_\gamma} / {F'_\gamma}$ and ${G''_\gamma} / {G'_\gamma}$ for all $\gamma \in C$ because $\g$ does.  By Proposition~\ref{irreducibleeverywhere}, $\m$ is locally semisimple.   

We obtain a pullback $\l \subset \g$ of $\m$ as follows.  Let $\m = \bigcup_i \m_i$ be an exhaustion by finite-dimensional semisimple subalgebras $\m_i$.  There exist nested finite-dimensional subalgebras $\k_i \subset \pi^{-1} (\m_i)$ such that $\pi(\k_i) = \m_i$.  We can choose inductively nested Levi components $\l_i$ of $\k_i$, because any maximal semisimple subalgebra is a Levi component.  Then $\pi|_{\l_i}$ is an isomorphism of $\l_i$ with $\m_i$, and $\pi$ gives an isomorphism of $\l := \bigcup_i \l_i$ with $\m$.

We will show that $\l$ is a Levi component of $\g$.  We know $\n_\g \subsetplus \l$ is an ideal in $\g$ since it is the pullback of an ideal in $\pi(\g) = {\g} / (\n_\p \cap \g) = {\g} / {\n_\g}$.  Moreover, the quotient ${\g} / {(\n_\g \subsetplus \l)}$ is abelian since the corresponding quotient of ${\g} / {\n_\g}$ is abelian.  Thus $[\g,\g] \subset \n_\g \subsetplus \l$.  Since $\l$ is locally semisimple, we have $\l \subset [\g,\g]$.  So $\l \subset [\g,\g] \subset \n_\g \subsetplus \l$, which implies $[\g,\g] = (\n_\g \cap [\g,\g]) \subsetplus \l$.  But we know from Proposition~\ref{nilcommutator} that $\n_\g \cap [\g,\g] = \r \cap [\g,\g]$, so $\l$ is a Levi component of $\g$.
\end{proof}

For any subalgebra $\g \subset \gl_\infty$, we have the following chain of ideals in $\g$:
$$ [\g,\g] \, \subset \, \n_\g + [\g,\g] \, \subset \, \r + [\g,\g] \, \subset \, \g.$$
For any Levi component $\l$ of $\g$, one has $\n_\g + [\g,\g] = \n_\g \subsetplus \l$ and $\r + [\g,\g] = \r \subsetplus \l$.  The first two terms of the filtration $[\g,\g]$ and $\n_\g + [\g,\g]$ are splittable subalgebras of $\gl_\infty$.  Note that, unlike in the case of finite-dimensional $\g$, the ideal $\r \subsetplus \l$ can be strictly contained in $\g$.

It follows from the proof of Theorem \ref{existence} that any finitary Lie algebra $\g$ admits an exhaustion $\bigcup_i \g_i$ by finite-dimensional Lie algebras $\g_i$ so that the union $\bigcup_i \l_i$ of certain Levi components $\l_i$ of $\g_i$ is a Levi component of $\g$.  It is not true, however, (and this is why our definition of Levi component is more restrictive than the one given in \cite{Ba}) that for any exhaustion $\g = \bigcup_i \g_i$ and for any choice of nested Levi components $\l_i \subset \l_{i+1}$, the union $\bigcup_i \l_i$ is a Levi component of $\g$. 
Indeed, recall the following example from \cite[Example 2]{DP4}.  Consider dual bases $\{v_i ~\colon i \in \Z_{>0} \}$ of $V$ and $\{v_i^* ~\colon i \in \Z_{>0} \}$ of $V_*$.  Let $\tilde{V} := V \oplus \C \tilde{v}$, and  define $\langle \tilde{v} , v_i^* \rangle := 1$ for all $i \in \Z_{>0}$.  Then $V_*$ pairs nondegenerately with both $V$ and $\tilde{V}$.  One has $\g := \sl(V,V_*) \cong \sl_\infty$ properly contained in $\tilde{\g} := \sl(\tilde{V}, V_*) \cong \sl_\infty$.  Consider $\g_n := \sl ( \Span \{ v_1 , v_2 , \ldots v_n \} , \Span \{ v_1^* , v_2^* , \ldots v_n^* \} )$ and $\tilde{\g}_n := \tilde{\g} \cap ( \Span \{ \tilde{v}  , v_1 , v_2 , \ldots v_n \} \otimes \Span \{ v_1^* , v_2^* , \ldots v_n^* \} )$.  Then $\g_n$ is a Levi component of $\tilde{\g}_n$.
Nevertheless the union of Levi components $\bigcup_n \g_n = \g$ is not a Levi component of $\tilde{\g} = \bigcup_n \tilde{\g}_n$.

We now demonstrate a few properties of Levi components.

\begin{thm} \label{maxlocss}
Let $\g$ be a subalgebra of $\gl(V,V_*)$.  Then any Levi component of $\g$ is a maximal locally semisimple subalgebra of $\g$.  Furthermore, any two Levi components of $\g$ are isomorphic.
\end{thm}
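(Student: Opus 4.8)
The plan is to prove the two assertions separately, using the taut couple machinery of Section~\ref{tautcouples} and the structure of Levi components established in Theorem~\ref{existence}.

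\textbf{Maximality.} Let $\l$ be a Levi component of $\g$, and suppose $\s \supset \l$ is a locally semisimple subalgebra of $\g$. First I would observe that, since $\s$ is locally semisimple, $\s = [\s,\s] \subset [\g,\g]$; and since $\s$ meets the locally solvable radical $\r$ of $\g$ only in a locally solvable (hence, inside $\s$, trivial) ideal, we get $\s \cap (\r \cap [\g,\g]) = 0$. Now from $[\g,\g] = (\r \cap [\g,\g]) \subsetplus \l$ and $\l \subset \s$, any $x \in \s$ decomposes as $x = r + y$ with $r \in \r \cap [\g,\g]$ and $y \in \l \subset \s$, whence $r = x - y \in \s \cap (\r \cap [\g,\g]) = 0$, so $x = y \in \l$. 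Thus $\s = \l$, proving maximality. (The only point requiring a word of care is that $\s \cap \r$ is a locally solvable ideal of $\s$: it is an ideal because $\r$ is an ideal of $\g$, and it is locally solvable as a subalgebra of $\r$; since $\s$ is locally semisimple its locally solvable radical is $0$ by Theorem~\ref{locsemisimple} together with the fact that $\sl_\infty$, $\so_\infty$, $\sp_\infty$ and finite-dimensional simple Lie algebras have trivial locally solvable radical.)

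\textbf{Uniqueness up to isomorphism.} Here I would use the taut couple $\F$, $\G$ attached to $\g$ by Theorem~\ref{tautcouple}, together with the projection $\pi \colon \g \to \p/\n_\p = \bigoplus_{\gamma \in C} \gl\big(F''_\gamma/F'_\gamma, G''_\gamma/G'_\gamma\big)$ whose kernel is $\n_\g$. For any Levi component $\l$ of $\g$, the restriction $\pi|_\l$ is injective (its kernel is $\l \cap \n_\g \subset \l \cap \r \cap [\g,\g] = 0$, as in the maximality argument), so $\l \cong \pi(\l)$. It therefore suffices to show that $\pi(\l)$ is the same subalgebra of $\p/\n_\p$ — or at least an isomorphic one — for every Levi component $\l$. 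I would argue that $\pi(\l) = \pi([\g,\g]) = \m$: the inclusion $\pi(\l) \subset \pi([\g,\g])$ is clear since $\l \subset [\g,\g]$, and conversely $[\g,\g] \subset \n_\g \subsetplus \l$ (shown in the proof of Theorem~\ref{existence}) gives $\pi([\g,\g]) \subset \pi(\l)$. Hence every Levi component of $\g$ is isomorphic to the fixed locally semisimple algebra $\m = \pi([\g,\g])$, and in particular any two are isomorphic to each other.

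\textbf{Main obstacle.} The substantive content is entirely in establishing that $\l \cap \n_\g = 0$ and that $\l \subset [\g,\g]$ with $[\g,\g] \subset \n_\g \subsetplus \l$ — but all three of these are already in hand: the first from $\n_\g \cap [\g,\g] = \r \cap [\g,\g]$ (Proposition~\ref{nilcommutator}) combined with Definition~\ref{levicomponent}, and the latter two from the proof of Theorem~\ref{existence} (using that $\l$ is locally semisimple, so $\l = [\l,\l] \subset [\g,\g]$). So the proof is short; the one place to be attentive is the claim that a locally semisimple subalgebra has trivial locally solvable radical, which follows from Theorem~\ref{locsemisimple}. I do not expect any genuinely hard step, only careful bookkeeping of which ideals lie inside which.
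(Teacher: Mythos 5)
Your proof is correct and follows essentially the same approach as the paper. Two minor remarks on exposition and justification.

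First, the logical ordering. You present ``maximality'' first, but what that paragraph actually proves is that no locally semisimple subalgebra $\s$ \emph{strictly} contains $\l$; it does not by itself establish that $\l$ is a maximal locally semisimple subalgebra, because that claim also requires $\l$ itself to be locally semisimple. You do prove local semisimplicity of $\l$ in the uniqueness paragraph (since $\l \cong \pi(\l) = \m$ and $\m$ is locally semisimple), so the ingredients are all present, but the cleaner order is the paper's: first deduce $\l \cong \l_0$ (hence $\l$ is locally semisimple), and only then argue maximality. As stated, the maximality paragraph implicitly relies on the later one.

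Second, in your ``Main obstacle'' remark you justify $\l \subset [\g,\g]$ by ``using that $\l$ is locally semisimple, so $\l = [\l,\l] \subset [\g,\g]$.'' That justification is circular in the present context, since $\l$ being locally semisimple is part of what you are trying to prove. The correct (and simpler) justification is that $\l \subset [\g,\g]$ is immediate from Definition~\ref{levicomponent}, since $[\g,\g] = (\r \cap [\g,\g]) \subsetplus \l$. Similarly, $[\g,\g] \subset \n_\g \subsetplus \l$ follows from that definition together with Proposition~\ref{nilcommutator} (giving $\r \cap [\g,\g] \subset \n_\g$), rather than from the proof of Theorem~\ref{existence} as you write. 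These are attribution issues, not mathematical gaps.

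Beyond these points the argument is sound; your use of the explicit projection $\pi$ to show $\pi(\l) = \pi([\g,\g]) = \m$ is essentially the paper's computation $\l \cong (\n_\g \subsetplus \l)/\n_\g = (\n_\g + [\g,\g])/\n_\g$, just phrased via the concrete target $\bigoplus_{\gamma \in C} \gl(F''_\gamma/F'_\gamma, G''_\gamma/G'_\gamma)$ instead of the abstract quotient.
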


\begin{proof} 
Let $\l$ be any Levi component of $\g$, and let $\l_0$ denote the locally semisimple Levi component of $\g$ constructed in the proof of Theorem~\ref{existence}.  Since $\l \cap \n_\g = 0$, we have $\l \cong (\n_\g \subsetplus \l )/ \n_\g = ( \n_\g \subsetplus \l_0 )/ \n_\g \cong \l_0$.  Thus $\l$ is also locally semisimple, and any two Levi components of $\g$ are isomorphic.  

Now suppose that $\tilde{\l}$ is any locally semisimple subalgebra with $\l \subset \tilde{\l} \subset \g$.  Since $\tilde{\l}$ is locally semisimple, $\tilde{\l} \subset [\g,\g]$ and $\tilde{\l} \cap \r = 0$.  Therefore $(\r \cap [\g,\g]) \subsetplus \l \subset (\r \cap [\g,\g]) \subsetplus \tilde{\l} \subset [\g,\g]$.  Since $(\r \cap [\g,\g]) \subsetplus \l = [\g,\g]$, it follows that $(\r \cap [\g,\g]) \subsetplus \l = (\r \cap [\g,\g]) \subsetplus \tilde{\l}$, and hence $\l = \tilde{\l}$.  Therefore $\l$ is a maximal locally semisimple subalgebra of $\g$.
\end{proof}

Note: we do not know whether an arbitrary maximal locally semisimple subalgebra of a subalgebra of $\gl_\infty$ is a Levi component.

\begin{cor} \label{ssconditions}
Let $\g$ be a finitary Lie algebra.  The following conditions on $\g$ are equivalent:
\begin{enumerate}
\item \label{condition1} $\g$ is locally semisimple;
\item \label{condition3} $\g$ is isomorphic to a direct sum of finite-dimensional simple Lie algebras and copies of $\sl_\infty$, $\so_\infty$, and $\sp_\infty$; 
\item \label{condition4} $\g$ is equal to its own Levi component;
\item \label{condition5}  $\g = [\g,\g]$ and $\r= 0$.
\end{enumerate}
\end{cor}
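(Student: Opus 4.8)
The plan is to prove the chain of equivalences in Corollary~\ref{ssconditions} by cycling through the implications $(\ref{condition1}) \Rightarrow (\ref{condition3}) \Rightarrow (\ref{condition4}) \Rightarrow (\ref{condition5}) \Rightarrow (\ref{condition1})$, leaning almost entirely on results already established in the excerpt. The implication $(\ref{condition1}) \Rightarrow (\ref{condition3})$ is nothing but Theorem~\ref{locsemisimple} applied to $\g$ viewed as a subalgebra of $\gl_\infty$ via a faithful finitary representation (which exists since $\g$ is finitary).

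For $(\ref{condition3}) \Rightarrow (\ref{condition4})$: if $\g$ is a direct sum of finite-dimensional simple Lie algebras and copies of $\sl_\infty$, $\so_\infty$, $\sp_\infty$, then $\g = [\g,\g]$ (each summand is perfect, being either finite-dimensional simple or one of the three infinite-dimensional simple Lie algebras) and the locally solvable radical $\r$ is zero (a locally solvable ideal would project to a locally solvable, hence zero, ideal in each simple summand). Hence $\r \cap [\g,\g] = 0$, so $[\g,\g] = (\r \cap [\g,\g]) \subsetplus \g$ exhibits $\g$ itself as a Levi component, and then Theorem~\ref{maxlocss} (maximality) forces $\g$ to equal its Levi component. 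The implication $(\ref{condition4}) \Rightarrow (\ref{condition5})$ is immediate from Definition~\ref{levicomponent}: if $\g$ is its own Levi component then $[\g,\g] = (\r \cap [\g,\g]) \subsetplus \g$, and comparing with $[\g,\g] \subset \g$ forces both $\r \cap [\g,\g] = 0$ and $[\g,\g] = \g$; from $[\g,\g] = \g$ and $\r \cap [\g,\g] = 0$ we get $\r = \r \cap \g = \r \cap [\g,\g] = 0$.

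Finally, for $(\ref{condition5}) \Rightarrow (\ref{condition1})$: assuming $\g = [\g,\g]$ and $\r = 0$, apply Theorem~\ref{existence} to obtain a Levi component $\l$ of $\g$, which by Theorem~\ref{maxlocss} is locally semisimple. By definition $[\g,\g] = (\r \cap [\g,\g]) \subsetplus \l = 0 \subsetplus \l = \l$, and since $\g = [\g,\g]$ we conclude $\g = \l$ is locally semisimple. I do not expect any genuine obstacle here: the corollary is a bookkeeping consequence of Theorems~\ref{locsemisimple}, \ref{existence}, and \ref{maxlocss} together with Definition~\ref{levicomponent}. The only point requiring a little care is checking in $(\ref{condition3}) \Rightarrow (\ref{condition4})$ that an infinite direct sum of perfect ideals is perfect and has trivial locally solvable radical, and that the notion of Levi component behaves well with respect to the (possibly infinite) direct sum decomposition — but this follows because $\r$, $[\g,\g]$, and the intersection $\r \cap [\g,\g]$ all decompose compatibly with a direct sum of ideals.
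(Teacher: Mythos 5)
Your proof is correct and uses essentially the same ingredients as the paper (Theorem~\ref{locsemisimple}, Theorem~\ref{existence}, Theorem~\ref{maxlocss}, and Definition~\ref{levicomponent}), differing only in that you cycle through $(\ref{condition1})\Rightarrow(\ref{condition3})\Rightarrow(\ref{condition4})\Rightarrow(\ref{condition5})\Rightarrow(\ref{condition1})$ while the paper establishes $(\ref{condition1})\Leftrightarrow(\ref{condition3})$, $(\ref{condition1})\Leftrightarrow(\ref{condition4})$, and $(\ref{condition1})\Leftrightarrow(\ref{condition5})$ separately; this is a cosmetic rearrangement, not a different route. One small remark: once you have shown in $(\ref{condition3})\Rightarrow(\ref{condition4})$ that $[\g,\g]=\g$ and $\r\cap[\g,\g]=0$, the display $[\g,\g]=(\r\cap[\g,\g])\subsetplus\g$ already exhibits $\g$ as a Levi component of itself, so the appeal to Theorem~\ref{maxlocss} at that point is unnecessary.
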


\begin{proof}
The equivalence of conditions (\ref{condition1}) and (\ref{condition3}) was already quoted from \cite{DP4} in Theorem~\ref{locsemisimple}.  
The equivalence of conditions (\ref{condition1}) and (\ref{condition4}) follows from Theorem~\ref{maxlocss}.

It is straightforward to check that  (\ref{condition1}) implies (\ref{condition5}).  Conversely, suppose that $\g = [\g,\g]$ and $\r= 0$.  Since the linear nilradical $\n_\g$ is by definition contained in $\r$, we have $\n_\g = 0$.    There exists a Levi component $\l \subset \g$, and one has $\g = [\g,\g] = \n_\g \subsetplus \l = \l$.  Thus $\g$ is  locally semisimple, and (\ref{condition5}) implies (\ref{condition1}).  
\end{proof}

We next turn to splittable subalgebras of $\gl(V,V_*)$.  We need preliminary material related to locally reductive finitary Lie algebras.

\begin{thm} \label{finitary}
Suppose a finitary Lie algebra $\g$ is a union of reductive subalgebras.  Then there exists an injective homorphism $\varphi \colon \g \hookrightarrow \z(\g) \oplus \bigoplus_i \g_i$ with $\g_i$ isomorphic to $\gl_\infty$, $\so_\infty$, $\sp_\infty$, or a finite-dimensional simple Lie algebra, such that $\varphi|_{\z(\g)} = \mathrm{id}$ and $\varphi([\g,\g])= \bigoplus_i[\g_i , \g_i]$.
\end{thm}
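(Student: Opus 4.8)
The plan is to embed $\g$ tightly into the stabilizer of a taut couple via Theorem~\ref{tautcouple}, to identify the resulting blocks using the normalizer computation of Lemma~\ref{infinitenormalizer}, and then to reassemble. I first record structure facts valid for any finitary $\g$ that is a union of reductive subalgebras $\g=\bigcup_n\g_n$. Since $[\g,\g]=\bigcup_n[\g_n,\g_n]$ is a union of finite-dimensional semisimple Lie algebras, it is locally semisimple, so by Theorem~\ref{locsemisimple} one may write $[\g,\g]=\bigoplus_{j\in I\sqcup\{0\}}\k_j$ with each $\k_j$ simple (finite-dimensional, or one of $\sl_\infty,\so_\infty,\sp_\infty$). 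Because every derivation of a direct sum of simple ideals preserves each summand, each $\k_j$ is an ideal of $\g$. Two Jacobi computations then show that the centralizer of $[\g,\g]$ in $\g$ equals $\z(\g)$ (if $[y,[\g,\g]]=0$, then for $x\in\g$ the element $[y,x]\in[\g,\g]$ still centralizes $[\g,\g]$ and hence lies in $\z([\g,\g])=0$) and that $\r=\z(\g)$ (since $\r\cap\g_n\subseteq\z(\g_n)$, an element of $\r$ is central in $\g_m$ for all large $m$, hence in $\g$); in particular $\z(\g)\cap[\g,\g]=0$ and $[\g,\g]$ is the Levi component of $\g$.

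Now apply Theorem~\ref{tautcouple} to $\k=\g$, obtaining a taut couple $\F,\G$ with $\g\subset\p:=\St_\F\cap\St_\G$ and $\n_\g=\n_\p\cap\g$; by Lemma~\ref{locreductivenilradical}, $\n_\g\subseteq\z(\g)$. By Proposition~\ref{splitexact}, $\p=\n_\p\subsetplus\bigoplus_{\gamma\in C}\gl(V_\gamma,(V_\gamma)_*)$ and the quotients $F''_\gamma/F'_\gamma$, $G''_\gamma/G'_\gamma$ are irreducible $\g$-modules. Put $\bar\g:=\g/\n_\g$; it embeds into $\bigoplus_{\gamma\in C}\gl(V_\gamma,(V_\gamma)_*)$, acting irreducibly on each natural and conatural summand, and $[\bar\g,\bar\g]\cong[\g,\g]=\bigoplus_j\k_j$ since $\n_\g\cap[\g,\g]=0$. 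By Proposition~\ref{glirreducibleeverywhere}, $[\bar\g,\bar\g]$ acts irreducibly on every $V_\gamma$ and $(V_\gamma)_*$ and lies in $\bigoplus_\gamma\sl(V_\gamma,(V_\gamma)_*)$; Proposition~\ref{irreducibleeverywhere} then partitions $C=C_0\sqcup\bigsqcup_{j\in I}C_j$ with $C_0=\{\gamma:\dim V_\gamma<\infty\}$, each $C_j$ finite, and for $j\in I$ the ideal $\k_j$ sits diagonally in $\bigoplus_{\gamma\in C_j}\sl(V_\gamma,(V_\gamma)_*)$, each projection being an isomorphism of $\k_j$ with $\sl(V_\gamma,(V_\gamma)_*)$, $\so(V_\gamma)$, or $\sp(V_\gamma)$.

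For $j\in I$ let $\bar\g^{(j)}$ be the image of $\bar\g$ under projection to $\bigoplus_{\gamma\in C_j}\gl(V_\gamma,(V_\gamma)_*)$; identifying the $V_\gamma$ over $\gamma\in C_j$ via the isomorphisms above (and $V_\gamma$ with $(V_\gamma)_*$ in the orthogonal and symplectic cases) exhibits $\k_j$ as a diagonally embedded copy of $\sl_\infty$ (resp.\ $\so_\infty$, $\sp_\infty$). Since $\k_j$ is an ideal of $\bar\g$, the subalgebra $\bar\g^{(j)}$ normalizes it, so Lemma~\ref{infinitenormalizer} forces $\bar\g^{(j)}$ into the diagonal copy $\g_j\cong\gl_\infty$ in the $\sl_\infty$ case and gives $\bar\g^{(j)}=\k_j=:\g_j$ in the orthogonal and symplectic cases; in every case $[\g_j,\g_j]=\k_j$ and $\z(\bar\g^{(j)})=0$ (the centralizer of $\k_j$ inside $\gl_\infty$ is $0$ by Schur's lemma). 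Let $\bar\g^{(0)}$ be the image of $\bar\g$ under projection to $\bigoplus_{\gamma\in C_0}\gl(V_\gamma,(V_\gamma)_*)$; then $[\bar\g^{(0)},\bar\g^{(0)}]=\k_0=\bigoplus_{\s}\g_\s$, a sum of finite-dimensional simple ideals, and the homomorphism $\bar\g^{(0)}\to\bigoplus_\s\mathrm{ad}(\g_\s)$ (which takes values in the direct sum because supports are finite) restricts to the identity on $\k_0$ with kernel $\z_{\bar\g^{(0)}}(\k_0)=\z(\bar\g^{(0)})$, so $\bar\g^{(0)}=\k_0\oplus\z(\bar\g^{(0)})$. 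Collecting the projections, $\bar\g$ embeds into $\bigl(\bigoplus_{j\in I}\g_j\bigr)\oplus\bigl(\bigoplus_\s\g_\s\bigr)\oplus\z(\bar\g^{(0)})$; if $\psi_0\colon\bar\g\to\bigoplus_i\g_i$ denotes the composite with the projection killing the last summand (the $\g_i$ being the $\g_j$ together with the $\g_\s$), then $\psi_0([\bar\g,\bar\g])=\bigoplus_i[\g_i,\g_i]$, and, using $\z(\bar\g^{(j)})=0$ and the splitting of $\bar\g^{(0)}$, one checks $\ker\psi_0=\z(\bar\g)$. Precomposing with $\g\to\bar\g$ yields $\psi\colon\g\to\bigoplus_i\g_i$ with $\psi([\g,\g])=\bigoplus_i[\g_i,\g_i]$ and $\ker\psi=\z(\g)$, since $\z(\bar\g)=\z(\g)/\n_\g$.

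It remains to produce a Lie-algebra retraction $p\colon\g\to\z(\g)$, that is, to split the central extension $0\to\z(\g)\to\g\to\g/\z(\g)\to0$; granting it, $\varphi:=(p,\psi)\colon\g\to\z(\g)\oplus\bigoplus_i\g_i$ is injective (as $\ker p\cap\ker\psi=\ker p\cap\z(\g)=0$), restricts to the identity on $\z(\g)$ (as $p|_{\z(\g)}=\mathrm{id}$ and $\z(\g)\subseteq\ker\psi$), and satisfies $\varphi([\g,\g])=\bigoplus_i[\g_i,\g_i]$ (as $p$ kills $[\g,\g]$), which is the claim. I expect this splitting to be the main obstacle: there is no Levi--Malcev decomposition $\g=\r\oplus\l$ in general---the paper itself notes that $\r\oplus\l$ can be a proper subalgebra of $\g$---and, because derivations of $\sl_\infty$ need not be inner, an element of $\g$ need not be the sum of an element of $[\g,\g]$ and a centralizing element. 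Since $\n_\g\subseteq\z(\g)$ is central and the finite-dimensional blocks split as $\bar\g^{(0)}=\k_0\oplus\z(\bar\g^{(0)})$, the problem reduces to splitting off the centre of $\bar\g$; the content is then to verify that the ``extra'' toral directions of $\g$---those lying outside $\r\oplus[\g,\g]$---are exactly the ones absorbed by enlarging the $\sl_\infty$-factors to the $\gl_\infty$-factors $\g_j$, so that only genuinely central directions survive after projecting them away.
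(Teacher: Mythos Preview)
Your argument is essentially correct and largely parallel to the paper's, but you leave the final step—the retraction $p\colon\g\to\z(\g)$—unfinished, and you misjudge its difficulty. It is in fact immediate from what you have already proved: since $\z(\g)\cap[\g,\g]=0$, any vector-space complement $W$ of $\z(\g)$ in $\g$ chosen to contain $[\g,\g]$ satisfies $[\g,W]\subset[\g,\g]\subset W$, hence is an ideal, and $\g=\z(\g)\oplus W$ as Lie algebras. The projection onto $\z(\g)$ along $W$ is the desired Lie-algebra retraction. Your worry about the failure of Levi--Malcev is a red herring: one does not need $W=[\g,\g]$, only $W\supset[\g,\g]$, and no cohomological obstruction arises because the kernel is central and abelian.

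The paper reaches the same splitting but performs it \emph{first} rather than last: observing that $\z(\g)\cap\g_n\subset\z(\g_n)$ for a reductive exhaustion $\g=\bigcup_n\g_n$, it inductively chooses nested reductive complements $\k_n$ with $\g_n=(\z(\g)\cap\g_n)\oplus\k_n$ and sets $\k=\bigcup_n\k_n$, so that $\g=\z(\g)\oplus\k$ with $\z(\k)=0$. The taut-couple and normalizer analysis is then carried out on the centerless $\k$, where $\n_\k=0$ by Lemma~\ref{locreductivenilradical}, so one need not drag the quotient $\bar\g=\g/\n_\g$ through the block computation. Your route works too, and also differs in the finite-dimensional block: the paper passes to $\bigoplus_{\gamma\in C_0}\sl\big(F''_\gamma/F'_\gamma,G''_\gamma/G'_\gamma\big)$ by quotienting out the centers of the finite $\gl$'s and argues directly that the image is locally semisimple, whereas you use the adjoint map into the simple summands of $\k_0$. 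Both treatments are valid.
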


\begin{proof}
Note first that there is a subalgebra $\k \subset \g$ such that $\k$ is a union of reductive subalgebras, $\z(\k) = 0$, and $\g = \z(\g) \oplus \k$.  Indeed, let $\g = \bigcup_i \g_i$ be an exhaustion of $\g$ by (finite-dimensional) reductive Lie algebras.  For each $i$, one has $\z(\g) \cap \g_i \subset \z(\g_i)$.  Hence one may inductively choose nested reductive subalgebras $\k_i \subset \g_i$ such that $\g_i = (\z(\g) \cap \g_i) \oplus \k_i$.    Then $\g = \z(\g) \oplus \k$, where $\k = \bigcup_i \k_i$.

For the rest of the proof we assume $\z(\g) = 0$.  This implies $\n_\g = 0$ via Lemma~\ref{locreductivenilradical}.  Let $\F$, $\G$ be a taut couple as given by Theorem~\ref{tautcouple}. Then $\g \subset \p$ and $\n_\g = \n_\p \cap \g$, where $\p := \St_\F \cap \St_\G$.  Moreover, since $\n_\g = 0$, we have an injective homomorphism $\g \hookrightarrow {\p} / {\n_\p} = \bigoplus_{\gamma \in C} \gl\big({F''_\gamma} / {F'_\gamma} , {G''_\gamma} / {G'_\gamma}\big)$, where the notation is as in Proposition~\ref{splitexact}.  Since $\g$ has trivial center, the homomorphism $\varphi \colon \g \hookrightarrow ({\p} / {\n_\p}) / \z({\p} / {\n_\p})$ is also injective.  

One has $$({\p} / {\n_\p}) / \z({\p} / {\n_\p}) =  \bigoplus_{\gamma \in C_0} \sl\big({F''_\gamma} / {F'_\gamma} , {G''_\gamma} / {G'_\gamma}\big) \oplus  \bigoplus_{\gamma \in C \setminus C_0} \gl\big({F''_\gamma} / {F'_\gamma} , {G''_\gamma} / {G'_\gamma}\big),$$ where $C_0$ is the set of $\gamma \in C$ such that ${F''_\gamma} / {F'_\gamma}$ is finite dimensional.  
By Proposition~\ref{glirreducibleeverywhere} $[\g,\g]$ acts irreducibly on ${F''_\gamma} / {F'_\gamma}$ and ${G''_\gamma} / {G'_\gamma}$ for all $\gamma \in C$.  Proposition~\ref{irreducibleeverywhere} implies that $\varphi([\g,\g])$ is locally semisimple.  

Let $\k_0$ denote the direct sum of finite-dimensional direct summands of $\varphi([\g,\g])$, and let $\k_i$ for $i \in I$ be the infinite-dimensional direct summands of $\varphi([\g,\g])$. 
Proposition~\ref{irreducibleeverywhere} implies
that $C \setminus C_0$ is a disjoint union of finite subsets $C_i$ such that 
$\k_i \subset \bigoplus_{\gamma \in C_i} \sl\big({F''_\gamma} / {F'_\gamma} , {G''_\gamma} / {G'_\gamma}\big)$.  Furthermore, if $i \in I$ and $\gamma \in C_i$, then the projection of $\k_i$ to $\sl\big({F''_\gamma} / {F'_\gamma} , {G''_\gamma} / {G'_\gamma}\big)$ yields an isomorphism of $\k_i$ with $\sl\big({F''_\gamma} / {F'_\gamma}, {G''_\gamma} / {G'_\gamma}\big)$, $\so\big({F''_\gamma} / {F'_\gamma}\big)$, or $\sp\big({F''_\gamma} / {F'_\gamma}\big)$, where in the final two cases one has an identification of ${F''_\gamma} / {F'_\gamma}$ and ${G''_\gamma} / {G'_\gamma}$.

Consider the projections $\pi_i \colon ({\p} / {\n_\p} ) / \z({\p} / {\n_\p}) \rightarrow \bigoplus_{\gamma \in C_i}  \gl \big({F''_\gamma} / {F'_\gamma} , {G''_\gamma} / {G'_\gamma} \big)$ and $\pi_0 \colon ({\p} / {\n_\p} ) / \z({\p} / {\n_\p}) \rightarrow \bigoplus_{\gamma \in C_0}  \sl \big({F''_\gamma} / {F'_\gamma} , {G''_\gamma} / {G'_\gamma} \big)$.    We have
$$\bigoplus_{i \in I \sqcup \{ 0 \} } \k_i \subset \varphi (\g) \subset \bigoplus_{i \in I \sqcup \{0\} } \pi_i  \circ \varphi ( \g).$$ 
Moreover, $[\pi_i \circ \varphi (\g) , \pi_i \circ \varphi (\g)] = \pi_i \circ \varphi ([\g ,\g]) = \k_i$.  This shows in particular that $ \pi_i \circ \varphi ( \g)$ is contained in the normalizer of $\k_i$ for $i \in I \sqcup \{0\}$.  Lemma~\ref{infinitenormalizer} implies that the normalizer of $\pi_i \circ \varphi (\g)$ for $i \in I$ is equal to   $\k_i $ unless $\k_i \cong \sl_\infty$, in which case the normalizer of $\pi_i \circ \varphi (\g)$ is the diagonal copy of $\gl_\infty$ containing $\k_i$.   Observe that $\pi_0 \circ \varphi (\g)$ is a union of reductive subalgebras and acts irreducibly on the quotients ${F''_\gamma} / {F'_\gamma}$ and ${G''_\gamma} / {G'_\gamma}$ for all $\gamma \in C_0$.  By finite-dimensional Lie theory, $\pi_0 \circ \varphi (\g) \subset \bigoplus_{\gamma \in C_0}  \sl \big({F''_\gamma} / {F'_\gamma} , {G''_\gamma} / {G'_\gamma} \big)$ is locally semisimple.  Therefore $\pi_0 \circ \varphi (\g) = [\pi_0 \circ \varphi (\g) , \pi_0 \circ \varphi (\g) ] = \pi_0 \circ \varphi ( [\g,\g]) = \k_0$.

We therefore take $\g_0$ to be $\k_0$, and we take $\g_i$ to be the normalizer in $\bigoplus_{\gamma \in C_i}  \gl \big({F''_\gamma} / {F'_\gamma} , {G''_\gamma} / {G'_\gamma} \big)$ of $\k_i$ for $i \in I$.  One may check that $[\g_i, \g_i] = \k_i$ for $i \in I \sqcup \{0\}$, and we have 
$$\bigoplus_i [\g_i, \g_i] \subset \varphi(\g) \subset \bigoplus_i \g_i.$$
It follows from the above inclusions that $\varphi ([\g,\g]) = \bigoplus_i [\g_i , \g_i]$.
\end{proof}

\begin{cor} \label{unionreductive}
A finitary Lie algebra which is a union of reductive subalgebras is locally reductive.
\end{cor}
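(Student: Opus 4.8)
The plan is to use Theorem~\ref{finitary} to place $\g$ inside an explicitly locally reductive Lie algebra in which $\g$ becomes a splittable subalgebra, and then to build a good exhaustion of $\g$ by hand.

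So I would first fix a faithful finitary representation and apply Theorem~\ref{finitary}, identifying $\g$ with a subalgebra $\h:=\varphi(\g)$ of $\k:=\z(\g)\oplus\bigoplus_i\g_i$, where each $\g_i$ is $\gl_\infty$, $\so_\infty$, $\sp_\infty$, or finite-dimensional simple, where $\z(\g)$ is the central direct summand (so $\z(\g)\subset\h$ because $\varphi|_{\z(\g)}=\mathrm{id}$), and where $[\h,\h]=\bigoplus_i[\g_i,\g_i]=:\s$. In particular $\z(\g)\oplus\s\subset\h$ and the quotient $\h/\s$ is abelian. The algebra $\k$ is visibly locally reductive: each $\g_i$ has a nested exhaustion by finite-dimensional reductive subalgebras each reductive in its successor (take $\gl_n\subset\gl_{n+1}$ and the analogues), $\z(\g)$ is an exhausting union of finite-dimensional abelian subalgebras, and the ``diagonal'' finite partial sums of these exhaustions are finite-dimensional reductive subalgebras of $\k$, each reductive in its successor. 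Next I would check that $\h$ is a \emph{splittable} subalgebra of $\k$: given $X\in\h$, write its $\mathrm{ad}$-Jordan decomposition in $\k$ as $X=X_{ss}+X_{nil}$ with both parts in $\k$; decomposing $X_{nil}$ along $\k=\z(\g)\oplus\bigoplus_i\g_i$, uniqueness of the Jordan decomposition forces the $\z(\g)$-component of $X_{nil}$ to vanish (a nonzero central part could be absorbed into $X_{ss}$), while the component of $X_{nil}$ in each factor $\g_i\cong\gl_\infty$ must be traceless (a non-traceless element of $\gl_\infty$ acts in the adjoint representation with nonzero semisimple part). Hence $X_{nil}\in\bigoplus_i[\g_i,\g_i]=\s\subset\h$, so that $X_{ss}=X-X_{nil}\in\h$ as well.

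The main work is then to promote the abelian quotient $\h/\s$ to a genuine toral complement. Fixing a maximal toral subalgebra $\mathfrak{d}_i$ of each $\g_i$ (with $\mathfrak{d}_i\cap[\g_i,\g_i]$ a splitting Cartan subalgebra of $[\g_i,\g_i]$), the subalgebra $L:=\h\cap\big(\z(\g)\oplus\bigoplus_i\mathfrak{d}_i\big)$ of $\h$ is abelian, consists of $\mathrm{ad}$-semisimple elements of $\k$, and surjects onto $\h/\s$: replacing each $\g_i$-component of a given $X\in\h$ by an element of $\mathfrak{d}_i$ of the same trace changes $X$ only by an element of $\s\subset\h$. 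A vector-space complement $\t$ in $L$ of $L\cap\s$ is then an abelian toral subalgebra of $\h$ with $\h=\s\subsetplus\t$. I would next choose a $\t$-stable exhaustion $\s=\bigcup_n\s_n$ by finite-dimensional semisimple subalgebras each reductive in $\k$ (adapting the standard root-space exhaustions of the $[\g_i,\g_i]$ to the $\mathfrak{d}_i$) and an exhaustion $\t=\bigcup_n\t_n$ by finite-dimensional subalgebras, and set $\h_n:=\s_n\subsetplus\t_n$. Each $\h_n$ is finite-dimensional and reductive: since every derivation of the semisimple algebra $\s_n$ is inner, there is a linear map $d\mapsto D_d$ from $\t_n$ to $\s_n$ with $\mathrm{ad}(d)|_{\s_n}=\mathrm{ad}(D_d)|_{\s_n}$, and $\{\,d-D_d:d\in\t_n\,\}$ is then an abelian subalgebra centralizing $\s_n$, exhibiting $\h_n$ as the direct sum of an abelian and a semisimple Lie algebra. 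One has $\h_n\subset\h_{n+1}$ and $\bigcup_n\h_n=\s+\t=\h$, and $\h_n$ is reductive in $\h_{n+1}$: the abelian subalgebra $\{\,d-D_d\,\}$ consists of $\mathrm{ad}$-semisimple, mutually commuting operators on $\k$ commuting with the semisimple ideal $\s_n$, which itself acts semisimply on $\k$, so $\h_n$ acts semisimply on $\k$ and hence on its submodule $\h_{n+1}$. This shows $\g\cong\h$ is locally reductive. I expect the only genuine obstacle to be the bookkeeping in this last step — realizing the ``trace directions'' of the $\gl_\infty$-summands as a single abelian subalgebra of $\mathrm{ad}$-semisimple elements complementing $\s$ and stabilizing a suitable exhaustion of $\s$; granting this, the reductivity of the $\h_n$ and the reductive-in-successor property are formal.
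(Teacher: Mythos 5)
Your argument is correct, and it is the natural derivation from Theorem~\ref{finitary} — the paper states Corollary~\ref{unionreductive} without proof precisely because this derivation is regarded as routine. Every step checks out: $\h=\varphi(\g)$ sits between $\z(\g)\oplus\s$ and $\k$; splittability of $\h$ in $\k$ holds since the $\k$-Jordan decomposition is componentwise and nilpotent parts have zero $\z(\g)$-component and zero trace, hence lie in $\s\subset\h$; and $L=\h\cap\bigl(\z(\g)\oplus\bigoplus_i\mathfrak{d}_i\bigr)$ does surject onto $\h/\s$ by the trace-replacement observation.

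Two small points are asserted rather than justified and should be spelled out. That $\mathfrak{a}_n:=\{d-D_d\,\colon d\in\t_n\}$ is abelian follows because $[d-D_d,\,d'-D_{d'}]$ lies in $\s_n$ (as $[d,d']=0$ and the remaining terms are in $\s_n$) and simultaneously centralizes $\s_n$, hence vanishes since $\z(\s_n)=0$. And $d-D_d$ is $\ad$-semisimple on $\k$ because $\ad(D_d)|_{\s_n}=\ad(d)|_{\s_n}$ is semisimple, so $D_d$ is a semisimple element of the semisimple Lie algebra $\s_n$ and therefore acts semisimply on the locally finite $\s_n$-module $\k$; moreover $[d,D_d]=\ad(D_d)(D_d)=0$, so $d-D_d$ is a difference of commuting semisimple elements of $\k$. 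With these in place, each $\h_n=\s_n\subsetplus\t_n$ is reductive and reductive in $\k$, hence reductive in $\h_{n+1}$, which completes the proof.
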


The following theorem strengthens Theorem \ref{existence} under the assumption that $\g$ is splittable.

\begin{thm} \label{locreductivepart}
Suppose $\g \subset \gl(V,V_*)$ is a splittable subalgebra.  Then there exists a locally reductive subalgebra $\g_{red}$, called a \emph{locally reductive part}, of $\g$ such that $\g = \n_\g \subsetplus \g_{red}$.  Furthermore, $\l := [\g_{red},\g_{red}]$ is a Levi component of $\g$, and there exists a toral subalgebra $\t \subset \g$ such that $\g_{red} = \l \subsetplus \t$.
\end{thm}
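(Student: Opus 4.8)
The plan is to run the construction of Theorem~\ref{tautcouple}, obtain a taut couple $\F$, $\G$ with $\g \subset \p := \St_\F \cap \St_\G$ and $\n_\g = \n_\p \cap \g$, and then exploit splittability to lift the reductive quotient $\p / \n_\p = \bigoplus_{\gamma \in C} \gl\big({F''_\gamma}/{F'_\gamma}, {G''_\gamma}/{G'_\gamma}\big)$ back into $\g$. By Proposition~\ref{splitexact}\eqref{seconditem} we have the vector-space decompositions $V = \bigoplus_{\alpha \in A} V_\alpha$, $V_* = \bigoplus_{\beta \in B}(V_\beta)_*$, and $\p = \n_\p \subsetplus \bigoplus_{\gamma \in C}\gl(V_\gamma,(V_\gamma)_*)$, with $\n_\p = \sum_\alpha F''_\alpha \otimes (F''_\alpha)^\perp$. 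Write $\pi \colon \p \to \p/\n_\p$ for the quotient map and $\rho \colon \p \to \bigoplus_{\gamma\in C}\gl(V_\gamma,(V_\gamma)_*)$ for the projection coming from this particular splitting, so $\pi$ and $\rho$ agree under the canonical identification.

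\smallskip

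First I would construct $\g_{red}$. The idea is that $\g_{red} := \g \cap \bigoplus_{\gamma \in C}\gl(V_\gamma,(V_\gamma)_*)$ will work, provided $\rho(\g) \subseteq \g$, i.e.\ provided that for each $X \in \g$ the "reductive part" $\rho(X) = X - (\text{$\n_\p$-component of } X)$ already lies in $\g$. This is exactly where splittability enters. The key observation is that $\n_\p$ consists of nilpotent elements of $\gl(V,V_*)$ and $\bigoplus_\gamma \gl(V_\gamma,(V_\gamma)_*)$ "looks like" a locally reductive algebra, so writing $X = N + R$ with $N \in \n_\p$, $R \in \bigoplus_\gamma\gl(V_\gamma,(V_\gamma)_*)$ should coincide — after passing to a finite-dimensional reductive subalgebra containing the relevant data — with an honest Jordan-type decomposition, forcing $N, R \in \g$ when $\g$ is splittable. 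Concretely: given $X \in \g$, choose a finite-dimensional subalgebra $\g_0 \ni X$ of $\g$; enlarge the ambient picture to a finite-dimensional $\p_0 \subset \p$ that is reductive in its action and contains $X$, $N$, $R$, and is compatible with the splitting; inside $\p_0$ the decomposition $X = N + R$ is the sum of the $\n_{\p_0}$-part and a reductive complement, and since $\g_0$ is splittable (being a subalgebra of the splittable $\g$) a standard finite-dimensional argument gives $R \in \g_0 \subset \g$ — hence also $N \in \g$. Therefore $\g = \n_\g \oplus \g_{red}$ as vector spaces; that $\n_\g$ is an ideal is Proposition~\ref{nilcommutator}, and $\g_{red}$ is a subalgebra because it is $\g \cap (\text{subalgebra of }\p)$, so $\g = \n_\g \subsetplus \g_{red}$.

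\smallskip

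Next I would identify $\g_{red}$ as locally reductive and extract the Levi component and torus. Since $\n_\g = \n_\p \cap \g$, the map $\pi$ restricts to an isomorphism $\g_{red} \xrightarrow{\sim} \pi(\g) \subset \bigoplus_\gamma \gl(V_\gamma,(V_\gamma)_*)$. By Theorem~\ref{tautcouple}, $\pi(\g)$ acts irreducibly on each ${F''_\gamma}/{F'_\gamma}$ and ${G''_\gamma}/{G'_\gamma}$; hence by Proposition~\ref{glirreducibleeverywhere} so does $[\pi(\g),\pi(\g)] = \pi([\g,\g])$, and by Proposition~\ref{irreducibleeverywhere} the latter is locally semisimple. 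As in the proof of Theorem~\ref{existence}, $\l_0 := \pi^{-1}\big(\pi([\g,\g])\big) \cap \g_{red}$ — more carefully, the pullback of $\pi([\g,\g])$ obtained exactly as in Theorem~\ref{existence} via nested finite-dimensional Levi components — is a Levi component of $\g$ contained in $\g_{red}$; write $\l := [\g_{red},\g_{red}]$, which coincides with this pullback since $\pi(\g_{red}) = \pi(\g)$ and $[\pi(\g),\pi(\g)]$ is the locally semisimple part, so $\l$ is a Levi component of $\g$. Finally, $\g_{red}$ is a union of reductive subalgebras: pick an exhaustion $\g_{red} = \bigcup_n \h_n$ by finite-dimensional subalgebras; each $\h_n$ is a splittable subalgebra of $\g_{red}$ lying in finitely many of the factors $\gl(V_\gamma,(V_\gamma)_*)$ plus finitely many basis directions, hence is reductive in some finite-dimensional reductive ambient, and one arranges the exhaustion so the $\h_n$ are reductive. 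Then Corollary~\ref{unionreductive} gives that $\g_{red}$ is locally reductive. For the torus: $\g_{red}$ being locally reductive with Levi component $\l = [\g_{red},\g_{red}]$, choose inductively a nested family of tori $\t_n$ in finite-dimensional reductive exhausting subalgebras of $\g_{red}$ complementing $[\h_n,\h_n]$ and compatible with $\l$; their union $\t := \bigcup_n \t_n$ is a toral subalgebra of $\g$ with $\g_{red} = \l \subsetplus \t$, using that $\g_{red}/[\g_{red},\g_{red}]$ is abelian and consists of semisimple elements.

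\smallskip

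The main obstacle is the first step — showing $\rho(\g) \subseteq \g$, equivalently that the $\n_\p$/reductive splitting of $\p$ restricts to $\g$. The subtlety is that this splitting is not a priori canonical (it depends on the choice of complements $V_\alpha$, $(V_\beta)_*$), so one must argue that the decomposition $X = N + R$ nonetheless has an intrinsic meaning — namely that it is compatible with Jordan decomposition in suitable finite-dimensional reductive subalgebras — in order to invoke splittability of $\g$. Getting the finite-dimensional reduction right (choosing $\p_0$ reductive, containing $X$, $N$, $R$, and respecting enough of the flag structure so that $\n_{\p_0} = \n_\p \cap \p_0$ and the complement is genuinely reductive in $\p_0$) is the technical heart; once that is in place, everything else is an assembly of already-established results (Theorems~\ref{tautcouple}, \ref{existence}, \ref{finitary}, Corollary~\ref{unionreductive}, Propositions~\ref{splitexact}, \ref{glirreducibleeverywhere}, \ref{irreducibleeverywhere}) plus routine inductive constructions of nested exhaustions.
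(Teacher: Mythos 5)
There is a genuine gap in the first step, and the claimed ``standard finite-dimensional argument'' does not exist. Your plan is to show $\rho(\g)\subseteq\g$ by arguing that, for $X\in\g$, the splitting $X=N+R$ with $N\in\n_\p$ and $R\in\bigoplus_\gamma\gl\big(V_\gamma,(V_\gamma)_*\big)$ is ``compatible with Jordan decomposition'' and then invoking splittability. But the block decomposition is not the Jordan decomposition: $R$ need not be semisimple (it is merely block-diagonal, and its diagonal blocks may themselves have nilpotent Jordan parts), and $N$ need not commute with $R$. So knowing $X_{ss},X_{nil}\in\g$ tells you nothing about $N$ and $R$. Even in finite dimensions, $\rho(\g)\subseteq\g$ can fail: take $\g=\C(E_{11}+E_{12})\subset\gl_2$, which is abelian, consists of semisimple elements, and is splittable; with the flag $0\subset\C e_1\subset\C^2$ and the obvious coordinate complements, $\rho(E_{11}+E_{12})=E_{11}\notin\g$. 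A further issue in the same paragraph: you assert ``$\g_0$ is splittable (being a subalgebra of the splittable $\g$),'' but a subalgebra of a splittable subalgebra need not itself be splittable, so even the premise of the supposed finite-dimensional lemma is not secured by what you wrote.

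The paper sidesteps all of this by never fixing a single splitting $\rho$ of $\p$. Instead it exhausts $\pi(\g)\subset\p/\n_\p$ by finite-dimensional $\q_j$ adapted to the irreducibility data (Proposition~\ref{irreducibleeverywhere}), lifts these to nested finite-dimensional \emph{splittable} subalgebras $\g_j\subset\g$ with $\pi(\g_j)=\q_j$ and $\n_{\g_j}=\n_\g\cap\g_j$, and then chooses inductively nested subalgebras $\m_j\subset\g_j$ maximal among those acting semisimply on $V$. The key input you are missing is Mostow's conjugacy theorem (Theorem~4.1 of~\cite{Mostow}): it guarantees that each $\m_j$ is a reductive part of $\g_j$, i.e.\ $\g_j=\n_{\g_j}\subsetplus\m_j$. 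Taking $\g_{red}:=\bigcup_j\m_j$ and appealing to Corollary~\ref{unionreductive} yields the locally reductive complement; the Levi component and torus are then assembled essentially as in your last paragraph. Your proposal could be salvaged only if you showed that the complements $V_\alpha,(V_\beta)_*$ in Proposition~\ref{splitexact} can be \emph{chosen} so that $\rho(\g)\subseteq\g$, but that is not argued, and proving it would amount to redoing the Mostow-type conjugacy argument in disguise.
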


\begin{proof}
Let $\pi : \g \rightarrow {\p} / {\n_\p} = \bigoplus_{\gamma \in C} \gl  \big(F''_\gamma / F'_\gamma , G''_\gamma / G'_\gamma \big)$ be as in the proof of Theorem~\ref{existence}.  
Since $\g$ acts irreducibly on $F''_\gamma  / F'_\gamma$ and $G''_\gamma / G'_\gamma$ for all $\gamma \in C$, Proposition~\ref{irreducibleeverywhere} implies the existence of
an exhaustion $ \bigoplus_{\gamma \in C} \gl  \big(F''_\gamma / F'_\gamma , G''_\gamma / G'_\gamma \big) = \bigcup_{j \in \Z_{\geq 0}} \q_j$, where each $\q_j$ is a finite-dimensional Lie algebra isomorphic to a direct sum of general linear Lie algebras such that $\pi(\g) \cap \q_j$ acts irreducibly on the natural and conatural representations of each direct summand of $\q_j$.

Define finite-dimensional splittable subalgebras $\g_j \subset \g$ with $\pi(\g_j) = \q_j$ inductively as follows.  Suppose one is given a finite-dimensional splittable subalgebra $\g_{j-1}$ with $\pi(\g_{j-1}) =  \q_{j-1}$.  Since $\pi^{-1} (\q_j)$ is a splittable subalgebra of $\g$, there exists a finite-dimensional splittable subalgebra $\g_j \subset \pi^{-1} (\q_j)$ containing $\g_{j-1}$ with $\pi(\g_j) = \q_j$.  

We next show that $\n_{\g_j} = \n_\g \cap \g_j$ for each $j$.  The containment $ \n_\g \cap \g_j \subset \n_{\g_j}$ follows from Lemma~\ref{nilradicalcontainment}.  The image $\pi(\g_j)$ in $\q_j$ acts irreducibly on the natural and conatural representations of $\q_j$.  Hence any nilpotent ideal of $\pi(\g_j)$ is contained in the center of $\q_j$.  But every nilpotent element of $\pi^{-1} (\z(\q_j))$ is in $\n_\g$, and the claim $\n_{\g_j} = \n_\g \cap \g_j$ follows.  As a result, $\n_{\g_{j-1}} \subset \n_{\g_j}$.

We now choose inductively subalgebras $\m_j \subset \g_j$ such that $\m_{j-1} \subset \m_j$ and $\m_j$ is maximal among the subalgebras of $\g_j$ which act semisimply on $V$.
We claim that $\m_j$ is a reductive part of $\g_j$.  This follows from Theorem 4.1 of \cite{Mostow}, which asserts that
any two subalgebras of a Lie algebra $\k \subset \gl_n$ maximal among those that act semisimply on the natural representation of $\gl_n$ are conjugate under an inner automorphism from the radical of $[\k,\k]$. As $\g_j$ is splittable, it has some reductive part $({\g_j})_{red}$, i.e.\ $\g_j = \n_{\g_j} \subsetplus ({\g_j})_{red}$.  Because $(\g_j)_{red}$ is maximal among the subalgebras of $\g_j$ which act semisimply on $V$, it is conjugate to $\m_j$, and hence the latter is also a reductive part of $\g_j$.

Let $\g_{red} := \bigcup_j \m_j$.  By Corollary~\ref{unionreductive}, $\g_{red}$ is locally reductive.  
The fact that $\g_j = \n_{\g_j} \subsetplus \m_j$ for all $j$ implies $\g = \n_\g + \bigcup_j \g_j = \n_\g \subsetplus \g_{red}$.  Note that  $\l := [\g_{red} , \g_{red}]$ is a Levi component of $\g$.
Let $\t_j$ be nested maximal toral subalgebras of $\m_j$, and take $\t := \bigcup_j \t_j$.  As $\m_j = [\m_j , \m_j] + \t_j$ for each $j$, we have $\g_{red} = \l + \t$.  Let $\t'$ be a vector space complement of $\t \cap \l$ in $\t$.  Then $\g_{red} = \l \subsetplus \t'$.
\end{proof}

\begin{defn} \label{traceconditions}
Let $\g$ be a splittable subalgebra of  $\gl(V,V_*)$.  A subalgebra $\k \subset \g$ is said to be \emph{defined by trace conditions} on $\g$ if $$\n_\g + [\g,\g] \subset \k.$$
\end{defn}

That is, a subalgebra $\k$ of a splittable subalgebra $\g$ of $\gl_\infty$ is defined by trace conditions if and only if $\k$ contains a Levi component of $\g$ and the linear nilradical $\n_\g$.

\begin{prop} \label{toraltrace}
Let $\g$ be a splittable subalgebra of  $\gl(V,V_*)$, and $\k$ a subalgebra defined by trace conditions on $\g$.  Then $\k$ is splittable.  Furthermore, $\g$ and $\k$ have the same linear nilradical and Levi components.
\end{prop}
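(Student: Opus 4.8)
The plan is to read off the structure of $\k$ from Theorem~\ref{locreductivepart} and the modular law, and then to reduce splittability to a finite-dimensional statement. By Theorem~\ref{locreductivepart} we may write $\g = \n_\g \subsetplus \g_{red}$ with $\g_{red} = \l \subsetplus \t'$, where $\l = [\g_{red},\g_{red}]$ is a Levi component of $\g$ and $\t'$ is a toral subalgebra; recall from the remarks after Theorem~\ref{existence} that $\n_\g + [\g,\g] = \n_\g \subsetplus \l$. Since $\n_\g \subsetplus \l \subseteq \k \subseteq \g$ and, as a vector space, $\g = (\n_\g \subsetplus \l) \oplus \t'$, the modular law gives $\k = (\n_\g \subsetplus \l) \oplus \t''$ with $\t'' := \k \cap \t'$. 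As a subalgebra of the toral algebra $\t'$, the space $\t''$ is toral, and $\t'' \cap \l = 0$; since $\t'$ normalizes $\l$ and $\n_\g$ is an ideal of $\g$, the space $\q := \l \subsetplus \t''$ is a subalgebra and $\k = \n_\g \subsetplus \q$ with $\n_\g$ an ideal of $\k$.

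For splittability I would first observe that $\q$ is locally reductive: with the exhaustion $\g = \bigcup_j \g_j$, $\g_j = \n_{\g_j} \subsetplus \m_j$, and nested maximal toral subalgebras $\t_j \subseteq \m_j$ from the proof of Theorem~\ref{locreductivepart} (so that $\l = \bigcup_j [\m_j,\m_j]$ and $\t''_j := \t'' \cap \m_j \subseteq \t_j$ exhausts $\t''$), one has $\q = \bigcup_j \q_j$ with $\q_j := [\m_j,\m_j] + \t''_j$, and each $\q_j$ is reductive (modulo $[\m_j,\m_j]$ the Cartan direction $\t''_j$ maps into $\z(\m_j)$) and acts semisimply on $V$. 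By Corollary~\ref{unionreductive} $\q$ is locally reductive, and for $X \in \q_j$ the operator Jordan decomposition of $X$ on $V$ has both parts in $\q_j$, so $\q$ is splittable. Now $\k = \bigcup_j \k_j$ with $\k_j := \n_{\g_j} \subsetplus \q_j$ a nested family of finite-dimensional subalgebras, so it suffices to prove each $\k_j$ is splittable. This is the finite-dimensional case of the proposition: $\k_j$ lies between $\n_{\g_j} + [\g_j,\g_j]$ and the splittable $\g_j$, its linear nilradical is $\n_{\g_j}$, and $\k_j / \n_{\g_j} \cong \q_j$ is reductive acting semisimply on $V$; such a $\k_j$ is splittable by classical finite-dimensional theory — given $X \in \k_j$ one conjugates by a suitable unipotent element of $\exp(\n_{\g_j})$, which normalizes $\k_j$ because $\n_{\g_j}$ is an ideal, to bring $X$ into a form in which its semisimple and nilpotent operator parts manifestly lie in $\k_j$.

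It remains to compare linear nilradicals and Levi components. One has $\n_\g \subseteq \n_\k$ since $\n_\g$ is an ideal of $\k$ consisting of nilpotent operators. Conversely, let $\pi \colon \k \to \q$ be the projection with kernel $\n_\g$ and take $X \in \n_\k$. Then $\pi(\n_\k)$ is a locally nilpotent ideal of the locally reductive algebra $\q$, hence central (as in the proof of Lemma~\ref{locreductivenilradical}); since each element of $\z(\q)$ lies in some reductive $\q_j$ acting semisimply on $V$, $\pi(\n_\k)$ consists of semisimple operators. But $\pi(X) = X - n$ with $n \in \n_\g \subseteq \n_\k$, so $\pi(X) \in \n_\k$ is a nilpotent operator; therefore $\pi(X) = 0$ and $X = n \in \n_\g$. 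Thus $\n_\k = \n_\g$. Writing $\k = \n_\k \subsetplus \l \subsetplus \t''$, we get $[\k,\k] \subseteq \n_\k + \l$ (as $\n_\k$ is an ideal, $\t''$ normalizes $\l$, and $\l = [\l,\l]$), while $\l \subseteq [\k,\k]$ and $\l \cap \n_\k = 0$; the modular law gives $[\k,\k] = (\n_\k \cap [\k,\k]) \subsetplus \l$, and since $\n_\k \cap [\k,\k] = \r_\k \cap [\k,\k]$ by Proposition~\ref{nilcommutator}, the Levi component $\l$ of $\g$ is a Levi component of $\k$; by Theorem~\ref{maxlocss} every Levi component of $\k$ is isomorphic to it.

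The step I expect to be the main obstacle is the finite-dimensional splittability of $\k_j$: although classical, it is the one place where the interaction of the linear nilradical with toral directions in $\t''$ that need not centralize it must be controlled, and it is what forces the unipotent-conjugation argument above. The rest is essentially formal once Theorem~\ref{locreductivepart} is available — the modular-law identifications and the nilradical and Levi comparisons require no new ideas.
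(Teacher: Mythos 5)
Your proof is correct in substance, and you supply the nilradical and Levi comparisons that the paper explicitly omits, but for the splittability step you take a noticeably longer road than the paper does. The paper's argument, after arriving at the vector space decomposition $\k = \n_\g \oplus \l \oplus \t'$ with $\t' := \t \cap \k$ (which is exactly the modular-law decomposition you derive), simply observes that $\n_\g$ (all nilpotent), $\l$ (locally semisimple), and $\t'$ (toral) are each splittable subalgebras of $\gl(V,V_*)$, so that $\k$ is generated by splittable subalgebras and is therefore splittable by \cite[Ch.~VII, \S 5, Cor.~1]{Bourbaki}. You instead avoid citing this corollary and re-derive its content by hand: you exhibit $\k$ as an exhaustion $\bigcup_j \k_j$ with $\k_j = \n_{\g_j} \subsetplus \q_j$ and reduce to the finite-dimensional statement that each $\k_j$ is splittable. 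This is a legitimate alternative since splittability is a local (finite-dimensional) property, but your finite-dimensional step --- ``one conjugates by a suitable unipotent element of $\exp(\n_{\g_j})$'' --- is the vaguest point of the argument and is essentially a sketch of the proof of the Bourbaki result in that setting; as written it does not explain why the conjugating element exists or why the conjugated element has Jordan components manifestly in $\k_j$. Since the paper exists in a framework that already invokes that corollary of Bourbaki freely, citing it directly is both shorter and more robust. The remaining work you do (showing $\n_\k = \n_\g$ by observing that $\pi(\n_\k)$ is simultaneously a locally nilpotent central ideal of the locally reductive $\q$ and nilpotent on $V$, hence zero; and showing that $\l$ is a Levi component of $\k$ via the modular law and Proposition~\ref{nilcommutator}) is exactly the ``straightforward'' verification the paper leaves to the reader, and your version of it is complete and correct.
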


\begin{proof}
Let $\g_{red}$ be a reductive part of $\g$, as given in Theorem~\ref{locreductivepart}.   By Theorem~\ref{locreductivepart}, there exists a toral subalgebra $\t \subset \g$ such that $\g_{red} = \l \subsetplus \t$, where $\l := [\g_{red},\g_{red}]$ is a Levi component of $\g$.  By the definition of $\k$, we have $\n_\g \subsetplus \l \subset \k \subset \g$.  Hence $\k$ admits a vector space decomposition $\k = \n_\g \oplus \l \oplus \t'$, where $\t' := \t \cap \k$.  As $\k$ is generated by splittable subalgebras, \cite[Ch 7 \S 5 Cor 1]{Bourbaki} implies that $\k$ is splittable.
The last statement is straightforward to check, so we omit this.
\end{proof}

For example, take $\g :=  \Big\{ \left( \begin{array}{cc} A & 0 \\ 0 & B \end{array} \right) ~\colon  A, B \in \gl(V,V_*)  \Big\} \subset \gl(V \oplus V, V_* \oplus V_*)$. The linear nilradical of $\g$ is trivial, and $[\g,\g] = \sl(V,V_*) \oplus \sl(V,V_*)$ is the unique Levi component of $\g$.  The subalgebra $$\k := \Big\{ \left( \begin{array}{cc} A & 0 \\ 0 & B \end{array} \right) \in \g ~\colon \tr \, A = 2 \tr \,B \Big\} \subset \g$$ is defined by trace conditions on $\g$.

\section{Parabolic subalgebras of $\gl_\infty$ and $\sl_\infty$} \label{glparabolic}

We are now ready to start the discussion of parabolic subalgebras of $\gl_\infty$ and $\sl_\infty$.  As in the finite-dimensional case, we define a subalgebra $\p$ of a finitary Lie algebra $\k$ to be \emph{parabolic} if there exists a Borel (that is, a maximal locally solvable) subalgebra $\b$ of $\k$ with $\b \subset \p$.  Recall that, for any parabolic subalgebra $\p_n$ of $\g_n = \gl_n$ or $\g_n = \sl_n$, the following statements hold:
\begin{itemize}
\item $\p_n$ is the stabilizer of a unique flag $\F_n$ in the natural representation of $\g_n$;
\item $\p_n$ is self-normalizing in $\g_n$;
\item $\p_n = \n_n \subsetplus \m_n$, where $\n_n$ is the linear nilradical of $\p_n$ and $\m_n$ is a subalgebra of $\g_n$ which is reductive in $\g_n$;
\item $\p_n = (\n_n)^\perp$, where the perpendicular complement is taken with respect to a nondegenerate invariant form on $\g_n$. 
\end{itemize}

In the case of $\g = \gl(V,V_*)$ or $\g = \sl(V,V_*)$, the above statements admit generalizations and yield in general a chain of three potentially different parabolic subalgebras of $\g$, namely $\p \subset N_\g(\p) \subset (\n_\p)^\perp$.  

Borel subalgebras of $\gl(V,V_*)$ were understood in \cite{DP2} using the concept of a closed generalized flag.  More precisely, any Borel subalgebra is the stabilizer of a unique maximal closed generalized flag in $V$.  The following theorem strengthens this result by providing some alternative descriptions of maximal closed generalized flags and their stabilizers.

\begin{thm} \label{tfae}
Let $\F$ be a semiclosed generalized flag in $V$. The following are equivalent:
\begin{enumerate}
\item \label{TFAEone} $\F$ is a maximal semiclosed generalized flag;
\item \label{TFAEtwo} $\F$ is a maximal closed generalized flag;
\item \label{TFAEthree} $\St_\F$ is a Borel subalgebra of $\gl(V,V_*)$ or $\sl(V,V_*)$;
\item \label{TFAEfour} $\St_\F$ is a minimal parabolic subalgebra of $\gl(V,V_*)$ or $\sl(V,V_*)$;
\item \label{TFAEfive} there exists a (unique) maximal semiclosed generalized flag $\G$ in $V_*$ such that $\St_\F = \St_\G$.
\end{enumerate}
Furthermore, if $\F$, $\G$ are as in (\ref{TFAEfive}), then they form a taut couple.
\end{thm}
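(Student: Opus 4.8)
The plan is to prove the chain of equivalences by a cycle of implications, using the results already established in Sections~\ref{preliminaries} and \ref{tautcouples}, and treating the $\gl_\infty$ case; the $\sl_\infty$ case follows since a Borel of $\sl_\infty$ is the intersection of a Borel of $\gl_\infty$ with $\sl_\infty$, and $\St_\F \cap \sl_\infty$ has codimension at most one in $\St_\F$. The natural order is $(\ref{TFAEone}) \Leftrightarrow (\ref{TFAEtwo})$, then $(\ref{TFAEtwo}) \Rightarrow (\ref{TFAEfive})$, then $(\ref{TFAEfive}) \Rightarrow (\ref{TFAEthree})$, then $(\ref{TFAEthree}) \Leftrightarrow (\ref{TFAEfour})$, and finally $(\ref{TFAEthree}) \text{ or } (\ref{TFAEfour}) \Rightarrow (\ref{TFAEone})$, after which the ``furthermore'' clause is essentially already in hand.

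For $(\ref{TFAEone}) \Leftrightarrow (\ref{TFAEtwo})$: a maximal closed generalized flag is in particular semiclosed, and by Lemma~\ref{maximalsemiclosed} it is maximal among semiclosed ones precisely when $\dim F''_\alpha/F'_\alpha = 1$ whenever $F'_\alpha$ is closed; conversely, if $\F$ is maximal semiclosed, I must check each $F''_\alpha$ is closed. If some $F''_\alpha$ were not closed, then $F'_\alpha$ is not closed either (a non-closed successor forces a non-closed predecessor in the semiclosed condition, since $\overline{F'_\alpha} \in \{F'_\alpha, F''_\alpha\}$ and $\overline{F'_\alpha}$ is closed), so $\overline{F'_\alpha} = F''_\alpha$; but then $F''_\alpha$ being the closure of $F'_\alpha$ with $\dim F''_\alpha/F'_\alpha = \infty$ contradicts nothing directly — here one uses that the closure $\overline{F''_\alpha} = F''_\alpha{}^{\perp\perp} = \overline{F'_\alpha}{}^{\perp\perp} = \overline{F'_\alpha} = F''_\alpha$, so actually $F''_\alpha$ \emph{is} closed. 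So $(\ref{TFAEone}) \Rightarrow (\ref{TFAEtwo})$ is immediate, and $(\ref{TFAEtwo}) \Rightarrow (\ref{TFAEone})$ is Lemma~\ref{maximalsemiclosed} plus the observation that refining at a non-closed-predecessor pair is impossible.

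For $(\ref{TFAEtwo}) \Rightarrow (\ref{TFAEfive})$: this is the heart of the argument and the main obstacle. Given a maximal semiclosed $\F$, I form the taut couple $\F, \G$ furnished by applying Theorem~\ref{tautcouple} to $\k = \St_\F$ (or, more directly, by taking $\G$ to be the generalized flag associated to the chain $\F^\perp$ together with $0$ and $V_*$, and checking tautness via Proposition~\ref{tautchar}). By Proposition~\ref{noobstruction}, $\G$ is then automatically a maximal semiclosed generalized flag, and by Proposition~\ref{splitexact}, $\St_\F \cap \St_\G = \n_\p \subsetplus \bigoplus_{\gamma \in C} \gl(F''_\gamma/F'_\gamma, G''_\gamma/G'_\gamma)$ with all quotients one-dimensional (by Lemma~\ref{maximalsemiclosed}, since $\F$ is maximal semiclosed all closed-predecessor pairs have one-dimensional quotient, and these are exactly the $\gamma \in C$). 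Thus $\St_\F \cap \St_\G$ is $\n_\p \subsetplus (\text{abelian toral})$, which is locally solvable. Now I must show $\St_\F = \St_\G$, i.e. $\St_\F \subset \St_\G$ (the reverse being symmetric): here I compare linear nilradicals — $\n_{\St_\F} = \sum_\alpha F''_\alpha \otimes (F''_\alpha)^\perp$ from \cite{DP2}, and one checks $\St_\F \subset \St_\G$ fails only if $\St_\F$ is strictly larger, but $\St_\F$ modulo $\n_{\St_\F}$ is a product of $\gl$'s of the quotients $F''_\alpha/F'_\alpha$ which for a maximal semiclosed flag are all one-dimensional or have non-closed predecessor; the non-closed-predecessor pairs contribute nothing new since there $F'_\alpha{}^\perp = F''_\alpha{}^\perp$. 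Hence $\St_\F$ is already locally solvable and equals $\St_\F \cap \St_\G \subset \St_\G$; uniqueness of $\G$ is Proposition~\ref{uniquetautpair}. This last step — pinning down that $\St_\F$ itself is locally solvable when $\F$ is maximal semiclosed, not merely that $\St_\F \cap \St_\G$ is — is the delicate point, and it is exactly where semiclosedness (as opposed to closedness) is used.

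For $(\ref{TFAEfive}) \Rightarrow (\ref{TFAEthree})$: from the previous paragraph $\St_\F = \St_\G = \St_\F \cap \St_\G$ is locally solvable; to see it is a \emph{maximal} locally solvable subalgebra, note any larger locally solvable subalgebra $\b \supsetneq \St_\F$ stabilizes some additional structure, and by the results of \cite{DP2} any Borel is the stabilizer of a maximal closed generalized flag in $V$, which by $(\ref{TFAEone}) \Leftrightarrow (\ref{TFAEtwo})$ is already $\St_\F$. Then $(\ref{TFAEthree}) \Leftrightarrow (\ref{TFAEfour})$ is the general fact that Borel subalgebras are precisely the minimal parabolic subalgebras (a parabolic contains a Borel by definition, and a minimal one must equal it; conversely a Borel is parabolic, being contained in itself, and is minimal such). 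Finally $(\ref{TFAEfour}) \Rightarrow (\ref{TFAEone})$: if $\F$ were not maximal semiclosed, Proposition~\ref{noobstruction} and Proposition~\ref{splitexact} would produce a proper refinement whose stabilizer is a strictly smaller parabolic (it still contains a Borel, by building one compatibly), contradicting minimality. The ``furthermore'' clause is recorded along the way in the proof of $(\ref{TFAEtwo}) \Rightarrow (\ref{TFAEfive})$, since the $\G$ produced there forms a taut couple with $\F$ by construction.
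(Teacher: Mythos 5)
Your proposal takes a more ambitious route than the paper's: where the paper defers the equivalence $(\ref{TFAEtwo}) \Leftrightarrow (\ref{TFAEthree})$ and the existence of the dual flag $\G$ to the cited results in \cite{DP2} and \cite{D}, you try to rebuild those facts from the machinery of Sections~\ref{preliminaries} and \ref{tautcouples}. That is a legitimate ambition, but as written the argument has genuine gaps.

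The most concrete error is in your step $(\ref{TFAEone}) \Rightarrow (\ref{TFAEtwo})$. You assert that if some $F''_\alpha$ were not closed then $F'_\alpha$ is ``not closed either.'' This is backwards: semiclosedness gives $\overline{F'_\alpha} \in \{F'_\alpha, F''_\alpha\}$, and since $\overline{F'_\alpha}$ is always closed, the assumption that $F''_\alpha$ is not closed forces $\overline{F'_\alpha} = F'_\alpha$, i.e.\ $F'_\alpha$ \emph{is} closed. The case you then ignore is exactly the one where maximality is needed: when $F'_\alpha$ is closed, Lemma~\ref{maximalsemiclosed} gives $\dim F''_\alpha/F'_\alpha = 1$, which makes $F''_\alpha$ closed as a finite-codimensional extension of a closed subspace. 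The paper handles this cleanly by splitting on $\overline{F'_\alpha} = F'_\alpha$ versus $\overline{F'_\alpha} = F''_\alpha$ and invoking Lemma~\ref{maximalsemiclosed} in the first case; your proof skips this and derives a ``contradiction'' from a false intermediate claim.

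The second issue is your treatment of $(\ref{TFAEtwo}) \Rightarrow (\ref{TFAEfive})$ and $(\ref{TFAEfive}) \Rightarrow (\ref{TFAEthree})$. The key claim $\St_\F \subset \St_\G$ does have a short proof (the chain $\F^\perp$ is automatically $\St_\F$-stable, and $\G$ is built from $\F^\perp$), but that is not the argument you give; your ``compare linear nilradicals'' discussion never actually produces the inclusion, and the sentence ``$\St_\F \subset \St_\G$ fails only if $\St_\F$ is strictly larger'' does not parse as a deduction. More seriously, the maximality of $\St_\F$ among locally solvable subalgebras --- the content of $(\ref{TFAEtwo}) \Leftrightarrow (\ref{TFAEthree})$, proved in \cite{DP2} and \cite{D} --- is not established by your sketch: ``any larger locally solvable subalgebra stabilizes some additional structure'' is a hope, not a proof, since you would still have to show that a Borel $\St_\H$ containing $\St_\F$ forces $\H = \F$. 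Similarly, your ``(it still contains a Borel, by building one compatibly)'' in $(\ref{TFAEfour}) \Rightarrow (\ref{TFAEone})$ papers over exactly the nontrivial content. In contrast, the paper only needs to note that $\F^{\perp}$ is $\St_\F$-stable (for the taut-couple clause) and otherwise cites the Borel classification from \cite{DP2}, \cite{D}; given those references, its proof is complete and short, while yours reintroduces the difficulty without resolving it.
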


\begin{proof}
Let $\F = \{ F'_\alpha, F''_\alpha \}_{\alpha \in A}$ be a maximal semiclosed generalized flag.  We now show that $\F$ is a closed generalized flag.  By Lemma~\ref{maximalsemiclosed}, $\dim F''_\alpha / F'_\alpha = 1$ for all $\alpha \in A$ such that $\overline{F'_\alpha} = F'_\alpha$.  Fix $\alpha \in A$.  If $\overline{F'_\alpha} = F'_\alpha$, then $F''_\alpha$ is closed as it contains a closed subspace of finite codimension.  If $\overline{F'_\alpha} = F''_\alpha$, then $F''_\alpha$ is also closed.  This proves that (\ref{TFAEone}) implies (\ref{TFAEtwo}).  To see that (\ref{TFAEtwo}) implies (\ref{TFAEone}), we notice that if $\F$ is a maximal closed generalized flag, then again $\dim F''_\alpha / F'_\alpha = 1$ for all $\alpha \in A$ with $\overline{F'_\alpha} = F'_\alpha$ \cite{DP2}.

It is shown in \cite{DP2} and \cite{D} that (\ref{TFAEtwo}) and (\ref{TFAEthree}) are equivalent.  The equivalence of (\ref{TFAEthree}) and (\ref{TFAEfour}) follows directly from the definition of a parabolic subalgebra.

We note next that (\ref{TFAEthree}) implies (\ref{TFAEfive}).  Indeed, if $\St_\F$ is a Borel subalgebra of $\gl(V,V_*)$, then it follows from \cite{DP2} that $\St_\F = \St_\G$ for a unique maximal closed generalized flag $\G$ in $V_*$.  We showed above that $\G$ is maximal also as a semiclosed generalized flag. 

The implication of (\ref{TFAEthree}) from (\ref{TFAEfive}) requires no further argument, due to the symmetry of $V$ and $V_*$.  Finally, since the chain $\F^\perp$ is stable under $\St_\F$, the equality $\St_\F = \St_\G$ implies that $\F$, $\G$ form a taut couple.  Thus the proof is complete.
\end{proof}

Let $\F = \{F'_\alpha , F''_\alpha \}_{\alpha \in A}$ and $\G= \{G'_\beta, G''_\beta \}_{\beta \in B}$ be semiclosed generalized flags in $V$ and $V_*$, respectively, and assume $\F$, $\G$ form a taut couple.  For $\alpha \in A$ and $\beta \in B$, define $\alpha < \beta$ if $\langle F''_\alpha , G''_\beta \rangle = 0$.
This gives a strict ordering on $A \cup_C B$ extending the orderings of $A$ and $B^{op}$ (where the superscript ${op}$ indicates opposite ordering).  For any $\alpha \in A$, we have 
$$(F''_\alpha)^\perp = \bigcup_{\beta \in B, \, G''_\beta \subset (F''_\alpha)^\perp} G''_\beta = \bigcup_{\alpha < \beta \in B} G''_\beta.$$
Proposition~\ref{splitexact} (\ref{firstitem}) therefore implies the formula
$$\n_{\St_\F \cap \St_\G} = \sum_{A \ni \alpha < \beta \in B} F''_\alpha \otimes G''_\beta,$$
and consequently Proposition~\ref{splitexact} (\ref{seconditem}) yields 
$$\St_\F \cap \St_\G = \sum_{A \ni \alpha \leq \beta \in B} F''_\alpha \otimes G''_\beta.$$

We need the following two technical lemmas.

\begin{lemma} \label{orderingproperty}
For any $\alpha \in A$, one has
$\overline{F'_\alpha} = (\bigcup_{\alpha \leq \beta} G''_\beta)^\perp$.
\end{lemma}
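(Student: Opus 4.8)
The statement to prove is Lemma~\ref{orderingproperty}: for any $\alpha \in A$, one has $\overline{F'_\alpha} = (\bigcup_{\alpha \leq \beta} G''_\beta)^\perp$. The plan is to compute the orthogonal complement of $\bigcup_{\alpha \leq \beta} G''_\beta$ directly and identify it with $\overline{F'_\alpha} = (F'_\alpha)^{\perp\perp}$, using the combinatorics of the ordering on $A \cup_C B$ together with the taut-couple properties established in Proposition~\ref{tautchar}, Proposition~\ref{definec}, and the displayed formulas immediately preceding the lemma.

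First I would observe that $(\bigcup_{\alpha \leq \beta} G''_\beta)^\perp = \bigcap_{\alpha \leq \beta} (G''_\beta)^\perp$, so it suffices to show $\overline{F'_\alpha} = \bigcap_{\alpha \leq \beta \in B} (G''_\beta)^\perp$. One inclusion is easy: for every $\beta$ with $\alpha \leq \beta$, i.e.\ $\langle F''_\alpha, G''_\beta\rangle = 0$, we have in particular $\langle F'_\alpha, G''_\beta\rangle = 0$, so $F'_\alpha \subset (G''_\beta)^\perp$; since $(G''_\beta)^\perp$ is closed, this gives $\overline{F'_\alpha} \subset (G''_\beta)^\perp$, hence $\overline{F'_\alpha} \subset \bigcap_{\alpha \leq \beta}(G''_\beta)^\perp$. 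For the reverse inclusion I would split on whether $F'_\alpha$ is closed. If $\alpha \in C$, then by Proposition~\ref{definec} (and the remark following it) $F'_\alpha = (G''_\gamma)^\perp$ for $\gamma = f_{AB}(\alpha)$, and $\gamma$ is exactly the minimal $\beta \in B$ with $\alpha \leq \beta$ (since $G'_\gamma = (F''_\alpha)^\perp$ means $\langle F''_\alpha, G''_\beta\rangle = 0$ iff $G''_\beta \subset (F''_\alpha)^\perp = G'_\gamma \subsetneq G''_\gamma$, i.e.\ $\beta \geq \gamma$ in $B$); hence the intersection $\bigcap_{\alpha\leq\beta}(G''_\beta)^\perp$ is just $(G''_\gamma)^\perp = F'_\alpha = \overline{F'_\alpha}$ and we are done.

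If $\alpha \notin C$, so $\overline{F'_\alpha} = F''_\alpha$, then I must show $\bigcap_{\alpha \leq \beta}(G''_\beta)^\perp \subset F''_\alpha$. Suppose $v \in V$ with $v \notin F''_\alpha$. Property (ii) of a generalized flag gives a pair $F'_\delta \subset F''_\delta$ with $v \in F''_\delta \setminus F'_\delta$; since $v \notin F''_\alpha$ we have $F''_\alpha \subsetneq F''_\delta$, in fact $F''_\alpha \subseteq F'_\delta$. I want to produce some $\beta \in B$ with $\alpha \leq \beta$ but $\langle v, G''_\beta\rangle \neq 0$, which will show $v \notin \bigcap_{\alpha \leq \beta}(G''_\beta)^\perp$. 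The natural candidate comes from considering the minimal closed subspace of $\F$ containing $v$ together with the bijection $C_A \to C_B$: one takes the smallest $\gamma \in C$ with $v \in \overline{F'_\gamma}$ with $\gamma$ large enough that $F''_\alpha \subseteq F'_\gamma$ and $v \notin \overline{F'_\gamma}$ fails appropriately, and sets $\beta := f_{AB}(\gamma)$; then $\alpha < \gamma$ forces $\alpha < \beta$, while $v \notin (G''_\beta)^\perp = (G''_\beta)^\perp$ follows because $(G''_\beta)^\perp = F'_\gamma$ (by the remark after Proposition~\ref{definec}) and $v \notin F'_\gamma$. The main obstacle is making this choice of $\gamma$ precise and checking that such a closed element of $\F$ above $v$ and above $F''_\alpha$ but not containing $v$ always exists — this requires using the maximality/semiclosedness of $\F$ via Lemma~\ref{stablesubspaces} or Lemma~\ref{maximalsemiclosed}, together with the fact (from the preceding displayed formulas) that the ordering on $A \cup_C B$ is total, so that one can locate the correct threshold index $\gamma \in C$ interleaved between $v$'s pair and $\alpha$. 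Once this $\gamma$ is found, the rest is a direct application of $(G''_{f_{AB}(\gamma)})^\perp = F'_\gamma$ and $G''_\gamma \subset (F'_\gamma)^\perp$ from Proposition~\ref{tautchar} and Proposition~\ref{definec}, completing the argument. The vice-versa symmetry between $\F$ and $\G$ plays no role here since the statement is asymmetric, so no extra case analysis is needed beyond the closed/non-closed dichotomy for $\alpha$.
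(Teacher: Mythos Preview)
Your case $\alpha \in C$ is essentially the paper's argument, only more verbose. One small slip: you write ``$\alpha \leq \beta$, i.e.\ $\langle F''_\alpha, G''_\beta\rangle = 0$'', but the vanishing characterizes only the \emph{strict} inequality; when $\alpha = \beta \in C$ one has $\langle F''_\alpha, G''_\alpha\rangle \neq 0$ since $G''_\alpha \supsetneq G'_\alpha = (F''_\alpha)^\perp$. Your conclusion $F'_\alpha \subset (G''_\beta)^\perp$ nonetheless holds in that case because $F'_\alpha = (G''_\alpha)^\perp$.

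For $\alpha \notin C$, however, you are working much harder than necessary and leaving a real gap. You try to locate an index $\gamma \in C$ with $\alpha < \gamma$ and $v \notin F'_\gamma$, acknowledge this as the ``main obstacle'', and then propose to resolve it via ``maximality'' of $\F$ and Lemma~\ref{maximalsemiclosed}. But $\F$ is \emph{not} assumed maximal here---it is merely one half of an arbitrary taut couple---so that route is unavailable. The paper bypasses all of this with a one-line computation, and in fact your own setup already points to it. Since $\alpha \notin C$ means $\overline{F'_\alpha} = F''_\alpha$ (so $F''_\alpha$ is closed), and since the displayed formula immediately preceding the lemma reads $(F''_\alpha)^\perp = \bigcup_{\alpha < \beta} G''_\beta$, taking $\perp$ of both sides gives
\[
\overline{F'_\alpha} = F''_\alpha = (F''_\alpha)^{\perp\perp} = \Bigl(\bigcup_{\alpha < \beta} G''_\beta\Bigr)^\perp = \Bigl(\bigcup_{\alpha \leq \beta} G''_\beta\Bigr)^\perp,
\]
the last equality because $\alpha \notin C$ rules out $\alpha = \beta$. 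No element-chasing and no search for a $\gamma \in C$ is needed; indeed, phrased in your language, this is exactly the contrapositive you wanted: if $v \in (G''_\beta)^\perp$ for all $\beta > \alpha$, then $v \in \bigl(\bigcup_{\beta > \alpha} G''_\beta\bigr)^\perp = (F''_\alpha)^{\perp\perp} = F''_\alpha$.
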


\begin{proof}
If $\alpha \notin C$, then $\overline{F'_\alpha} = \overline{F''_\alpha} = (F''_\alpha)^{\perp \perp} = ( \bigcup_{\alpha < \beta} G''_\beta)^\perp = ( \bigcup_{\alpha \leq \beta} G''_\beta)^\perp$.  If $\alpha \in C$, then $\overline{F'_\alpha} = F'_\alpha = (G''_\alpha)^\perp = (\bigcup_{\alpha \leq \beta} G''_\beta)^\perp$.  
\end{proof}

\begin{lemma}\label{recoverslinfty}
Let $\b \subset \sl_\infty$ be a Borel subalgebra. Then no proper subalgebra of $\sl_\infty$ contains $[\n_\b, \sl_\infty]$.
\end{lemma}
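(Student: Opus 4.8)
The plan is to describe a Borel subalgebra $\b$ of $\sl_\infty$ concretely via a maximal closed generalized flag $\F$, compute its linear nilradical $\n_\b$, and show that $[\n_\b,\sl_\infty]$ already generates all of $\sl_\infty$ by exhibiting a large supply of root-vector-type elements inside it. Concretely, one should fix a taut couple $\F$, $\G$ with $\St_\F = \St_\b = \St_\G$ (this exists and is a taut couple by Theorem~\ref{tfae}). Since $\b$ is Borel, $\F$ is maximal semiclosed, so every pair $F'_\alpha \subset F''_\alpha$ with $F'_\alpha$ closed satisfies $\dim F''_\alpha / F'_\alpha = 1$ by Lemma~\ref{maximalsemiclosed}. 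Using the ordering on $A \cup_C B$ introduced just before Lemma~\ref{orderingproperty} and the formula $\n_{\St_\F \cap \St_\G} = \sum_{A \ni \alpha < \beta \in B} F''_\alpha \otimes G''_\beta$, together with the intersection with $\sl_\infty$, one gets an explicit description of $\n_\b$ as a sum of rank-one tensors $v \otimes w$ with $v \in F''_\alpha$, $w \in G''_\beta$, $\alpha < \beta$.

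The key computation is then the following: for a rank-one element $v \otimes w \in \n_\b$ with $\langle v, w\rangle = 0$, and for an arbitrary $x \otimes y \in \sl_\infty$ (so $\langle x,y\rangle = 0$), one has
$$[v \otimes w, \, x \otimes y] = \langle x, w\rangle \, v \otimes y - \langle v, y \rangle \, x \otimes w.$$
By choosing $x$ with $\langle x, w\rangle = 1$ and $\langle v, y\rangle = 0$ (possible since $v$ and $x$ can be taken linearly independent and the pairing is nondegenerate), one produces $v \otimes y$ for essentially arbitrary $y$; symmetrically one produces $x \otimes w$ for arbitrary $x$. Thus $[\n_\b, \sl_\infty]$ contains $v \otimes V_*$ and $V \otimes w$ for every such $v$, $w$. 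Since $\b$ is Borel, $\n_\b$ is not contained in any single $F''_\alpha \otimes V_*$ — in fact the first vectors $v$ arising span $V$ and the first functionals $w$ span $V_*$ — so $v \otimes V_*$ ranges over enough rank-one spaces that their sum, intersected with the trace-zero condition, is all of $\sl_\infty$. One should double-check the edge cases where $v \otimes w$ itself lies on the ``diagonal'' $\langle v,w\rangle \neq 0$; such elements do not occur in $\n_\b$ precisely because $\n_\b$ consists of nilpotent operators, so this case is vacuous.

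The main obstacle will be the bookkeeping needed to verify that the vectors $v$ occurring as ``left legs'' of elements of $\n_\b$ actually span $V$ (and dually for $V_*$) — this is where the maximality of the closed generalized flag $\F$ is genuinely used, via Lemma~\ref{maximalsemiclosed} and the observation that the smallest nonzero subspace in $\F$ together with its successors exhausts $V$. Once one knows $\sum \{v : v \otimes w \in \n_\b \text{ for some } w\} = V$, the commutator computation above shows $[\n_\b, \sl_\infty] \supset \sum_{v} v \otimes V_* \cap \sl_\infty$; but that sum, being all rank-one trace-zero tensors $v \otimes y$ with $\langle v,y\rangle = 0$ as $v$ ranges over a spanning set and $y$ over $V_*$, generates $\sl_\infty$ (any element of $\sl_\infty$ is a finite sum of such tensors). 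Hence no proper subalgebra can contain $[\n_\b, \sl_\infty]$, which is the claim.
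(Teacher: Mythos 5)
Your setup and the basic commutator computation are on the right track and match the paper's, but there is a genuine gap in the concluding step. The ``left legs'' of elements of $\n_\b$ do \emph{not} in general span $V$. With $\F = \{F'_\alpha , F''_\alpha\}_{\alpha\in A}$ the maximal closed generalized flag and $\n_\b = \sum_\alpha F''_\alpha \otimes (F''_\alpha)^\perp$, the only $F''_\alpha$ that contribute are those with $(F''_\alpha)^\perp \neq 0$, i.e.\ $F''_\alpha \subsetneq V$, and every such $F''_\alpha$ is contained in $W := \bigcup_\alpha F'_\alpha$. The subspace $W$ can be a proper subspace of $V$: for instance, with a basis $\{e_i\}_{i\ge 1}$ of $V$ and dual basis $\{e_i^*\}$ of $V_*$, the chain $\F = \{\Span\{e_n, e_{n+1}, \ldots\} : n \ge 1\}$ is a maximal closed generalized flag, $V \in \F$, and $W = \Span\{e_2, e_3, \ldots\}$ has codimension $1$. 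In that example $e_1$ never occurs as a left leg of an element of $\n_\b$, so your assertion that ``the first vectors $v$ arising span $V$'' fails, and consequently $\sum_v v\otimes v^\perp$ is contained in $W \otimes V_*$ and cannot be all of $\sl_\infty$.

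What the argument actually buys you is the inclusion $\sl(V,V_*) \cap (W\otimes V_* + V\otimes W_*) \subset [\n_\b,\sl(V,V_*)]$, where $W_*$ is the analogous union on the $V_*$ side. To finish you need a second step that genuinely uses the \emph{subalgebra} structure of the ambient $\k \supset [\n_\b,\sl(V,V_*)]$, not just linear spanning: one first observes that if $W \neq V$ then $W$ is the predecessor of $V$ in $\F$, hence either closed of codimension $1$ or dense (and similarly for $W_*$); then for an arbitrary $x\otimes y \in \sl(V,V_*)$ one chooses $u \in y^\perp\cap W$ and $v \in x^\perp \cap W_*$ with $\langle u,v\rangle \neq 0$, so that $x\otimes v$ and $u\otimes y$ both lie in $[\n_\b,\sl(V,V_*)] \subset \k$ and $[x\otimes v, u\otimes y] = \langle u,v\rangle\, x\otimes y \in \k$. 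Without this bootstrapping your final spanning claim is false; with it, the proof goes through and coincides with the paper's.
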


\begin{proof}
Let $\F = \{F'_\alpha, F''_\alpha \}_{\alpha \in A}$ and $\G = \{ G'_\beta , G''_\beta \}_{\beta \in B}$ be the maximal closed generalized flags in $V$ and $V_*$, respectively, such that $\b = \St_\F = \St_\G$ \cite{DP2}.
Define 
$$W := \bigcup_{\alpha \in A} F'_\alpha$$
and
$$W_* := \bigcup_{\beta \in B} G'_\beta.$$

We will show first that $\sl(V,V_*) \cap (W \otimes V_* + V \otimes W_*) \subset [\n_\b , \sl(V,V_*)]$.  Let $x \in W$ and $y \in V_*$ satisfy $\langle x,y \rangle = 0$.  Since $x \in W$, there exists $\alpha \in A$ such that $x \in F''_\alpha \setminus F'_\alpha$ and $(F''_\alpha)^\perp \neq 0$.  Let $0 \neq z \in (F''_\alpha)^\perp$, and let $w \in V$ be any element such that $\langle w, z \rangle \neq 0$.  Then $[ x \otimes z, w \otimes y] =  \langle w , z \rangle x \otimes y$.  Since $x \otimes z \in \n_\b$ and $w \otimes y  \in \gl(V,V_*)$, this implies $x \otimes y \in [\n_\b , \gl(V,V_*)]$.  Moreover, there must exist a copy of $\gl_n \subset \gl(V,V_*)$ such that $x$, $y$, $z$, and $w$ are all elements of $\gl_n$, and hence one may replace $w \otimes y$ with a traceless element whose commutator with $x \otimes z$ is unchanged.  This shows that  $x \otimes y \in [\n_\b , \sl(V,V_*)]$.
Similarly, if $x \in V$, $y \in W_*$, and $\langle x,y \rangle = 0$, then $x \otimes y \in [\n_\b , \sl(V,V_*)]$.  

Since $\sl(V,V_*) \cap (W \otimes V_* + V \otimes W_*)$ is spanned by elements of the form $x \otimes y \in V \otimes V_*$ such that $\langle x, y \rangle = 0$ and $x \in W$ or $y \in W_*$, it follows that $\sl(V,V_*) \cap (W \otimes V_* + V \otimes W_*) \subset [\n_\b , \sl(V,V_*)]$. 

If $W \neq V$, then $W \subset V$ is a pair in $\F$.  Hence $\dim V/ W \leq 1$ if $W$ is closed, and $\overline{W} = V$ if $W$ is not closed.  Similarly,  $\dim V_* / W_* \leq 1$ if $W_*$ is closed, and $\overline{W_*} = V_*$ if $W_*$ is not closed.  

This implies that for any $x \in V$ and $y \in V_*$ with $\langle x , y \rangle = 0$, there exist $u \in y^\perp \cap W$ and $v \in x^\perp \cap W_*$ with $\langle u , v \rangle \neq 0$.  Then $x \otimes v \in \sl(V,V_*) \cap (V \otimes W_*)$ and $u \otimes y \in \sl(V,V_*) \cap (W \otimes V_*)$ and $[x \otimes v, u \otimes y] = \langle u , v \rangle x \otimes y$.  This shows that if $\k$ is any subalgebra with $[\n_\b , \sl(V,V_*)] \subset \k \subset \sl(V,V_*)$, then
 $x \otimes y \in \k$. Since $\sl(V,V_*)$ is spanned by elements of this form, we have shown that $\k = \sl(V,V_*)$.
\end{proof}

In order to state the main theorem of this section, we need two more preliminary constructions. 

Suppose $\F$, $\G$ form a taut couple.  
We define a subalgebra $(\St_\F \cap \St_\G)_-$ of $\sl(V,V_*)$ or $\gl(V,V_*)$ as follows.  Recall from Proposition~\ref{splitexact} that there is a homomorphism (with kernel $\n_{\St_\F \cap \St_\G}$):  $$\St_\F \cap \St_\G \longrightarrow \bigoplus_{\gamma \in C} \gl \big({F''_\gamma} / {F'_\gamma} , {G''_\gamma} / {G'_\gamma} \big).$$
Let $C_0$ be the set of $\gamma \in C$ for which $\dim {F''_\gamma} / {F'_\gamma} < \infty$. Define $(\St_\F \cap \St_\G)_-$ as the preimage of 
\[ 
\bigoplus_{\gamma \in C \setminus C_0} \sl \big({F''_\gamma} / {F'_\gamma} , {G''_\gamma} / {G'_\gamma}
 \big) \oplus \bigoplus_{\eta \in C_0}  \gl \big( {F''_\eta} / {F'_\eta} , {G''_\eta} / {G'_\eta}
 \big)
\]
in $\St_\F\cap\St_\G$. Then for any set of vector spaces $V_\alpha$ and $(V_\beta)_*$ as in Proposition~\ref{splitexact}, one has the subalgebra of $\gl(V,V_*)$
$$(\St_\F \cap \St_\G)_- = \n_{\St_\F \cap \St_\G} \subsetplus  \left( \bigoplus_{\gamma \in C \setminus C_0} \sl \big( V_\gamma, (V_\gamma)_* \big) \oplus \bigoplus_{\eta \in C_0}  \gl \big( V_\eta , (V_\eta)_* \big) \right).$$ 

The second construction produces a new taut couple $\F^c$, $\G^c$ from a given taut couple $\F$, $\G$.

\begin{lemma} \label{nonclosedsuccessor}
Let $\F$ be a semiclosed generalized flag.  Let $F' \subset F''$ be a pair such that $F''$ is not closed. Then $F'' \subset \overline{F''}$ is a pair in $\F$.  
\end{lemma}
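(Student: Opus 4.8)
The plan is to use the defining properties of a semiclosed generalized flag together with the fact that $F''$ lies between itself and its closure $\overline{F''} = (F'')^{\perp\perp}$, and to locate the pair in $\F$ that contains $\overline{F''}$ as a successor. First I would record the hypothesis in terms of the parametrization $\F = \{F'_\alpha, F''_\alpha\}_{\alpha \in A}$: there is some $\alpha \in A$ with $\{F', F''\} = \{F'_\alpha, F''_\alpha\}$, and since $F''$ is not closed we have $F'' \subsetneq \overline{F''}$, so in particular $\overline{F''} \neq F''$. Observe that $F'$ is closed: indeed, by the semiclosedness of $\F$ applied to the pair $\alpha$, we have $\overline{F'_\alpha} \in \{F'_\alpha, F''_\alpha\}$; since $F''_\alpha = F''$ is not closed, the only possibility is $\overline{F'_\alpha} = F'_\alpha$, i.e.\ $F' = \overline{F'}$ is closed.

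Next I would show that $\dim \overline{F''}/F'' \le 1$ is not needed; instead the key point is that $\overline{F''}$ must itself be an element of the chain $\F$. Consider the subspace $\overline{F''}$. Since $F' \subsetneq F'' \subsetneq \overline{F''}$, the subspace $\overline{F''}$ is a proper closed subspace of $V$ containing $F''$, and it is stable under $\St_\F$ (because $\St_\F$ stabilizes $F''$ and hence stabilizes $(F'')^{\perp\perp} = \overline{F''}$, as the formula $\St_\F = \sum_\beta F''_\beta \otimes (F'_\beta)^\perp$ gives operators commuting with taking perpendiculars in the relevant sense — more precisely, if a closed subspace $F''$ is $\St_\F$-stable then so is $\overline{F''}$; but here $\overline{F''}$ is exactly the closure, so one invokes Lemma~\ref{stablesubspaces}). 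By Lemma~\ref{stablesubspaces}, $\overline{F''}$ is both a union and an intersection of elements of $\F$. Being an intersection of elements of $\F$, and being a closed subspace strictly containing the non-closed subspace $F''$, the subspace $\overline{F''}$ is itself in $\F$: write $\overline{F''} = \bigcap_{G \in \F, \, G \supseteq \overline{F''}} G$, and note this intersection is attained since $\F$ is a chain exhausted by its pairs.

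It then remains to check that $F'' \subset \overline{F''}$ is an \emph{immediate} predecessor-successor pair in $\F$, i.e.\ that no element of $\F$ lies strictly between them. Suppose $H \in \F$ with $F'' \subsetneq H \subsetneq \overline{F''}$. Then $F'' \subsetneq H$, and $F''$ is the successor of $F'$ in the pair $\alpha$, so $H$ must be $\supseteq F''$ but the immediate successor structure forces $H$ to be a successor of $F''$ in some pair $\alpha'$; since $H \subsetneq \overline{F''}$, we have $H^{\perp\perp} \subseteq \overline{F''}^{\,\perp\perp} = \overline{F''}$, and on the other hand $H \supsetneq F''$ gives $H^\perp \subsetneq (F'')^\perp$, hence $\overline{F''} = (F'')^{\perp\perp} \subseteq H^{\perp\perp} \subseteq \overline{F''}$, so $\overline{H} = \overline{F''}$; thus $H$ is a non-closed subspace strictly between $F''$ and $\overline{F''}$ with the same closure. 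But then applying semiclosedness to the pair $\alpha'$ with successor $H$ (which is not closed) forces its predecessor to be closed, and iterating/using that the predecessor of $H$ contains $F''$ which is non-closed yields a contradiction — the cleanest route is: $H$ non-closed means by semiclosedness $\overline{H'_{\alpha'}} \in \{H'_{\alpha'}, H\}$ so $H'_{\alpha'}$ is closed, yet $H'_{\alpha'} \supseteq F''$ (as $H$ is the immediate successor of $F''$ only if $H'_{\alpha'} = F''$, but $F''$ is not closed) — contradiction. Hence no such $H$ exists and $F'' \subset \overline{F''}$ is a pair in $\F$.

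I expect the main obstacle to be the last step: carefully ruling out an intermediate element of $\F$ between $F''$ and $\overline{F''}$ while juggling which subspaces are forced to be closed by the semiclosed condition. The argument is essentially a bookkeeping of the possible configurations of pairs, and one must be careful that ``$F''$ is the immediate successor of $F'$'' together with ``$H \supsetneq F''$'' genuinely forces $H'_{\alpha'} = F''$ (using property (ii) of generalized flags, since any vector in $H \setminus F''$ has a well-defined pair), at which point the non-closedness of $F''$ collides with the semiclosedness of $\F$ at the pair $\alpha'$.
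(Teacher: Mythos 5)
Your proof takes a genuinely different route from the paper's, but it has a gap at the crucial step.

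The paper argues directly that $F''$ has an immediate successor: if it did not, then $F''$ would be the intersection of all subspaces of $\F$ properly containing it, and since (by semiclosedness) every pair contains a closed member, one could replace that family by a cofinal subfamily of closed subspaces, forcing $F''$ to be closed --- a contradiction. Once $F''$ is known to have an immediate successor $H$, semiclosedness applied to the pair $F'' \subset H$ gives $\overline{F''} \in \{F'', H\}$, and since $F''$ is not closed, $H = \overline{F''}$.

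Your proposal instead tries to show $\overline{F''} \in \F$ first, and this is where the gap is. Lemma~\ref{stablesubspaces} tells you only that $\overline{F''}$ is \emph{both a union and an intersection} of elements of $\F$; it does \emph{not} say that $\overline{F''}$ is itself an element of $\F$. Your sentence ``this intersection is attained since $\F$ is a chain exhausted by its pairs'' is not a justification: there is no general reason why the family $\{G \in \F : G \supseteq \overline{F''}\}$ should have a minimum. (For instance, in a generalized flag nothing in properties (i) and (ii) rules out a ``Dedekind cut'' at which the sup from below and the inf from above coincide with a subspace that is not an element.) The way to fill this gap is precisely the paper's contradiction argument showing $F''$ has an immediate successor, after which $\overline{F''} \in \F$ is automatic --- so your detour through Lemma~\ref{stablesubspaces} ends up circular unless you supply that step separately.

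Your final paragraph (ruling out an intermediate $H$) is also incomplete: you assume $H$ is the immediate successor of some element $H'_{\alpha'}$, but property (i) only guarantees that $H$ is the predecessor \emph{or} the successor in some pair. If $H$ has no immediate predecessor, your argument does not apply, and that case needs to be handled (one can reduce it to the case that $F''$ has an immediate successor $M$, note $\overline{F''} = M$, and derive $M \subsetneq H \subsetneq \overline{F''} = M$). The cleanest repair is to abandon the Lemma~\ref{stablesubspaces} detour entirely and run the paper's argument: prove $F''$ has an immediate successor, then invoke semiclosedness once.

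Minor remark: your opening observation that $F'$ is closed is correct but is not used in either approach.
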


\begin{proof}
Suppose for the sake of a contradiction that $F''$ does not have an immediate successor in $\F$.  Then $F''$ must be the intersection of those subspaces of $\F$ properly containing $F''$.  By the definition of a semiclosed generalized flag, each pair contains a closed subspace.  Hence $F''$ is the intersection of all closed subspaces in $\F$ containing $F''$, and is itself closed.
\end{proof}

\begin{prop} \label{primeflags}
For any semiclosed generalized flag $\F$, there exists a closed generalized flag $\F^c$ which is maximal among the closed generalized flags arising as subchains of $\F$.  Furthermore, if $\F$, $\G$ form a taut couple, then $\F^c$, $\G^c$ also form a taut couple. 
\end{prop}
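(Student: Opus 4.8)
The plan is to describe $\F^c$ explicitly, check that it is a closed generalized flag sitting inside $\F$, prove that it is in fact the \emph{largest} closed generalized flag among subchains of $\F$, and then derive the taut-couple statement from Proposition~\ref{tautchar}. I would set
$$\F^c := \{\, F \in \F \,:\, F \text{ is closed, or } F \text{ has no immediate predecessor in } \F \,\},$$
so that one deletes from $\F$ exactly the subspaces that occur as a successor $F''_\alpha$ of some pair and are non-closed. Everything rests on two elementary facts about a semiclosed generalized flag: (a) no pair $(F',F'')$ can have both members non-closed, since semiclosedness gives $\overline{F'}\in\{F',F''\}$ with $\overline{F'}$ closed, so the deleted subspaces are isolated; and (b) by Lemma~\ref{nonclosedsuccessor} a non-closed $F\in\F$ has $\overline F$ as its immediate successor in $\F$, and if it has an immediate predecessor $F^-$ at all then $F^-$ is closed. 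Thus each deleted $F$ lies as $F^-\subsetneq F\subsetneq\overline F$ with $F^-,\overline F$ closed, and deleting it just fuses the pairs $(F^-,F)$ and $(F,\overline F)$ into $(F^-,\overline F)$.

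With this picture, verifying that $\F^c$ is a generalized flag is a direct but careful check: property (i) is clear, and for property (ii) one fixes $v\ne0$, takes the pair $F'(v)\subset F''(v)$ of $\F$ with $v\in F''(v)\setminus F'(v)$, and treats the case $F''(v)\in\F^c$ (its predecessor in $\F^c$ is $F'(v)$, or the closed subspace just below $F'(v)$ if $F'(v)$ was deleted) separately from the case $F''(v)\notin\F^c$ (then $\overline{F''(v)}\in\F^c$ is the new minimal subspace containing $v$, with immediate predecessor the closed subspace $F'(v)$). One also observes that a non-closed member of $\F^c$ is never a successor of a pair of $\F^c$, so $\F^c$ is semiclosed with all successors closed; hence it is a closed generalized flag, and a subchain of $\F$ by construction.

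For maximality, let $\mathcal{H}\subseteq\F$ be any closed generalized flag and $F\in\mathcal{H}$ non-closed. Then $F$ is not a successor of a pair of $\mathcal{H}$ (those are closed), so $F$ has no immediate predecessor in $\mathcal{H}$ and $F=\bigcup\{D\in\mathcal{H}:D\subsetneq F\}$; but if $F$ had an immediate predecessor $F^-$ in $\F$ then every $D\in\F$ with $D\subsetneq F$ would lie in $F^-$, giving $F\subseteq F^-\subsetneq F$, a contradiction. Hence $F\in\F^c$, so $\mathcal{H}\subseteq\F^c$ and $\F^c$ is the largest (in particular a maximal) closed generalized flag among subchains of $\F$. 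Applying the construction to $\G$ gives $\G^c$. Now suppose $\F,\G$ form a taut couple. By Proposition~\ref{tautchar} it is enough to show, for each $F\in\F^c$ with $F^\perp$ a nontrivial proper subspace of $V_*$, that $F^\perp$ is a union and an intersection of elements of $\G^c$ (the reverse statement follows by the symmetry of $\F$ and $\G$). Here $F^\perp$ is closed, and a union and an intersection of elements of $\G$ because $\F,\G$ form a taut couple. For the union, any $G\in\G$ with $G\subseteq F^\perp$ that is not in $\G^c$ is non-closed, and its immediate successor $\overline G\in\G^c$ still satisfies $\overline G\subseteq\overline{F^\perp}=F^\perp$. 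For the intersection, given $x\notin F^\perp$ choose $G\in\G$ with $G\supseteq F^\perp$, $x\notin G$; if $G\notin\G^c$ pass to its closed immediate predecessor $G^-\in\G^c$, which works once one rules out $G^-\not\supseteq F^\perp$ --- in that case every $H\in\G$ containing $F^\perp$ would have to contain $G$, forcing $F^\perp=G$ and contradicting $G$ non-closed, $F^\perp$ closed. Proposition~\ref{tautchar} then gives that $\F^c,\G^c$ form a taut couple.

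The main obstacle, I expect, is not any single step but the accumulation of case-checking needed to confirm that $\F^c$ is genuinely a closed generalized flag --- property (ii) in particular --- and the ``intersection'' half of the taut-couple argument, where a closed subspace $F^\perp$ must be prevented from being wedged strictly between a non-closed member of $\G$ and its closed predecessor. Both are controlled by fact (b) above, which pins down the local shape of a semiclosed generalized flag around its non-closed members, together with the principle that nothing of the flag lies strictly between the two members of a pair.
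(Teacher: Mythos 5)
Your proof is correct, and it arrives at the same $\F^c$ as the paper, but packages the argument somewhat differently. The paper defines $\F^c$ by applying the general chain-to-flag procedure of Section~\ref{tautcouples} to the chain $\F^{\perp\perp}\cup\{0,V\}$, and then observes (using Lemma~\ref{nonclosedsuccessor}, which it also records) that the effect on pairs is exactly the fusion of $F'_\alpha\subset F''_\alpha$ and $F''_\alpha\subset\overline{F''_\alpha}$ into $F'_\alpha\subset\overline{F''_\alpha}$ whenever $F''_\alpha$ is not closed; you instead declare $\F^c$ directly as the subchain obtained by deleting the non-closed immediate successors, and then verify by hand that it is a generalized flag. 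Both descriptions coincide, and your facts (a) and (b) capture precisely the local structure the paper's Lemma~\ref{nonclosedsuccessor} is there for. Your maximality argument (any closed generalized subflag of $\F$ is contained in $\F^c$, because a non-closed member of it can have no immediate predecessor in $\F$) is the same statement the paper asserts as ``$\F^c$ refines any closed generalized flag contained in $\F$,'' only you actually spell out why. The real divergence is in the taut-couple step: the paper argues through the stabilizer formula, noting $(\G^c)^\perp=\G^\perp$ and that $\G^\perp$ stable under $\St_\F$ carries over to $\St_{\F^c}=\sum_\alpha\overline{F''_\alpha}\otimes(F'_\alpha)^\perp$ (a claim that is quite terse as written, since $\St_{\F^c}\supseteq\St_\F$, so stability is not formally inherited), whereas you route through the union/intersection characterization of Proposition~\ref{tautchar}, replacing non-closed elements of $\G$ by their closures for the union and passing to the closed immediate predecessor for the intersection. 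Your version is longer but carries its own justification and doesn't leave the reader to supply the comparability arguments that the paper's one-line conclusion implicitly relies on; the paper's version buys brevity at the cost of that opacity.

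Two small points worth tightening if you write this up: when you say ``$F=\bigcup\{D\in\mathcal H:D\subsetneq F\}$'' for a non-closed $F\in\mathcal H$ with no immediate predecessor, this is a genuine (if standard) property of generalized flags that deserves a line of justification via property (ii); and in the intersection half of the taut-couple argument, the contradiction ``$F^\perp=G$'' should be reached by noting that every $H\in\G$ containing $F^\perp$ must contain $G$ (by the pair property of $G^-\subset G$), so $\bigcap H\supseteq G\supsetneq F^\perp$ contradicts $F^\perp=\bigcap H$; what you wrote is essentially this, just phrased slightly loosely.
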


\begin{proof}
Lemma~\ref{nonclosedsuccessor} implies that $\F^{\perp \perp}$ is a subchain of $\F$.  Let $\F^c$ be the generalized flag obtained from $\F^{\perp \perp} \cup \{0,V_*\}$ via the general procedure described in Section~\ref{tautcouples}.  Then $\F^c$ is a closed generalized flag which refines any closed generalized flag contained in $\F$.  Explicitly, if $F''_\alpha$ is not closed, then the two pairs $F'_\alpha \subset F''_\alpha$ and $F''_\alpha \subset \overline{F''_\alpha}$ in $\F$ are replaced by the single pair $F'_\alpha \subset \overline{F''_\alpha}$ in $\F^c$.  Finally, if $\G^\perp$ is stable under $\St_\F$, then $(\G^c)^\perp = \G^\perp$ is stable under $\St_{\F^c} = \sum_{\alpha \in A} \overline{F''_\alpha} \otimes (F'_\alpha)^\perp$. 
\end{proof}

The following theorem is our main result in this section.

\begin{thm} \label{slglcharacterization}
Let $\g$ be one of $\gl(V,V_*)$ and $\sl(V,V_*)$.

\begin{enumerate}
\item Let $\p \subset \g$ be a vector subspace.  Then $\p$ is a parabolic subalgebra if and only if there exists a (unique) taut couple $\F$, $\G$ such that $$(\St_\F \cap \St_\G)_- \subset \p \subset \St_\F \cap \St_\G.$$

\item Let $\p \subset \g$ be a parabolic subalgebra, and let $\p_+ := \St_\F \cap \St_\G$ and $\p_- := (\St_\F \cap \St_\G)_- $.  Then the following statements hold.
\begin{enumerate}
\item \label{parta}
$\p$ is splittable and hence (by Theorem \ref{locreductivepart}) admits a decomposition $\p = \n_\p \subsetplus \p_{red}$.
\item \label{partb}
$\p_+ = N_\g(\p_+) = N_\g(\p_-) = N_\g(\p)$, and $\n_{\p_+} = \n_{\p_-} = \n_\p$.
\item \label{partc}
One has $\p_+ \subset \p' := (\n_\p)^\perp$, where the orthogonal complement is taken with respect to the form $\tr(XY)$ on $\g$.  In fact $\p' = \St_{\F^c} \cap \St_{\G^c}$, where $\F^c$ and $\G^c$ are the closed generalized flags associated to $\F$ and $\G$ as above in Proposition~\ref{primeflags}.  Furthermore, $\n_{\p'} = \n_\p$. 
\end{enumerate}
\end{enumerate}
\end{thm}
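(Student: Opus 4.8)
The plan is to prove the two parts in sequence, establishing first the structural characterization in (1) and then deriving the normalizer and trace-condition statements in (2). For part (1), the ``if'' direction is the easier one: given a taut couple $\F$, $\G$, we know from Proposition~\ref{splitexact} that $\St_\F \cap \St_\G = \n_{\St_\F\cap\St_\G} \subsetplus \bigoplus_{\gamma\in C}\gl(V_\gamma,(V_\gamma)_*)$, and $(\St_\F\cap\St_\G)_-$ differs from it only by restricting to $\sl$ in the infinite-dimensional blocks. Both are subalgebras, and any vector subspace $\p$ squeezed between them is a subalgebra because the bracket of $\St_\F\cap\St_\G$ with itself lands in $(\St_\F\cap\St_\G)_-$ (the $\gl/\sl$ discrepancy is killed by the commutator, since $[\gl,\gl]\subset\sl$ in each finite block too, and the center of each $\gl$-block acts as scalars). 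To see $\p$ is parabolic I would show $(\St_\F\cap\St_\G)_-$ contains a Borel: refine $\F$, $\G$ to a maximal taut couple $\tilde\F$, $\tilde\G$ via Proposition~\ref{noobstruction}, so $\St_{\tilde\F}=\St_{\tilde\G}$ is a Borel by Theorem~\ref{tfae}; then check $\St_{\tilde\F}\subset (\St_\F\cap\St_\G)_-$ by comparing the explicit formulas, noting that each $1$-dimensional quotient of the refinement contributes only a scalar (hence traceless constraints are vacuous or automatically met in the infinite blocks). For the ``only if'' direction, given a parabolic $\p$ containing a Borel $\b=\St_{\tilde\F}$, apply Theorem~\ref{tautcouple} to $\p$ to get a $\p$-stable taut couple $\F$, $\G$ with irreducible quotients and $\n_\p = \n_{\St_\F\cap\St_\G}\cap\p$; then argue the induced map $\p/\n_\p \to \bigoplus_\gamma\gl(F''_\gamma/F'_\gamma, G''_\gamma/G'_\gamma)$ has image acting irreducibly in each block, and since $\p \supset \b$, the image in each \emph{infinite-dimensional} block contains the image of $\n_\b$ under the analogue of Lemma~\ref{recoverslinfty}, forcing the image to be all of $\sl$ (or $\so$, $\sp$) there — which is exactly the condition that $\p \supset (\St_\F\cap\St_\G)_-$. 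Uniqueness of $\F$, $\G$ follows from Proposition~\ref{uniquetautpair} once we know $\St_\F\cap\St_\G$ is determined, which it is since it equals $N_\g(\p)$ by part (2).

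For part (2), statement (a) is immediate from Proposition~\ref{toraltrace}: $\p$ is defined by trace conditions on $\p_+ = \St_\F\cap\St_\G$ (it contains $\n_{\p_+}$ and a Levi component of $\p_+$, the latter because $\p \supset (\St_\F\cap\St_\G)_-$ which contains all the simple summands), and $\p_+$ is splittable by Proposition~\ref{splitexact}, so $\p$ is splittable and inherits $\n_\p = \n_{\p_+}$ and the same Levi components; Theorem~\ref{locreductivepart} then gives the decomposition $\p=\n_\p\subsetplus\p_{red}$. This also yields the second half of (b): $\n_{\p_+}=\n_{\p_-}=\n_\p$. For the normalizer chain in (b), I would compute $N_\g(\p_+)$ using Lemma~\ref{infinitenormalizer} applied block by block — the diagonal embedding of each simple summand into its collection of $\gl$-blocks is handled exactly there, and since $\p_+$ already contains the full $\gl$ of each block (not just $\sl$), it is self-normalizing. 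Then $N_\g(\p)\subset N_\g(\n_\p)$ and a direct argument (using that $\p/\n_\p$ is determined and that normalizing $\p$ forces normalizing $\p_+$ via the Levi-component rigidity of Theorem~\ref{maxlocss}) pins $N_\g(\p)=\p_+$; similarly $N_\g(\p_-)=\p_+$.

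Statement (c) is where the real work lies. I would first identify $(\n_\p)^\perp$ explicitly: using $\n_\p = \sum_{A\ni\alpha<\beta\in B} F''_\alpha\otimes G''_\beta$ (the formula derived right before Lemma~\ref{orderingproperty}) and computing the orthogonal complement with respect to $\tr(XY)$ on $V\otimes V_*$, an element $X=\sum u_i\otimes \phi_i$ is orthogonal to all $F''_\alpha\otimes G''_\beta$ with $\alpha<\beta$ precisely when the ``lower-triangular strictly-below-diagonal'' matrix coefficients of $X$ vanish, i.e.\ $X\in \sum_{A\ni\alpha\leq\beta\in B}\overline{F''_\alpha}\otimes G''_\beta$ after taking closures — and Lemma~\ref{orderingproperty} together with Lemma~\ref{nonclosedsuccessor} identifies this with $\St_{\F^c}\cap\St_{\G^c}$, since passing to $\F^c$ exactly replaces each non-closed $F''_\alpha$ by $\overline{F''_\alpha}$. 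The containment $\p_+\subset\p'$ is then visible from the formulas, and $\n_{\p'}=\n_{\F^c}\cap(\cdots)$ equals $\n_\p$ because closing up the successors does not change the strictly-below-diagonal part (the nilradical is built from pairs $\alpha<\beta$, and $\overline{F''_\alpha}$ still pairs to zero with $G''_\beta$ for $\alpha<\beta$). The main obstacle, I expect, is the careful bookkeeping in the orthogonal-complement computation: one must be precise about which pairings $\langle F''_\alpha, G''_\beta\rangle$ vanish, handle the boundary case $\alpha=\beta$ (the ``diagonal'' blocks, where $\F^c$ may or may not merge pairs), and verify that the closure operations commute with the infinite sums defining these subalgebras — the subtlety being that $\overline{F''_\alpha}$ can genuinely differ from $F''_\alpha$ only for $\alpha\notin C$, and one must check the resulting chain is still a (closed) generalized flag with the claimed stabilizer, which is precisely the content of Proposition~\ref{primeflags} that we may invoke.
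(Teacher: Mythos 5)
Your overall architecture matches the paper's: apply Theorem~\ref{tautcouple} to get a taut couple, push $\p$ into $\p_+/\n_{\p_+}\cong\bigoplus_\gamma\gl(F''_\gamma/F'_\gamma,G''_\gamma/G'_\gamma)$, use Lemma~\ref{recoverslinfty} to force $\sl$ in each infinite block, then derive splittability from Proposition~\ref{toraltrace} and uniqueness from Proposition~\ref{uniquetautpair} via the normalizer. However, there are two genuine gaps.

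The first and most serious is in the ``if'' direction of part (1). You claim that refining $\F$, $\G$ to \emph{any} maximal taut couple $\tilde\F$, $\tilde\G$ yields a Borel $\St_{\tilde\F}\cap\St_{\tilde\G}$ contained in $(\St_\F\cap\St_\G)_-$, on the grounds that ``traceless constraints are vacuous or automatically met in the infinite blocks.'' This is false. If, inside an infinite-dimensional block $F'_\gamma\subset F''_\gamma$ with $\gamma\in C\setminus C_0$, the refinement introduces a pair $H'\subset H''$ with $H'$ \emph{closed}, then the Borel contains $H''\otimes(H')^\perp$, which includes elements $x\otimes y$ with $\langle x,y\rangle=1$ (choose $x\in H''\setminus H'$ and $y\in (H')^\perp\setminus (H'')^\perp$ dual to $x$). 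Such an element projects to a nonzero-trace element of $\gl(V_\gamma,(V_\gamma)_*)$ and hence lies outside $(\St_\F\cap\St_\G)_-$. The paper avoids this by selecting the refinement carefully: within each infinite-dimensional block it requires $\overline{H'}=H''$ for every new pair, which makes the Borel locally nilpotent on that block and hence contained in $\sl(V_\gamma,(V_\gamma)_*)$. This relies on the existence of a locally nilpotent Borel of $\gl_\infty$ constructed in \cite{DP2}, which is not automatic.

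The second gap is in part (2)(b). You propose computing $N_\g(\p_+)$ ``using Lemma~\ref{infinitenormalizer} applied block by block,'' but that lemma computes the normalizer in $\gl(W,W_*)$ of an $n$-fold diagonal copy of $\sl(V,V_*)$, $\so(V)$, or $\sp(V)$ — a different situation from computing $N_{\gl(V,V_*)}(\St_\F\cap\St_\G)$. The paper's actual argument is a direct one: it shows $[X,\p_-]\subset\p_+$ forces $X\in\p_+$ by writing $X=X'+\sum_i v_i\otimes w_i$ with $v_i\in V_{\alpha_i}$, $w_i\in(V_{\beta_i})_*$, $\alpha_i>\beta_i$, and then deriving a contradiction from the ordering structure of $A\cup_C B$ and Lemma~\ref{orderingproperty}. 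Your alternative appeal to ``Levi-component rigidity of Theorem~\ref{maxlocss}'' is not developed enough to substitute for this; as it stands, the normalizer claims are not established. Parts (2)(a) and (2)(c) of your sketch are essentially sound and match the paper's route (the paper omits the explicit verification that $\p'=\St_{\F^c}\cap\St_{\G^c}$, so your intent to compute it from the trace pairing is a reasonable supplement, though the bookkeeping you flag there does need to be done).
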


\begin{proof}
Suppose $\p \subset \g$ is a parabolic subalgebra.  Let $\F$, $\G$ be a $\p$-stable taut couple as given by Theorem~\ref{tautcouple}.  Set $\p_+ := \St_\F \cap \St_\G$.  Let $\varphi \colon \p \rightarrow {\p_+} / { \n_{\p_+}}$ be the inclusion of $\p$ into $\p_+$ followed by the quotient map.  Recall from Proposition~\ref{splitexact} that $${\p_+} / { \n_{\p_+}} \cong \bigoplus_{\gamma \in C}\gl\big({F''_\gamma} / {F'_\gamma} , {G''_\gamma} / {G'_\gamma}\big).$$  
Let $\pi_\gamma \colon {\p_+} / { \n_{\p_+}} \rightarrow\gl\big({F''_\gamma} / {F'_\gamma} , {G''_\gamma} / {G'_\gamma}\big)$ denote the quotient map for each $\gamma \in C$.  
Clearly $$\bigoplus_{\gamma \in C} \varphi(\b) \cap \sl\big({F''_\gamma} / {F'_\gamma} , {G''_\gamma} / {G'_\gamma}\big) \subset
\varphi (\p) \subset \bigoplus_{\gamma \in C} \pi_\gamma \circ \varphi (\p).$$

Fix $\gamma \in C$.  We now show that $\varphi (\b) \cap \sl\big({F''_\gamma} / {F'_\gamma} , {G''_\gamma} / {G'_\gamma}\big)$ is a Borel subalgebra of $\sl\big({F''_\gamma} / {F'_\gamma} , {G''_\gamma} / {G'_\gamma}\big)$ for any Borel subalgebra $\b$ of $\g$ contained in $\p$.  We know from the classification of Borel subalgebras in \cite{DP2} quoted in Theorem~\ref{tfae} that $\b$ is the stabilizer of a maximal closed generalized flag $\H$ in $V$.  Furthermore $\H$ is a refinement of $\F$ by Lemma~\ref{stablesubspaces}.  The intersection $\varphi (\b) \cap \sl\big({F''_\gamma} / {F'_\gamma} , {G''_\gamma} / {G'_\gamma}\big)$ is the stabilizer in $\sl\big({F''_\gamma} / {F'_\gamma} , {G''_\gamma} / {G'_\gamma}\big)$ of $\{ (H \cap F''_\gamma) / F'_\gamma \colon H \in \H \}$.  One may check that  $\{ (H \cap F''_\gamma) / F'_\gamma : H \in \H \}$ is a maximal closed generalized flag in ${F''_\gamma} / {F'_\gamma}$ under the pairing ${F''_\gamma} / {F'_\gamma} \times {G''_\gamma} / {G'_\gamma} \rightarrow \C$.  Hence by the same classification $\varphi (\b) \cap \sl\big({F''_\gamma} / {F'_\gamma} , {G''_\gamma} / {G'_\gamma}\big)$ is a Borel subalgebra of $\sl\big({F''_\gamma} / {F'_\gamma} , {G''_\gamma} / {G'_\gamma}\big)$.

Since $\b \subset \p$, this means that $\pi_\gamma \circ \varphi (\p)$ is a subalgebra of $\gl\big({F''_\gamma} / {F'_\gamma} , {G''_\gamma} / {G'_\gamma}\big)$ which acts irreducibly on both ${F''_\gamma} / {F'_\gamma}$ and ${G''_\gamma} / {G'_\gamma}$ and contains a Borel subalgebra of $\sl\big({F''_\gamma} / {F'_\gamma} , {G''_\gamma} / {G'_\gamma}\big)$.  By Theorem \ref{irreducible}, $\pi_\gamma\circ \varphi (\p)$ is either $\sl\big({F''_\gamma} / {F'_\gamma} , {G''_\gamma} / {G'_\gamma}\big)$ or $ \gl\big({F''_\gamma} / {F'_\gamma} , {G''_\gamma} / {G'_\gamma}\big)$.

For each $\gamma \in C$, one has 
\begin{eqnarray*}
[\varphi (\b) \cap \sl\big({F''_\gamma} / {F'_\gamma} , {G''_\gamma} / {G'_\gamma}\big) ,   \sl\big({F''_\gamma} / {F'_\gamma} , {G''_\gamma} / {G'_\gamma}\big) ]
& \subset &[\varphi (\b) \cap 
\sl\big({F''_\gamma} / {F'_\gamma} , {G''_\gamma} / {G'_\gamma}\big)
, \pi_\gamma \circ \varphi (\p) ] \\
&= &[\varphi (\b) \cap 
\sl\big({F''_\gamma} / {F'_\gamma} , {G''_\gamma} / {G'_\gamma}\big)
, \varphi( \p) ] \\
& \subset & \varphi(\p).
\end{eqnarray*}
It follows from Lemma~\ref{recoverslinfty} that
$\sl\big({F''_\gamma} / {F'_\gamma} , {G''_\gamma} / {G'_\gamma}\big)
\subset \varphi(\p)$.  

The proof of Theorem~\ref{existence} implies the existence of subalgebra $\l \subset \p$ which is a Levi component of both $\p$ and $\p_+$, since in either case $\l$ is the pullback of $\bigoplus_{\gamma \in C} \sl\big({F''_\gamma} / {F'_\gamma} , {G''_\gamma} / {G'_\gamma}\big)$.  Note that $\n_{\p_+} \subset \b \subset \p$.  Hence $[\p_+ , \p_+] = \n_{\p_+} \subsetplus \l \subset \p$.

Let $V_\alpha$ and $(V_\beta)_*$ for $\alpha \in A$ and $\beta \in B$ be as in Proposition~\ref{splitexact} (\ref{seconditem}).  Then $\p_+ = \n_{\p_+} \subsetplus \bigoplus_{\gamma \in C} \gl \big( V_\gamma , (V_\gamma)_* \big)$, and we have shown that $ \n_{\p_+} \subsetplus \bigoplus_{\gamma \in C} \sl \big( V_\gamma , (V_\gamma)_* \big) \subset \p$.  Since $\p$ contains $\b$, one has
$$\bigoplus_{\eta \in C_0} \sl \big( V_\eta , (V_\eta)_* \big) + \b \cap \bigoplus_{\eta \in C_0} \gl \big( V_\eta , (V_\eta)_* \big) = \g \cap \bigoplus_{\eta \in C_0} \gl \big( V_\eta , (V_\eta)_* \big) \subset \p.$$
Hence we have shown $(\St_\F \cap \St_\G)_- \subset \p$.

As the commutator subalgebra of $\St_\F \cap \St_\G$ is contained in $(\St_\F \cap \St_\G)_-$, any intermediate vector subspace is a subalgebra.  It remains to show that  $(\St_\F \cap \St_\G)_-$ is a parabolic subalgebra of $\gl(V,V_*)$.  Let $\tilde{\F}$, $\tilde{\G}$ be a maximal taut couple refining the taut couple $\F$, $\G$, with the following property.  For each $\gamma \in C \setminus C_0$, and every immediate predecessor-successor pair $H' \subset H''$ of $\tilde{\F}$ with $F'_\gamma \subset H' \subset H'' \subset F''_\gamma$, one requires $\overline{H'} = H''$.  This is possible due to an example given in \cite{DP2} of a locally nilpotent Borel subalgebra of $\gl_\infty$.  As noted in Theorem~\ref{tfae}, the subalgebra $\St_{\tilde{\F}} \cap \St_{\tilde{\G}}$ is a Borel subalgebra of $\g$.  Of course we have $\St_{\tilde{\F}} \cap \St_{\tilde{\G}} \subset \St_\F \cap \St_\G$ since the one couple refines the other, and one may check from the construction that indeed $\St_{\tilde{\F}} \cap \St_{\tilde{\G}}  \subset (\St_\F \cap \St_\G)_-$.

\begin{itemize}
\item[(\ref{parta})] The subalgebra $\p_+ = \St_\F \cap \St_\G$ is splittable because the stabilizer of any chain is a splittable subalgebra, and the intersection of splittable subalgebras is splittable.  Since $\p$ is defined by trace conditions on $\p_+$, Proposition~\ref{toraltrace}  implies that $\p$ is splittable.

\item[(\ref{partb})]
Proposition~\ref{toraltrace} implies $\n_\p =  \n_{\p_+}$.  Using the fact that $\p_-$ is a parabolic subalgebra with $(\p_-)_+ = \p_+$, we see that $\n_{\p_-} = \n_{(\p_-)_+} = \n_{\p_+}$.

We will show that $N_\g (\p) = \p_+$.  The result for the normalizer of $\p_+$ and $\p_-$ follows from the fact that $(\p_-)_+ = (\p_+)_+ = \p_+$.

We have already seen that $[\p_+ , \p_+] \subset \p_-$.  Thus $\p_+$ normalizes $\p$.  We show below that $[X , \p_-] \subset \p_+$ implies $X \in \p_+$. 
As a result, $[X , \p] \subset \p$ implies $[X , \p_-] \subset [X, \p] \subset \p \subset \p_+$, which in turn implies $X \in \p_+$.

Suppose that $[X , \p_-] \subset \p_+$.  Then $X = X' + \sum_{i=1}^n v_i \otimes w_i$ for some $X' \in \p_+$ and $0 \neq v_i \in V_{\alpha_i}$ and $0 \neq w_i \in (V_{\beta_i})_*$ with $\alpha_i > \beta_i$ for $i \in \{1 , \ldots n \}$.  Thus $[\sum_{i=1}^n v_i \otimes w_i, \p_-] \subset \p_+$, and indeed
 $[\sum_{i=1}^n v_i \otimes w_i, \p_+] \subset \p_+$.  

Assume for the sake of a contradiction that $n \geq 1$.  Assume without loss of generality that $\alpha_1 \geq \alpha_i$ for all $i$, and that $\beta_1 \leq \beta_i$ if $\alpha_i = \alpha_1$.
We may also assume that the vectors $v_i$ for which $\alpha_i = \alpha_1$ are linearly independent.  Observe $v_1 \otimes \bigcup_{b \geq \alpha_1} G''_b \subset \p_+$.  For any $y \in \bigcup_{b \geq \alpha_1} G''_b$,
compute
$$\left[ \sum_i v_i \otimes w_i , v_1 \otimes y \right] = \sum_i (\langle v_1 , w_i \rangle v_i \otimes y - \langle v_i , y \rangle v_1 \otimes w_i).$$
By the linear independence of $v_1$ from the other vectors $v_i$ with $\alpha_i = \alpha_1$, we see that $v_1 \otimes w_1$ must appear with coefficient zero.  Therefore $\langle v_1 , y \rangle = 0$.  This shows that $v_1 \in (\bigcup_{\alpha_1 \leq b} G''_b)^\perp = \overline{F'_\alpha}$, by Lemma~\ref{orderingproperty}.  Consequently $\alpha_1 \in A \setminus C$.  One may show similarly that $\beta_1 \in B \setminus C$.

We next show that there exists $a \in A$ such that $\beta_1 < a < \alpha_1$.  Since $\langle F''_{\alpha_1} , G''_{\beta_1} \rangle \neq 0$, one has that $G''_{\beta_1} \nsubseteq (F''_{\alpha_1})^\perp = (F'_{\alpha_1})^\perp$.  Therefore $\langle F'_{\alpha_1} , G''_{\beta_1} \rangle \neq 0$.  Since $F'_{\alpha_1} = \bigcup_{a < {\alpha_1}} F''_\alpha$, there exists $a < {\alpha_1}$ such that $\langle F''_a , G''_{\beta_1} \rangle \neq 0$.  Therefore $\beta_1 \leq a$, and since $\beta_1 \notin C$, indeed $\beta_1 < a$.

Now assume that the vectors $w_i$ for which $\beta_i = \beta$ are linearly independent, relaxing the above linear independence hypothesis.  
A similar line of argument shows $v_1 \in (\bigcup_{a \leq b} G''_b)^\perp = \overline{F'_a}$.  Hence $v_1 \in F''_a$, which contradicts the fact that $v_1 \in F''_\alpha \setminus F'_\alpha$ and $a < \alpha$.  As a result, $n = 0$ and $X = X' \in \p_+$.

\item[(\ref{partc})] 
To check that $\p_+$ pairs trivially with $\n_\p$, recall that $\p_+ = \n_\p \subsetplus \bigoplus_{\gamma \in C} V_\gamma \otimes (V_\gamma)_*$ and $\n_\p = \sum_{\alpha \in A} F''_\alpha \otimes (F''_\alpha)^\perp$ by Proposition~\ref{splitexact}.  For any $\alpha$, $a \in A$, the pairing of $F''_\alpha \otimes (F''_\alpha)^\perp$ with $F''_a \otimes (F''_a)^\perp$ is $\langle F''_\alpha , (F''_a)^\perp \rangle \langle F''_a , (F''_\alpha)^\perp \rangle = 0$.  For any $\alpha \in A$ and $\gamma \in C$,  the pairing of $F''_\alpha \otimes (F''_\alpha)^\perp$ with $F''_\gamma \otimes G''_\gamma$ is $\langle F''_\alpha , G''_\gamma \rangle \langle F''_\gamma , (F''_\alpha)^\perp \rangle = 0$.

We omit the proof that $\p' = \St_{\F^c} \cap \St_{\G^c}$.

Proposition~\ref{splitexact} (\ref{firstitem}) gives $\n_{\p_+} = \n_{\St_\F \cap \St_\G} = \sum_{\alpha \in A} F''_\alpha \otimes (F''_\alpha)^\perp$.  
Suppose $F''_\alpha$ is not closed.  Then $F'_\alpha \subset F''_\alpha$ and $F''_\alpha \subset \overline{F''_\alpha}$ are two pairs in $\F$, and these yield terms $F''_\alpha \otimes (F''_\alpha)^\perp$ and $\overline{F''_\alpha} \otimes ( \overline{F''_\alpha} )^\perp$ in the expression for $\n_{\p_+} $.  Note that $F''_\alpha \otimes (F''_\alpha)^\perp \subset \overline{F''_\alpha} \otimes ( \overline{F''_\alpha} )^\perp$.
By Proposition~\ref{primeflags}, the generalized flag $\F^c$ has the pair $F'_\alpha \subset \overline{F''_\alpha}$, which gives rise to the term $\overline{F''_\alpha} \otimes (\overline{F''_\alpha})^\perp$ in the analogous expression for $\n_{\p'} = \n_{\St_{\F^c} \cap \St_{\G^c}}$.  Since the remaining pairs of $\F$ and $\F^c$ are identical, the expressions for  $\n_{\p'}$ and $\n_{\p_+}$ agree, and hence $\n_{\p'} = \n_{\p_+} = \n_\p$.
\end{itemize}

To get the uniqueness of the couple $\F$, $\G$, consider that the normalizer of $\p$ in $\gl(V,V_*)$ is $\p_+ = \St_\F \cap \St_\G$.  Hence the normalizer of $\p$ is in the image of the map $(\F , \G) \mapsto \St_\F \cap \St_\G$ taking taut couples to subalgebras of $\gl(V,V_*)$. Proposition~\ref{uniquetautpair} states that this map is injective, and the uniqueness of the couple follows.
\end{proof}

\begin{cor}\label{selfnormalizingparabolics}
The map $$(\F,\G) \mapsto \St_\F \cap \St_\G$$ is a bijection from the set of taut couples in $V$ and $V_*$ to the set of self-normalizing parabolic subalgebras of $\gl(V,V_*)$.
\end{cor}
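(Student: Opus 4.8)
The plan is to deduce the corollary by repackaging Proposition~\ref{uniquetautpair} and Theorem~\ref{slglcharacterization}, which together already contain everything needed; the only task is to organize it. Injectivity of the map $(\F,\G) \mapsto \St_\F \cap \St_\G$ is exactly the content of Proposition~\ref{uniquetautpair}, so what remains is to identify its image with the set of self-normalizing parabolic subalgebras of $\gl(V,V_*)$.

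For the inclusion of the image in the self-normalizing parabolics, I would fix a taut couple $\F$, $\G$ and set $\p := \St_\F \cap \St_\G$. Since $(\St_\F \cap \St_\G)_-$ is by construction a subalgebra of $\St_\F \cap \St_\G$, the chain $(\St_\F \cap \St_\G)_- \subset \p \subset \St_\F \cap \St_\G$ holds trivially, so the ``if'' direction of Theorem~\ref{slglcharacterization}(1) shows that $\p$ is parabolic. Now apply the ``only if'' direction of Theorem~\ref{slglcharacterization}(1) to the parabolic subalgebra $\p$: it attaches to $\p$ a \emph{unique} taut couple, which by the same trivial sandwiching must be $\F$, $\G$ itself. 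Hence in the notation of Theorem~\ref{slglcharacterization}(2) one has $\p_+ = \St_\F \cap \St_\G = \p$, and part~(\ref{partb}) of that theorem gives $N_\g(\p) = \p_+ = \p$, i.e.\ $\p$ is a self-normalizing parabolic subalgebra.

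For surjectivity, I would take an arbitrary self-normalizing parabolic subalgebra $\p \subset \gl(V,V_*)$. Theorem~\ref{slglcharacterization}(1) yields a taut couple $\F$, $\G$ with $(\St_\F \cap \St_\G)_- \subset \p \subset \St_\F \cap \St_\G$, and part~(\ref{partb}) of Theorem~\ref{slglcharacterization}(2) identifies $N_\g(\p) = \p_+ = \St_\F \cap \St_\G$. Since $\p$ is self-normalizing, $\p = N_\g(\p) = \St_\F \cap \St_\G$, so $\p$ lies in the image of the map. Together with injectivity from Proposition~\ref{uniquetautpair}, this completes the proof. I do not expect any genuine obstacle: the one point requiring a moment's attention is the bookkeeping that the unique taut couple associated by Theorem~\ref{slglcharacterization}(1) to the parabolic subalgebra $\St_\F \cap \St_\G$ is the couple one started with, and this is immediate from the uniqueness clause of that theorem.
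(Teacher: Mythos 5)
Your proof is correct and is essentially the intended deduction: the corollary is stated without proof in the paper precisely because it is the direct repackaging of Proposition~\ref{uniquetautpair} (injectivity) with Theorem~\ref{slglcharacterization} part~(1) and part~(\ref{partb}) (identification of the image), which is exactly what you carry out. The one bookkeeping point you flag — that the unique taut couple attached by Theorem~\ref{slglcharacterization}(1) to $\St_\F\cap\St_\G$ is the couple $\F$, $\G$ one started with — is handled correctly via the trivial sandwiching $(\St_\F \cap \St_\G)_- \subset \St_\F \cap \St_\G \subset \St_\F \cap \St_\G$ and the uniqueness clause.
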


The following proposition describes the closed subspaces of $V$ which are stable under the subalgebra $(\St_\F \cap \St_\G)_- \subset \gl(V,V_*)$ for a taut couple $\F$, $\G$.  The proof is similar to that of Lemma~\ref{stablesubspaces}, and we leave it to the reader.

\begin{prop}
Let $\k := (\St_\F \cap \St_\G)_- \subset \gl(V,V_*)$.  The set of closed $\k$-stable subspaces of $V$ is a chain (containing the chain of closed subspaces of $\F$).
\end{prop}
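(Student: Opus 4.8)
The plan is to derive the statement from Lemma~\ref{stablesubspaces}, using the fact --- established in the proof of Theorem~\ref{slglcharacterization} --- that $\k=(\St_\F\cap\St_\G)_-$ is itself a parabolic subalgebra of $\gl(V,V_*)$. Concretely, taking $\p:=\k$ in Theorem~\ref{slglcharacterization}~(1) (the inclusions $(\St_\F\cap\St_\G)_-\subseteq\p\subseteq\St_\F\cap\St_\G$ hold trivially) shows that $\k$ is a parabolic subalgebra of $\gl(V,V_*)$, so it contains a Borel subalgebra $\b$. By the classification of Borel subalgebras of $\gl(V,V_*)$ recalled in Theorem~\ref{tfae}, one has $\b=\St_\H$ for a maximal closed generalized flag $\H$ in $V$; in particular $\H$ is semiclosed.

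Next I would apply Lemma~\ref{stablesubspaces} to $\H$: every nontrivial proper closed $\St_\H$-stable subspace of $V$ is a union of elements of $\H$, and the subspaces $0$ and $V$ are trivially unions of elements of $\H$ as well. One then records the elementary observation that, in any chain $\mathcal{C}$ of subspaces, the subspaces arising as unions of subsets of $\mathcal{C}$ are pairwise comparable: if $W=\bigcup_{C\in S}C$ and $W'=\bigcup_{C\in T}C$ with $W\not\subseteq W'$, pick $C_0\in S$ with $C_0\not\subseteq W'$; then no $C\in T$ can contain $C_0$, so $C\subseteq C_0$ for every $C\in T$, whence $W'\subseteq C_0\subseteq W$. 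Consequently the closed $\St_\H$-stable subspaces of $V$ form a chain.

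Finally, since $\b=\St_\H\subseteq\k$, every closed $\k$-stable subspace of $V$ is in particular $\St_\H$-stable, so the set of closed $\k$-stable subspaces is contained in the chain just described and is therefore itself a chain. Because $\k\subseteq\St_\F\cap\St_\G\subseteq\St_\F$, every subspace belonging to $\F$ --- in particular every closed subspace of $\F$ --- is $\k$-stable, which gives the parenthetical assertion.

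Since the argument rests on results already proved, there is essentially no computation to carry out; the only point that needs care --- and the feature that makes a direct argument in the style of Lemma~\ref{stablesubspaces} more laborious --- is that a subalgebra of this shape genuinely contains a Borel subalgebra of all of $\gl(V,V_*)$ (for $\k\cong\sl_\infty$ this is precisely the phenomenon that $\sl_\infty$ is parabolic in $\gl_\infty$), which is exactly what Theorem~\ref{slglcharacterization} supplies. If one preferred to avoid invoking Theorem~\ref{slglcharacterization}, one would instead compute the $\k$-submodule generated by a nonzero $v\in F''_\alpha\setminus F'_\alpha$ using $\k=\n_{\St_\F\cap\St_\G}\subsetplus\bigoplus_{\gamma\in C}\k_\gamma$ with $\n_{\St_\F\cap\St_\G}=\sum_{\alpha\in A}F''_\alpha\otimes(F''_\alpha)^\perp$; the case $\alpha\in C$ gives that this submodule is $F''_\alpha$, while the case $\alpha\notin C$ (no block present, $F'_\alpha$ non-closed) is the main obstacle, to be handled by the fact that $F'_\alpha$ and $\overline{F'_\alpha}=F''_\alpha$ are already $\k$-stable and by passing to the block of $\F^c$ governing that window.
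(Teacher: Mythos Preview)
Your argument is correct and takes a genuinely different route from the one the paper indicates. The paper says only that ``the proof is similar to that of Lemma~\ref{stablesubspaces}, and we leave it to the reader,'' pointing to a direct computation of $\k\cdot v$ for $0\neq v\in V$ from the explicit description $\k=\n_{\St_\F\cap\St_\G}\subsetplus\big(\bigoplus_{\gamma\in C\setminus C_0}\sl(V_\gamma,(V_\gamma)_*)\oplus\bigoplus_{\eta\in C_0}\gl(V_\eta,(V_\eta)_*)\big)$ --- essentially the alternative you sketch in your final paragraph. You instead invoke Theorem~\ref{slglcharacterization} to place a Borel $\b=\St_\H$ inside $\k$, apply Lemma~\ref{stablesubspaces} to $\H$ rather than to $\F$, and then pass from the chain of closed $\St_\H$-stable subspaces to the smaller set of closed $\k$-stable ones. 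This is cleaner and sidesteps the case split (in particular the $\alpha\notin C$ window). The trade-off is that the direct computation would also identify the closed $\k$-stable subspaces explicitly in terms of $\F$, whereas your route shows only that they form a chain; but that is all the proposition asserts, so nothing is missing.
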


\begin{thm}
Let $\p \subset \gl(V,V_*)$ be any subalgebra.  If $\p = \n_\p^\perp$, then $\p$ is a parabolic subalgebra.  (The converse does not hold, since $\n_\p^\perp = \p'$ for any parabolic subalgebra $\p$.)
\end{thm}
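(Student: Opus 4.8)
The plan is to apply the main construction of Theorem~\ref{tautcouple} to $\p$ and then convert the hypothesis $\p = \n_\p^\perp$ into a containment of a genuine parabolic subalgebra inside $\p$. First I would invoke Theorem~\ref{tautcouple} to obtain a $\p$-stable taut couple $\F$, $\G$ of semiclosed generalized flags; writing $\p_+ := \St_\F \cap \St_\G$, this gives $\p \subseteq \p_+$ together with the key identity $\n_\p = \n_{\p_+} \cap \p$, so in particular $\n_\p \subseteq \n_{\p_+}$.

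Next I would note that $\p_+$ is itself a (self-normalizing) parabolic subalgebra: by Corollary~\ref{selfnormalizingparabolics} the map $(\F,\G) \mapsto \St_\F \cap \St_\G$ carries taut couples to self-normalizing parabolic subalgebras. Hence Theorem~\ref{slglcharacterization}(2)(\ref{partc}), applied with $\p_+$ in place of $\p$, identifies the orthogonal complement $(\n_{\p_+})^\perp$, taken with respect to the form $\tr(XY)$ on $\gl(V,V_*)$, with $\St_{\F^c} \cap \St_{\G^c}$, where $\F^c$, $\G^c$ are the closed generalized flags attached to $\F$, $\G$ in Proposition~\ref{primeflags}. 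Since $\F^c$, $\G^c$ again form a taut couple (Proposition~\ref{primeflags}), Corollary~\ref{selfnormalizingparabolics} shows that $\St_{\F^c} \cap \St_{\G^c}$ is parabolic, hence contains a Borel subalgebra $\b$ of $\gl(V,V_*)$.

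To finish, I would use that forming orthogonal complements is inclusion-reversing: from $\n_\p \subseteq \n_{\p_+}$ we get $\b \subseteq \St_{\F^c} \cap \St_{\G^c} = (\n_{\p_+})^\perp \subseteq (\n_\p)^\perp = \p$. Thus $\p$ contains a Borel subalgebra and is therefore parabolic. I do not expect a serious obstacle here; the argument is a short chain of results already established. The only points requiring care are applying Theorem~\ref{slglcharacterization}(2)(\ref{partc}) to the correct subalgebra $\p_+$ rather than to $\p$ itself (for which the relevant identity $\n_\p^\perp = \St_{\F^c}\cap\St_{\G^c}$ is a priori unknown), and recording that the converse fails because, for an honest parabolic $\p$, Theorem~\ref{slglcharacterization}(2)(\ref{partc}) gives $\n_\p^\perp = \p' = \St_{\F^c}\cap\St_{\G^c}$, which in general strictly contains $\p$ (for instance $\sl_\infty \subsetneq \gl_\infty$).
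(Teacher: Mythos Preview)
Your proof is correct and follows essentially the same approach as the paper: apply Theorem~\ref{tautcouple} to obtain the taut couple, note that $\p_+ = \St_\F \cap \St_\G$ is parabolic, and then reverse inclusions using $\n_\p \subset \n_{\p_+}$ to land back inside $\p$. The paper's argument is a bit more streamlined---it uses only the containment $\p_+ \subset \n_{\p_+}^\perp$ from Theorem~\ref{slglcharacterization}(2)(\ref{partc}) (not the full identification with $\St_{\F^c}\cap\St_{\G^c}$) and combines $\p \subset \p_+ \subset \n_{\p_+}^\perp \subset \n_\p^\perp = \p$ to conclude the sharper statement $\p = \St_\F \cap \St_\G$, whereas your detour through $\F^c$, $\G^c$ and an auxiliary Borel is unnecessary but harmless.
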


\begin{proof}
Suppose  $\p \subset \gl(V,V_*)$ is a subalgebra with the property $\p = \n_\p^\perp$.  Let $\F$, $\G$ be the couple given by Theorem~\ref{tautcouple}, and set $\q := \St_\F \cap \St_\G$.  Theorem~\ref{tautcouple} gives that $\p \subset \q$ and $\n_\p = \n_\q \cap \p$.  By Theorem~\ref{slglcharacterization}, $\q$ is a parabolic subalgebra of $\gl(V,V_*)$, and $\q \subset \n_\q^\perp$.  This shows that $\q \subset \n_\q^\perp \subset \n_\p^\perp \subset \p$.  Therefore $\p = \q$ is a parabolic subalgebra.
\end{proof}

It is worth pointing out that the stabilizer of a single semiclosed generalized flag $\F$ in $V$ (or $\G$ in $V_*$) is a parabolic subalgebra of $\gl(V,V_*)$.   Via the general construction described in Section~\ref{preliminaries}, the chain $\F^\perp \cup \{0,V_*\}$ in $V_*$ yields a semiclosed generalized flag $\G$.  One may check that $\F$, $\G$ form a taut couple, and the joint stabilizer equals $\St_\F$.  Alternatively, Theorem~\ref{tautcouple} applied to the subalgebra $\k = \St_\F$ produces a taut couple, one generalized flag of which can be taken to be $\F$.  Then the statement $\St_\F \subset \St_\F \cap \St_\G$ implies $\St_\F = \St_\F \cap \St_\G$.

\section{Parabolic subalgebras of $\so_\infty$ and $\sp_\infty$}\label{secParsosp}

In this section we take $\g$ to be $\so(V)$ or $\sp(V)$, under a suitable identification of $V$ and $V_*$.  The statements in this section are given without proof, as they are similar to those of the corresponding statements for $\gl_\infty$.  

Suppose $\F = \{F'_\alpha , F''_\alpha \}_{\alpha \in A}$ is a self-taut generalized flag in $V$.  
For each $\alpha \in A$ such that $F'_\alpha$ is closed and coisotropic, there exists a unique pair $F'_\beta \subset F''_\beta$ in $\F$ such that $F'_\beta = (F''_\alpha)^\perp$.  Then $F'_\beta$ is closed and coisotropic, and setting $f(\alpha) := \beta$, we obtain a map $f \colon \{ \alpha \in A ~\colon F'_\alpha \textrm{ is closed, }F''_\alpha \textrm{ is isotropic} \} \rightarrow \{ \alpha \in A ~\colon F'_\alpha \textrm{ is closed, coisotropic} \}$.

\begin{prop} \label{isodefinec}
The map $f$ is a bijection
$$\{ \alpha \in A ~\colon F'_\alpha \textrm{ is closed, }F''_\alpha \textrm{ is isotropic} \} \rightarrow \{ \alpha \in A ~\colon F'_\alpha \textrm{ is closed, coisotropic} \}.$$
\end{prop}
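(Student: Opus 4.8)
The plan is to observe that a self-taut generalized flag, paired with itself, is an instance of the taut-couple machinery of Section~\ref{tautcouples}, so that the map $f$ is nothing but the bijection supplied by Proposition~\ref{definec}.

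First I would use the nondegenerate form $V \times V \to \C$ to identify $V_*$ with $V$. Under this identification $\gl(V,V_*) = \gl(V,V)$, the orthogonal complement in the sense of Section~\ref{tautcouples} agrees with the complement $\cdot^\perp$ used in this section, and, setting $\G := \F$, one has $\St_\G = \St_\F$. Since $\F$ is semiclosed and, by the definition of self-tautness, $\F^\perp$ is stable under $\St_\F$, the couple $(\F,\F)$ is taut. Proposition~\ref{definec} then produces a bijection $f_{AA} \colon C_A \to C_A$, where $C_A = \{\alpha \in A : F'_\alpha \text{ is closed}\}$, characterized by $F'_{f_{AA}(\alpha)} = (F''_\alpha)^\perp$ for all $\alpha \in C_A$; since the inverse bijection of Proposition~\ref{definec} coincides with $f_{AA}$ itself when the two flags are equal, $f_{AA}$ is an involution and moreover $F'_\alpha = (F''_{f_{AA}(\alpha)})^\perp$. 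Because an element of $\F$ has at most one immediate successor, the pair of $\F$ with a prescribed immediate predecessor is unique, so the map $f$ of the statement (wherever it is defined) is the restriction of $f_{AA}$.

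It remains to match the isotropy conditions through $f_{AA}$. For $\alpha \in C_A$ put $\beta := f_{AA}(\alpha)$, so $F'_\beta = (F''_\alpha)^\perp$ is closed and $(F'_\beta)^\perp = (F''_\alpha)^{\perp\perp} = \overline{F''_\alpha}$. Hence $F'_\beta$ is coisotropic, i.e.\ $(F'_\beta)^\perp \subseteq F'_\beta$, if and only if $\overline{F''_\alpha} \subseteq (F''_\alpha)^\perp$; and for any subspace $X$ the inclusion $X \subseteq X^\perp$ is equivalent to $\overline{X} = X^{\perp\perp} \subseteq X^\perp$ (apply $\cdot^\perp$, which reverses inclusions, and use $X \subseteq \overline X$), so this holds if and only if $F''_\alpha$ is isotropic. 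Thus $\alpha$ lies in $\{\alpha : F'_\alpha \text{ closed}, F''_\alpha \text{ isotropic}\}$ precisely when $\beta = f_{AA}(\alpha)$ lies in $\{\alpha : F'_\alpha \text{ closed, coisotropic}\}$. Consequently $f_{AA}$ sends the first set into the second; being an involution of $C_A$, it then also sends the second into the first, and the two restrictions are mutually inverse. Therefore $f = f_{AA}|_{\{\alpha : F'_\alpha \text{ closed}, F''_\alpha \text{ isotropic}\}}$ is a bijection onto $\{\alpha : F'_\alpha \text{ closed, coisotropic}\}$.

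The only genuine care needed is in the first step — arranging the identification $V_* \cong V$ so that the hypothesis ``self-taut'' becomes exactly the hypothesis ``$(\F,\F)$ is a taut couple'' of Proposition~\ref{definec} — together with the elementary observation that passing to orthogonal complements interchanges the isotropic and coisotropic conditions up to closure; everything else is bookkeeping with the already-established involution $f_{AA}$.
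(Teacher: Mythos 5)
Your proof is correct, and it takes exactly the approach the paper intends: the paper gives no proof for this proposition, remarking only that the statements in Section~\ref{secParsosp} are ``similar to those of the corresponding statements for $\gl_\infty$,'' and you implement that by applying Proposition~\ref{definec} to the taut couple $(\F,\F)$ and then tracking the isotropy conditions through the involution $f_{AA}$. The verification that $\overline{F''_\alpha} \subseteq (F''_\alpha)^\perp$ is equivalent to $F''_\alpha \subseteq (F''_\alpha)^\perp$ is the one genuinely new step beyond Proposition~\ref{definec}, and you carry it out correctly; the involution argument then closes the bijection cleanly.
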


We fix the notation $C := \{ \alpha \in A ~\colon F'_\alpha \textrm{ is closed, }F''_\alpha \textrm{ is isotropic} \}$.
For any $\gamma \in C$, let $G'_\gamma := F'_{f(\gamma)}$ and $G''_\gamma := F''_{f(\gamma)}$.
For each $\gamma \in C$, one has $G'_\gamma = (F''_\gamma)^\perp$ and $F'_\gamma = (G''_\gamma)^\perp$.

As a corollary of Lemma~\ref{stablesubspaces}, every subspace of a self-taut generalized flag is either isotropic or coisotropic.  To see this, consider that the definition of self-taut implies that $X^\perp$ is stable under the $\gl(V,V)$-stabilizer of $\F$ for any $X \in \F$.  By Lemma~\ref{stablesubspaces}, $X^\perp \cup \F$ is a chain, so either $X \subset X^\perp$ or $X^\perp \subset X$.  Define $F$ as the union of all isotropic subspaces $F''_\alpha$ for $\alpha \in A$, and $G$ as the intersection of all coisotropic subspaces $F'_\alpha$ for $\alpha \in A$.  Clearly $F \subset G$.  We claim furthermore that $F  \neq G$ implies $F \subset G$ is a pair in $\F$.  Indeed, if $v$ is any vector in $G \setminus F$, then the unique pair in $\F$ given by property (ii) of the definition of a generalized flag must be $F \subset G$.

Consider the maps
\begin{eqnarray*}
\Lambda \colon \gl(V,V) & \rightarrow & {\bigwedge}^2 V = \so(V) \\
v \otimes w & \mapsto & v \otimes w - w \otimes v
\end{eqnarray*}
and
\begin{eqnarray*}
\mathrm{S} \colon \gl(V,V) & \rightarrow & \Sym^2 (V) = \sp(V) \\
v \otimes w & \mapsto & v \otimes w + w \otimes v.
\end{eqnarray*}

\begin{prop}
If $\p := \St_{\F,\g} \subset \g$, then the following statements hold.
\begin{enumerate}
\item $\n_\p = \sum_{\alpha \in A} F''_\alpha \wedge (F''_\alpha)^\perp$ if $\g = \so(V)$,
and $\n_\p = \sum_{\alpha \in A} F''_\alpha \& (F''_\alpha)^\perp$ if $\g = \sp(V)$.
\item  There exist vector subspaces $V_\alpha \subset V$ for $\alpha \in A$  such that
\begin{itemize}
\item $F''_\alpha = F'_\alpha \oplus V_\alpha$ for each $\alpha \in A$;
\item $V = \bigoplus_{\alpha \in A} V_\alpha$;
\item $\langle  \bigoplus_{\gamma \in C} (V_\gamma)_* , \bigoplus_{\gamma \in C} (V_\gamma)_* \rangle = 
\langle W , \bigoplus_{\gamma \in C} V_\gamma \rangle = \langle W , \bigoplus_{\gamma \in C} (V_\gamma)_* \rangle = 0$;
\item $\langle V_\gamma , (V_\eta)_* \rangle = 0$ for distinct $\gamma \neq \eta \in C$;
\end{itemize}
where $(V_\gamma)_*$ denotes the chosen vector space complement of $G'_\gamma$ in $G''_\gamma$ for any $\gamma \in C$, and where $W$ denotes the chosen vector space complement to $F$ in $G$ if $F \neq G$ and otherwise $W = 0$.

Moreover,
$$\p = \n_\p \subsetplus \left( \so(W) \oplus \bigoplus_{\gamma \in C} \Lambda \big( \gl \big( V_\gamma, (V_\gamma)_* \big) \big) \right)$$ if $\g = \so(V)$,
and
$$\p = \n_\p \subsetplus \left(\sp(W) \oplus \bigoplus_{\gamma \in C} \mathrm{S} \big( \gl \big( V_\gamma, (V_\gamma)_* \big) \big) \right)$$ if $\g = \sp(V)$.
\end{enumerate}
\end{prop}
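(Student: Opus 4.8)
\noindent\emph{Plan of proof.}
The plan is to deduce everything from Proposition~\ref{splitexact} by viewing $\g$ inside $\gl(V,V)$ under the fixed identification $V\cong V_*$, and then restricting to the relevant symmetry type. First I would realize $\p$ as the intersection of $\g$ with the joint stabilizer of a taut couple: let $\G$ be the semiclosed generalized flag in $V_*=V$ produced from the chain $\F^\perp\cup\{0,V\}$ by the general construction of Section~\ref{tautcouples}. Since $\F$ is self-taut, the pair $\F,\G$ forms a taut couple (the argument is the one given for $\gl_\infty$ at the end of Section~\ref{glparabolic}), so $\St_\F=\St_\F\cap\St_\G$ in $\gl(V,V)$ and $\p=\St_{\F,\g}=\g\cap(\St_\F\cap\St_\G)$. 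A direct inspection then identifies the index set of Proposition~\ref{definec} for this couple with $C\sqcup f(C)$, enlarged, when $F\neq G$, by the one extra pair $F\subset G$; moreover, for $\gamma\in C$ the successor quotients of $\F$ at $\gamma$ and at $f(\gamma)$ are nondegenerately paired by $\langle\cdot,\cdot\rangle$, the quotient $G/F$ carries the nondegenerate induced form, and the subspaces $G'_\gamma,G''_\gamma$ fixed before the proposition coincide with the $\G$-data of Proposition~\ref{splitexact} at the index $f(\gamma)$.

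Next I would apply Proposition~\ref{splitexact} to $\St_\F\cap\St_\G$, which gives $\n_{\St_\F\cap\St_\G}=\sum_{\alpha\in A}F''_\alpha\otimes(F''_\alpha)^\perp$ together with a vector space decomposition
$$\St_\F\cap\St_\G=\Big(\sum_{\alpha\in A}F''_\alpha\otimes(F''_\alpha)^\perp\Big)\oplus\bigoplus_{\delta}V_\delta\otimes(V_\delta)_*,$$
with $\delta$ running over the index set above. I would then refine $\F$ as in the proof of Proposition~\ref{splitexact}, but choosing a basis of $V$ whose Mackey pairing is realized by $\langle\cdot,\cdot\rangle$ itself, so that the complements become compatible with the form: the complement at $f(\gamma)$ is the form-dual of the one at $\gamma$ (so $V_\gamma$ and $(V_\gamma)_*$ are both isotropic and in duality), the complement $W$ at the pair $F\subset G$ is nondegenerate, and $W$ is orthogonal to all the $V_\gamma$ and $(V_\gamma)_*$. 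These choices produce exactly the subspaces $V_\alpha$, $(V_\gamma)_*$, $W$ and the orthogonality relations listed in the proposition.

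Finally I would intersect with $\g$. Recall that $\g$ is an eigenspace of the flip $\tau\colon v\otimes w\mapsto w\otimes v$ on $\gl(V,V)=V\otimes V$ (the $(-1)$-eigenspace for $\so(V)$, the $(+1)$-eigenspace for $\sp(V)$), and that $\Lambda$ and $\mathrm{S}$ are, up to a scalar, the projections onto these eigenspaces. Because the complements were chosen form-compatibly, $\tau$ preserves the displayed decomposition: it interchanges the summands indexed by $\gamma$ and $f(\gamma)$ for $\gamma\in C$, preserves the middle summand $W\otimes W$, and preserves $\sum_\alpha F''_\alpha\otimes(F''_\alpha)^\perp$. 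Passing to $\tau$-eigenvectors, each pair of summands over $\{\gamma,f(\gamma)\}$ contributes $\{X+\tau(X):X\in V_\gamma\otimes(V_\gamma)_*\}$, which is $\Lambda\big(\gl(V_\gamma,(V_\gamma)_*)\big)$ for $\so(V)$ and $\mathrm{S}\big(\gl(V_\gamma,(V_\gamma)_*)\big)$ for $\sp(V)$; the middle summand contributes $\g\cap\gl(W,W)=\so(W)$, respectively $\sp(W)$; and the nilpotent summand contributes $\g\cap\sum_\alpha F''_\alpha\otimes(F''_\alpha)^\perp$, which is $\sum_\alpha F''_\alpha\wedge(F''_\alpha)^\perp$, respectively $\sum_\alpha F''_\alpha\,\&\,(F''_\alpha)^\perp$. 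This yields the asserted vector space splitting, and it is a semidirect sum since $\sum_\alpha F''_\alpha\otimes(F''_\alpha)^\perp$ is an ideal of $\St_\F\cap\St_\G$, so its intersection with $\g$ is an ideal of $\p$. That this ideal equals $\n_\p$ follows as in Proposition~\ref{splitexact}: it is a locally nilpotent ideal of $\p$ consisting of elements nilpotent in $\gl(V,V)$, hence is contained in $\n_\p$, while the quotient of $\p$ by it is the locally reductive algebra $\so(W)\oplus\bigoplus_{\gamma\in C}\gl(V_\gamma,(V_\gamma)_*)$ (resp.\ with $\sp(W)$), in which the image of $\n_\p$ is a central ideal consisting of nilpotent elements, hence zero.

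The step I expect to be the main obstacle is the choice of form-adapted complements and the verification of the orthogonality relations in the proposition, together with the bookkeeping that matches the $\tau$-fixed part of each dual pair of $\gl$-blocks with the image of $\Lambda$ (or $\mathrm{S}$) and the middle block with $\so(W)$ (or $\sp(W)$). This is the genuinely new input beyond the $\gl_\infty$ case, and it amounts to running the Mackey dual-basis argument used in Proposition~\ref{splitexact} so that the two resulting dual bases are interchanged by $\langle\cdot,\cdot\rangle$.
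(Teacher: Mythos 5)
Your overall plan—embedding $\g\subset\gl(V,V)$, realizing $\p$ as $\g$ intersected with the joint stabilizer of a taut couple, applying Proposition~\ref{splitexact}, choosing form-adapted complements, and then splitting along the flip $\tau$—is exactly the strategy the paper intends (the authors explicitly state the proofs in this section are ``similar to those of the corresponding statements for $\gl_\infty$''). However, there is a genuine gap in the way you set up the taut couple.

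You take $\G$ to be the generalized flag produced from the chain $\F^\perp\cup\{0,V_*\}$. With that choice $\St_\F\cap\St_\G=\St_\F$, and you then assert that, after choosing form-compatible complements, $\tau$ preserves the resulting decomposition of $\St_\F$. This is not true in general, because $\St_\F$ itself need not be $\tau$-invariant. Concretely, take $\F=\{0\subset H\subset V\}$ with $H$ a non-closed hyperplane; this is self-taut (indeed $\F^\perp=\{0,V\}$), but $\St_\F=H\otimes V$ and $\tau(\St_\F)=V\otimes H\neq\St_\F$. The block that Proposition~\ref{splitexact} produces for this couple is $H\otimes V$, not $W\otimes W$, so the premise of your eigenspace argument fails. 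The issue is precisely that the $\G$ built from $\F^\perp$ can be strictly coarser than $\F$ when $\F$ contains non-closed members, so the $\G$-side data of Proposition~\ref{splitexact} do not match the subspaces $G'_\gamma=F'_{f(\gamma)}$, $G''_\gamma=F''_{f(\gamma)}$, and $W$ fixed before the proposition.

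The fix is to use $\G:=\F$ itself, regarded as a generalized flag in $V_*$ under the identification $V_*=V$: the self-taut hypothesis is \emph{exactly} the condition that $\F,\F$ form a taut couple, one has $\St_\F\cap\St_\G=\St_\F\cap\tau(\St_\F)$, which is manifestly $\tau$-invariant, and since $\g$ is a $\tau$-eigenspace, $\p=\g\cap\St_\F=\g\cap(\St_\F\cap\St_\G)$ still holds. With this $\G$ the map $f_{AB}$ of Proposition~\ref{definec} restricts on $C_{\so}$ to the map $f$ of Proposition~\ref{isodefinec}, the $\G$-data at $f(\gamma)$ coincide with $G'_\gamma,G''_\gamma$ as defined before the proposition, and the middle block really is $W\otimes W$; then your $\tau$-eigenvector argument goes through as written. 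Finally, there is a sign slip to fix: for $\so(V)$ the relevant eigenvectors are $X-\tau(X)=\Lambda(X)$ (the $-1$-eigenspace), not $X+\tau(X)$; the latter is correct only in the $\sp(V)$ case.
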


Note that although $\Lambda$ and $\mathrm{S}$ are not Lie algebras homomorphisms, their restrictions to $\gl \big( V_\gamma, (V_\gamma)_* \big)$ are homomorphisms.

The following theorem is the proper analogue of Theorem~\ref{tfae}.  A generalized flag $\G$ in $V$ is called \emph{isotropic} (resp.\ \emph{coisotropic}) if every proper nontrivial subspace of $V$ appearing in $\G$ is isotropic (resp.\ coisotropic).

\begin{thm}
Let $\G$ be an isotropic semiclosed generalized flag in $V$.  If $\g = \sp(V)$, the following are equivalent:
\begin{enumerate}
\item \label{eins} $\G$ is a maximal semiclosed isotropic generalized flag;
\item \label{zwei} $\G$ is a maximal closed isotropic generalized flag;
\item \label{drei} $\St_{\G,\g}$ is a Borel subalgebra of $\g$; 
\item \label{vier} $\St_{\G,\g}$ is a minimal parabolic subalgebra of $\g$;
\item \label{funf} there exists a  
maximal semiclosed coisotropic generalized flag $\H$ in $V$ such that $\St_{\H,\g} = \St_{\G,\g}$.
\end{enumerate}
If $\g = \so(V)$, then (\ref{eins}) $\Leftrightarrow$ (\ref{zwei}) $\Rightarrow$ (\ref{drei}) $\Leftrightarrow$ (\ref{vier}) $\Rightarrow$ (\ref{funf}).
\end{thm}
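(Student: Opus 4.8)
\medskip
\noindent\textbf{Proof plan.}
The plan is to follow the proof of Theorem~\ref{tfae} step by step, running the two cases $\g = \sp(V)$ and $\g = \so(V)$ in parallel and isolating the one place where the orthogonal case behaves differently. Throughout, write $\F = \{F'_\alpha , F''_\alpha\}_{\alpha \in A}$ for a self-taut semiclosed generalized flag and keep at hand the structural description of $\St_{\F,\g}$ from the Proposition immediately preceding the theorem, together with the bijection $f$ of Proposition~\ref{isodefinec}, the set $C$, and the subspaces $F \subset G$ (the union of the isotropic members of $\F$, respectively the intersection of its coisotropic members).

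First I would establish an isotropic analogue of Lemma~\ref{maximalsemiclosed}: a semiclosed isotropic (resp.\ coisotropic) generalized flag is maximal among such flags exactly when $\dim F''_\alpha / F'_\alpha = 1$ for every $\alpha$ with $\overline{F'_\alpha} = F'_\alpha$, together with an extra condition at the central pair $F \subset G$ (present only when $F \neq G$) that is slightly stronger in the symplectic case than in the orthogonal one. The argument for the pairs indexed by $C$ copies that of Lemma~\ref{maximalsemiclosed} verbatim --- a closed $F'_\alpha$ of codimension $1$ in $F''_\alpha$ forces $F''_\alpha$ to be closed, and if $\overline{F'_\alpha} = F''_\alpha$ then $F''_\alpha$ is already closed --- while the central pair is treated by hand using the explicit forms of $\so(W)$ and $\sp(W)$. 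Granting this, the equivalence $(\ref{eins}) \Leftrightarrow (\ref{zwei})$ follows exactly as in Theorem~\ref{tfae}: maximality forces the relevant successors to be closed, so a maximal semiclosed isotropic generalized flag is closed, and conversely a maximal closed isotropic generalized flag has the same one-dimensionality property by the corresponding result of \cite{DP2}, \cite{D}. The equivalence $(\ref{drei}) \Leftrightarrow (\ref{vier})$ holds for both $\g$ and is immediate from the definitions: a minimal parabolic contains a Borel, which is itself parabolic and hence coincides with it, while conversely any parabolic contained in a Borel again contains a Borel and so equals it.

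Next come the implications tying $(\ref{zwei})$ to $(\ref{drei})$, which rest on the classification of Borel subalgebras of $\so_\infty$ and $\sp_\infty$ established in \cite{D} (building on \cite{DP2}). Using the displayed formula for $\St_{\F,\g}$ together with the lemma above, the stabilizer of a maximal closed isotropic generalized flag is a semidirect sum of its linear nilradical and a toral subalgebra --- the summands $\Lambda(\gl(V_\gamma,(V_\gamma)_*))$, resp.\ $\mathrm{S}(\gl(V_\gamma,(V_\gamma)_*))$, being one-dimensional and the residual piece $\so(W)$, resp.\ $\sp(W)$, being at most toral --- hence it is locally solvable, and the cited classification shows it is a maximal locally solvable subalgebra, i.e.\ a Borel. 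This gives $(\ref{zwei}) \Rightarrow (\ref{drei})$ in both cases. For $\g = \sp(V)$ the classification is sharp --- every Borel of $\sp(V)$ is the stabilizer of a maximal closed isotropic generalized flag --- so the reverse implication $(\ref{drei}) \Rightarrow (\ref{zwei})$ holds as well and all five conditions are equivalent. For $\g = \so(V)$ this reverse implication fails: a Borel of $\so(V)$ need not be the stabilizer of a maximal closed \emph{isotropic} generalized flag, a phenomenon foreshadowed by the special role of the final two simple roots of the finite-dimensional $D_n$ root system, and the ``dual'' failure is why $(\ref{funf})$ does not force $(\ref{drei})$ either. Thus in the orthogonal case only the implications stated can be asserted.

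Finally, $(\ref{drei}) \Rightarrow (\ref{funf})$ follows from the classification of \cite{D} exactly as $(\ref{TFAEthree}) \Rightarrow (\ref{TFAEfive})$ of Theorem~\ref{tfae} follows from \cite{DP2}: a Borel subalgebra of $\g$, being the stabilizer of an isotropic generalized flag $\G$, is also the stabilizer of the ``perp'' flag --- the coisotropic generalized flag $\H$ generated, via the construction of Section~\ref{tautcouples}, by the chain $\G^\perp \cup \{0,V\}$ --- which is maximal semiclosed coisotropic by the lemma of the second paragraph. Since this uses only the isotropy and self-tautness of $\G$, it is valid for $\g = \so(V)$ as well. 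I expect the real obstacle to lie not in any of these formal steps but in the orthogonal bookkeeping: pinning down, via the Borel classification of \cite{D}, exactly which Borel subalgebras of $\so(V)$ fail to be stabilizers of maximal closed isotropic generalized flags --- so that $(\ref{drei}) \not\Rightarrow (\ref{zwei})$ and $(\ref{funf}) \not\Rightarrow (\ref{drei})$ --- and, on the technical side, getting the isotropic analogue of Lemma~\ref{maximalsemiclosed} exactly right at the central pair $F \subset G$.
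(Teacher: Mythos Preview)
Your plan is correct and matches the paper's approach: the paper gives no proof of this theorem, stating only that the arguments in this section are omitted because they parallel the $\gl_\infty$ case, and your proposal is precisely to adapt the proof of Theorem~\ref{tfae} step by step while invoking the Borel classification of \cite{D} for $\so_\infty$ and $\sp_\infty$ in place of \cite{DP2}. Your identification of the central pair $F \subset G$ as the locus where the orthogonal case diverges is exactly the point the paper flags in its remark following the theorem and in its reference to \cite{DPW}.
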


If $\g = \so(V)$, then it is possible to realize a Borel subalgebra of $\g$ as the stabilizer of a non-maximal closed isotropic generalized flag.  Thus the listed conditions are not equivalent in the $\so(V)$ case. 
For a more general discussion of this phenomenon involving parabolic subalgebras, see \cite{DPW}.

\begin{thm}
A self-taut generalized flag $\F$ is maximal self-taut if and only if $\F$ is maximal semiclosed.
\end{thm}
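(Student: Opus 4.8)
The plan is to follow the proof of Proposition~\ref{noobstruction} closely, adapting its two steps — the reduction via Lemma~\ref{maximalsemiclosed} and the insertion of a new subspace — to the presence of the form on $V$. One implication is immediate: every self-taut generalized flag is in particular semiclosed, so if $\F$ is maximal semiclosed then any self-taut generalized flag refining $\F$ must equal $\F$, whence $\F$ is maximal self-taut. All the content is in the converse, which I would prove in the form: if $\F$ is \emph{not} maximal semiclosed, then $\F$ admits a proper refinement that is again self-taut.

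So assume $\F = \{F'_\alpha , F''_\alpha\}_{\alpha \in A}$ is self-taut but not maximal semiclosed. By Lemma~\ref{maximalsemiclosed} there is $\gamma \in A$ with $F'_\gamma$ closed and $\dim F''_\gamma / F'_\gamma > 1$. The first step is a structural reduction. Recall that every subspace occurring in a self-taut generalized flag is isotropic or coisotropic, a consequence of Lemma~\ref{stablesubspaces}. Using this together with the bijection $f$ of Proposition~\ref{isodefinec} and the extremal description of the maximal isotropic subspace $F$ and the minimal coisotropic subspace $G$ of $\F$, I would unwind the position of the pair $F'_\gamma \subset F''_\gamma$ relative to $F$ and $G$; this is routine but requires a short case check, and the outcome is that, after possibly replacing $\gamma$ by its $f$-partner, one is in one of two representative situations. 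In case \textbf{(i)}, $\gamma \in C$, so $F''_\gamma$ is isotropic, and the conjugate block $[(F''_\gamma)^\perp, (F'_\gamma)^\perp]$ is the pair $[F'_{f(\gamma)}, F''_{f(\gamma)}]$ of $\F$, sitting strictly above $[F'_\gamma, F''_\gamma]$, with closed upper endpoint $F''_{f(\gamma)} = (F'_\gamma)^\perp$. In case \textbf{(ii)}, the pair $F'_\gamma \subset F''_\gamma$ is the ``middle'' pair $F \subset G$, with $F = F'_\gamma$ closed, $G = F^\perp = F''_\gamma$ closed, and the induced form on $G/F$ nondegenerate.

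In case (i) the configuration is precisely the one treated inside the proof of Proposition~\ref{noobstruction}, applied to $\F$ regarded as forming a taut couple with itself under the identification of $V$ with $V_*$: the pairing induces a nondegenerate pairing $F''_\gamma / F'_\gamma \times (F'_\gamma)^\perp / (F''_\gamma)^\perp \to \C$, and I would reproduce the argument there — a direct choice of complement when $\dim F''_\gamma / F'_\gamma$ is finite, and otherwise the $\Z$-indexing of a Mackey dual basis from \cite{Mackey}, of which only the half concerning $\overline{F''_\gamma} \setminus F''_\gamma$ is needed since the conjugate endpoint $(F'_\gamma)^\perp$ is already closed — to obtain a closed subspace $H$ with $F'_\gamma \subsetneq H \subsetneq F''_\gamma$ and $(F''_\gamma)^\perp \subsetneq H^\perp \subsetneq (F'_\gamma)^\perp$. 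Since $H \subseteq F''_\gamma$ is isotropic, $H \subsetneq H^\perp$, so $\F \cup \{H, H^\perp\}$ is a chain; it properly refines $\F$, it is semiclosed because $H$ and $H^\perp$ are closed, and it is self-taut because $H^\perp$ and $(H^\perp)^\perp = H$ both belong to it while the perpendiculars of the remaining subspaces are unchanged. Case (ii) is simpler, as $F$ and $G$ are both closed: using Mackey's theorem I would fix a hyperbolic basis of $G/F$ when $\g = \so(V)$ and a symplectic basis when $\g = \sp(V)$, take $H$ to be the preimage in $G$ of the span of the isotropic half of this basis (a single basis vector when $G/F$ is finite-dimensional of dimension at least $3$), and note that $H$ is closed and isotropic with $F \subsetneq H \subsetneq G$ and $H \subseteq H^\perp \subseteq G$, so that $\F \cup \{H, H^\perp\}$ is a self-taut proper refinement just as in case (i).

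The main obstacle is the infinite-dimensional instance of case (i): since $F''_\gamma$ itself need not be closed (only its predecessor $F'_\gamma$ is), producing a closed subspace strictly between $F'_\gamma$ and $F''_\gamma$ forces the delicate $\Z$-ordering of a Mackey dual basis used in Proposition~\ref{noobstruction}. What keeps this manageable here is that every subspace of $F''_\gamma$ is automatically isotropic, so no additional form-theoretic constraint interferes and the work amounts to transcribing that argument. The only other point requiring care is the structural reduction to cases (i) and (ii) — in particular ruling out or absorbing degenerate configurations of $F$ and $G$ — which is handled by the isotropic-or-coisotropic alternative for subspaces of a self-taut flag together with the defining properties of $F$ and $G$.
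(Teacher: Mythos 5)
Your overall strategy is right: the paper proves this result by the same scheme as Proposition~\ref{noobstruction}, via Lemma~\ref{maximalsemiclosed} to locate a pair $F'_\gamma \subset F''_\gamma$ with $F'_\gamma$ closed and $\dim F''_\gamma / F'_\gamma > 1$, followed by an explicit insertion of a closed subspace $H$ (and its perpendicular). The easy direction, the isotropic/coisotropic reduction to your cases (i) and (ii), and case (ii) itself are all fine (though in the infinite-dimensional instance of (ii) it is simpler and safer to take $H$ to be the preimage of a single isotropic line rather than of the span of an entire "isotropic half," whose closedness is not a priori clear).

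The problem is the assertion in case (i) that $F''_{f(\gamma)} = (F'_\gamma)^\perp$, i.e.\ that the successor in the conjugate pair is closed. This is not justified and is not generally true: from the taut-couple identities one only has $F'_{f(\gamma)} = (F''_\gamma)^\perp$ and $\overline{F''_{f(\gamma)}} = (F'_\gamma)^\perp$; when $F''_{f(\gamma)}$ is not closed, the closed subspace $(F'_\gamma)^\perp$ is the immediate successor of $F''_{f(\gamma)}$, not $F''_{f(\gamma)}$ itself. You then use this unwarranted identification to drop the second $\Z$-indexing condition in Mackey's argument, and that drop is where the argument actually breaks. With only the first condition you obtain a closed $H$ with $(F''_\gamma)^\perp \subsetneq H^\perp \subsetneq (F'_\gamma)^\perp$, but nothing forces $H^\perp$ to be comparable with the intermediate subspace $F''_{f(\gamma)}$; thus $\F \cup \{H, H^\perp\}$ need not even be a chain, let alone a self-taut generalized flag. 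The fix is simply to run the full Proposition~\ref{noobstruction} Mackey argument with $G'_\gamma := F'_{f(\gamma)}$ and $G''_\gamma := F''_{f(\gamma)}$ (not $(F'_\gamma)^\perp$), using both $\Z$-ordering conditions; this yields $H^\perp$ strictly between $F'_{f(\gamma)}$ and $F''_{f(\gamma)}$, so the refinement is a genuine chain, and the remainder of your verification of semiclosedness and self-tautness goes through unchanged.
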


\begin{lemma}
Let $\b \subset \g$ be a Borel subalgebra.  Then no proper subalgebra of $\g$ contains $[\n_\b,\g]$.
\end{lemma}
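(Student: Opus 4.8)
The plan is to follow the proof of Lemma~\ref{recoverslinfty}, adapting it to the bilinear form on $V$. I will describe the case $\g = \so(V)$; the case $\g = \sp(V)$ is identical after replacing every wedge $v \wedge w$ by the symmetric product $v \odot w := v \otimes w + w \otimes v$ and tracking a few sign changes. First I would write $\b = \St_{\F, \g}$ for a maximal self-taut generalized flag $\F = \{ F'_\alpha, F''_\alpha \}_{\alpha \in A}$ in $V$ (in the orthogonal case, if $\b$ is presented only as the stabilizer of a non-maximal closed isotropic flag, first pass to a maximal self-taut refinement, whose $\g$-stabilizer is again $\b$ since it is a Borel subalgebra contained in $\b$). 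Then $\n_\b = \sum_{\alpha \in A} F''_\alpha \wedge (F''_\alpha)^\perp$ by the preceding proposition, and I set $W := \bigcup_{\alpha \in A} F'_\alpha$; as in Lemma~\ref{recoverslinfty}, maximality of $\F$ forces $W$ to be $V$ or a pair $W \subset V$ in $\F$, so $W$ is closed of codimension at most one, or else $\overline{W} = V$. Since the elements $v \wedge w$ with $\langle v, w \rangle = 0$ span $\so(V)$ (any $v \wedge w$ equals $v_1 \wedge v_2$ for suitable $v_1 \perp v_2$), and the elements $v \odot v$, for which $\langle v, v \rangle = 0$ automatically, span $\Sym^2 V$, the goal reduces to the following: for every subalgebra $\k$ with $[\n_\b, \g] \subseteq \k \subseteq \g$ and every $v, w$ with $\langle v, w \rangle = 0$, show $v \wedge w \in \k$.

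The computational heart is the identity, valid whenever $\langle v, z \rangle = \langle v, w \rangle = \langle v, a \rangle = \langle z, w \rangle = 0$,
\[
[v \wedge z, \, a \wedge w] = \langle z, a \rangle \, v \wedge w ,
\]
which I would verify by a short direct computation in $\bigwedge^2 V$ (dropping the hypothesis $\langle z, w \rangle = 0$ produces exactly one extra term $-\langle z, w \rangle\, v \wedge a$, and the corresponding identity in $\Sym^2 V$ has a plus sign). From this I would extract the auxiliary claim $(\ast)$: if $x$ lies in the pair $F'_\beta \subset F''_\beta$ of $\F$ (so $x \in F''_\beta \setminus F'_\beta$) and $\dim (F''_\beta)^\perp \ge 2$, then $x \wedge y \in [\n_\b, \g]$ for every $y \in x^\perp$. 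Indeed, choose $z \in (F''_\beta)^\perp \cap y^\perp$ not proportional to $x$, and then $a \in x^\perp$ with $\langle z, a \rangle \ne 0$ — both possible because $(F''_\beta)^\perp$ is at least two-dimensional while $x^\perp, y^\perp$ are hyperplanes; then $x \wedge z \in F''_\beta \wedge (F''_\beta)^\perp \subseteq \n_\b$, $a \wedge y \in \g$, and the identity gives $x \wedge y = \langle z, a \rangle^{-1} [x \wedge z, a \wedge y]$.

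To finish, fix $v, w$ with $\langle v, w \rangle = 0$ and put $W' := \bigcup \{ F''_\beta : \dim (F''_\beta)^\perp \ge 2 \}$. Since a chain contains at most one closed hyperplane, and (by maximality and semiclosedness) at most one dense $F''_\beta$, the subspace $W'$ is still dense in $V$ or of finite codimension. Exactly as in Lemma~\ref{recoverslinfty}, where the largeness of $W$ is used, this guarantees that the form does not vanish identically on $W' \cap v^\perp \cap w^\perp$ (its orthogonal complement is finite-dimensional, so it cannot be isotropic), so I can pick $z, a \in W' \cap v^\perp \cap w^\perp$ with $\langle z, a \rangle \ne 0$. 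Each such vector lies in a pair whose successor has orthogonal complement of dimension at least two, so applying $(\ast)$ with the two tensor slots interchanged yields $v \wedge z = z \wedge v \in [\n_\b, \g] \subseteq \k$ and $a \wedge w = w \wedge a \in [\n_\b, \g] \subseteq \k$; then the identity gives $v \wedge w = \langle z, a \rangle^{-1} [v \wedge z, a \wedge w] \in \k$, and since such elements span $\g$ we conclude $\k = \g$.

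I expect the main obstacle to be the bookkeeping around low-codimension (degenerate) flags. For $\sl_\infty$ the bracket of two rank-one tensors has a single nonzero term, whereas in $\bigwedge^2 V$ and $\Sym^2 V$ it has several; one must therefore impose enough orthogonality on the auxiliary vectors $z$ and $a$ to cancel the unwanted terms, and then check that vectors meeting all these constraints genuinely exist. This is automatic as long as the relevant perpendiculars are infinite-dimensional, so the only delicate point is isolating and separately handling the finitely many ``boundary'' pairs of $\F$ whose successor is a closed hyperplane or is dense — which is exactly why I work with $W'$ rather than $W$; one must confirm that excluding those pairs still leaves $W'$ large enough, and that in the proof of $(\ast)$ the vector $z$ can always be chosen off the line $\C x$.
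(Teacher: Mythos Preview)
The paper gives no proof of this lemma: the entire section on $\so_\infty$ and $\sp_\infty$ is prefaced by the remark that the statements ``are given without proof, as they are similar to those of the corresponding statements for $\gl_\infty$,'' so the intended argument is precisely the adaptation of Lemma~\ref{recoverslinfty} that you carry out. Your strategy---reduce to rank-one elements $v\wedge w$ (resp.\ $v\odot w$) with $\langle v,w\rangle=0$, use the bracket identity to produce them from $\n_\b$, and control the auxiliary vectors via a ``large'' subspace $W'$---is correct and matches the paper's intent.

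One point deserves slightly more care than you give it. Your justification that $W'\cap v^\perp\cap w^\perp$ is non-isotropic rests on $W'$ being ``dense or of finite codimension,'' but density alone does not prevent isotropy. What actually works is that in a \emph{maximal} self-taut flag the first bad pair $F'_{\gamma_0}\subset F''_{\gamma_0}$ has $\dim F''_{\gamma_0}/F'_{\gamma_0}\le 2$ (it is $1$ unless $\gamma_0$ is the middle pair in the $\so$ case), so $\dim (W')^\perp=\dim(F'_{\gamma_0})^\perp\le \dim(F''_{\gamma_0})^\perp+2\le 3$; hence $\overline{W'}$ has finite codimension, $\overline{W'}\cap v^\perp\cap w^\perp$ has finite codimension and is therefore non-isotropic, and one can choose $z,a$ there and then perturb into $W'$ (or argue directly that the radical of the form on $W'\cap v^\perp\cap w^\perp$ is finite-dimensional). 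The other edge case you flag---choosing $z\notin\C x$ in $(\ast)$---is handled the same way, since $\dim(F''_\beta)^\perp\ge 2$ already gives a two-dimensional space from which to avoid a single line after intersecting with one hyperplane.
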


We define a subalgebra $(\St_{\F,\g})_- \subset \St_{\F,\g}$ as follows.   Since $\F$ is self-taut, it forms a taut couple with itself, and we have already defined the subalgebra $(\St_\F)_- \subset \gl(V,V)$.  We take $(\St_{\F,\g})_- := (\St_\F)_- \cap \g$.
One has
$$(\St_{\F,\g})_- = \n_{\St_{\F,\g}} \subsetplus \left( 
 \so(W) \oplus \bigoplus_{\gamma \in C_0} \Lambda \big( \gl \big( V_\gamma, (V_\gamma)_* \big) \big) \oplus \bigoplus_{\gamma \in C \setminus C_0} \Lambda \big( \sl \big( V_\gamma, (V_\gamma)_* \big) \big)  \right)$$
if $\g = \so(V)$ and
$$(\St_{\F,\g})_- = \n_{\St_{\F,\g}} \subsetplus \left( \sp(W) \oplus \bigoplus_{\gamma \in C_0} \mathrm{S} \big( \gl \big( V_\gamma, (V_\gamma)_* \big) \big) \oplus \bigoplus_{\gamma \in C \setminus C_0} \mathrm{S} \big( \sl \big( V_\gamma, (V_\gamma)_* \big) \big) \right)$$
if $\g = \sp(V)$, where $C_0$ denotes the set of $\gamma \in C$ for which $\dim V_\gamma < \infty$.

\begin{thm} \label{sospcharacterization}
Let $\g$ be one of $\so(V)$ and $\sp(V)$.

\begin{enumerate}
\item Let $\p \subset \g$ be a vector subspace.  Then $\p$ is a parabolic subalgebra if and only if there exists a (unique) self-taut generalized flag $\F$ in $V$ such that $$(\St_{\F,\g})_- \subset \p \subset \St_{\F,\g}.$$  

\item Let $\p \subset \g$ be a parabolic subalgebra, and let $\p_+ := \St_{\F,\g}$ and $\p_- := (\St_{\F,\g})_- $.  Then the following statements hold.
\begin{enumerate}
\item $\p$ is splittable and hence (by Theorem \ref{locreductivepart}) admits a decomposition $\p = \n_\p \subsetplus \p_{red}$.
\item $\p_+ = N_\g(\p_+) = N_\g(\p_-) = N_\g(\p)$, and $\n_{\p_+} = \n_{\p_-} = \n_\p$.
\item One has $\p_+ \subset \p' := (\n_\p)^\perp$, where the orthogonal complement is taken with respect to the form $\tr(XY)$ on $\g$.  In fact $\p' = \St_{\F^c , \g}$, where $\F^c$ is the closed generalized flag associated to $\F$ as in Proposition~\ref{primeflags}.  Furthermore, $\n_{\p'} = \n_\p$. 
\end{enumerate}
\end{enumerate}
\end{thm}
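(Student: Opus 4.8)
The plan is to follow the proof of Theorem~\ref{slglcharacterization} step by step, transferring each assertion from $\gl(V,V)$ to $\g$ by intersecting with $\g$. The mechanism is that a self-taut generalized flag $\F$ in $V$ is precisely a taut couple with itself under the identification $V\cong V_*$ given by the form, so all constructions of Section~\ref{tautcouples}, the subalgebra $(\St_\F)_-\subset\gl(V,V)$, and its structure are available; by definition $\St_{\F,\g}=\St_\F\cap\g$ and $(\St_{\F,\g})_-=(\St_\F)_-\cap\g$. The first step is to establish the $\so/\sp$ analogue of Theorem~\ref{tautcouple}: for any subalgebra $\k\subset\g$ there is a $\k$-stable self-taut generalized flag $\F$ in $V$ with ${F''_\gamma}/{F'_\gamma}$ irreducible for all $\gamma\in C$ and with $W$ irreducible as a $\k$-module. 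This is proved by rerunning the construction of Theorem~\ref{tautcouple}, but with all chains of closed $\k$-stable subspaces taken to be stable under $F\mapsto F^\perp$ — legitimate because $\k\subset\g$ preserves the form, so it stabilizes $F^\perp$ whenever it stabilizes a closed subspace $F$; the resulting flag then satisfies both semiclosedness and the self-taut condition. Applying this to a parabolic $\p\subset\g$ (containing a Borel $\b$ of $\g$), set $\p_+:=\St_{\F,\g}$.

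By the structure proposition for $\St_{\F,\g}$ above, $\p_+/\n_{\p_+}$ is isomorphic to $\so(W)\oplus\bigoplus_{\gamma\in C}\gl\big({F''_\gamma}/{F'_\gamma},{G''_\gamma}/{G'_\gamma}\big)$ in the orthogonal case, and to the analogous sum with $\sp(W)$ in the symplectic case; this is a Lie algebra direct sum because the restrictions of $\Lambda$ and $\mathrm S$ to each $\gl(V_\gamma,(V_\gamma)_*)$ and to $\so(W)$, resp.\ $\sp(W)$, are homomorphisms. Writing $\varphi\colon\p\to\p_+/\n_{\p_+}$ for the inclusion followed by the quotient map, I would check, exactly as in the proof of Theorem~\ref{slglcharacterization}, that $\varphi(\b)$ meets each infinite-dimensional simple constituent of $\p_+/\n_{\p_+}$ in a Borel subalgebra of that constituent: for a $\gl$-summand by intersecting the closed isotropic flag defining $\b$ with $F''_\gamma$ and transporting through $\Lambda$, resp.\ $\mathrm S$; for the $\so(W)$, resp.\ $\sp(W)$, summand by the analogous intersection with $W$ (using the Borel classification of $\g$ stated in this section and that of Theorem~\ref{tfae}). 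Hence each $\pi_\gamma\circ\varphi(\p)$ acts irreducibly and contains a Borel, so by Theorem~\ref{irreducible} it equals $\sl$ or $\gl$ on the $\gl$-factors; Lemma~\ref{recoverslinfty} then puts the $\sl$-factors inside $\varphi(\p)$, while on the $W$-factor $\varphi(\p)$ contains $[\n_{\varphi(\b)\cap\so(W)},\so(W)]$, resp.\ the $\sp$-version, so by the lemma of this section that no proper subalgebra of $\g$ contains $[\n_\b,\g]$ it contains all of $\so(W)$, resp.\ $\sp(W)$. Together with the finitely many finite-dimensional factors (where $\b\subset\p$ forces the full $\gl$), this gives $(\St_{\F,\g})_-\subset\p\subset\p_+$.

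For the converse in part (1): since $[\St_\F,\St_\F]\subset(\St_\F)_-$, intersecting with $\g$ gives $[\p_+,\p_+]\subset\p_-$, so every vector subspace between $\p_-$ and $\p_+$ is a subalgebra; and $\p_-$ is itself parabolic because, refining $\F$ to a maximal self-taut generalized flag $\tilde{\F}$ whose pairs inside each infinite-dimensional block ${F''_\gamma}/{F'_\gamma}$ are chosen ``as locally nilpotent as possible'' (using the locally nilpotent Borel subalgebra of $\so_\infty$, resp.\ $\sp_\infty$), one gets $\St_{\tilde{\F},\g}$ a Borel subalgebra of $\g$ contained in $\p_-$. This proves (1). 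For part (2): (a) is immediate from Proposition~\ref{toraltrace}, since $\p$ is defined by trace conditions on the splittable subalgebra $\p_+$ (a stabilizer of a chain, intersected with $\g$), whence $\p=\n_\p\subsetplus\p_{red}$ by Theorem~\ref{locreductivepart}. For (b), $[\p_+,\p_+]\subset\p_-$ shows $\p_+$ normalizes $\p_-$, $\p$, and $\p_+$; the reverse inclusion $N_\g(\p)\subset\p_+$ reduces, as in the $\gl$ case, to showing that $[X,\p_-]\subset\p_+$ implies $X\in\p_+$, which I would establish by writing $X=X'+\sum_i\Lambda(v_i\otimes w_i)$, resp.\ $X'+\sum_i\mathrm S(v_i\otimes w_i)$, with $X'\in\p_+$, $v_i\in V_{\alpha_i}$, $w_i\in(V_{\beta_i})_*$, $\alpha_i>\beta_i$, and running the maximal-index / linear-independence argument of Theorem~\ref{slglcharacterization} together with Lemma~\ref{orderingproperty}; the equalities $\n_{\p_+}=\n_{\p_-}=\n_\p$ follow from Proposition~\ref{toraltrace} and $(\p_-)_+=\p_+$. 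For (c), a direct computation from $\n_\p=\sum_\alpha\Lambda\big(F''_\alpha\otimes(F''_\alpha)^\perp\big)$, resp.\ with $\mathrm S$, and $\p_+=\n_\p\subsetplus\big(\so(W)\oplus\bigoplus_\gamma\Lambda(V_\gamma\otimes(V_\gamma)_*)\big)$ shows $\tr(XY)=0$ for $X\in\p_+$, $Y\in\n_\p$, so $\p_+\subset(\n_\p)^\perp$; the identification $(\n_\p)^\perp=\St_{\F^c,\g}$ and $\n_{\p'}=\n_\p$ follow from Proposition~\ref{primeflags} as in Theorem~\ref{slglcharacterization}. Uniqueness of $\F$ follows since $N_\g(\p)=\p_+=\St_{\F,\g}$ determines $\F$, by an injectivity argument for $\F\mapsto\St_{\F,\g}$ on self-taut flags as in Proposition~\ref{uniquetautpair}.

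The main obstacle I expect is the orthogonal case. Because a Borel subalgebra of $\so(V)$ need not be the stabilizer of a \emph{maximal} closed isotropic generalized flag, care is needed both in identifying $\varphi(\b)\cap\so(W)$ as a Borel subalgebra of $\so(W)$ and in producing a Borel inside $\p_-$; relatedly, one must set up the $\perp$-symmetric chain construction in the first step so that the resulting generalized flag is simultaneously semiclosed and self-taut, which is the genuinely new technical point relative to the $\gl$ situation. A secondary, purely bookkeeping, nuisance is that $\Lambda$ and $\mathrm S$ are homomorphisms only on the relevant subalgebras, so every transfer of a Borel or Levi component between a $\gl$-factor and $\g$ must be checked to take place inside those subalgebras.
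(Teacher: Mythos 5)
Your overall route is the one the paper itself indicates (Section~\ref{secParsosp} opens by saying the proofs "are similar to those of the corresponding statements for $\gl_\infty$"), and you adapt Theorem~\ref{slglcharacterization} faithfully: build a $\k$-stable self-taut flag, decompose $\St_{\F,\g}$ modulo its nilradical into $\so(W)$ (resp.\ $\sp(W)$) plus $\gl$-blocks, push a Borel $\b\subset\p$ into each factor, and use Theorem~\ref{irreducible} together with the ``$[\n_\b,\g]$ generates'' lemma to force $(\St_{\F,\g})_-\subset\p$. You also correctly isolate the two places where real new work is needed: the $\perp$-symmetric chain construction, and the $\so$-case subtlety that Borel subalgebras can stabilize non-maximal closed isotropic flags.

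One caution, however, about your $\so/\sp$ analogue of Theorem~\ref{tautcouple}. As you state it — for an \emph{arbitrary} subalgebra $\k\subset\g$ — the assertion that $W$ is an irreducible $\k$-module will in general fail. The quotient $W$ carries a nondegenerate $\k$-invariant form, and it can happen that $W=H\oplus H^{\perp_W}$ with both summands nondegenerate and $\k$-stable. The corresponding preimage $F+H$ is closed and $\k$-stable but neither isotropic nor coisotropic, and $F+H$ and its orthogonal $F+H^{\perp_W}$ are incomparable; hence neither can be added to a $\perp$-symmetric chain, and maximality of such a chain does \emph{not} force $H$ to be trivial. If $W$ is reducible, the image of $\k$ in $\so(W)$ sits inside $\so(H)\oplus\so(H^{\perp_W})$ and the appeal to Theorem~\ref{irreducible} on the $W$-factor breaks down. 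This is harmless for the theorem at hand because you only apply the construction to a \emph{parabolic} $\p$: then any closed $\p$-stable subspace is a fortiori stable under a Borel $\b=\St_{\H,\g}\subset\p$, hence (by the $\so/\sp$ analogue of Lemma~\ref{stablesubspaces} applied to the self-taut flag $\H\cup\H^\perp$) lies in a chain of isotropic and coisotropic subspaces, so the pathological $H$ above cannot exist and $W$ really is irreducible. You should therefore either restrict the auxiliary lemma to parabolic $\p$, or state the general version with the weaker conclusion that $W$ has no nonzero proper $\k$-stable \emph{isotropic} subspace, and then observe separately that parabolicity upgrades this to full irreducibility. The same remark applies to the $\so/\sp$ analogue of Lemma~\ref{stablesubspaces} itself: it does not follow directly from the $\gl$ version because $\St_{\F,\g}\subsetneq\St_\F$, so the computation of $\St_{\F,\g}\cdot v$ has to be redone with $\Lambda$ (resp.\ $\mathrm S$) in place of simple tensors, and you should verify that the ``up'' contributions coming from the second term of $\Lambda(u\otimes w)\cdot v$ do not enlarge the orbit beyond $\F$.
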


Note that $\F^c$ is self-taut if $\F$ is a self-taut generalized flag.

\section{Parabolic subalgebras of more general finitary Lie algebras} \label{generalparabolic}

In this section we describe all parabolic subalgebras of a splittable finitary Lie subalgebra of $\gl_\infty$. We start with the case when the linear nilradical is trivial.  In this case we do not need the splittability assumption and thus simply consider finitary Lie algebras which are unions of reductive subalgebras.  By Corollary~\ref{unionreductive}, this is equivalent to considering locally reductive finitary Lie algebras.

\begin{thm} \label{finitaryborel}
Let $\g$ be a locally reductive finitary Lie algebra.  If $\b \subset \g$ is a Borel subalgebra, then $\b \cap [\g,\g]$ is a Borel subalgebra of $[\g,\g]$.
\end{thm}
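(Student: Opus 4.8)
The plan is to use the structure theorem for locally reductive finitary Lie algebras, Theorem~\ref{finitary}, to reduce to the simple cases $\gl_\infty$, $\so_\infty$, $\sp_\infty$, and finite-dimensional simple Lie algebras, where the statement is either known or follows from the parabolic-subalgebra classification already established. First I would write $\g = \z(\g) \oplus \k$ as in the first paragraph of the proof of Theorem~\ref{finitary}, so that $[\g,\g] = [\k,\k]$ and a Borel subalgebra $\b$ of $\g$ necessarily contains $\z(\g)$; thus $\b = \z(\g) \oplus (\b \cap \k)$ and $\b \cap \k$ is a Borel subalgebra of $\k$. This lets me assume $\z(\g) = 0$, whence $\n_\g = 0$ by Lemma~\ref{locreductivenilradical}, and by Theorem~\ref{finitary} we get an embedding $\varphi \colon \g \hookrightarrow \bigoplus_i \g_i$ with each $\g_i$ one of $\gl_\infty$, $\so_\infty$, $\sp_\infty$, or finite-dimensional simple, and $\varphi([\g,\g]) = \bigoplus_i [\g_i,\g_i]$.

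The key point is then to understand how a Borel subalgebra $\b$ of $\g$ sits inside $\bigoplus_i \g_i$. A maximal locally solvable subalgebra of $\g$ projects to a locally solvable subalgebra of each $\g_i$; since $\g$ contains $[\g,\g] = \bigoplus_i [\g_i,\g_i]$ up to the embedding, and a Borel of $\g$ must restrict on each factor to a Borel of the corresponding simple (or reductive) factor — otherwise one could enlarge $\b$ while staying locally solvable — I would argue that $\b \cap \g_i$ is a Borel subalgebra of $\g_i$, hence $\b \cap [\g_i,\g_i]$ is a Borel of $[\g_i,\g_i]$. For $\g_i = \gl_\infty$ this is exactly the statement that a Borel of $\gl_\infty$ meets $\sl_\infty$ in a Borel of $\sl_\infty$, which is implicit in the $\gl/\sl$ correspondence of Theorem~\ref{tfae} and the classification of \cite{DP2}, \cite{D}; for $\g_i$ simple (finite-dimensional, $\so_\infty$, or $\sp_\infty$) the factor already equals its own commutator subalgebra, so there is nothing to prove. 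Assembling these factorwise statements gives that $\b \cap [\g,\g] = \bigoplus_i (\b \cap [\g_i,\g_i])$ is a Borel subalgebra of $[\g,\g] = \bigoplus_i [\g_i,\g_i]$, as a direct sum of Borel subalgebras is a Borel subalgebra.

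The main obstacle I anticipate is the bookkeeping in passing from a Borel subalgebra $\b$ of $\g$ to the factorwise statement: a priori $\b$ need not be a direct sum of its projections, because $\g$ itself is only squeezed between $\bigoplus_i [\g_i,\g_i]$ and $\bigoplus_i \g_i$, and the toral ``extra'' part of $\b$ could be spread diagonally across factors in a way that obscures which Borels are being picked out. The way around this is to note that $\b \cap [\g,\g]$ is automatically an ideal in $\b$ consisting of elements of $[\g,\g] = \bigoplus_i [\g_i,\g_i]$, so it genuinely decomposes as $\bigoplus_i (\b \cap [\g_i,\g_i])$; and since $\b$ is a maximal locally solvable subalgebra containing (a Borel plus toral complement of) each factor, maximality forces $\b \cap [\g_i,\g_i]$ to be maximal locally solvable in $[\g_i,\g_i]$ — if it were not, one could enlarge it within that factor while keeping $\b$ locally solvable, using that $\b$ already contains a maximal toral subalgebra normalizing everything. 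I would spell out this maximality argument carefully, as it is the crux; the rest is the observation that a (possibly infinite) direct sum of Borel subalgebras of the $[\g_i,\g_i]$ is a Borel subalgebra of the direct sum, which is immediate from the definition of local solvability via finite subsets.
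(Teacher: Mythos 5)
Your overall strategy matches the paper's: reduce via Theorem~\ref{finitary} to a direct sum of copies of $\gl_\infty$, $\so_\infty$, $\sp_\infty$, and finite-dimensional simple Lie algebras, observe that only the $\gl_\infty$ factors require attention (since the others equal their own commutator subalgebras), and invoke the $\gl_\infty/\sl_\infty$ Borel correspondence from \cite{D}. The initial reduction $\g = \z(\g) \oplus \k$ with $\z(\g) \subset \b$ is also correct.

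However, there is a genuine gap in your maximality argument, and it is precisely the ``bookkeeping'' issue you flagged. You claim that if $\b \cap [\g_i,\g_i]$ were not maximal locally solvable in $[\g_i,\g_i]$, one could enlarge it while keeping $\b$ locally solvable, ``using that $\b$ already contains a maximal toral subalgebra normalizing everything.'' There are two problems. First, in the infinitary setting a Borel subalgebra need \emph{not} contain a maximal toral subalgebra: \cite{DP2} exhibits a Borel subalgebra of $\gl_\infty$ that is locally nilpotent, hence toral-free. So the ``toral complement normalizing everything'' you want to lean on may simply not exist inside $\b$. Second, even if it did, an enlargement $\c \supsetneq \b \cap [\g_i,\g_i]$ inside $[\g_i,\g_i]$ need not be normalized by $\b$, so $\b + \c$ need not be a subalgebra at all, and without that you cannot contradict the maximality of $\b$. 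The paper sidesteps both difficulties with a cleaner device you did not use: it extends $\varphi(\b)$ to a Borel subalgebra $\tilde{\b}$ of the \emph{ambient} algebra $\z(\g) \oplus \bigoplus_i \g_i$, then observes that $\varphi^{-1}(\tilde{\b})$ is a locally solvable subalgebra of $\g$ containing $\b$, so $\b = \varphi^{-1}(\tilde{\b})$ by maximality. After that, Borel subalgebras of a direct sum decompose as direct sums of Borel subalgebras, so $\tilde{\b} \cap \g_i$ is automatically a Borel of $\g_i$ with no maximality argument needed, and the $\gl_\infty/\sl_\infty$ step and the final intersection with $[\g,\g]$ go through exactly as you envisaged. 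You should incorporate this ``saturate upstairs, then pull back'' step to make the argument rigorous.
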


\begin{proof}
By Theorem~\ref{finitary}, there is a decomposition $[\g,\g] = \bigoplus_i \s_i$ and an injective homomorphism $\varphi \colon \g \hookrightarrow \z(\g) \oplus \bigoplus_i \g_i$, where each Lie algebra $\g_i$ is isomorphic to $\gl_\infty$, $\so_\infty$, $\sp_\infty$, or a finite-dimensional simple Lie algebra, such that $\varphi|_{\z(\g)} = \mathrm{id}$ and $[\g_i,\g_i] = \varphi(\s_i)$.  
Let $\tilde{\b}$ be a Borel subalgebra of $\z(\g) \oplus \bigoplus_i \g_i$ containing $\varphi(\b)$.  As $\b$ is contained in the locally solvable subalgebra $\varphi^{-1} (\tilde{\b})$, the maximality of $\b$ implies $\b = \varphi^{-1}(\tilde{\b})$.  

Since Borel subalgebras respect direct sums, $\tilde{\b} \cap \g_i$ is a Borel subalgebra of $\g_i$.  If $\g_i$ differs from $\varphi(\s_i)$, then $\g_i \cong \gl_\infty$.  In this case, by the classification of Borel subalgebras of $\gl_\infty$ and $\sl_\infty$ seen in \cite{D}, we know that $\tilde{\b} \cap [\g_i , \g_i]$ is a Borel subalgebra of $[\g_i , \g_i]$.  This enables us to conclude that $\tilde{\b} \cap [\g_i,\g_i]$ is a Borel subalgebra for all $i$, and thus $\varphi^{-1} (\tilde{\b}) \cap [\g,\g]$ is a Borel subalgebra of $[\g,\g]$.  Finally, the observation that $\varphi^{-1} (\tilde{\b}) \cap [\g,\g] = \b \cap [\g,\g]$ finishes the proof.
\end{proof}

The following theorem shows that the parabolic subalgebras of a finitary Lie algebra exhausted by reductive subalgebras can be understood in terms of the parabolic subalgebras of $\gl_\infty$, $\so_\infty$, $\sp_\infty$, and finite-dimensional simple Lie algebras.  For any parabolic subalgebra $\p$ of $\sl_\infty$, $\gl_\infty$, $\so_\infty$, or $\sp_\infty$, the normalizer of $\p$ is the parabolic subalgebra $\p_+$.  Since parabolic subalgebras of finite-dimensional simple Lie algebras are self-normalizing, we use for convenience the notational convention $\p_+ := \p$ for any parabolic subalgebra $\p$ of a finite-dimensional simple Lie algebra.

\begin{thm} \label{generalthm}
Let $\g$ be a locally reductive finitary Lie algebra, and $\varphi \colon \g \hookrightarrow \z(\g) \oplus \bigoplus_i \g_i$ be an injective homomorphism as in Theorem~\ref{finitary}.

\begin{enumerate}
\item
Let $\p \subset \g$ be a vector subspace.  Then $\p$ is a parabolic subalgebra if and only if there exist
parabolic subalgebras $\p_i \subset \g_i$ such that
$$\varphi^{-1} \left( \z(\g) \oplus \bigoplus_i (\p_i)_- \right) \subset \p \subset \varphi^{-1} \left( \z(\g) \oplus \bigoplus_i (\p_i)_+ \right).$$

\item
Let $\p \subset \g$ be a parabolic subalgebra, and let $\p_+ := \varphi^{-1} \left( \z(\g) \oplus \bigoplus_i (\p_i)_+ \right)$  and $\p_- := \varphi^{-1} \left( \z(\g) \oplus \bigoplus_i (\p_i)_- \right) $.
Then the following statements hold.
\begin{enumerate}
\item \label{firstclaim} If $\g$ is a splittable subalgebra of $\gl_\infty$, then $\p$ is also splittable.
\item \label{secondclaim} $\p_+ = N_\g(\p_+) = N_\g(\p_-) = N_\g(\p)$, and $\n_{\p_+} = \n_{\p_-} = \n_\p$.
\item \label{thirdclaim} One has $\p_+ \subset \p' := (\n_\p)^\perp$, where the orthogonal complement is taken with respect to the form $\tr(XY)$ on $\g$.  In fact $\p' = \varphi^{-1} \left( \z(\g) \oplus \bigoplus_i (\p_i)' \right)$. 
Furthermore, $\n_{\p'} = \n_\p$. 
\end{enumerate}
\end{enumerate}
\end{thm}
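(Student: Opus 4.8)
Transport everything through the embedding $\varphi\colon\g\hookrightarrow\hat\g:=\z(\g)\oplus\bigoplus_i\g_i$ of Theorem~\ref{finitary}, where $\varphi|_{\z(\g)}=\mathrm{id}$, $\varphi([\g,\g])=\bigoplus_i[\g_i,\g_i]$, and $\z(\hat\g)=\z(\g)$ since the $\g_i$ are centerless. Two general facts are needed. First, any parabolic $\p\subseteq\g$ contains $\z(\g)$: for a Borel $\b\subseteq\p$, the subalgebra $\z(\g)+\b$ is locally solvable because on each finite subset its derived series coincides with that of a finite-dimensional solvable subalgebra of $\b$, so $\z(\g)\subseteq\b$ by maximality. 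Second, Borel and parabolic subalgebras respect direct sums all of whose summands are simple or one of $\sl_\infty$, $\so_\infty$, $\sp_\infty$: if $\q\subseteq\bigoplus_j\s_j$ contains $\bigoplus_j\b_j$ with each $\b_j$ a Borel, then $\q=\bigoplus_j(\q\cap\s_j)$, since each element of $\q$ has finite support and the components are separated using the toral elements and root vectors contained in the $\b_j$ (available from the known classifications of Borels). Combining these with the argument of Theorem~\ref{finitaryborel} shows that the Borel subalgebras of $\g$ are exactly the preimages $\varphi^{-1}(\z(\g)\oplus\bigoplus_i\b_i)$ with $\b_i$ a Borel of $\g_i$, and that if $\b\subseteq\p$ with $\varphi(\b)=\varphi(\g)\cap(\z(\g)\oplus\bigoplus_i\b_i)$, then the subalgebra $\p_i\subseteq\g_i$ generated by the projection $\pi_i(\varphi(\p))$ together with $\b_i$ is parabolic in $\g_i$, with $N_{\g_i}(\p_i)=(\p_i)_+$ by Theorems~\ref{slglcharacterization} and~\ref{sospcharacterization} (for $\g_i$ finite dimensional one sets $(\p_i)_+:=\p_i$).

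\textbf{Part (1).} For ``$\Leftarrow$'', each $(\p_i)_-$ contains a Borel $\b_i$ of $\g_i$: for $\g_i\in\{\gl_\infty,\so_\infty,\sp_\infty\}$ this is precisely the inclusion of the Borel $\St_{\tilde\F}\cap\St_{\tilde\G}$ (or its $\so/\sp$ analogue) into $(\cdot)_-$ established inside the proofs of Theorems~\ref{slglcharacterization} and~\ref{sospcharacterization}, and for $\g_i$ finite dimensional $(\p_i)_-=\p_i$ already contains a Borel. Then $\varphi^{-1}(\z(\g)\oplus\bigoplus_i\b_i)$ is a Borel of $\g$ contained in $\varphi^{-1}(\z(\g)\oplus\bigoplus_i(\p_i)_-)\subseteq\p$, so $\p$ is parabolic. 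For ``$\Rightarrow$'', attach the $\p_i$ to a parabolic $\p$ as in the first paragraph; the upper inclusion $\p\subseteq\varphi^{-1}(\z(\g)\oplus\bigoplus_i(\p_i)_+)$ is immediate from $\z(\g)\subseteq\p$ and $\pi_i(\varphi(\p))\subseteq\p_i\subseteq(\p_i)_+$. The remaining lower inclusion $\varphi^{-1}(\z(\g)\oplus\bigoplus_i(\p_i)_-)\subseteq\p$ is the heart of the matter.

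\textbf{The main obstacle.} I expect this lower inclusion to be the difficult point, and the plan is to reproduce the argument of Theorem~\ref{slglcharacterization}(1) factor by factor, with $\z(\g)$ carried along passively. Concretely: by the direct-sum fact, $\varphi(\b)\cap[\g_i,\g_i]$ is a Borel of $[\g_i,\g_i]$, so the image of $\b$ in each simple factor is a Borel; Theorem~\ref{irreducible} forces the image of $\p$ in each infinite-dimensional factor to contain the whole $\sl$ (resp.\ $\so$, $\sp$); Lemma~\ref{recoverslinfty} and its $\so/\sp$ analogue, applied to the commutator of this $\sl$ with the image of $\b$, pin down that $(\p_i)_-\cap[\g_i,\g_i]\subseteq\varphi(\p)$; and the remaining finite-dimensional $\gl$-directions meeting $\varphi(\g)$ lie in $\varphi(\p)$ because $\b\subseteq\p$, exactly as in the final display of that proof. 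A secondary nuisance is that $\varphi(\p)$ is \emph{not} a direct sum inside $\hat\g$, so one must be careful to define the component parabolics $\p_i$ by ``saturating'' $\pi_i(\varphi(\p))$ with the Borel component $\b_i$, and to check that it is $N_{\g_i}(\p_i)$, rather than $N_{\g_i}(\pi_i\varphi(\p))$, that governs $N_\g(\p)$; both mimic the $\gl_\infty/\sl_\infty$ treatment. Once the lower inclusion is in hand, $\p_-$ and $\p_+$ are determined by $\p$ (e.g.\ $\p_+=N_\g(\p)$, by part (2b) below).

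\textbf{Part (2).} (a) $\p_+:=\varphi^{-1}(\z(\g)\oplus\bigoplus_i(\p_i)_+)$ is splittable, since each $(\p_i)_+$ is (Theorems~\ref{slglcharacterization}/\ref{sospcharacterization}), $\z(\g)$ is, and $\varphi$ is an embedding of locally reductive Lie algebras respecting Jordan decomposition; as $\p$ contains $\n_{\p_+}$ and a Levi component of $\p_+$ (both built componentwise from the proof of Theorem~\ref{existence}), $\p$ is defined by trace conditions on $\p_+$, so Proposition~\ref{toraltrace} gives that $\p$ is splittable with $\p=\n_\p\subsetplus\p_{red}$, and also $\n_{\p_+}=\n_\p$; likewise $\n_{\p_-}=\n_{(\p_-)_+}=\n_{\p_+}=\n_\p$. (b) One has $\varphi(N_\g(\p))=N_{\hat\g}(\varphi(\p))\cap\varphi(\g)$, and $N_{\hat\g}(\varphi(\p))$ projects under $\pi_i$ into $N_{\g_i}(\p_i)=(\p_i)_+$, giving $N_\g(\p)\subseteq\p_+$; conversely $[\p_+,\p_+]=\n_{\p_+}\subsetplus\l\subseteq\p$ and, writing $\p=\n_{\p_+}\oplus\l\oplus\t'$, one has $[\p_+,\t']\subseteq\n_{\p_+}+\l\subseteq\p$, so $\p_+$ normalizes $\p$ and $N_\g(\p)=\p_+$; the same argument applied to $\p_-$ and $\p_+$ themselves gives $N_\g(\p_-)=N_\g(\p_+)=\p_+$, since their $i$-th components $(\p_i)_-$ and $(\p_i)_+$ both have normalizer $(\p_i)_+$ in $\g_i$. (c) Under $\varphi$ the form $\tr(XY)$ on $\g$ is the restriction of a form on $\hat\g$ that is a nonzero multiple of the trace form on each $[\g_i,\g_i]$ plus a form on $\z(\g)$; since $\n_\p$ corresponds to $\bigoplus_i\n_{\p_i}$ on commutators and pairs trivially with $\z(\g)$, taking orthogonal complements yields $\p'=(\n_\p)^\perp=\varphi^{-1}\bigl(\z(\g)\oplus\bigoplus_i(\n_{\p_i})^\perp\bigr)=\varphi^{-1}\bigl(\z(\g)\oplus\bigoplus_i(\p_i)'\bigr)$, and $\n_{\p'}=\n_\p$ follows from $\n_{(\p_i)'}=\n_{\p_i}$ together with $\varphi([\g,\g])=\bigoplus_i[\g_i,\g_i]$.
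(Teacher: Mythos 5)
Your overall plan — transport through $\varphi$, show $\z(\g)$ sits in every parabolic, reduce to the simple summands of $[\g,\g]$, and use the characterization theorems for $\gl_\infty$, $\sl_\infty$, $\so_\infty$, $\sp_\infty$ — matches the paper's. But the concrete argument you outline for the crucial lower inclusion $\varphi^{-1}\bigl(\z(\g)\oplus\bigoplus_i(\p_i)_-\bigr)\subset\p$ does not work. You propose to ``reproduce the argument of Theorem~\ref{slglcharacterization}(1) factor by factor,'' invoking Theorem~\ref{irreducible} to conclude that ``the image of $\p$ in each infinite-dimensional factor'' contains the whole $\sl$ (resp.\ $\so$, $\sp$), and then applying Lemma~\ref{recoverslinfty}. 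This misapplies Theorem~\ref{irreducible}: it requires irreducible action on the natural representation, and it is $\g$, not the parabolic $\p$, that acts irreducibly on the natural representation of each $\g_i$ in Theorem~\ref{finitary}'s construction. In the proof of Theorem~\ref{slglcharacterization}, irreducibility is available because the taut couple $\F,\G$ is built \emph{for} $\p$ via Theorem~\ref{tautcouple}, so the quotients $F''_\gamma/F'_\gamma$ are irreducible $\p$-modules; these quotients are not the simple constituents $\s_i$ of $[\g,\g]$, and the two decompositions should not be conflated. As stated, the claim that the image of $\p$ in $\s_i$ contains all of $\sl$/$\so$/$\sp$ is simply false for a proper parabolic.

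The paper's route is shorter and avoids this: once Theorem~\ref{finitaryborel} gives that $\b\cap\s_i$ is a Borel of $\s_i$, hence $\p\cap\s_i$ is a parabolic subalgebra of $\s_i$, one directly \emph{invokes} Theorems~\ref{slglcharacterization} and~\ref{sospcharacterization} for $\s_i$ (rather than rerunning their proofs) to obtain a taut couple or self-taut flag whose stabilizer sandwiches $\p\cap\s_i$; the paper then defines $\p_i$ to be $\p\cap\s_i$ when $\g_i$ is simple, and $N_{\g_i}(\p\cap\s_i)$ (the stabilizer of that taut couple inside $\g_i\cong\gl_\infty$) when $\s_i\cong\sl_\infty$, and verifies $\pi_i(\varphi(\p))\subset(\p_i)_+$ by a direct normalizer computation. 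Your alternate definition of $\p_i$ as the subalgebra generated by $\pi_i(\varphi(\p))$ and $\b_i$ is not obviously equal to the paper's, and the sandwich inequalities are less transparent for it. Separately, your sketches of parts (2b) and (2c) fill in arguments that the paper declares and omits; those sketches look reasonable, though (2b) needs the computation that $[\pi_i(X),\p\cap\s_i]\subset\varphi(\p)\cap\s_i=\varphi(\p\cap\s_i)$ spelled out (which uses $\s_i$ being an ideal in $\hat\g$ and the injectivity of $\varphi$) before it yields $\pi_i(X)\in N_{\g_i}(\p\cap\s_i)=(\p_i)_+$.
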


\begin{proof} 
We continue to use the notation of the proof of Theorem~\ref{finitaryborel}.  
Let $\p \subset \g$ be a parabolic subalgebra.  There exists a Borel subalgebra $\b \subset \p$, and by Theorem~\ref{finitaryborel}, $\b \cap \s_i$ is a Borel subalgebra of $\s_i$.  Hence $\p \cap \s_i$ is a parabolic subalgebra of $\s_i$.

Let $\pi_i \colon \z(\g) \oplus \bigoplus_j \g_j \rightarrow \g_i$ denote the projection.  Then 
$$\z(\g) \oplus \bigoplus_i \p \cap \s_i \subset \p \subset \varphi^{-1} \left( \z(\g) \oplus  \bigoplus_i \pi_i( \varphi(\p)) \right).$$
Observe that $[\varphi(X) , \pi_i (\varphi(\p))] = \varphi([X , \p])$ for any $X \in \s_i$.  Hence $[\varphi(\p \cap \s_i) , \pi_i(\varphi(\p)) ] = [\varphi(\p \cap \s_i , \varphi (\p) ] \subset \varphi (\p)$.  Furthermore, $[\varphi(\p \cap \s_i) , \pi_i (\varphi(\p))] \subset \varphi(\s_i)$ because $[\varphi(\s_i) , \tilde{\g} ] \subset \varphi(\s_i)$.  The injectivity of $\varphi$ implies $\varphi(\p \cap \s_i) = \varphi(\p) \cap \varphi(\s_i)$.  Thus $[\varphi(\p \cap \s_i) , \pi_i (\varphi(\p))]  \subset \varphi(\p \cap \s_i)$.  Hence $\pi_i (\varphi(\p))$ is contained in the normalizer in $\g_i$ of $\varphi(\p \cap \s_i)$.  By Proposition~\ref{sospcharacterization}, the normalizer of $\varphi(\p \cap \s_i)$ is $(\varphi(\p \cap \s_i))_+$ if $\s_i$ is isomorphic to $\so_\infty$ or $\sp_\infty$.  If $\g_i$ is simple, we define $\p_i = \p \cap \s_i$.  If $\g_i$ is not simple, then $\s_i \cong \sl_\infty$, and we take $\p_i$ to be the (self-normalizing) parabolic subalgebra of $\g_i$ which is the stabilizer of the taut couple related to the parabolic subalgebra $\p \cap \s_i$, and in this case $\p_i$ is the normalizer in $\g_i$ of $\p \cap \s_i$.  For all $i$ we have $\pi_i (\varphi(\p)) \subset (\p_i)_+$.  Also note that $\varphi^{-1}(\p_i)_- \subset \p \cap \s_i$.  Thus we have shown
$$\varphi^{-1} \left( \z(\g) \oplus \bigoplus_i (\p_i)_- \right) \subset \p \subset \varphi^{-1} \left( \z(\g) \oplus \bigoplus_i (\p_i)_+ \right).$$

Conversely, fix parabolic subalgebras $\p_i$ of $\g_i$.  Observe that the commutator subalgebra of $\varphi^{-1} \big( \z(\g) \oplus \bigoplus_i (\p_i)_+ \big)$ is contained in $\varphi^{-1} \big( \z(\g) \oplus \bigoplus_i (\p_i)_- \big)$, hence any intermediate vector subspace $\p$ is indeed a subalgebra.  
To show that $\p$ is a parabolic subalgebra, it suffices to show that $\varphi^{-1} \big( \z(\g) \oplus \bigoplus_i (\p_i)_- \big)$ is a parabolic subalgebra of $\g$.  It is evident that $\z(\g) \oplus \bigoplus_i (\p_i)_-$ contains a Borel subalgebra $\b$ of $\z(\g) \oplus \bigoplus_i \g_i$.  Since $\b$ is the unique extension of $\b \cap (\z(\g) \oplus \bigoplus_i [\g_i,\g_i])$ to a Borel subalgebra, we see that $\varphi^{-1} (\b)$ is a Borel subalgebra of $\g$.  As a result, $\varphi^{-1} \big( \z(\g) \oplus \bigoplus_i (\p_i)_- \big)$ is a parabolic subalgebra of $\g$.

\begin{itemize}
\item[(\ref{firstclaim})]  Suppose $\g$ is a splittable subalgebra of $\gl_\infty$.  The stabilizer in $\g$ of any chain of subspaces in a representation of $\g$ is a splittable subalgebra of $\gl_\infty$.  Since the  intersection of splittable subalgebras is splittable, it follows that $\varphi^{-1} \big( \bigoplus_i (\p_i)_+ \big)$ is splittable.  By Proposition~\ref{toraltrace}, a subalgebra defined by trace conditions on  $\varphi^{-1} \big( \bigoplus_i (\p_i)_+ \big)$ is also splittable.

Note furthermore that $\z(\g)$ is splittable.  Indeed, for any $X \in \g$, its Jordan components $X_{ss}$ and $X_{nil}$  are polynomials in $X$.  So $X \in \z(\g)$ implies $X_{ss}$, $X_{nil} \in \z(\g)$.  As $\p$ is generated by $\z(\g)$ and a subalgebra defined by trace conditions on $\varphi^{-1} \big( \bigoplus_i (\p_i)_+ \big)$, it is itself splittable by \cite[Ch 7 \S 5 Cor 1]{Bourbaki}.
\end{itemize}
We omit the proof of parts (\ref{secondclaim}) and (\ref{thirdclaim}).
\end{proof}

Suppose $\g$ is a locally semisimple finitary Lie algebra.  Then $\g \cong \bigoplus_i \s_i$, where each $\s_i$ is $\sl_\infty$, $\so_\infty$, $\sp_\infty$, or a finite-dimensional simple Lie algebra.  It does not follow from Theorem \ref{generalthm} that for any parabolic subalgebra $\p$ of $\g$ there exist parabolic subalgebras $\p_i \subset \s_i$ such that $\p = \bigoplus_i \p_i$.  Indeed, set $\g := \sl(V \oplus V , V_* \oplus V_*) \oplus \sl(V \oplus V , V_* \oplus V_*)$ considered as a subalgebra of $\gl(V \oplus V \oplus V \oplus V, V_* \oplus V_* \oplus V_* \oplus V_*)$.  Consider the parabolic subalgebra $\p$ defined as the elements of $\g$ with block decomposition
$$\left( \begin{array}{cccc} 
A & B & 0 & 0 \\
0 & C & 0 & 0 \\
0 & 0 & D & E \\
0 & 0 & 0 & F
\end{array} \right)$$
such $\tr A = \tr D = -\tr C = - \tr F$.
Then $\p_-$ consists of all elements of $\g$ with the same block decomposition satisfying the conditions $\tr A = \tr C = \tr D = \tr F = 0$.  Let $X$ be a semisimple element of $\gl(V,V_*)$ with nonzero trace.  One may observe that
$$\p = \p_- \subsetplus \Span \left\{ 
\left( \begin{array}{rrrr} 
X & 0 & 0 & 0 \\
0 & -X & 0 & 0 \\
0 & 0 & X & 0 \\
0 & 0 & 0 & -X
\end{array} \right) \right\}$$
and that $\p$ is not the direct sum of parabolic subalgebras of the direct summands of $\g$.

Using the language of parabolic subalgebras, we can state the following generalization of the Karpelevi\v c Theorem, which asserts that a maximal subalgebra of a simple finite-dimensional Lie algebra must be semisimple or parabolic \cite{Karpelevic}.

\begin{cor}
Any maximal subalgebra of $\gl_\infty$, $\sl_\infty$, $\so_\infty$, or $\sp_\infty$ is either a direct sum of simple subalgebras or a parabolic subalgebra.
\end{cor}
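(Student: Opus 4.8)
The plan is to run a dichotomy on a (proper) maximal subalgebra $\m \subsetneq \g$: either $\m$ is contained in a \emph{proper} parabolic subalgebra, in which case maximality forces $\m$ to equal it and we are done, or $\m$ acts essentially irreducibly on the natural module $V$, in which case Theorem~\ref{irreducible} identifies $\m$ as one of the simple Lie algebras $\sl_\infty$, $\so_\infty$, $\sp_\infty$. The reason for invoking the taut-couple machinery of Section~\ref{tautcouples} rather than crude subspace stabilizers is that an arbitrary $\m$-stable subspace of $V$ need not have a parabolic stabilizer, whereas the joint stabilizer of an $\m$-stable taut couple of \emph{semiclosed} generalized flags is parabolic by Theorems~\ref{slglcharacterization} and \ref{sospcharacterization}.

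\emph{The cases $\g = \gl(V,V_*)$ and $\g = \sl(V,V_*)$.} Apply Theorem~\ref{tautcouple} to $\m \subset \gl(V,V_*)$, producing an $\m$-stable taut couple $\F$, $\G$ with ${F''_\gamma}/{F'_\gamma}$ and ${G''_\gamma}/{G'_\gamma}$ irreducible $\m$-modules; then $\m \subset \q := (\St_\F \cap \St_\G) \cap \g$, and $\q$ is a parabolic subalgebra of $\g$ (by Theorem~\ref{slglcharacterization}, or directly because $\q$ contains a Borel subalgebra of $\g$, namely the intersection with $\g$ of a Borel subalgebra of $\gl(V,V_*)$ obtained by refining $\F$, $\G$ as in Proposition~\ref{noobstruction}). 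If $\q \neq \g$, maximality gives $\m = \q$, a parabolic subalgebra. If $\q = \g$, then the formula $\St_\F \cap \St_\G = \sum_{A \ni \alpha \leq \beta \in B} F''_\alpha \otimes G''_\beta$ forces the taut couple to be trivial: for $\g = \gl(V,V_*)$ this is immediate, and for $\g = \sl(V,V_*)$ one also uses that $\St_\F \cap \St_\G$ always contains elements of nonzero trace (hence cannot equal $\sl(V,V_*)$) together with the fact that $\gl_\infty/\sl_\infty$ is one-dimensional. So $\m$ acts irreducibly on both $V$ and $V_*$; by Theorem~\ref{irreducible} $\m$ equals $\gl(V,W)$, $\sl(V,W)$, $\so(V)$ or $\sp(V)$ for some $W \subset V_*$ with $W^\perp = 0$, and since each of these carries $V_*$ into $W$, irreducibility on $V_*$ forces $W = V_*$. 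As $\m \subsetneq \g$, the general linear option is impossible, so $\m$ is one of the simple Lie algebras $\sl_\infty$ (possible only when $\g = \gl_\infty$), $\so_\infty$, $\sp_\infty$.

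\emph{The cases $\g = \so(V)$ and $\g = \sp(V)$.} These run the same way, with the self-taut generalized flag technology of Section~\ref{secParsosp} in place of taut couples. If $\m$ stabilizes a proper nonzero isotropic subspace $F$, replace $F$ by its closure $\overline{F} = F^{\perp\perp}$ (still $\m$-stable, closed, isotropic, proper, nonzero); then $\{0, F, F^\perp, V\}$ is a self-taut generalized flag whose $\g$-stabilizer is a proper parabolic containing $\m$ by Theorem~\ref{sospcharacterization}, whence $\m$ equals it. If $\m$ stabilizes no proper nonzero isotropic (equivalently coisotropic) subspace but does stabilize some proper nonzero subspace $F$, then $F \cap F^\perp$ is an $\m$-stable isotropic proper subspace, hence $0$, so $F$ is nondegenerate and (passing to its closure if necessary) $V = F \oplus F^\perp$; then $\m \subset \so(F) \oplus \so(F^\perp)$ (resp.\ $\sp(F) \oplus \sp(F^\perp)$), a proper subalgebra, and maximality makes $\m$ equal to this direct sum of simple Lie algebras (the potentially abelian factor $\so_2$ does not arise, since a two-dimensional nondegenerate orthogonal space over $\C$ contains an isotropic line stable under its own orthogonal algebra, which is excluded here; and $\so_4 \cong \sl_2 \oplus \sl_2$ is a direct sum of simple ideals). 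Finally, if $\m$ stabilizes no proper nonzero subspace at all, it acts irreducibly on $V$, and Theorem~\ref{irreducible} together with $\m \subset \g$ then forces $\m = \g$, contradicting properness.

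\emph{Main obstacle.} The genuinely delicate point is the last portion of the $\so$/$\sp$ analysis: controlling \emph{non-closed} $\m$-stable subspaces. One must show that a maximal subalgebra preserving some proper nonzero subspace in fact preserves either a closed isotropic one or a closed nondegenerate one, and establish the orthogonal splitting $V = F \oplus F^\perp$ in the latter case (by passing to closures and using that the radical of a closed subspace is itself a closed isotropic invariant subspace), as well as dispose of the low-dimensional orthogonal spaces. Everything else is supplied directly by Theorems~\ref{tautcouple}, \ref{slglcharacterization}, \ref{sospcharacterization} and \ref{irreducible}; in particular one should record, as an immediate consequence of the results of Section~\ref{secParsosp}, that the $\g$-stabilizer of the self-taut flag $\{0, F, F^\perp, V\}$ attached to a closed isotropic $F$ is a parabolic subalgebra.
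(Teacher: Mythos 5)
Your route is genuinely different from the paper's. The paper's proof is a one-paragraph citation: it invokes the explicit classification of maximal subalgebras from \cite{DP4} (a maximal subalgebra of $\gl_\infty$ is $\sl_\infty$ or the stabilizer of one subspace; a maximal subalgebra of $\so_\infty$ or $\sp_\infty$ is a direct sum of two simple subalgebras or the stabilizer of one subspace), and then observes that the stabilizer of a single (isotropic/coisotropic) subspace is a parabolic subalgebra. You instead redevelop a weak form of that classification from the taut-couple machinery and Theorem~\ref{irreducible}, which makes the argument self-contained modulo \cite{BS} rather than modulo \cite{DP4}. Your treatment of $\gl(V,V_*)$ and $\sl(V,V_*)$ is correct: if the taut couple produced by Theorem~\ref{tautcouple} is nontrivial one gets a proper parabolic, and if it is trivial then $\m$ acts irreducibly on $V$ and $V_*$ and Theorem~\ref{irreducible} finishes (the trace argument for $\sl_\infty$ works because the taut couple produced by Theorem~\ref{tautcouple} always has $C$ nonempty, hence contains $V_\gamma\otimes (V_\gamma)_*$ and thus elements of nonzero trace).

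There is, however, a real gap in the $\so$/$\sp$ branch, and it is exactly at the step you flag as the ``main obstacle.'' For a closed subspace $\overline F$ with $\overline F\cap\overline F^\perp=0$ it is \emph{not} automatic that $V=\overline F\oplus\overline F^\perp$: the sum $\overline F+\overline F^\perp$ is dense (its perp is zero) but need not be all of $V$. Concretely, take $V$ with basis $\{a_i\}_{i\ge 1}\cup\{b_i\}_{i\ge 1}\cup\{c\}$ and symmetric form given by $\langle a_i,a_j\rangle=\langle b_i,b_j\rangle=\delta_{ij}$, $\langle a_i,b_j\rangle=0$, $\langle a_i,c\rangle=\langle b_i,c\rangle=1$, $\langle c,c\rangle=0$. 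Then $F=\mathrm{span}\{a_i\}$ is closed, $F^\perp=\mathrm{span}\{b_i\}$, $F\cap F^\perp=0$, yet $c\notin F+F^\perp$. Your parenthetical justification (``the radical of a closed subspace is itself a closed isotropic invariant subspace'') only yields $\overline F\cap\overline F^\perp=0$; it says nothing about $\overline F+\overline F^\perp=V$. The fix is to use maximality once more: if $\overline F+\overline F^\perp\ne V$, then $\m=\St_{\overline F}\cap\g$ (by maximality) is strictly contained in $\St_{\overline F+\overline F^\perp}\cap\g$ (any $v\wedge w$ with $0\ne v\in\overline F$, $0\ne w\in\overline F^\perp$ preserves $\overline F+\overline F^\perp$ but not $\overline F$), and the latter is a proper subalgebra since $\overline F+\overline F^\perp$ is a proper nonzero $\g$-unstable subspace; this contradicts maximality of $\m$. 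With that inserted, the orthogonal splitting holds, $\St_{\overline F}\cap\g=\so(\overline F)\oplus\so(\overline F^\perp)$ (resp.\ $\sp\oplus\sp$), and the rest of your Case 2 and Case 3 go through as written.
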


\begin{proof}
The maximal subalgebras of the above Lie algebras are described explicitly in \cite{DP4}.  
A maximal subalgebra of $\gl_\infty$ is either the commutator subalgebra, which is simple, or the stabilizer of a single subspace in $V$ or $V_*$.  If a maximal subalgebra of $\sl_\infty$ is not isomorphic to one of $\so_\infty$ and $\sp_\infty$, which are simple, then it is again the stabilizer of a single subspace in $V$ or $V_*$.  A maximal subalgebra of $\so_\infty$ or $\sp_\infty$ is either the direct sum of two simple subalgebras or the stabilizer of a single subspace in $V$.  At the end of Section~\ref{glparabolic} it was noted that the stabilizer in $\gl_\infty$ or $\sl_\infty$ of a single semiclosed generalized flag in $V$ or $V_*$ is parabolic.  In particular, the stabilizer of a single subspace of $V$ or $V_*$ is a parabolic subalgebra.  Analogously, the stabilizer in $\so_\infty$ or $\sp_\infty$ of a single isotropic or coisotropic subspace of $V$ is a parabolic subalgebra.
\end{proof}

We conclude this section by describing the parabolic subalgebras of any splittable finitary Lie algebra.

\begin{thm} \label{mostgeneralthm}
Let $\g$ be a splittable subalgebra of $\gl(V,V_*)$.  Fix a locally reductive part $\g_{red} \subset \g$ according to Theorem~\ref{locreductivepart}.
Then the map $$\p \mapsto \n_\g \subsetplus \p$$
is a bijection between the set of parabolic subalgebras of $\g_{red}$ and the set of parabolic subalgebras of $\g$, under which Borel subalgebras correspond to Borel subalgebras.  
\end{thm}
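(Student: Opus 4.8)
The plan is to realize the stated map as the bijection on parabolic subalgebras induced by the quotient homomorphism $\pi \colon \g \to \g/\n_\g \cong \g_{red}$, after first showing that every Borel subalgebra of $\g$ (and hence every parabolic subalgebra of $\g$) contains $\n_\g$. Once this is known, the theorem is essentially formal bookkeeping with the semidirect decomposition $\g = \n_\g \subsetplus \g_{red}$ provided by Theorem~\ref{locreductivepart}.

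First I would record an auxiliary fact: \emph{if $\n$ is a locally solvable ideal of a locally finite Lie algebra $\s$ and $\s/\n$ is locally solvable, then $\s$ is locally solvable}. Indeed, exhausting $\s = \bigcup_i \s_i$ by finite-dimensional subalgebras, each $\s_i \cap \n$ is a finite-dimensional locally solvable, hence solvable, ideal of $\s_i$, while $\s_i/(\s_i \cap \n)$ embeds into the locally solvable algebra $\s/\n$ and is therefore solvable; so each $\s_i$ is solvable. Applying this with $\s = \b + \n_\g$ and $\n = \n_\g$ (by Proposition~\ref{nilcommutator}, $\n_\g$ is a locally nilpotent ideal of $\g$, so $\b + \n_\g$ is a subalgebra whose quotient by $\n_\g$ is a quotient of the locally solvable $\b$), we get that $\b + \n_\g$ is locally solvable for any Borel subalgebra $\b$ of $\g$, and maximality forces $\n_\g \subset \b$. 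Since a parabolic subalgebra of $\g$ contains a Borel subalgebra, it too contains $\n_\g$. The same argument inside $\g_{red}$ shows that $\n_\g \subsetplus \b_0$ is locally solvable for any Borel subalgebra $\b_0$ of $\g_{red}$.

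Next I would set up the bijection. Under $\g = \n_\g \subsetplus \g_{red}$, the projection $\pi \colon \g \to \g_{red}$ along $\n_\g$ is a surjective Lie algebra homomorphism with kernel $\n_\g$. For any subalgebra $\q \subset \g$ with $\n_\g \subset \q$ one has $\q = \pi^{-1}(\pi(\q)) = \n_\g \subsetplus (\q \cap \g_{red})$, and the assignments $\q \mapsto \q \cap \g_{red} = \pi(\q)$ and $\q_0 \mapsto \n_\g \subsetplus \q_0 = \pi^{-1}(\q_0)$ are mutually inverse, inclusion-preserving bijections between subalgebras of $\g$ containing $\n_\g$ and all subalgebras of $\g_{red}$. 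I would then check this carries Borel subalgebras to Borel subalgebras: if $\b$ is a Borel of $\g$, then $\pi(\b)$ is locally solvable and any locally solvable $\s_0 \supset \pi(\b)$ in $\g_{red}$ pulls back, via the auxiliary fact, to a locally solvable $\pi^{-1}(\s_0) \supset \b$, whence $\s_0 = \pi(\b)$; conversely $\pi^{-1}(\b_0) = \n_\g \subsetplus \b_0$ is locally solvable for a Borel $\b_0$ of $\g_{red}$, and any locally solvable $\s \supset \pi^{-1}(\b_0)$ contains $\n_\g$, so $\s = \pi^{-1}(\pi(\s))$ with $\pi(\s) \supset \b_0$ locally solvable, forcing $\s = \pi^{-1}(\b_0)$.

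Finally I would deduce the statement. If $\p \subset \g$ is parabolic, pick a Borel $\b \subset \p$; by the second step $\n_\g \subset \b \subset \p$, so $\p = \n_\g \subsetplus (\p \cap \g_{red})$ with $\p \cap \g_{red} \supset \pi(\b)$ parabolic in $\g_{red}$. Conversely, if $\p_0 \subset \g_{red}$ is parabolic, pick a Borel $\b_0 \subset \p_0$; then $\n_\g \subsetplus \b_0 \subset \n_\g \subsetplus \p_0$ with $\n_\g \subsetplus \b_0$ a Borel of $\g$, so $\n_\g \subsetplus \p_0$ is parabolic in $\g$. Hence $\p_0 \mapsto \n_\g \subsetplus \p_0$ is a bijection from parabolic subalgebras of $\g_{red}$ onto parabolic subalgebras of $\g$, with inverse $\p \mapsto \p \cap \g_{red}$, restricting to a bijection on Borel subalgebras by the previous paragraph. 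The only point I expect to require a genuine argument is the auxiliary extension fact together with its consequence that Borel subalgebras absorb $\n_\g$; everything else is formal.
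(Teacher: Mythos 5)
Your proof is correct and follows essentially the same route as the paper: the key point in both is that every Borel (hence every parabolic) subalgebra of $\g$ absorbs $\n_\g$, after which the correspondence via the decomposition $\g = \n_\g \subsetplus \g_{red}$ is formal. The paper's proof is terser — it simply invokes that a Borel subalgebra contains the locally solvable radical $\r \supset \n_\g$ — while you fill in the verification (the auxiliary extension lemma for local solvability) and the Borel-to-Borel check explicitly, but there is no substantive difference in strategy.
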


\begin{proof}
Any Borel subalgebra of $\g$ contains the locally solvable radical of $\g$, so any Borel subalgebra of $\g$ contains $\n_\g$.  It follows that any parabolic subalgebra of $\g$ contains $\n_\g$.  Hence the map  $\p \mapsto \n_\g \subsetplus \p$
is a bijection between the set of parabolic subalgebras of $\g_{red}$ and the set of parabolic subalgebras of $\g$.  
\end{proof}

\appendix

\section{Appendix: Cartan subalgebras of splittable Lie algebras}

In the existing literature only Cartan subalgebras of locally finite Lie algebras admitting an exhaustion by reductive Lie algebras has been studied, see \cite{DPS} and the references therein. In this appendix we extend the theory of Cartan subalgebras to arbitrary splittable subalgebras of locally reductive Lie algebras. 

If $\h$ is a subalgebra of a locally reductive Lie algebra, let $\h_{ss}$ denote the set of semisimple Jordan components of the elements of $\h$.
For any subset $\a \subset \g$ and any subalgebra $\k \subset \g$, we define the centralizer of $\a$ in $\k$, denoted $\z_\k (\a)$, to be the set of elements of $\k$ which commute in $\g$ with all elements of $\a$.
For an arbitrary Lie algebra $\h \subset \k$, we define $\overline{\k^0(\h)}$ as the subalgebra of $\k$ consisting of all elements of $\k$ on which every finite-dimensional subalgebra $\h_{fin}$ of $\h$ acts locally nilpotently.

The following are generalizations of Proposition 3.1, Theorem 3.2 and Lemma 3.3 from \cite{DPS}.

\begin{prop} \label{centralizer}
Let $\h$ be a locally nilpotent subalgebra of a splittable subalgebra $\k$ of a locally reductive Lie
algebra.  Then the following assertions hold:
\begin{enumerate}
\item $\h \subseteq \z_\k(\h_{ss})$;
\item \label{toral} $\h_{ss}$ is a toral subalgebra of $\g$;
\item  \label{self-normalizing} $\z_\k (\h_{ss})$ is a self-normalizing subalgebra of $\k$.
\end{enumerate}
\end{prop}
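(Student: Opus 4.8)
The plan is to prove all three parts by reducing, as is standard for locally reductive Lie algebras, to finite-dimensional reductive subalgebras: since the ambient algebra $\g$ is locally reductive, any finite subset of $\k$ lies in a finite-dimensional reductive subalgebra $\g_0 \subset \g$, and for $X \in \g_0$ the operators $\ad X_{ss}$ and $\ad X_{nil}$ are the semisimple and nilpotent parts of $\ad_{\g_0} X$; being polynomials in $\ad_{\g_0} X$ without constant term, they preserve every subalgebra of $\g_0$. I will use repeatedly that splittability of $\k$ gives $\h_{ss} \subseteq \k$.

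For (1), fix $X, Y \in \h$. By local nilpotency choose a finite-dimensional nilpotent subalgebra $\h_0 \subseteq \h$ with $X, Y \in \h_0$, and a finite-dimensional reductive $\g_0 \supseteq \h_0$ with $X_{ss} \in \g_0$. The subspace $\h_0$ is $\ad X$-stable, and $\ad_{\h_0} X$ is nilpotent by Engel's theorem; hence the semisimple part $\ad X_{ss}$ restricts on $\h_0$ to the semisimple part of a nilpotent operator, which is $0$. Thus $[X_{ss}, Y] = 0$, and since $X, Y$ were arbitrary, $\h \subseteq \z_\k(\h_{ss})$.

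For (2), applying the same observation to $Y$ gives $\ad Y_{ss}|_{\h_0} = 0$, so $[X, Y_{ss}] = 0$; writing $\ad X_{ss} = q(\ad X)$ with $q(0) = 0$ then forces $[X_{ss}, Y_{ss}] = q(\ad X)(Y_{ss}) = 0$. Hence the elements of $\h_{ss}$ pairwise commute and are individually $\ad$-semisimple, and any finite collection of them lies in a common finite-dimensional reductive subalgebra, in which commuting $\ad$-semisimple elements span a toral subalgebra. One checks in addition that $\h_{ss}$ is a linear subspace: via the generalized weight-space decomposition of $\g_0$ under the nilpotent algebra $\h_0$, on each weight space $\ad X_{ss}$ acts by the scalar weight of $X$, so $\ad(X_{ss} + Y_{ss})$ and $\ad((X+Y)_{ss})$ agree and $X_{ss} + Y_{ss} = (X+Y)_{ss} \in \h_{ss}$. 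Therefore $\h_{ss}$ is a toral subalgebra of $\g$.

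For (3), put $\t := \h_{ss}$; by (2) it is an abelian subalgebra of $\k$, hence $\t \subseteq \z_\k(\t)$. Suppose $X \in \k$ normalizes $\z_\k(\t)$. For any $t \in \t$ we then have $[X, t] \in [X, \z_\k(\t)] \subseteq \z_\k(\t) \subseteq \z_\g(t)$, i.e.\ $(\ad t)^2 X = 0$. Choosing a finite-dimensional reductive $\g_0 \ni X, t$, the restriction $\ad_{\g_0} t$ is semisimple, being the restriction of the semisimple operator $\ad_\g t$ to an invariant subspace, so $(\ad t)^2 X = 0$ forces $\ad t(X) = 0$, i.e.\ $[X, t] = 0$. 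As $t$ ranges over $\t$ we conclude $X \in \z_\g(\t) \cap \k = \z_\k(\t)$, so $\z_\k(\t)$ is self-normalizing. The reductions above are routine; the one genuinely fiddly point I anticipate is the verification in (2) that $\h_{ss}$ is closed under addition, whereas (1), the commutativity in (2), and all of (3) each come down to a single eigenvalue computation in a finite-dimensional reductive subalgebra.
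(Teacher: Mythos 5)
Your argument is correct, but it takes a noticeably different route from the paper's for parts (1) and (2), so a comparison is worthwhile. The paper never explicitly descends to a finite-dimensional reductive $\g_0$; for (1) it simply writes $\ad h_{ss}$ as a polynomial $q(\ad h)$ with $q(0)=0$, and from $(\ad h)^n(h')=0$ inductively deduces $(\ad h_{ss})^n(h')=0$, then uses semisimplicity of $\ad h_{ss}$ to conclude $(\ad h_{ss})(h')=0$. You instead pass to a finite-dimensional nilpotent $\h_0\ni X,Y$ sitting inside a reductive $\g_0$ and observe that the semisimple part of $(\ad X)|_{\h_0}$ is the restriction of $\ad X_{ss}$ and vanishes because $(\ad X)|_{\h_0}$ is nilpotent. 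Both arguments encode the same observation — that the semisimple part of $\ad h$ kills any $\ad h$-invariant subspace on which $\ad h$ is nilpotent — but the paper's is a bare-hands computation that avoids Engel's theorem and generalized weight spaces, while yours leans on Jordan-decomposition machinery in the finite-dimensional reductive setting. Your version is, if anything, more reusable, since the weight-space picture is precisely what you need again for linearity.

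On that point you are actually more careful than the paper. To show $\h_{ss}$ is a subalgebra the paper says only that a sum of commuting semisimple elements is semisimple; that shows $X_{ss}+Y_{ss}$ is semisimple but not, by itself, that it lies in $\h_{ss}$, i.e.\ that $X_{ss}+Y_{ss}=(X+Y)_{ss}$. Your weight-space computation gives exactly this. One small gap remains in how you phrase it: equality of $\ad(X_{ss}+Y_{ss})$ and $\ad((X+Y)_{ss})$ on $\g_0$ only gives equality of the elements modulo $\z(\g_0)$. This is easily closed — the difference equals $(X+Y)_{nil}-(X_{nil}+Y_{nil})$, which lies in $[\g_0,\g_0]$, and $\z(\g_0)\cap[\g_0,\g_0]=0$ for reductive $\g_0$ — but it deserves a sentence. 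Part (3) is essentially identical to the paper's argument: take $y\in\h_{ss}$, note $[[x,y],y]=0$, and use semisimplicity of $\ad y$ to conclude $[x,y]=0$.
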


\begin{proof}
Let $h, h' \in \h$.  The local nilpotence of $\h$ implies $(\ad
h)^n(h') = 0$ for some $n$.  Since $\ad h_{ss}$ is a polynomial in $\ad h$ with no constant term, it follows that $(\ad h_{ss})(\ad h)^{n-1}(h') = 0$.  Because an element commutes with its semisimple part, $(\ad h)^{n-1}(\ad h_{ss})(h') = 0$, and it follows by induction that $(\ad h_{ss})^n(h') = 0$.  Hence $(\ad h_{ss})(h') = 0$.  Since $\k$ is splittable, $\h_{ss} \subset \k$, and we have shown $\h \subseteq \z_\k(\h_{ss})$.

Furthermore, by the same argument, $(\ad h')(h_{ss}) = 0$ implies $(\ad h'_{ss})(h_{ss}) = 0$.  Therefore any two elements of $\h_{ss}$ commute.  Since the sum of any two commuting semisimple elements is semisimple, $\h_{ss}$ is a subalgebra.

Finally, suppose $x$ is in the normalizer of $ \z_\k(\h_{ss})$.  For any $y \in \h_{ss}$, we have that $[x,y] \in \z_\k(\h_{ss})$.  Thus $[[x,y],y]=0$, and as $y$ is semisimple it follows that $[x,y]=0$. Hence $x \in \z_\k(\h_{ss})$, i.e.\ $\z_\k(\h_{ss})$ is self-normalizing.
\end{proof}

\begin{thm} \label{main}
Let $\k$ be a splittable subalgebra of a locally reductive Lie algebra, and $\h$ a subalgebra of $\k$.  The following conditions on $\h$ are equivalent:
\begin{enumerate}
\item \label{D} $\h = \z_\k(\h_{ss})$;
\item  \label{E} $\h = \z_\k(\t)$ for some maximal toral subalgebra $\t \subseteq \k$;
\item \label{F} $\h = \overline{\k^0(\h)}$.
\end{enumerate}
In addition, any subalgebra satisfying one of the above conditions is locally nilpotent, splittable, and self-normalizing.
\end{thm}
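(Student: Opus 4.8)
I would establish the three equivalences cyclically, proving $(\ref{E})\Rightarrow(\ref{D})$, $(\ref{D})\Rightarrow(\ref{E})$, $(\ref{D})\Rightarrow(\ref{F})$, and $(\ref{F})\Rightarrow(\ref{D})$, and extracting from each of (\ref{D}), (\ref{E}), (\ref{F}) the local nilpotence of $\h$ so that Proposition~\ref{centralizer} becomes applicable. Fix an exhaustion $\k=\bigcup_n\k_n$ with $\k_n:=\k\cap\g_n$, where $\g=\bigcup_n\g_n$ exhausts the ambient locally reductive Lie algebra by finite-dimensional reductive subalgebras; since $\k$ is splittable, each $\k_n$ is a finite-dimensional splittable subalgebra of $\g_n$, and, $\g$ being locally finite, every finite subset of any subalgebra $\h'\subseteq\k$ lies in a finite-dimensional subalgebra of $\h'$. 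The recurring device is: if $\m\subseteq\k$ is a finite-dimensional subalgebra with $[x_{ss},\m]=0$ for every $x\in\m$, then $\ad x|_{\m}=\ad x_{nil}|_{\m}$ is nilpotent for each $x\in\m$, so Engel's theorem makes $\m$ nilpotent.

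\textbf{From (\ref{E}) to (\ref{D}).} Let $\h=\z_\k(\t)$ with $\t$ maximal toral in $\k$; then $\t\subseteq\h$ (as $\t$ is abelian) and $\t\subseteq\h_{ss}$ (as $\t$ consists of semisimple elements), so $\z_\k(\h_{ss})\subseteq\z_\k(\t)=\h$. For the reverse inclusion I first show $\h$ is locally nilpotent. For each $n$, set $\m:=\z_{\k_n}(\t)=\h\cap\k_n$; given $x\in\m$, splittability puts $x_{ss}\in\k_n$, and since $x_{ss}$ is a constant-free polynomial in $x$ it commutes with $\t$, so $\t+\C x_{ss}$ is a toral subalgebra of $\k$ containing $\t$, forcing $x_{ss}\in\t$ by maximality; hence $[x_{ss},\m]=0$ and $\m$ is nilpotent by the recurring device. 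As every finite subset of $\h$ lies in some $\z_{\k_n}(\t)$, $\h$ is locally nilpotent, so Proposition~\ref{centralizer} gives $\h\subseteq\z_\k(\h_{ss})$, and therefore $\h=\z_\k(\h_{ss})$.

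\textbf{Condition (\ref{D}), its consequences, and (\ref{E}).} Assume $\h=\z_\k(\h_{ss})$. For a finite $S\subseteq\h$, choose a finite-dimensional subalgebra $\h_0\subseteq\h$ with $S\subseteq\h_0$; for $x\in\h_0$ one has $x_{ss}\in\h_{ss}$, which commutes with $\h\supseteq\h_0$, so $[x_{ss},\h_0]=0$ and $\h_0$ is nilpotent by the recurring device. Thus $\h$ is locally nilpotent, and Proposition~\ref{centralizer} gives that $\h_{ss}$ is toral and that $\h=\z_\k(\h_{ss})$ is self-normalizing; moreover $\h_{ss}$ is abelian and, by splittability of $\k$, contained in $\k$, so $\h_{ss}\subseteq\z_\k(\h_{ss})=\h$, and hence $\h$ contains the semisimple and thus also the nilpotent parts of its elements, i.e.\ $\h$ is splittable. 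Finally, any toral subalgebra $\t'\subseteq\k$ with $\t'\supseteq\h_{ss}$ commutes with $\h_{ss}$, so $\t'\subseteq\z_\k(\h_{ss})=\h$ and then $\t'\subseteq\h_{ss}$ since $\t'$ consists of semisimple elements; therefore $\h_{ss}$ is itself a maximal toral subalgebra of $\k$, and $\h=\z_\k(\h_{ss})$ provides condition (\ref{E}) with $\t=\h_{ss}$. Once all three conditions are known to be equivalent, it follows that each of them makes $\h$ locally nilpotent, splittable, and self-normalizing.

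\textbf{Equivalence of (\ref{D}) and (\ref{F}); the main obstacle.} For $(\ref{D})\Rightarrow(\ref{F})$, use $\h=\z_\k(\h_{ss})$ with $\t:=\h_{ss}$ maximal toral: the inclusion $\h\subseteq\overline{\k^0(\h)}$ holds because $\h$ is locally nilpotent (a finite set containing $x\in\h$ and a basis of a finite-dimensional $\h_{fin}\subseteq\h$ lies in a nilpotent subalgebra of $\h$, on which $\h_{fin}$ acts ad-nilpotently); conversely, if $x\in\k$ is acted on locally nilpotently by every finite-dimensional subalgebra of $\h$, then for each $t\in\t\subseteq\h$ the operator $\ad t$ is at once semisimple and nilpotent on $x$, so $[t,x]=0$ and $x\in\z_\k(\t)=\h$. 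For $(\ref{F})\Rightarrow(\ref{D})$: $\h=\overline{\k^0(\h)}$ forces $\h$ to be locally nilpotent (a finite-dimensional $\h_0\subseteq\h$ acts ad-nilpotently on each of its own elements, hence is nilpotent by Engel), so Proposition~\ref{centralizer} gives $\h\subseteq\z_\k(\h_{ss})$; for the reverse, if $x\in\z_\k(\h_{ss})$ and $e$ lies in a finite-dimensional $\h_{fin}\subseteq\h$, then $e_{ss}\in\h_{ss}$ commutes with $x$, so an induction on $k$ using $[e_{ss},e_{nil}]=0$ yields $(\ad e)^k x=(\ad e_{nil})^k x$, which vanishes for large $k$ because $\ad_\g e_{nil}$ is locally nilpotent; hence $x\in\overline{\k^0(\h)}=\h$. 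The step I expect to be the main obstacle is $(\ref{E})\Rightarrow$ local nilpotence: one must convert the maximality of $\t$ \emph{in $\k$} — not merely in the finite-dimensional pieces $\k_n$ — into nilpotence of the centralizers $\z_{\k_n}(\t)$, and splittability of $\k$ is exactly what makes this possible, by forcing $x_{ss}\in\k$ and hence into $\t$; notably, no conjugacy theorem for toral subalgebras is required.
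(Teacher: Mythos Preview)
Your proof is correct, and it takes a genuinely different route from the paper's. The paper relies on an auxiliary lemma (Lemma~\ref{lemma1}: if $\h$ is locally nilpotent and splittable, then $\overline{\k^0(\h)}=\z_\k(\h_{ss})$), which in turn invokes Bourbaki's decomposition $\h=\h_{ss}\oplus\h_{nil}$ for splittable nilpotent algebras; to feed this lemma, the paper establishes splittability of $\h$ under each of (\ref{E}) and (\ref{F}) separately, using further Bourbaki results on the splittability of centralizers and of the nil-spaces $\k_j^0(\cdot)$ in finite-dimensional splittable algebras. You bypass all of this: your arguments for (\ref{D})$\Leftrightarrow$(\ref{F}) and for (\ref{E})$\Rightarrow$(\ref{D}) work directly with Jordan components in the ambient locally reductive algebra, never needing $\h$ itself to be splittable along the way; splittability of $\h$ is then obtained once, cleanly, from (\ref{D}) via $\h_{ss}\subseteq\z_\k(\h_{ss})=\h$. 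Your key step (\ref{E})$\Rightarrow$ local nilpotence --- forcing $x_{ss}\in\t$ by maximality and then invoking Engel --- is also more direct than the paper's detour through splittability. The trade-off is that the paper's approach packages the (\ref{D})$\Leftrightarrow$(\ref{F}) equivalence into a reusable lemma, while yours is more self-contained and avoids external citations.
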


\begin{lemma}\label{lemma1}
If $\h$ is locally nilpotent and splittable, then $\overline{\k^0(\h)} = \z_\k(\h_{ss})$.
\end{lemma}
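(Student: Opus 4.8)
The plan is to first translate membership in $\overline{\k^0(\h)}$ into a condition on single elements of $\h$, and then to verify the two inclusions $\z_\k(\h_{ss}) \subseteq \overline{\k^0(\h)}$ and $\overline{\k^0(\h)} \subseteq \z_\k(\h_{ss})$ separately, using the Jordan decomposition in the ambient locally reductive Lie algebra.

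The first step is to check that, for $x \in \k$, one has $x \in \overline{\k^0(\h)}$ if and only if for every $h \in \h$ there is an $N$ with $(\ad h)^N x = 0$. One implication is immediate, since $\C h$ is a finite-dimensional (abelian, hence nilpotent) subalgebra of $\h$. For the converse, fix a finite-dimensional subalgebra $\h_{fin} \subseteq \h$, which is nilpotent since $\h$ is locally nilpotent, choose a finite-dimensional subalgebra $\g_0 \subseteq \g$ containing $\h_{fin}$ and $x$ (automatically $\ad \h_{fin}$-stable), and decompose $\g_0$ into generalized $\h_{fin}$-weight spaces. The weight-$\lambda$ component of $x$ is annihilated by a sufficiently high power of $\ad h$ whenever $\lambda(h) \neq 0$; as this holds for every $h \in \h_{fin}$, the element $x$ must lie in the zero weight space, on which $\h_{fin}$ acts nilpotently by Engel's theorem, so $\h_{fin}$ acts locally nilpotently on $x$.

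Given this criterion, the inclusion $\z_\k(\h_{ss}) \subseteq \overline{\k^0(\h)}$ goes as follows: for $x \in \z_\k(\h_{ss})$ and $h \in \h$, write $h = h_{ss} + h_{nil}$; then $[h_{ss},x] = 0$ and $\ad h_{ss}$ commutes with $\ad h_{nil}$ (their bracket is $\ad [h_{ss},h_{nil}] = 0$), so a one-line induction yields $(\ad h)^k x = (\ad h_{nil})^k x$ for all $k \geq 1$, which vanishes for $k$ large because $h_{nil}$ is a nilpotent element of $\g$. For the reverse inclusion, take $x \in \overline{\k^0(\h)}$ and $h \in \h$; by the criterion $(\ad h)^N x = 0$ for some $N$, so $\ad h$ is nilpotent on the finite-dimensional subspace $W := \Span\{(\ad h)^j x : j \geq 0\}$. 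Since $\ad h_{ss}$ is a polynomial in $\ad h$ without constant term, $W$ is $\ad h_{ss}$-stable, and $\ad h_{ss}|_W$ is both nilpotent (a polynomial without constant term in the nilpotent operator $\ad h|_W$) and semisimple (a restriction, to an invariant subspace, of the semisimple operator $\ad h_{ss}$), hence zero; thus $[h_{ss},x] = 0$ for every $h \in \h$, i.e.\ $x \in \z_\k(\h_{ss})$.

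I expect the genuine difficulty to lie in the first step: the requirement that every finite-dimensional subalgebra $\h_{fin} \subseteq \h$ act locally nilpotently on $x$ is a priori more delicate than the pointwise condition, and reducing it correctly requires the generalized weight-space decomposition together with Engel's theorem. The remaining steps are routine manipulations with commuting Jordan components, the only subtlety being that ``semisimple'' and ``polynomial in $\ad h$'' must be read on a suitable finite-dimensional invariant subspace, and that the splittability hypotheses on $\h$ and $\k$ are precisely what place $h_{ss}$ and $h_{nil}$ back inside $\k$ so that the objects being compared live in the same ambient algebra.
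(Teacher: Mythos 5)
Your proof is correct, but it takes a genuinely different route from the paper's. The paper invokes Bourbaki (Ch.\ VII, \S 5, Prop.\ 5) to write the locally nilpotent \emph{splittable} algebra $\h$ as a direct sum of subalgebras $\h = \h_{ss} \oplus \h_{nil}$, factors the condition as $\overline{\k^0(\h)} = \overline{\k^0(\h_{ss})} \cap \overline{\k^0(\h_{nil})}$, and then evaluates each factor: the first equals $\z_\k(\h_{ss})$ because a semisimple element acts locally nilpotently on $x$ only by annihilating $x$, and the second is all of $\k$ because elements of $\h_{nil}$ are ad-locally-nilpotent. Your argument works entirely elementwise via the Jordan decomposition $h = h_{ss} + h_{nil}$ of each individual $h$, after first reducing membership in $\overline{\k^0(\h)}$ to the pointwise criterion (each $\ad h$ acts nilpotently on $x$) using the generalized weight-space decomposition and Engel. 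This buys you two things: you never need the Bourbaki decomposition of $\h$, and you sidestep the identity $\overline{\k^0(\h)} = \overline{\k^0(\h_{ss})} \cap \overline{\k^0(\h_{nil})}$, whose $\supseteq$ inclusion is not entirely immediate and is glossed over in the paper. In fact, a close look shows your argument never invokes the splittability of $\h$ at all, so it proves the lemma under the weaker hypothesis that $\h$ is merely locally nilpotent; your closing remark about splittability being needed to ``place $h_{ss}$ and $h_{nil}$ back inside $\k$'' is not what your proof actually uses, since $\z_\k(\h_{ss})$ is by the paper's convention the set of elements of $\k$ commuting with $\h_{ss}$ \emph{in $\g$}, so $\h_{ss}$ need not lie in $\k$. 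The cost of your approach is length, and there is one small point worth making fully explicit in your last step: pass to a $\g_n$ containing both $h$ and the finite-dimensional $\ad h$-invariant subspace $W$, so that $\ad h_{ss}|_W$ is simultaneously a polynomial in $\ad h|_W$ without constant term (hence nilpotent) and a restriction of the diagonalizable operator $\ad h_{ss}|_{\g_n}$ to an invariant subspace (hence semisimple).
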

\begin{proof}
By \cite[Ch. VII, \S 5, Prop. 5]{Bourbaki} $\h = \h_{ss} \oplus \h_{nil}$, with $\h_{nil}$ being the subalgebra of all nilpotent elements in $\h$.  It follows that  $\overline{\k^0(\h)} = \overline{\k^0(\h_{ss})} \cap \overline{\k^0(\h_{nil})}$.  Since elements of $\h_{ss}$ are semisimple, $\overline{\k^0(\h_{ss})} = \z_\k(\h_{ss})$.  Clearly $\overline{\k^0(\h_{nil})} = \k$. Hence $\overline{\k^0(\h)} = \z_\k(\h_{ss})$.
\end{proof}

\begin{proof}[Proof of Theorem \ref{main}] 
Fix an exhaustion $\k = \bigcup_{i \in \Z_{>0}} \k_i$, where each $\k_i$ is a finite-dimensional splittable subalgebra of $\k$.

To show that (\ref{D}) implies (\ref{E}), we must first show that $\h = \z_\k(\h_{ss})$ implies $\h$ is locally nilpotent.  Notice that the equality $\h = \z_\g(\h_{ss})$ implies that every element of $\h_{ss}$ commutes with every element of $\h$.  Now consider a general element $h = h_{ss} + h_{nil} \in \h$.  Choose $k$ such that $(\ad h_{nil})^k = 0$.  For any $x \in \h$, $$(\ad h)^k (x) = (\ad(h_{ss} + h_{nil}))^k(x) = (\ad h_{nil})^k(x) = 0.$$  Hence $\h$ is locally nilpotent.

By Proposition \ref{centralizer} (\ref{toral}), we know $\h_{ss}$ is a toral subalgebra of $\k$.  The equality $\h = \z_{\k}(\h_{ss})$ shows that any semisimple element of $\k$ which centralizes $\h_{ss}$ is already in $\h_{ss}$.  Thus $\h_{ss}$ is a maximal toral subalgebra of $\k$ and (\ref{E}) holds.

To show that (\ref{E}) implies (\ref{D}), we first prove that (\ref{E}) implies that $\h$ is splittable.  Suppose that $\h$ satisfies (\ref{E}).  For any $i \in \Z_{>0}$ note that $$\h \cap \k_i =  \z_\g(\t) \cap \k_i =  \bigcap_{k \geq i}(\z_{\k_k}(\t \cap \k_k) \cap \k_i).$$  Since $\dim \k_i < \infty$, we have
$\h \cap \k_i = \z_{\k_j}(\t \cap \k_j) \cap \k_i$
for some sufficiently large $j \geq i$.
Since $\t \cap \k_j$ is a subalgebra of $\k_j$, we know from \cite[Ch. VII, \S 5, Prop. 3 Cor. 1]{Bourbaki} that $\z_{\k_j}(\t \cap \k_j)$ is a splittable subalgebra of $\k_j$.   
Recall that we have taken $\k_j$ to be splittable also.
Then the intersection $\z_{\k_j}(\h_{ss} \cap \k_j) \cap \k_i$ is splittable, too.  Being a union of splittable algebras, $\h$ is splittable.  

It follows that $\h_{ss} \subseteq \h$.  Then clearly $\t \subseteq \h_{ss}$.  If $\t \neq \h_{ss}$, the existence of a semisimple element $h \in \h \setminus \t$ contradicts the maximality of $\t$.  Therefore $\t = \h_{ss}$, and (\ref{E}) implies (\ref{D}).

Note that (\ref{D}) implies (\ref{F}).  Indeed, suppose $\h = \z_\k(\h_{ss})$.  We have already seen that $\h$ is splittable and locally nilpotent, so by Lemma \ref{lemma1}, $\h = \z_\k(\h_{ss}) = \overline{\k^0(\h)}$.

To show that (\ref{F}) implies (\ref{D}), assume that $\h = \overline{\k^0(\h)}$.  Then clearly $\h$ is locally nilpotent, and we claim that $\h$ is splittable, too.  Indeed, for any $i \in \Z_{>0}$, 
$$\overline{\k^0(\h)} \cap \k_i = \bigcap_{k \geq i} \left(\k_k^0(\h \cap \k_k) \cap \k_i \right).$$  The finite dimensionality of $\k_i$ yields $\overline{\k^0(\h)} \cap \k_i = \k_{j}^0(\h \cap \k_{j}) \cap \k_i$ for some sufficiently large $j \geq i$.  It is well known that $\k_j^0(\h \cap \k_j)$ is a splittable subalgebra of $\k_j$, see \cite[Ch. VII, \S 1, Prop. 11]{Bourbaki}.  Since $\k_i$ is also splittable, the intersection $\k_{j}^0(\h \cap \k_{j}) \cap \k_i$ is splittable, too.   Hence $\overline{\k^0(\h)} \cap \k_i$ is splittable.  Being a union of splittable algebras, $\h$ is splittable.  Therefore Lemma \ref{lemma1} implies $\h = \overline{\k^0(\h)} = \z_\k(\h_{ss})$.

In addition, by Proposition \ref{centralizer} (\ref{self-normalizing}), a subalgebra $\h$ satisfying (\ref{D}) is self-normalizing.  As we have already seen that such a subalgebra is locally nilpotent and splittable, the proof of Theorem \ref{main} is complete.
\end{proof}

We define a subalgebra $\h$ of a splittable subalgebra $\k$ of a locally reductive Lie algebra $\g$ to be a \emph{Cartan subalgebra} if it satisifies any of the equivalent conditions in Theorem~\ref{main}.  Note that since condition (\ref{F}) of Theorem~\ref{main} is intrinsic to $\k$, the definition of a Cartan subalgebra depends only on the isomorphism class of $\k$ and not on the choice of injective homomorphism of $\k$ into a locally reductive Lie algebra.

\centerline{\begin{tabular}{ll}
E.\ D.-C.: & I.\ P.: \\
Department of Mathematics & School of Engineering and Science \\
Rice University & Jacobs University Bremen   \\
6100 S. Main St. & Campus Ring 1 \\
Houston TX 77005-1892 & 28759 Bremen, Germany  \\
{\tt edc@rice.edu} & {\tt i.penkov@jacobs-university.de}
\end{tabular}}

\end{document}